\documentclass{amsproc}
\usepackage{euscript}
\usepackage{cases}
\usepackage{mathrsfs}
\usepackage{bbm}
\usepackage{amssymb}
\usepackage{txfonts}
\usepackage{amsfonts,amsmath,amsxtra,mathdots,mathabx}
\usepackage{color}
\usepackage{hyperref} 
\usepackage{tikz}
\usepackage{appendix}

\allowdisplaybreaks

\DeclareFontFamily{U}{matha}{\hyphenchar\font45}
\DeclareFontShape{U}{matha}{m}{n}{
	<5> <6> <7> <8> <9> <10> gen * matha
	<10.95> matha10 <12> <14.4> <17.28> <20.74> <24.88> matha12
}{}
\DeclareSymbolFont{matha}{U}{matha}{m}{n}

\DeclareMathSymbol{\Lt}{3}{matha}{"CE}
\DeclareMathSymbol{\Gt}{3}{matha}{"CF}

\DeclareSymbolFont{mathc}{OML}{txmi}{m}{it}% txfonts
\DeclareMathSymbol{\varvv}{\mathord}{mathc}{118}
\DeclareMathSymbol{\varww}{\mathord}{mathc}{119}
\DeclareMathSymbol{\varnu}{\mathord}{mathc}{"17}

%\DeclareSymbolFont{letters}{OML}{ztmcm}{m}{it} % mathptmx

\DeclareSymbolFont{mathd}{OML}{ztmcm}{m}{it}
\DeclareMathSymbol{\varalpha}{\mathord}{mathd}{11}
\DeclareMathSymbol{\varlambda}{\mathord}{mathd}{21}
\def\valpha{\text{\scalebox{0.86}{$\varalpha$}}} 
\def\vlambda{\text{\scalebox{0.86}{$\varlambda$}}}
\def\ssnu{\text{\scalebox{0.9}{$\nu$}}} 
\def\ssmu{\text{\scalebox{0.9}{$\mu$}}}
\def\ssprime{\text{\scalebox{0.9}{$\prime$}}}

\DeclareMathSymbol{\depsilon}{\mathord}{mathd}{15}
\def\vepsilon{\text{\scalebox{0.88}{$\depsilon$}}}

\DeclareMathSymbol{\varchi}{\mathord}{mathd}{31}
\def\vchi{\text{\scalebox{0.9}{$\varchi$}}}

\newcommand{\overbar}[1]{\mkern 1mu\overline{\mkern-1mu#1\mkern-1mu}\mkern 1mu}

\newcommand{\BA}{{\mathbb {A}}} 
\newcommand{\BC}{{\mathbb {C}}}

\newcommand{\BQ}{{\mathbb {Q}}} \newcommand{\BR}{{\mathbb {R}}}

 \newcommand{\BZ}{{\mathbb {Z}}}

\newcommand{\RC}{{\mathrm {C}}} \newcommand{\RD}{{\mathrm {D}}}
\newcommand{\RE}{{\mathrm {E}}}

 \newcommand{\RN}{{\mathrm {N}}}

\newcommand{\GL}{{\mathrm {GL}}}
\newcommand{\PGL}{{\mathrm {PGL}}}
\newcommand{\SL}{{\mathrm {SL}}}

\newcommand{\sstyle}{\scriptstyle}
\newcommand{\ssstyle}{\scriptscriptstyle}

\newcommand{\ra}{\rightarrow} 
\def\viint{	\int \hskip -4 pt \int}
\def\nnmid{\hskip -1 pt \nmid \hskip -1 pt}

\def\fra{\mathfrak{a}}
\def\frm{\mathfrak{m}}
\def\frn{\mathfrak{n}}
\def\frf{\mathfrak{f}}
\def\frr{\mathfrak{r}}
\def\bfra{\text{\scalebox{1.08}{$\text{\usefont{U}{BOONDOX-frak}{m}{n}a}$}} }
\def\bfrf{\text{$\text{\usefont{U}{BOONDOX-frak}{m}{n}f}$} }
\def\-{^{-1}}

\def\sasymp{\text{ \small $\asymp$ }}
\def\mod{\mathrm{mod}\,  }

\def\sumx{\sideset{}{^\star}\sum}

\def\vv{\varv}

\def\tw{\varww}
\def\nd{\mathrm{d}}
\def\Tr{\mathrm{Tr}}
\def\oo{\mathrm{o}}
\def\Fx{F^{\times}}
\def\trh{ \mathrm{trh}}

\def\lp {\left (}
\def\rp {\right )}

\def\sstimes {\scalebox{0.55}{$\times$}}

\renewcommand{\Im}{{\mathrm{Im} }}
\renewcommand{\Re}{{\mathrm{Re} }}

\def\shskip{\hskip 0.5 pt}
\def\snatural{\text{\scalebox{0.85}{$\natural$}}} 
\def\ssharp{\text{\scalebox{0.85}{$\sharp$}}}

\def\frO {\text{\raisebox{- 2 \depth}{\scalebox{1.1}{$ \text{\usefont{U}{BOONDOX-calo}{m}{n}O} \hskip 0.5pt $}}}}
\def\frOO {\text{\raisebox{- 2 \depth}{\scalebox{1.1}{$ \text{\usefont{U}{BOONDOX-calo}{m}{n}O}$}}}}
\def\frp{\mathfrak{p}}
\def\frq{\mathfrak{q}}
\def\frb{\mathfrak{b}}
\def\frc{\mathfrak{c}}
\def\frd{\mathfrak{d}}

\def\frD{\mathfrak{D}}

\def\SB{\text{\raisebox{- 2 \depth}{\scalebox{1.1}{$ \text{\usefont{U}{BOONDOX-calo}{m}{n}B} \hskip 0.5pt $}}}}

\def\SS{\text{\raisebox{- 2 \depth}{\scalebox{1.1}{$ \text{\usefont{U}{BOONDOX-calo}{m}{n}S}\hskip 0.5pt $}}}}

\def\ST{\text{\raisebox{- 2 \depth}{\scalebox{1.1}{$ \text{\usefont{U}{BOONDOX-calo}{m}{n}T}\hskip 1pt $}}}}

\def\SDB{\text{\raisebox{- 1 \depth}{\scalebox{1.03}{$ \text{\usefont{U}{dutchcal}{m}{n}B}  $}}}}

\def\SDH{\text{\raisebox{- 1 \depth}{\scalebox{1.03}{$ \text{\usefont{U}{dutchcal}{m}{n}H}  $}}}}

 \def\SDJ{\text{{\scalebox{1.03}{$ \text{\usefont{U}{dutchcal}{m}{n}J}  $}}}}

\makeatletter
\g@addto@macro\normalsize{\setlength\abovedisplayskip{3pt}}
\makeatother

\makeatletter
\g@addto@macro\normalsize{\setlength\belowdisplayskip{3pt}}
\makeatother

\newcommand{\delete}[1]{}

\theoremstyle{plain}

\newtheorem{thm}{Theorem}[section] \newtheorem{cor}[thm]{Corollary}
\newtheorem{lem}[thm]{Lemma}  \newtheorem{prop}[thm]{Proposition}

 \newtheorem{defn}[thm]{Definition}

\newtheorem {rem}[thm]{Remark} 
\newtheorem {notation}[thm]{Notation}

\newtheorem*{acknowledgement}{Acknowledgements}

\numberwithin{equation}{section}

\begin{document}
	
	\title{Subconvexity for $L$-Functions on $\mathrm{GL}_3$ over Number Fields}
 
	\author{Zhi Qi}
	\address{School of Mathematical Sciences, Zhejiang University, Hangzhou, 310027, China}
	\email{zhi.qi@zju.edu.cn}
	
	\subjclass[2010]{11M41}
	\keywords{$L$-functions, subconvexity}

	\begin{abstract}
		In this paper, over an arbitrary number field, we prove subconvexity bounds for self-dual $\mathrm{GL}_3$ $L$-functions in the $t$-aspect and for self-dual $\mathrm{GL}_3 \times \mathrm{GL}_2$ $L$-functions in the $\mathrm{GL}_2$ Archimedean aspect.
	\end{abstract} 

\thanks{The  author is supported by the National Natural Science Foundation of China (Grant No. 12071420).}
	
	\maketitle

{\small \tableofcontents}

\renewcommand{\baselinestretch}{1.08}
	
\section{Introduction}

There is a great interest in  upper bounds for the central values of $L$-functions.   The subconvexity problem is concerned with improving over their convexity bound resulting from   the Phragm\'en-Lindel\"of convexity principle.

The subconvexity problem for $\GL_1$ and $\GL_2$ over arbitrary number fields was completely solved in the seminal work of Michel and Venkatesh \cite{Michel-Venkatesh-GL2}.  More recent work on the subconvexity for $\GL_2$ over number fields may be found in \cite{Blomer-Harcos-TR,Maga-Sub,Maga-Shifted,WuHan-GL2,WuHan-2,Nelson-Eisenstein}. 

Xiaoqing Li \cite{XLi2011} made the first progress on the subconvexity problem for $\GL_3$  in the  $t$-aspect and $\GL_3 \times \GL_2$ in the $\GL_2$ spectral aspect. For a {\it self-dual} Hecke--Maass form $\pi$ of $\SL_3 (\BZ)$ and the family $\SB     $ given by an orthonormal basis of  even  Hecke--Maass cusp forms for $\SL_2 (\BZ)$, she established the averaged Lindel\"of hypothesis for the first moment:
\begin{align}\label{0eq: XLi}
{\sum_{f \in  \SB   }  } e^{- (t_f-T)^2/M^2} L \big(\tfrac 1 2 , \pi \otimes f \big) + \frac 1 {4 \pi} \int_{\BR  } e^{- (t -T)^2/M^2} \left| L \big(\tfrac 1 2 + i t, \pi \big) \right|^2 \nd \shskip t \Lt_{    \shskip \pi, \shskip \vepsilon} M T^{1 + \vepsilon},
\end{align}
for $T^{3/8+\vepsilon} \leqslant M \leqslant T^{1/2}$, where $\frac 1 4 + t_f^2$ is the Laplace eigenvalue for $f$. As a consequence of \eqref{0eq: XLi} and the non-negativity of $L \big(\tfrac 1 2 , \pi \otimes f \big)$ due to the self-dual assumption, she obtained the subconvexity bounds
\begin{align} \label{0eq: Li}
L \big( \tfrac 1 2 + i t, \pi \big) \Lt_{    \shskip \pi, \shskip \vepsilon}   (1 + |t|   )^{11 /16 + \vepsilon}, \quad 
L \big(\tfrac 1 2 , \pi \otimes f \big) \Lt_{    \shskip \pi, \shskip \vepsilon}  (1 + |t_f|)^{11 /8 + \vepsilon}.
\end{align}
In a similar framework, Blomer \cite{Blomer} proved the subconvexity for twisted  $\GL_3$ and $\GL_3 \times \GL_2$ $L$-functions in the  $q$-aspect:
\begin{align}\label{0eq: Blomer} 
L \big( \tfrac 1 2 + it, \pi \otimes  \vchi \big) \Lt_{  \shskip t, \shskip \pi, \shskip \vepsilon} q^{5 /8 + \vepsilon} ,  \quad L \big( \tfrac 1 2 , \pi \otimes f \otimes \vchi \big) \Lt_{  \shskip f, \shskip \pi, \shskip \vepsilon} q^{5 /4 + \vepsilon}, 
\end{align} 
where $\vchi$ is a quadratic Dirichlet character  of prime modulus $q$.  
The work of Blomer clearly stems from the  remarkable paper of Conrey and Iwaniec \cite{CI-Cubic} on the cubic moment of twisted  $  \GL_2$ $L$-functions. A recent advance on the path of Conrey and Iwaniec is the work of Young \cite{Young-Cubic}, in which he introduced new analytic techniques, which are quite different from those of Xiaoqing Li, to prove  the hybrid Weyl-type subconvexity bound 
\begin{align}\label{0eq: Young}
L \big( \tfrac 1 2 + i t,  \vchi \big) \Lt_{\vepsilon} ( q \big(1 + |t|   ) )^{1 /6 + \vepsilon} , \quad L \big( \tfrac 1 2  , f \otimes \vchi \big) \Lt_{\vepsilon} ( q \big(1 + |t_f|   ) )^{1 /3 + \vepsilon} .
\end{align}
Later, in the spirit of Blomer and Young,  Nunes \cite{Nunes-GL3} improved Xiaoqing Li's exponent $\frac {11} {16}$ in \eqref{0eq: Li} into Blomer's $\frac {5} {8}$ in \eqref{0eq: Blomer}.   

In this paper, we shall prove subconvexity results in the $t$-aspect for $\GL_3$  and the $\GL_2$ Archimedean aspect for 
$\mathrm{GL}_3 \times \mathrm{GL}_2$  over arbitrary number fields. The Vorono\"i summation formula for $\GL_3$ of Ichino and Templier \cite{Ichino-Templier} is used in its full generality, with the aid of the asymptotic formulae of Bessel functions for $\GL_3$ in \cite{Qi-Bessel}.

As explained in \S \ref{sec: XQLi}, there are technical issues to generalize the analysis of Xiaoqing Li to number fields other than $\BQ$, so, instead, our approach is  inspired by the works of Blomer, Young, and Nunes. As for the strategy, briefly speaking, Xiaoqing Li uses the Vorono\"i summation twice, while we use  the Vorono\"i summation once, followed by the large sieve. %We refer the reader to an earlier paper of the author \cite{Qi-Gauss} which extends the work of Blomer to the Gaussian number field $\BQ (i)$.

For other related works, see for example \cite{Ivic-t-aspect,Petrow-Cubic,PY-Cubic,PY-Weyl2,PY-Weyl3,Ye-GL3,Huang-GL3,Qi-Gauss}. 

For subconvexity results for $\GL_3$ over $\BQ$ without the self-dual assumption, we refer the reader to the papers of Munshi, Holowinsky, Nelson, Yongxiao Lin, et. al.,  \cite{Munshi-Circle-III,Munshi-Circle-IV,Munshi-Circle-IV2,Munshi-GL3xGL2,HoNe-ZeroFr,Lin-GL3,SZ-Depth,Aggarwal-GL3,Munshi-GL3GL2-q,LinSun-GL3,Lin-Integral}.

%Unlike the $\GL_2$ case, the representation theory of $L$-periods for $\GL_3$ is currently unknown. As such, we shall follow the classical lines, 

\subsection{Statement of Results}  

Let $F    $ be a fixed number field of degree $N$. 
Let $S_{\infty}$ denote the set of Archimedean places of $F    $. As usual, write  $\vv | {\infty}$ in place of  $\vv \in S_{\infty}$. For $\vv | {\infty}$, let $N_{\vv} $ be the degree of $F_{\vv} / \BR$.

Let  $\pi$ be a {fixed} {\it self-dual} spherical automorphic cuspidal representation of $ \PGL_3$ over $F$. 
Let $ \SB $ be an orthonormal basis consisting of Hecke--Maass cusp forms for the spherical cuspidal spectrum for $ \PGL_2$ over $F$. For $f \in \SB$, let $\varnu_f \in \BC^{|S_{\infty}|}$ be its Archimedean parameter such that either $ \varnu_{f, \, \vv} $ is real or $i \varnu_{f, \, \vv} \in \lp - \frac 1 2, \frac 1 2 \rp$ for every $\vv | {\infty}$; we may readily assume that $ \varnu_{f, \, \vv} \in [0, \infty) \cup i \big[ 0, \frac 1 2 \big)$. 

We are concerned with the $\GL_3 \times \GL_2$ Rankin--Selberg $L$-functions $ L (s, \pi \otimes f) $ for $f$ in the family $\SB$ and the $\GL_3$ $L$-function $L (s, \pi)$. The following is our main theorem. 

\begin{thm}\label{thm: main}
Let notation be as above. Let $\vepsilon   > 0$. Let $T, M \in \BR_+^{|S_{\infty}|} $ be such that $1 \Lt T_{\vv}^{\vepsilon} \leqslant M_{\vv} \leqslant T_{\vv}^{1-\vepsilon} $ for every $\vv |\infty $.  Set $\RN (T) = \prod_{ \vv | {\infty} } T_{\vv}^{N_{\vv}}$ and $\RN^{\snatural} (M) = \prod_{ \vv | {\infty} } M_{\vv} $. Assume that $T_{\vv} \geqslant \RN (T)^{\vepsilon}$ for every $\vv | {\infty}$. 
Define $\SB_{T, \shskip M} $ to be the collection of $f \in \SB$ satisfying $ |\varnu_{f, \, \vv} - T_{\vv}| \leqslant M_{\vv} $ for all $\vv | {\infty}$, and define $ \BR_{\shskip T, \shskip M} $ to be the intersection of the intervals $ [T_{\vv}- M_{\vv}, T_{\vv} + M_{\vv} ] $ for all  $\vv | {\infty}$.  Then
\begin{align}\label{0eq: main}
 {\sum_{f \in \SB_{T, \shskip M}  }  } L \big(\tfrac 1 2 , \pi \otimes f \big) + \int_{\BR_{\shskip T, \shskip M} } \left| L \big(\tfrac 1 2 + i t, \pi \big) \right|^2 \nd \shskip t \Lt_{\shskip \vepsilon, \shskip  \pi, \shskip F} \RN^{\snatural} (M) \RN (T)^{5/4 + \vepsilon},
\end{align}
with the implied constant depending only on $\vepsilon$, $\pi$, and $F$. 
\end{thm}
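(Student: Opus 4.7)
Following the framework of Blomer, Young, and Nunes over $\BQ$, the plan is: apply an approximate functional equation; invoke the $\PGL_2$ Kuznetsov formula over $F$ with trivial second variable to reduce the spectral mean to a single sum of Kloosterman terms; apply the Ichino--Templier $\GL_3$ \Voronoi summation once; and bound the resulting dual sum by Cauchy--Schwarz together with a large sieve over $F$. Xiaoqing Li's original strategy applies $\GL_3$ \Voronoi \emph{twice}, and it is precisely the second such application whose archimedean analysis over $F$ is problematic.

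\textbf{Step 1 (AFE and Kuznetsov).} Approximate functional equations express $L(1/2,\pi\otimes f)$ and $|L(1/2+it,\pi)|^2 = L(1/2,\pi\otimes E_t)$ (using self-duality of $\pi$) as smooth Dirichlet series of length $\RN\frn \Lt \RN(T)^{3+\sepsilon}$, linear in $\lambda_f(\frn)$ (resp.\ in the divisor-like $\tau_{it}(\frn)$) and weighted by $\lambda_\pi(\frn)/\sqrt{\RN\frn}$. Insert a smooth nonnegative majorant $h(\varnu) \geqslant 1$ on $\SB_{T,M} \cup \BR_{T,M}$ with archimedean width $M_\vv$ at height $T_\vv$. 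After exchanging summations and applying the $\PGL_2$ Kuznetsov trace formula over $F$ at the pair $(\frn,\frOO)$, the combined spectral mean splits as a diagonal $\lambda_\pi(\frOO)\widetilde h(0) \Lt \RN^\snatural(M)\RN(T)$ plus an off-diagonal
\[
\SH \,=\, \sum_{\frn}\frac{\lambda_\pi(\frn)}{\sqrt{\RN\frn}}\,V(\RN\frn)\sum_{\frc}\frac{\SS(\frn,\frOO;\frc)}{\RN\frc}\,\boldJ\!\left(\frac{\sqrt{\RN\frn}}{\RN\frc}\right),
\]
with $\SS(\cdot,\cdot;\frc)$ the $\GL_2$ Kloosterman sum over $F$ and $\boldJ$ the Bessel transform of $h$. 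Stationary phase at each $\vv\mid\infty$ localises $\boldJ(y)$ to $y_\vv\asymp T_\vv$ with amplitude $\asymp M_\vv/T_\vv^{1/2}$, forcing $\RN\frc \asymp \sqrt{\RN\frn}/\RN(T) \Lt \RN(T)^{1/2}$.

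\textbf{Step 2 ($\GL_3$ \Voronoi and large sieve).} Open $\SS(\frn,\frOO;\frc) = \sumx_{a\bmod\frc} e_\frc(a\frn+\overline a)$ and apply the Ichino--Templier $\GL_3$ \Voronoi formula to the inner sum $\sum_\frn \lambda_\pi(\frn)e_\frc(a\frn)W(\frn)$ (with $W$ absorbing $V$ and $\boldJ$). This dualises it to
\[
\RN\frc\sum_{\frd\mid\frc}\sum_{\frn'}\frac{\lambda_\pi(\frn',\frd)}{\RN(\frn'\frd^2)^{1/2}}\,\SS(\overline a,\frn';\frc/\frd,\frd)\,\widetilde W\!\left(\frac{\RN\frn'\RN\frd^2}{\RN\frc^3}\right),
\]
whose archimedean transform $\widetilde W$ is controlled by the $\GL_3$ Bessel kernel studied in \cite{Qi-Bessel}. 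A joint multidimensional stationary-phase analysis combining the $\GL_2$ phase (from $\boldJ$) with the $\GL_3$ phase localises the dual support to $\RN\frn'\RN\frd^2 \asymp \RN\frc^3/\RN(T)^3$, which exhibits the essential conductor drop. Re-summing over $a\bmod\frc$ folds the residual additive character $e_\frc(\overline a)$ into a $\GL_2$-type Kloosterman sum in the new variables, and Cauchy--Schwarz in $\frc$ followed by a $\GL_2$ large sieve over $F$ (of Blomer--Harcos type) applied to $\lambda_\pi(\frn')$, combined with Rankin--Selberg for $\sum|\lambda_\pi(\frn)|^2$, yields $\SH \Lt \RN^\snatural(M)\RN(T)^{5/4+\sepsilon}$.

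\textbf{Main obstacle.} The principal difficulty is the joint archimedean harmonic analysis across all $|S_\infty|$ places. One must execute stationary-phase analysis simultaneously for the $\GL_2$ Bessel kernel (through $\boldJ$) and for the $\GL_3$ Bessel kernel of \cite{Qi-Bessel} at each $\vv\mid\infty$, and verify that the $|S_\infty|$-dimensional oscillations combine uniformly so that the weights $M_\vv$ assemble precisely into $\RN^\snatural(M)$, with no spurious $N$-dependent powers of $T_\vv$ creeping in. A secondary complication is the form of the Ichino--Templier \Voronoi formula over general $F$: the ideal $\frc$ must be decomposed according to divisibility data (including interaction with the different $\frd$ of $F/\BQ$), and the resulting hyper-Kloosterman moduli must be organised so that the large sieve over $F$ applies uniformly in $\frc$. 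Once these two analytic--arithmetic pieces are calibrated, the Cauchy--Schwarz plus large sieve endgame is routine.
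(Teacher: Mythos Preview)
Your high-level strategy---AFE, Kuznetsov, open the Kloosterman sum, one application of $\GL_3$ \Voronoi, then Cauchy--Schwarz and a large sieve---matches the paper's. But two of the steps you describe do not go through as stated, and a third is more delicate than you acknowledge.

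\textbf{Variable separation via Mellin.} After \Voronoi, the Hankel-transformed weight takes the form $\widetilde W(\gamma/c_{\frq})$ (more precisely, a product over $\vv\mid\infty$ of $\Phi_\vv(\gamma_\vv/c_{\frq,\vv})$); the variables $\gamma$ and $\frq$ are entangled through this ratio, and no large sieve applies directly. The paper's remedy is a Mellin decomposition: for each archimedean place and each regime $\sigmaup\in\{\oldstylenums{0},-,+,\flat\}$, one writes $\Phi^{\sigmaup}(z)=\int_{\widehat{\bfra}(U^\sigmaup)}\lambdaup^\sigmaup(\varnu,m)\vchi_{i\varnu,m}(z)\,\nd\mu(\varnu,m)$ with explicit control on the support $U^\sigmaup$ and amplitude $1/A^\sigmaup$. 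This converts $\Phi(\gamma/c_\frq)$ into $\vchi(\gamma)\overline{\vchi(c_\frq)}$, separating the variables. Establishing the requisite bounds on $U^\sigmaup$ and $A^\sigmaup$ is the bulk of the archimedean work (including a second-derivative test in two dimensions and, at complex places, Olver's uniform asymptotics for $J_m(mx)$). Your plan skips this step entirely; without it the endgame stalls.

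\textbf{The large sieve.} What is actually applied is not a $\GL_2$ spectral large sieve of Blomer--Harcos type, but Gallagher's \emph{hybrid} large sieve, transported to $F$: one sums over Dirichlet characters modulo $\frn$ \emph{and} integrates over the archimedean character group $\widehat{\bfra}(U)$, obtaining a bound of the shape $(\RN(U)\RN(\frn)+\RN(\gamma\fra^{-1}))\sum|a_\gamma|^2$. The archimedean integration is precisely what absorbs the Mellin integral above; the Dirichlet characters arise because the post-\Voronoi exponential sum $T_\frd(\gamma;\frq,\frq_1)$ is not a Kloosterman sum as you suggest, but (after a local computation) a combination of additive characters $\psi_{\frr\frf}(\beta_\frd c_\frq^2\gamma)$, which one expands via Gauss sums.

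\textbf{Complex places.} Your remark that stationary phase localises $\boldJ$ to $y_\vv\asymp T_\vv$ is only half the story at complex $\vv$: there is an additional regime $\sigmaup=\flat$ (when $\cos 2\omega$ is near $0$) with no real analogue, governed by a new trigonometric-hyperbolic function $\trh^\natural(r,\omega)$, and it requires its own partition and its own Mellin analysis with support shifted to $(\varnu,m)$ near $(0,\pm T)$. This is where the two-dimensional phase genuinely differs from a product of one-dimensional ones.
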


\begin{rem}
	The assumption $T_{\vv} \geqslant \RN (T)^{\vepsilon}$ {\rm(}$\vv | {\infty}${\rm)} is used to make the contribution from exceptional forms negligible and to address some issues with the infinitude of units. 
\end{rem}

For $f \in \SB$, define its Archimedean conductor $\RC_{\infty} (f) = \RC (\varnu_f)^2$ by
\begin{align*}
\RC_{\infty} (f) = \RN (1 + |\varnu_f|)^2 =  \prod_{ \vv | {\infty} }  (1 + |\varnu_{f, \, \vv}|  )^{2 N_{ \vv}} .
\end{align*}

Since $\pi   $ is self-dual, by the non-negativity theorem of Lapid \cite{Lapid}, we have
\begin{align}\label{0eq: positivity}
L \big(\tfrac 1 2 , \pi \otimes f \big) \geqslant 0. 
\end{align}
As a consequence of \eqref{0eq: positivity}, we derive from \eqref{0eq: main} the following subconvexity bounds by taking $M = T^{\vepsilon}$.  

\begin{cor}
	 Let notation be as above. We have 
	 \begin{align}
	 L \big( \tfrac 1 2 + i t, \pi \big) \Lt_{\shskip \vepsilon, \shskip  \pi, \shskip F} \big(1 + |t|^{N} \big)^{5 /8 + \vepsilon}, 
	 \end{align}
	 and
	 \begin{align}\label{0eq: GL3 x GL2}
	 L \big(\tfrac 1 2 , \pi \otimes f \big) \Lt_{\shskip \vepsilon, \shskip  \pi, \shskip F} \RC_{\infty}(f)^{5 /8 + \vepsilon}, 
	 \end{align}
	 if $ |\varnu_{f, \, \vv}| \geqslant \RC_{\infty}(f)^{\vepsilon} $ for all $\vv| \infty$.
\end{cor}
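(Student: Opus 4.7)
The plan is to deduce both bounds directly from Theorem~\ref{thm: main} by exploiting the non-negativity of its left-hand side. Indeed, by \eqref{0eq: positivity} one has $L\big(\tfrac 12, \pi \otimes f\big) \geqslant 0$ for every $f \in \SB$, and $|L(\tfrac 12+it, \pi)|^{2} \geqslant 0$ is trivial; hence dropping all but one term on the left of \eqref{0eq: main} converts the averaged estimate into a pointwise bound, and it remains only to choose the parameters $T, M$ appropriately in each case.

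For \eqref{0eq: GL3 x GL2}, given $f \in \SB$ with $|\varnu_{f, \, \vv}| \geqslant \RC_{\infty}(f)^{\sepsilon}$ for every $\vv | \infty$, I would set $T_{\vv} = |\varnu_{f, \, \vv}|$ and $M_{\vv} = T_{\vv}^{\sepsilon}$. The condition $|\varnu_{f, \, \vv}| \geqslant \RC_{\infty}(f)^{\sepsilon} \geqslant 1$ gives $\RC_{\infty}(f) \asymp \RN(T)^{2}$, so the hypothesis $T_{\vv} \geqslant \RN(T)^{\sepsilon}$ of Theorem~\ref{thm: main} is satisfied, and clearly $f \in \SB_{T, \shskip M}$. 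Keeping only this term on the left of \eqref{0eq: main} then yields
\begin{align*}
L\big(\tfrac 12, \pi \otimes f\big) \Lt \RN^{\snatural}(M) \, \RN(T)^{5/4+\sepsilon} \Lt \RN(T)^{5/4+2\sepsilon} \asymp \RC_{\infty}(f)^{5/8+\sepsilon},
\end{align*}
which is the claim after relabeling $\vepsilon$.

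For the $t$-aspect bound I would assume $|t| \geqslant 1$ (the complementary range being trivial from the regularity of $L(s, \pi)$ on compact sets) and take $T_{\vv} = |t|$, $M_{\vv} = |t|^{\sepsilon}$ for all $\vv | \infty$. Then $\BR_{\shskip T, \shskip M} = [|t|-|t|^{\sepsilon},\, |t|+|t|^{\sepsilon}] \ni t$, and dropping the cusp-form sum from \eqref{0eq: main} yields
\begin{align*}
\int_{|t|-|t|^{\sepsilon}}^{|t|+|t|^{\sepsilon}} \left| L\big(\tfrac 12 + i\tau, \pi\big) \right|^{2} \nd \shskip \tau \Lt |t|^{5N/4 + \sepsilon}.
\end{align*}
To pass to the pointwise bound at $\tau = t$, I would apply a standard mean-value argument: for an $L$-function of bounded degree, $|L(\tfrac 12 + i\tau, \pi)|^{2}$ varies slowly enough that its value at $\tau = t$ is dominated, up to a $(\log|t|)^{O(1)}$ factor, by its integral over any interval of length $\gg 1$ centered at $t$---one may invoke, for instance, the approximate functional equation together with Cauchy--Schwarz, or the subharmonicity of $|L|^{2}$ combined with Phragm\'en--Lindel\"of in a thin strip around the critical line. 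Since $|t|^{\sepsilon} \gg 1$, this extraction gives $|L(\tfrac 12 + it, \pi)|^{2} \Lt (1+|t|^{N})^{5/4 + \sepsilon}$, whence the claimed exponent $5/8 + \sepsilon$ follows upon taking square roots and relabeling.

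The entire analytic content is of course packaged in Theorem~\ref{thm: main}; the corollary itself requires only positivity together with the parameter choices above. The only mildly technical step is the pointwise-from-average extraction in the $t$-aspect, which is routine and uses nothing specific to the number-field setting.
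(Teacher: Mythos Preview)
Your argument is correct and matches the paper's one-line derivation: both set $M_{\vv} = T_{\vv}^{\sepsilon}$ and drop all but the desired term via the non-negativity \eqref{0eq: positivity}. The pointwise extraction from the short integral in the $t$-aspect, which you flag explicitly, is left implicit in the paper and is indeed the standard routine step you describe.
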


\subsection{Subconvexity for $\GL_2$} With the Archimedean analysis of this paper,  following Young, one should be able to establish the Weyl-type bound (or even the hybrid Weyl-type  bound):
\begin{align}\label{0eq: GL2}
L \big(\tfrac 1 2 ,  f \big) \Lt_{\shskip \vepsilon,  \shskip F} \RC_{\infty}(f)^{1 /6 + \vepsilon}.
\end{align}
For   $F = \BQ$, this is a result of Ivi\'c \cite{Ivic-t-aspect}. For arbitrary $F$, Han Wu \cite{WuHan-2} has a uniform subconvexity bound with weaker exponent. % including the aspect of non-Archimedean conductor. 

Let  $\phi$ be a fixed spherical Hecke--Maass cusp form for $\PGL_2$ over $F$. By combining \eqref{0eq: GL3 x GL2} and \eqref{0eq: GL2},  with $\pi = \mathrm{Sym}^2 \phi$, we obtain 
\begin{align}\label{0eq: triple}
L \big(\tfrac 1 2 , \phi \otimes \phi \otimes f \big) \Lt_{\shskip \vepsilon, \shskip  \phi, \shskip F} \RC_{\infty}(f)^{19 /24 + \vepsilon}.  
\end{align} 
In comparison, when $F = \BQ$, Bernstein and Reznikov \cite{Bernstein-Reznikov} proved 
\begin{align}\label{0eq: BZ}
L \big(\tfrac 1 2 , \phi \otimes \phi' \otimes f \big) \Lt_{\shskip \vepsilon, \shskip  \phi , \shskip \phi' } |t_f|^{5 /3 +\vepsilon}. 
\end{align}

\subsection{Remarks on Hybrid Subconvexity}   
Let $\vchi$ be a quadratic Hecke character for $   F $ of prime conductor $\frq$.  In light of the works of Young \cite{Young-Cubic} and Blomer \cite{Blomer} in the case $F = \BQ$,   the following hybrid subconvexity bounds
\begin{align}\label{0eq: hybrid}
\begin{aligned}
& L \big( \tfrac 1 2 + i t, \pi \otimes \vchi \big) \Lt_{\shskip \vepsilon, \shskip  \pi, \shskip F} \big( \RN (\frq) \big(1 + |t|^{N} \big)\big)^{5 /8 + \vepsilon} , \\
& L \big( \tfrac 1 2, \pi \otimes f \otimes \vchi \big) \Lt_{\shskip \vepsilon, \shskip  \pi, \shskip F}  \big( \RN (\frq)^2  (1 + C_{\infty} (f)  ) \big)^{5 /8 + \vepsilon} ,
\end{aligned}
\end{align}
should hold, at least when $ |t|, C_{\infty} (f) \geqslant \RN (\frq)^{\vepsilon}  $. This is believed by Nunes \cite{Nunes-GL3} for $F = \BQ$, and has been confirmed privately by the author for $F = \BQ$ or $\BQ (i)$.  
It seems that \eqref{0eq: hybrid} can be verified whenever the class number $h_F = 1$, though the group of units might cause some troubles. In general, one has to compute certain non-Archimedean local integrals which should be transformed eventually to the character sums studied by Conrey and Iwaniec. The transformation could be quite intricate in view of Blomer's computations in \cite[\S 6]{Blomer}. Recently, Nelson \cite{Nelson-Eisenstein}  generalized Conrey--Iwaniec to general number fields, and he might provide us some hint for this. 

\subsection{Features of Analysis over Complex Numbers} 

The main difficulty in the analysis over $\BC$ is the lack of suitable stationary phase results in two dimensions. Considerably more efforts are needed particularly for the Hankel transform and the Mellin transform. We would rather not to discuss the technical details---It is more worthwhile and interesting to present here the features of certain trigonometric-hyperbolic functions  arising in the phases. 

To start with, the function $\trh (r, \omega) = \rho (r, \omega) e^{i \theta (r, \shskip \omega)}$ that occurs in the $\GL_2$ Bessel integral on $\BC$ is given by
\begin{align*}%\label{3eq: rho (r, w)}
\rho (r, \omega) =      \sqrt { \frac {\cosh 2 r + \cos 2 \omega} 2 }    , \qquad  \tan \theta (r, \omega) = \tanh r \tan \omega .
\end{align*}  
Since $\trh (r, 0) = \cosh r $ and $    \trh ( r, \pi/2) = i  \sinh r$, one expects that the $r$-integral behaves as if it is the Bessel integral on $\BR_+$ or $\BR_-$ when $\sin \omega$ or $\cos \omega$ is small, respectively. 

After the $\GL_3$ Hankel transform, we obtain a new function $ \trh^{\snatural} (r, \omega) = \rho^{\snatural} (r, \omega) e^{i \shskip \theta^{\snatural}  (r, \shskip \omega) } $ defined by
\begin{align*}%\label{6eq: rho and theta, natural}
\rho^{\snatural } (r, \omega)   = \frac {\cosh 2 r -   \cos 2 \omega} {\cosh 2 r +   \cos 2 \omega}, \qquad
\tan  ( \theta^{\shskip \snatural}  (r, \omega)/ 2  ) =  \frac {\sin 2 \omega} {\sinh 2 r}   .
\end{align*}
It is certainly a pleasure to see the square-root sign gone. More important is the symmetry in $\trh^{\snatural} (r, \omega)$ reflected by the identities:
\begin{equation*} 
\frac {\partial^2 \log \rho^{\shskip \snatural}   } {\partial r^2}     = - \frac {\partial^2 \log \rho^{\shskip \snatural}   } {\partial \omega^2}  = \frac {\partial^2 \theta^{\shskip \snatural} } {\partial r \partial \omega}  , \qquad  
\frac {\partial^2 \theta^{\shskip \snatural}   } {\partial r^2}   = - \frac {\partial^2 \theta^{\shskip \snatural}  } {\partial \omega^2} = - \frac {\partial^2 \log \rho^{\shskip \snatural}   } {\partial r \partial \omega}  ,
\end{equation*}
which play a critical role in our analysis after the Mellin technique. 

Finally, we remark that, at the stage of the Mellin technique,   new phenomena emerge  in our analysis for $\cos 2 \omega $ in the vicinity of $0$ (for $|\sin \omega|$ and $|\cos \omega|$ nearly equal).

\subsection*{Notation} By $X \Lt Y$ or $X = O (Y)$ we mean that $|X| \leqslant c Y$ for some constant $c > 0$, and by $X \asymp Y$  we mean that  $X \Lt Y$ and $Y \Lt X$. We write $X \Lt_{P, \shskip Q, \, \dots} Y$ or $X = O_{P, \shskip Q, \, \dots} (Y)$ if the implied constant $c$ depends on  $P$, $Q, \dots$. Throughout this article $\RN (T) \Gt 1$ and each $T_{\vv} \geqslant \RN (T)^{\vepsilon} \Gt 1$ will be large, and we say  that $X$ is negligibly small if $X = O_A (\RN(T)^{-A})$ (or $O_A (T_{\vv}^{-A})$) for arbitrarily large  but fixed $A > 0$.

We adopt the usual $\vepsilon$-convention of analytic number theory; the value of $\vepsilon $ may differ from one occurrence to another.

\begin{acknowledgement}
	 I would like to  thank the referee for careful readings and  extensive	comments.
\end{acknowledgement}
	
 { \large \part{Number Theoretic Preliminaries}}
	
\section{Notation over Number Fields}\label{sec: notation}

%We give a list of our most frequently used notation from algebraic number theory. %We follow Lang \cite{Lang-ANT} closely.

\subsection*{Basic Notions}

Let $F    $ be a number field of degree $N$. Let $\frO $ be its ring of integers and $\frOO^{\times}$  be the group of units.  Let $\mathfrak{D}$ be the different ideal of $F    $. 
Let $\mathrm{N}$ and $\Tr$ denote the norm and the trace for $F    $, respectively. Let $d_F$ be the discriminant of $F$.  
Denote by $ \BA  $ the  ad\`ele ring of $F    $ and by $\BA^{\times}$ the id\`ele group of $F$.

For any place $\vv$ of $F    $, we denote by $F    _{\vv}$ the corresponding local field. 
When $\vv$ is non-Archimedean,  let $\frp_{\vv}$ be the corresponding prime ideal of   $\frO $ and let $\mathrm{ord}_{\vv}$   or $\vv$ itself 
denote the additive valuation; occasionally, $\frp_{  \vv}$   also stands for the prime ideal in $\frO_{\varv}$. Denote by $\mathfrak{D}_{\varv}$ the local different ideal.
Let  $N_{\vv}$ be the local degree of $F    _{\vv}$; in particular, $N_{\vv} = 1$ if $F    _{\vv} = \BR$ and  $N_{\vv} = 2$ if $F    _{\vv} = \BC$.  Let $| \hskip 3.5 pt |_{\vv}$  denote the normalized module of $F_{\vv}$. We have $| \hskip 3.5 pt |_{\vv} = | \hskip 3.5 pt |$ if $F    _{\vv} = \BR$ and  $| \hskip 3.5 pt |_{\vv} = | \hskip 3.5 pt |^2$ if $F    _{\vv} = \BC$, where $| \hskip 3.5 pt |$ is the usual absolute value. Let $r_1$ and $r_2$ be the number of real and  complex places of $F$, respectively. 

Let $S_{\infty}$ or $S_f$ denote the set of Archimedean or non-Archimedean  places of $F    $, respectively. Write $\varv | \infty$ and $\varv    \nnmid   \infty$ as the abbreviation for $\varv \in S_{\infty}$ and $\varv \in S_{f}$, respectively.
For a finite set of places $S$, % containing $S_{\infty}$, 
denote by $\BA^S$, respectively $F    _{S}$, the sub-ring of ad\`eles with trivial component above $S$, respectively above the complement of $S$. The modules on $\BA^{\times S}$ and $ F_S $ will be denoted by $| \hskip 3.5 pt|^S$ and $| \hskip 3.5 pt |_S$ respectively. For brevity, write $\BA_f = \BA^{S_{\infty}}$ and $F    _{\infty} = F    _{S_{\infty}}$. %Let $\widehat{\frO} = \prod_{ \vv |\infty} \frO_{\vv} $ be the maximal compact subring of $\BA_{f}$. 

\subsection*{Additive Characters and Haar Measures}
Fix the (non-trivial) standard additive character $\psi = \otimes_{\vv} \psi_{\vv}$ on $\BA/F    $ as in \cite[\S XIV.1]{Lang-ANT} such that $\psi_{\vv} (x) = e (- x)$ if $F    _\vv = \BR$,   $\psi_{\vv} (z) = e (- (z + \widebar z))$ if $F    _\vv = \BC$, %where $e (x) = e^{2 \pi i x}$, 
and that $\psi_{\vv}$ has conductor $\mathfrak{D}_{\vv}\-$ for any non-Archimedean $F    _\vv$. For a finite set of places $S$, denote $\psi^S  = \prod_{\varv \shskip  \notin S} \psi_{  \varv}$ and $\psi_S  = \prod_{\varv \shskip  \in S} \psi_{  \varv}  $.  We split $\psi = \psi_{\infty} \psi_f  $ so that $\psi_{\infty} (x) = e (- \Tr_{F_{\infty}} (x))$ ($x \in F_{\infty}$). 

   %Accordingly, 
%Note that %$|\gamma|_f = |\RN \gamma|$ and 
%$\psi_{\infty} (\gamma) = e (- \Tr \shskip  \gamma)$ and hence $\psi_{f} (\gamma) = e (  \Tr \shskip  \gamma)$ for $\gamma \in F    $. 

We choose the Haar measure  $\nd x$ of $F    _{\vv}$ self-dual with respect to $\psi_{\vv}$ as in  \cite[\S XIV.1]{Lang-ANT}; the Haar measure is the ordinary Lebesgue measure on the real line if $F    _{\vv} = \BR$, twice  the ordinary Lebesgue measure on the complex plane if $F    _{\vv} = \BC$, and that measure for which  $\frO_{\vv}$ has measure $\RN (\frD_{\vv})^{-1/2}$ % = |\delta_{\vv}|^{1/2} $ 
 if $F    _{\vv}$ is non-Archimedean.  We slightly modify the Haar measure  $\nd^{\times} \hskip -1pt x$ of $F_{\vv}^{\times}$  defined in \cite[\S XIV.2]{Lang-ANT}: $\nd^{\times} \hskip -1pt x = \nd x / |x|_{\vv}$ if $\vv |\infty$, and $\nd^{\times} \hskip -1pt x = \RN (\frD_{\vv})^{1/2} \RN(\frp_{\vv}) / (\RN (\frp_{\vv}) - 1) \cdot \nd x / |x|_{\vv}$ if $\vv \nnmid \infty$, so that   $\frO_{\vv}^{\times}$ has mass $1$ (it is $\RN (\frD_{\vv})^{-1/2}$ in \cite{Lang-ANT}).

We remark that the module, measure and additive character on $F_{\infty} = \prod_{\vv|\infty} F_{\vv}$ are chosen differently in {\rm \cite{BM-Kuz-Spherical}}. For example, they use $| \hskip 3.5 pt |$ for both real and complex $\vv$, their additive measure differs from ours by a factor $1/\pi$ or $1/2\pi$ if $\vv$ is real or complex,  respectively, and their   additive character is $e (\Tr_{F_{\infty}} (x))$ instead of  $e (- \Tr_{F_{\infty}} (x))$.

%Let the Haar measure on $\BA$ and $\BA^{\times}$ be defined as in \cite[\S \S XIV.5, XIV.7]{Lang-ANT}. Then $F \backslash \BA$ has volume $1$. 

%We defined the Haar measure of $F    _{\vv}^{\times}$ as in \cite[\S XIV.2]{Lang-ANT}. 

\subsection*{Ideals}

In general, we use Gothic letters $\fra ,  \frb , \frc , \dots$ to denote {\it nonzero} fractional ideals of $F$, while we reserve $\frm$, $\frn$, $\frd$, $\frf$, $\frq$, and $\frr$ for nonzero integral ideals of $F$. Let $\frp$ always stand for a prime ideal. Let $\RN (\fra)$ denote the norm of $\fra$.
If $\valpha \in \Fx$, we denote by $(\valpha)$ the corresponding
principal ideal. If $\fra$ is a fractional ideal, we shall often just write $\valpha \fra$ for the product $(\valpha) \fra$. 

Given $ \theta \in \BR_+^{|S_{\infty}|}$ with $ \sum_{\vv|\infty} N_{\vv} \theta_{\vv} = 1$, for each principal fractional ideal $\fra$, we choose once and for all a generator, which
we denote $[\fra]$, so that 
\begin{align}\label{2eq: choice of [a]}
\big|\shskip [\fra] \shskip \big|_{\vv} \asymp \RN(\fra)^{N_{\vv} \theta_{\vv}  }, \qquad  \vv |\infty;
\end{align} 
such a choice is guaranteed by Dirichlet's units theorem (see \cite[\S V.1]{Lang-ANT}). Later, we shall set $\theta_{\vv} =   \log T_{\vv} / \log \RN (T)$. 

For each nonzero fractional ideal $\fra$, we fix a corresponding $\pi_{\fra} \in \BA_f^{\times}$ so that    $\mathrm{ord}_{\vv}(\fra)  = \mathrm{ord}_{\vv} (\pi_{\fra, \, \vv})$ for all $\vv \nnmid \infty$. Set $\delta = \pi_{\frD}$. 
For brevity, write $\fra_{\vv} = \fra \frO_{\vv}$. % For a finite set $S \subset S_f$, let $\fra_{S}$ denote the $S$-part of $\fra$, namely, $\fra_{S} = \prod_{ \vv \shskip \in \shskip S} \frp_{  \vv}^{\mathrm{ord}_{\vv}(\fra)}$.  %Define $\overline{\fra}^{\times} = \pi_{\fra} {\widehat {\frO}}{}^{\times}$; the measure on $\overline{\fra}^{\times}$ is inherited from $\BA_{f}^{\times}$ so that it has total mass $1$. % ${\RN(\frD)}^{-1/2} = {|d_F|}^{-1/2}$.  
	
Let $ {C}_F$ be the class group and $h_F$ be the class number of $F$. We shall use the notation $\fra \sim \frb$ to mean that $\fra$ and $\frb$ are in the same ideal class. We choose a set $  \widetilde{C}_F$ of integral ideals that represent the class group.  %We assume that $\frD^{-1}$ is a representative in $  \widetilde{C}_F$.

\subsection*{Characters and Mellin Transforms}

For  $\vv$ Archimedean, define the (unitary) character $\vchi_{i \varnu, \shskip m} (x) = |x|_{\vv}^{i \varnu} (x/|x|)^{m}$ ($x \in F_{\vv}^{\times}$) for $\varnu \in \BR$ and $m \in  \{0, 1\}$($=\BZ / 2\BZ$) if $F$ is real, and $m \in \BZ $ if $F$ is complex. Let $ \widehat{\bfra}_{\vv} $ denote the unitary dual of $F_{\vv}^{\times}$. We shall identify $ \widehat{\bfra}_{\vv}  $ with $\BR \times \{0, 1 \}$ or $\BR \times \BZ$ according as $\vv$ is real or complex, respectively. Let $\nd \mu (\varnu, m) $  denote the usual Lebesgue measure on $\BR \times \{0, 1\}$ or $\BR \times \BZ$, respectively. For notational simplicity, we shall write the summation on $ \{0, 1\} $ or $\BZ$ as integration. 

For $f (x) \in L^1 (F_{\vv}^{\times}) \cap L^2 (F_{\vv}^{\times})$, define its (local) Mellin transform $\breve{f} (\varnu, m)$ by 
\begin{align}\label{2eq: Mellin}
\breve{f} (\varnu, m) = \int_{ F_{\vv}^{\times} } f(x) \vchi_{i \varnu, \shskip m} (x) \nd^{\times} x .
\end{align} 
The Mellin inversion formula reads
\begin{align}\label{2eq: Mellin inverse}
f (x) = \frac 1 {2\pi c_{\vv}} \viint_{\widehat{\bfra}_{\vv} }  \breve{f} (\varnu, m) \overline{\vchi_{  i \varnu, \shskip  m} (x)} \nd \mu (\varnu, m),
\end{align}
where $c_{\vv} = 2$ if $\vv$ is real and $c_{\vv} = 2 \pi$ if $\vv$ is complex. Moreover, by Plancherel's theorem,
\begin{align}\label{2eq: Plancherel} 
\int_{ F_{\vv}^{\times} } | {f} (x)|^2 \nd^{\times} x = \frac {1} {2\pi c_{\vv}} \viint_{ \widehat{\bfra}_{\vv} } \big|\breve{f} (\varnu, m)\big|^2 \nd \mu (\varnu, m) . 
\end{align}

Let $ \widehat{\bfra} = \prod_{\vv |\infty} \widehat{\bfra}_{\vv}$ be  the unitary dual of $F^{\times}_{\infty}$. For $ (\varnu, m) \in \widehat{\bfra} $, define $ \vchi_{i \varnu, \shskip m} $ to be the product of $ \vchi_{i \varnu_{\vv}, \shskip m_{\vv}} $. Let $\nd \mu (\varnu, m) $ be the Lebesgue measure on $\widehat{\bfra}$. In an obvious way, the formulae in \eqref{2eq: Mellin}--\eqref{2eq: Plancherel} extend onto $F^{\times}_{\infty}$ and $\widehat{\bfra}$.

%For each $\vv | {\infty}$, let $\sigma_{\vv}$ denote the embedding $F \hookrightarrow F_{\vv}$, %Let $r_1$ and $r_2$ be the number of real and complex embeddings, respectively. 
%then all the  $\sigma_{\vv}$ yield an embedding $\sigma : F \hookrightarrow F_{ {\infty}} = \BR^N$. 

%\begin{rem}
	 %, respectively, and their multiplicative measure also differs from ours by a factor $1/2$ for complex $\vv$.
%\end{rem}

\section{\texorpdfstring{Automorphic Forms on $\GL_2$}{Automorphic Forms on GL(2)}}

%We shall only consider . 
In this section, we briefly recollect some   notation and preliminaries, mostly for the statement of   the Kuznetsov formula of Bruggeman and Miatello for $\PGL_2$. For simplicity, only spherical automorphic forms on $\PGL_2 (F) \backslash \PGL_2 (\BA)$ are considered. The reader is referred to \cite{Venkatesh-BeyondEndoscopy} for further discussions.  

%We denote by $Z$ the center of $\GL_2$. Let $\PGL_2 = \GL_2 / Z$ as usual. 
Define   
\begin{align*}
N = \left\{  \begin{pmatrix}
1 & r \\ & 1
\end{pmatrix}\right\}, \quad A = \left\{  \begin{pmatrix}
x &   \\ & 1
\end{pmatrix}\right\}, \quad P = \left\{  \begin{pmatrix}
x & r \\ & y
\end{pmatrix}\right\}.
\end{align*}

We denote by $K = K_f K_{\infty}$ the standard maximal compact subgroup of $\PGL_2 (\BA)$. For each non-Archimedean $\vv$, let  
$ K_{\vv} =  \PGL_2 (\frO_{\vv}) $. %Set $K (\frO) = \prod_{\vv \shskip \nmid \infty} K_{\vv} (\frO_{\vv})  $ ($\subset \GL_2 (\BA_{f})$). 
Note that $  \PGL_2 (F) \cap K_f = \PGL_2 (\frO)$ (the intersection is taken in $\GL_2 (\BA_f)$). %For each Archimedean place $\vv$, we fix a maximal compact subgroup $K_{\vv}$ of $\GL_2 (F_{\vv})$. 
Let $K_{\vv} = \mathrm{O}_2 (\BR) / \{\pm 1_{2} \}$ if $\vv$ is real. Let $K_{\vv} = \mathrm{U}_2 (\BC)/  \{\pm 1_{2} \}$ if $\vv$ is complex. %Set $K_{\infty} = \prod_{\vv \shskip \mid \infty} K_{\vv}$.  

We identify $N (F_{\infty})$ with $F_{\infty}$ and $ A (F_{\infty}) $ with $F_{\infty}^{\times}$, and define their measures %on $N (F_{\infty})$ and $ A (F_{\infty}) $ 
accordingly. For $\vv | \infty$, we normalize the Haar measure on $K_{\vv}$ so that $K_{\vv} /  A (F_{\vv}) \cap K_{\vv}   $ has measure $1$.  Thus the measure of $K_{\vv}$  is $2$ or $2\pi$ according as $\vv$ is real or complex. The Haar measure on $\PGL_2 (F_{\infty})$ is defined  via the Iwasawa decomposition $ \PGL_2 (F_{\infty}) = N (F_{\infty}) A(F_{\infty}) K_{\infty} $.  Again, our measure on the hyperbolic space $ \PGL_2 (F_{\vv}) / K_{\vv} $ is different from that in \cite{BM-Kuz-Spherical} or \cite{Venkatesh-BeyondEndoscopy}. 

\vskip 5 pt

\subsection{Archimedean Representations}\label{sec: Archimedean} In this paper, we shall be concerned
only with {spherical} representations of $\PGL_2 (F_{\infty})$. 

Let $\bfra$ be the vector space $\BR^{r_1+r_2}$. We usually identify $\BR$ with its image under the diagonal embedding $\BR \hookrightarrow \bfra$. 
Let $\bfra_{\BC}$ be its complexification. Let $Y \subset \bfra_{\BC}$ be the set of $\varnu$ such that $\varnu_{\vv} \in \BR$  or $i \varnu_{\vv} \in \hskip -1pt \left(- \frac 1 2, \frac 1 2 \right)$.  We associate to $\varnu = (\varnu_{\vv})_{\vv | \infty} $ in $Y$ a unique spherical unitary irreducible representation $\pi (i \varnu)$ of $\PGL_2 (F_{\infty})$. Namely, $\varnu $ determines a character of the  diagonal torus $A (F_{\infty})$ via
\begin{align*}
\begin{pmatrix}
 x & \\ & 1
\end{pmatrix} \ra \prod_{\vv \shskip \mid \infty} |x  |_{\vv}^{i \varnu_{\vv}} , \qquad x \in F_{\infty}^{\times}, 
\end{align*}
and we let $\pi (i \varnu)$ be the irreducible spherical constituent of the representation unitarily induced
from this character.   The spherical $ \pi (i \varnu) $ is   tempered if and only if $\varnu \in \bfra$.  Let $\nd \shskip \varnu$ be the usual Lebesgue measure on $\bfra$. We equip $\bfra$ with the Plancherel measure $\nd   \mu (\varnu)  $ defined as the product of % $ \mathrm{Pl}_{\vv} (\varnu_{\vv}) \nd \shskip \varnu_{\vv} $ for all $\vv | \infty$, where 
\begin{equation}\label{1eq: defn Plancherel measure}
\nd  \mu (\varnu_{\vv}) =
 \left\{ \begin{aligned}
& \varnu_{\vv} \tanh (\pi \varnu_{\vv}) \nd \shskip \varnu_{\vv} , \  & & \text{ if }  \vv \text{ is real}, \\
&   \varnu_{\vv}^2 \nd \shskip \varnu_{\vv} ,   & & \text{ if }   \vv \text{ is complex}.
\end{aligned}\right.
\end{equation}
%Our choice of $| \hskip 3.5 pt|_{\vv} = | \hskip 3.5 pt |^2$ on $F_{\vv}$ for complex $\vv$ is responsible for the factor $8$. 
Moreover, we define the function ${\mathrm{Pl}} (\varnu) $ to be the product of 
\begin{equation}\label{1eq: defn of Pl(t)}
\mathrm{Pl}_{\vv} (\varnu_{\vv}) = \left\{ \begin{aligned}
& \cosh (\pi \varnu_{\vv})   , & & \text{ if }   \vv \text{ is real}, \\
& \sinh (2\pi \varnu_{\vv}) /   \varnu_{\vv}   , \  & & \text{ if }   \vv \text{ is complex}.
\end{aligned}\right.
\end{equation}
Note that $ 2^{r_2-r_1} \cdot {\mathrm{Pl}} (\varnu) \shskip \nd \shskip \mu (\varnu) $ is the measure used in \cite{BM-Kuz-Spherical,Venkatesh-BeyondEndoscopy}. 

We must fix, once and for all,
a spherical Whittaker vector corresponding to each $\pi (i \varnu)$, with respect to the character $\psi_{\infty}$ on $N (F_{\infty})$. We choose   $W_{ i \varnu} $ to be the product of $W_{i \varnu_\vv}$ so that
\begin{equation}
W_{i \varnu_\vv} \begin{pmatrix}
x_{\vv} & \\ & 1
\end{pmatrix} = \left\{ \begin{aligned}
& |x_{\vv} |_{\vv}^{  1 /2} K_{i \varnu_{\vv}} (2\pi |x_{\vv} |) , & & \text{ if }   \vv \text{ is real}, \\
& |x_{\vv} |_{\vv}^{  1 /2} K_{2 i \varnu_{\vv}} (4\pi |x_{\vv} |)  , \  & & \text{ if }   \vv \text{ is complex}.
\end{aligned}\right.
\end{equation}

Finally,  for $V \in \bfra_+ = \BR_+^{r_1 + r_2}$, we define 
\begin{align}\label{2eq: norm on a+}
\RN (V) = \prod_{ \varv | \infty } V_{\vv}^{N_{\vv}}, \qquad \RN^{\snatural} (V) = \prod_{ \varv | \infty } V_{\vv}. 
\end{align} 
We define the conductor of $\varnu \in Y$ to be $\RC  (\varnu) = \RN (1 + |\varnu|)$, namely, 
\begin{align}\label{1eq: defn conductor}
\RC  (\varnu) = \prod_{ \varv | \infty }  (1 + |\varnu_{\vv}|  )^{N_{\vv}}. 
\end{align}
%Moreover, we set $\RC^{\snatural}  (\varnu) = \RN^{\snatural} (1 + |\varnu|)$, 
%\begin{align}\label{1eq: defn natural conductor}
%\RC^{\snatural}  (\varnu) = \prod_{ \varv | \infty }  (1 + |\varnu_{\vv}|  ) . 
%\end{align}

\vskip 5 pt

\subsection{Automorphic Forms}

We shall be interested in the space of spherical automorphic forms, that is,  functions in  $L^2 (\PGL_2 (F) \backslash \PGL_2 (\BA)  )$ that are (right) invariant under  $K $.

We fix an orthonormal basis $ \SB $ for the   cuspidal subspace  that consists of eigenforms for the Hecke algebra as well as the Laplacian operators (Hecke--Maass cusp forms). Each $f \in \SB $ transforms under a certain representation $\pi (i \varnu_f)$ of $\PGL_2 (F_{\infty})$, for some $ \varnu_f \in Y $. We have the Kim--Sarnak bound in \cite{Blomer-Brumley} over the field $F$:
\begin{align}\label{2eq: Kim-Sarnak}
|\Im (\varnu_{f, \, \vv})| \leqslant \frac 7 {64}, \qquad  \vv | \infty . 
\end{align}
The Fourier coefficients $a_f (\fra, \valpha)$ are indexed by a
 pair consisting of an ideal class  and an element  of $F$.  To be precise, 
$a_f (\fra, \valpha ) $ are defined so that
\begin{align}\label{2eq: Fourier expansion}
f (\begin{pmatrix}
\pi_{\fra} & \\ & 1
\end{pmatrix} g_{\infty}) = \sum_{ \valpha \shskip \in F^{\sstimes}} \frac {a_f (\fra, \valpha)} {\sqrt{\RN (\valpha \fra \frD)}} W_{i \varnu_f} (\begin{pmatrix}
\valpha & \\ & 1
\end{pmatrix} g_{\infty}), \quad g_{\infty} \in \GL_2 (F_{\infty}), 
\end{align}
where $\pi_{\fra} \in \BA_f^{\times}$ is a representative of $\fra$.  It should be kept in mind that $a_f (\fra, \valpha)$ vanishes unless $\valpha \in \fra\- \frD\-$ and only depends on the ideal $\valpha \fra$. We therefore set $a_f (\frm) = a_f (\fra, \valpha)$ if $\frm = \valpha \fra \frD $. These $ a_f (\frm) $ may be interpreted in terms of the non-Archimedean spherical Whittaker function with respect to the additive character $\psi_f$ on $N (\BA_f)$. 
We denote $\lambdaup_f (\frm)$ the $\frm$-th Hecke eigenvalue of $f$. We normalize
 in such a way that the Ramanujan conjecture corresponds to $|\lambdaup_f (\frp)| \leqslant 2 $. As usual, there is a constant $C_f$ so that 
 \begin{align}\label{2eq: af = lambda f}
 a_f (\frm) = C_f \lambdaup_f (\frm)
 \end{align}
 for any nonzero integral ideal $\frm$. See \cite[\S 2.5]{Venkatesh-BeyondEndoscopy} for more details.
 
For $s \in \BC$, define via the Iwasawa decomposition the function
\begin{align*}
f_{s} (\begin{pmatrix}
x & r \\ & y 
\end{pmatrix} k ) = |x / y|^{s +   1 / 2} , \qquad x, y \in \BA^{\times}, \, r \in \BA, \, k \in K,
\end{align*} and define the Eisenstein series $E (g; s)$ by
\begin{align*}
 E (g; s) = \sum_{ \gamma \shskip \in P(F) \backslash \GL_2 (F) } f_s (\gamma g), \qquad g \in \GL_2 (\BA),
\end{align*}
for $\Re  ( s) > \frac 1 2$ and by a process of meromorphic continuation in general. For our purpose, we only need the knowledge of its Fourier coefficients $a_{E (s)} (\frm) = a_{E (s)} (\fra, \valpha)$  ($\frm = \valpha \fra \frD$) for  $\frm$ nonzero. Precisely, we have 
\begin{align}
a_{E (s)} (\frm) = C_{E(s)} \shskip \tau_s  (\frm  ) ,
\end{align}
where 
\begin{align}\label{1eq: defn of eta (m, s)}
\tau_s  (\frm  ) =  \sum_{ \sstyle \frb |\frm  }  \RN \big(\frm \frb^{-2} \big)^{ s  }, \qquad   C_{E(s)} = \frac {P (s)   } {\zeta_F (2s+1) \RN (\mathfrak{D})^{ s} }, 
\end{align}
$\zeta_F (s)$ is the Dedekind $\zeta$ function for $F$, and $P (s)$ is the product of 
\begin{equation}\label{2eq: defn of G(s)}
P_{\vv} (s) = \left\{ \begin{aligned}
& {2 \pi^{s + 1 / 2}  } / {\Gamma \big(s + \tfrac 1 2 \big)} ,  & & \text{ if } \vv \text{ is real}, \\
& {2   (2 \pi)^{2 s   + 1}  } / {\Gamma (2 s + 1 ) }  , \   & & \text{ if } \vv \text{ is complex}.
\end{aligned}\right.
\end{equation}
See for example \cite[\S \S 3.7, 4.6]{Bump}. A subtle issue is that the results in \cite[\S 4.6]{Bump} are proven for $\psi_{ \vv }$ of conductor $\frO_{\vv}$, but this may be easily addressed by re-scaling the character $\psi_{ \vv } (x_{\vv})$ and the Haar measure $\nd \shskip x_{\vv}$ (say,  $\psi_{ \vv } (x_{\vv}) = \psi_{ \vv }^{    \snatural } (\delta_{\vv} x_{\vv})$ and $\nd \shskip x_{\vv} = \hskip -1 pt \sqrt{|\delta_{\vv}|_{\vv}} \shskip \nd^{    \snatural } x_{\vv}$). Moreover, the $P_{\vv} (s)$ in  \eqref{2eq: defn of G(s)} arises from a computation of Jacquet's integral. Note that the definition of $P  $ may be extended from $\BC$ to $    \bfra_{\BC} = \BC^{r_1 + r_2}$ and that 
\begin{align}\label{1eq: P^2 = Pl}
| P (i \varnu) |^2 = 2^{2r_1+3r_2} \pi^{r_2} \cdot  \mathrm{Pl}  (\varnu), \qquad   \varnu \in \bfra. 
\end{align}

\subsection{Kuznetsov--Bruggeman--Miatello Formula}

We first make a preliminary definition.

\begin{defn}\label{def: x inverse}
	Let $\frb$, $\frq$ be fractional ideals with $\frb | \frq$. We set $(\frb/\frq)^{\times}$ to be those elements $x \in \frb/\frq$ which generate $\frb/\frq$ as an $\frO$-module. For $x \in (\frb/\frq)^{\times}$ define $x^{-1}$ to be the unique class $y \in (\frb^{-1}/\frq \frb^{-2})^{\times}$ such that $x y \in 1 + \frq \frb\-$. 
\end{defn}

We now define the Kloosterman sum. By necessity, we must include ideal classes as parameters. 

\begin{defn}[Kloosterman sum]\label{defn: Kloosterman KS} Let $\fra_1, \fra_2$ be nonzero fractional ideals of $F$, and $\frc$ be any ideal so that $\frc^2 \sim \fra_1 \fra_2$. Let $c \in \frc^{-1}$, $\valpha_1 \in \fra_1\- \frD\-$ and $\valpha_2 \in \fra_1 \frc^{-2} \frD\- $. We define the Kloosterman
	sum
\begin{align}\label{2eq: defn Kloosterman KS}
\mathrm{KS} (\valpha_1, \fra_1; \valpha_2, \fra_2; c, \frc) = \sum_{x \, \in (\fra_1 \frc\-/ \fra_1 (c))^{\sstimes} } \psi_{\infty} \bigg( \frac {\valpha_1 x + \valpha_2 x\-} {c} \bigg) ,
\end{align}
where $ (\fra_1 \frc\-/ \fra_1 (c))^{\times} $ and $x^{-1} \in  (\fra_1^{-1} \frc / \fra_1^{-1} (c)\frc^2  )^{\times}$ are defined as in Definition {\rm\ref{def: x inverse}}.
\end{defn}

We should view  the ideals $\valpha_1 \fra_1$ and $\valpha_2 \fra_2$ as the parameters of this Kloosterman sum, and the ideal $c \frc$ as the modulus. However, $\mathrm{KS}$ does depend on
the choice of generator; it is not invariant under the substitution $\valpha \ra \epsilon \valpha$, if $\epsilon \in \frOO^{\times}$ is a unit. To relate the definition to the usual Kloosterman sum, we note if $\fra_1 = \fra_2 = \frc = \frO$, then for $\valpha_1, \valpha_2 \in \frD^{-1}$ and $c \in \frO$ we have
\begin{align*}%\label{2eq: defn Kloosterman KS}
\mathrm{KS} (\valpha_1, \fra_1; \valpha_2, \fra_2; c, \frc) = \sum_{x \, \in (\frO /c)^{\sstimes} } \psi_{\infty} \bigg( \frac {\valpha_1 x + \valpha_2 x\-} {c} \bigg). 
\end{align*}
We have the Weil bound for Kloosterman sums:
\begin{align}\label{3eq: Weil}
\mathrm{KS} (\valpha_1, \fra_1; \valpha_2, \fra_2; c, \frc) \Lt \RN (\valpha_1\fra_1 \frD, \valpha_2 \fra_1^{-1} \frc \frD, c \frc)^{1/2} \RN (c\frc)^{1/2+\vepsilon},
\end{align}
where the  brackets $(\cdot\, , \cdot\, , \cdot)$ denote greatest common divisor (of ideals).

\begin{defn}[Space of test functions] \label{defn: test functions}
	Let $S > \frac 1 2$.\footnote{The condition $S > \frac 1 2$ is borrowed from \cite{BM-Kuz-Spherical}, while it requires $S > 2$ in \cite{Venkatesh-BeyondEndoscopy} for some convergence issues. Note that the space of test functions  in \cite{Kuznetsov,B-Mo,B-Mo2} is much larger.} We set
	$ \mathscr{H} (S ) $ to be the space of functions $h : \bfra \ra \BC $ that are of the   form $
	h (\varnu) = \prod_{\varv | \infty} h_{\varv} (\varnu_{\varv}), $  
	where each $h_{\varv} : \BR \ra \BC $ extends to an even holomorphic function on the strip
	$\big\{ s    : |\Im  ( s) | \leqslant S \big\}$ such that,  on the horizontal line $\Im ( s) = \sigma$ {\rm(}$|\sigma| \leqslant S${\rm)}, we have uniformly 
	\begin{align*}
	h_{\varv} (t + i \sigma) \Lt e^{-\pi |t|} (|t|+1)^{- N}, 
	\end{align*}
	for some $N > 6$. 
	
%We also let $h : \BR \ra \BC $ denote the restriction of $h$ on the diagonal embedding of $ \BR$ in $\bfra$, namely, $ h (t) = \prod_{\varv | \infty} h_{\varv} (t)  $  for $ t \in \BR$. 
\end{defn}

Next, we define the Bessel kernel   as follows.

\begin{defn}
	[Bessel kernel] \label{defn: Bessel kernel}
	
	Let  $\varnu  \in \bfra_{\BC}$. 
	
	{\rm(1)} When  $F_\varv = \BR$, for $x  \in \BR_+$ we define
	\begin{align*}
	B_{\varnu _{\varv}} (x) & = \frac {\pi} {\sin (\pi \varnu _{\varv}) } \big( J_{-2 \varnu _{\varv}} (4 \pi \sqrt {x }) - J_{2 \varnu _{\varv}} (4 \pi \sqrt {x }) \big), \\
	B_{\varnu _{\varv}} (-x ) & = \frac {\pi} {\sin (\pi \varnu _{\varv}) } \big( I_{-2 \varnu _{\varv}} (4 \pi \sqrt {x }) - I_{2 \varnu _{\varv}} (4 \pi \sqrt {x }) \big) =   {4 \cos (\pi \varnu _{\varv})}    K_{2 \varnu _{\varv}} (4 \pi \sqrt {x }) .
	\end{align*}
	
	{\rm(2)} When  $F_\varv = \BC$, for $z \in \BC^{\times}$ we define
	\begin{equation*}
	B_{\varnu _{\varv}} (z ) =  \frac {2\pi^2} {\sin (2\pi \varnu _{\varv}) } \big( { \textstyle  J_{-2 \varnu _{\varv}} (4 \pi \sqrt {z}) J_{- 2\varnu _{\varv}} (4 \pi \sqrt { \widebar z}) - J_{2 \varnu _{\varv}} (4 \pi \sqrt {z}) J_{ 2\varnu _{\varv}} (4 \pi \sqrt { \widebar z}) } \big). 
	\end{equation*}  
	
	For $x \in F_{\infty}^{\times}$, we define
	\begin{align*}
	\SDB_{\varnu} (x ) = \prod_{ \varv | \infty } B_{\varnu _{\varv}} (x_{\varv} ).
	\end{align*}
\end{defn}	

Note that  the $I$-Bessel functions in \cite{BM-Kuz-Spherical} or \cite{Venkatesh-BeyondEndoscopy} should be changed into $J$-Bessel functions in the complex case. Moreover,  according to \cite{Qi-Bessel}, we have normalized the Bessel kernel by a factor $\pi$ or $2\pi^2$ for real or complex $\vv$, respectively.

\begin{prop}[Kuznetsov formula]\label{prop: Kuznetsov}
Let $h$ be a test function on $\bfra_{\BC}$ belonging to 	$ \mathscr{H} (S) $ {\rm(}defined in Definition {\rm\ref{defn: test functions}}{\rm)} and define 
\begin{align}\label{1eq: defn Bessel integral}
 \SDH = \int_{\bfra} h (\varnu) \nd \shskip \mu (\varnu), \quad \SDH (x) = \int_{\bfra} h (\varnu) \SDB_{i \varnu} (x ) \nd \shskip \mu (\varnu), \qquad   x\in F_{\infty}^{\times}, 
\end{align}
where $ \SDB_{\varnu} (x )$ is the Bessel kernel defined in Definition {\rm\ref{defn: Bessel kernel}} and $\nd \shskip \mu (\varnu)$ is the Plancherel measure in {\rm\eqref{1eq: defn Plancherel measure}}. Let $ \SB $ be an orthonormal basis  of the spherical cuspidal spectrum on $  \PGL_2 (F) \backslash \PGL_2 (\BA) / K$, so that each $f \in \SB$ has Archimedean parameter $ \varnu_f \in  Y$ {\rm(}$f$ transforms under the $\GL_2 (F_{\infty})$-representation $\pi (i\varnu_f)${\rm)} and Hecke eigenvalues $\lambdaup_f (\frm)$. Let $\fra_1, \fra_2$ be fractional ideals. 
Let $\valpha_1 \in \fra_1\- \frD\-, \valpha_2 \in \fra_2\- \frD\- $. Set $\frm_1 = \valpha_1 \fra_1 \frD,   \frm_2 = \valpha_2 \fra_2 \frD$, and
\begin{align}
\beta  = \beta_{\frc, \shskip \fra_1 \fra_2} =   [\frc^2  ( \fra_1 \fra_2 )\-  ]
\end{align}
for every $\frc \in \widetilde{C}_F$ with $\frc^2 \sim \fra_1 \fra_2$ {\rm(}here $ [\frc^2  ( \fra_1 \fra_2 )\-  ]$ is a chosen generator for this principal ideal{\rm)}. We have
	\begin{equation}\label{1eq: Kuznetsov} 
	\begin{split}
& 	\sum_{f \in \SB  } \hskip -1pt \omega_f      h  ( \varnu_f ) \lambdaup_f  ( \frm_1 )     {\lambdaup_f  ( \frm_2 )} + \frac {1}  {4\pi} c_0 \int_{-\infty}^{\infty} \hskip -2pt  \omega (t) h ( t ) 
	\tau_{it}  (\frm_1  ) \tau_{ it}  (\frm_2  )         \nd t  \\
	   = & \, c_1 \delta_{\frm_1, \shskip \frm_2}  \SDH  + c_2 \mathop{\sum_{\frc \, \in \shskip \widetilde{C}_F}}_{ \frc^2 \sim \fra_1 \fra_2 } \sum_{ \epsilon \, \in \shskip \frOO^{\sstimes} \hskip -1pt / \frOO^{\sstimes 2} } \sum_{c \, \in \shskip \frc^{-1} } \frac {\mathrm{KS} ( \valpha_1, \fra_1; \epsilon \valpha_2 / \beta , \fra_2; c, \frc ) } { \RN (c \frc) } \SDH \bigg( \frac {\epsilon \valpha_1 \valpha_2 } {\beta c^2    }  \bigg),
	\end{split}
	\end{equation}
	where 
	\begin{align}\label{1eq: omegas}
	\omega_f = \frac {|C_f|^2} {\mathrm{Pl}(\varnu_f)}, \qquad  \omega (t) =  \frac {2^{2r_1+3r_2} \pi^{r_2}} {|\zeta_F(1+2it)|^2}, 
	\end{align} 
{\rm(}see {\rm \eqref{1eq: defn of Pl(t)}} and {\rm\eqref{2eq: af = lambda f}} for the definition of $\mathrm{Pl} (\varnu_f)$ and $C_f${\rm)},	$\tau_{s}  (\frm   )  $ is defined in {\rm\eqref{1eq: defn of eta (m, s)}}, $\delta_{\frm_1, \shskip \frm_2} $ is the Kronecker $\delta$ that detects $\frm_1 = \frm_2$, $\mathrm{KS}$ is the Kloosterman sum as in Definition {\rm\ref{defn: Kloosterman KS}}, and  the constants $c_0$, $c_1$  and $c_2$ are given by
	\begin{align}
	c_0 = \frac  {2^{r_1} (2\pi)^{r_2} R_F }  {\varw_F \sqrt{|d_F|} }, \quad c_1 = \frac { 2^{r_2} \sqrt{|d_F|}} {2 \pi^{2 r_1 + 2 r_2} h_F}  , \quad c_2 =   \frac {2^{r_2}} {4 \pi^{2 r_1 + 2 r_2} h_F }  ,
	\end{align}
	in which $\varw_F$, $d_F$, $R_F$ and $h_F$ are the number of roots of unity in $F$, the discriminant, the regulator and the class number of $F$, respectively. 
	
\end{prop}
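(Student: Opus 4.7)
The plan is to prove the Kuznetsov--Bruggeman--Miatello formula by the standard Poincar\'e-series method, adapted to the ad\`elic setting over $F$ and tailored to the measure, character, and Whittaker normalizations fixed in \S\ref{sec: notation} and \S\ref{sec: Archimedean}. For each pair $(\valpha_j, \fra_j)$ and each test function in $\mathscr H(S)$, introduce a Poincar\'e series
\[
P_{\salpha_j, \, \fra_j}(g) \;=\; \sum_{\gamma \,\in\, N(F)\backslash \PGL_2(F)} \phi_{\salpha_j,\, \fra_j}(\gamma g),
\]
where $\phi_{\salpha_j,\, \fra_j}$ is built from $\psi_\infty$ times a suitable Archimedean weight $k_j$ (to be inverted against $\SDB_{i\varnu}$) on the $A(F_\infty)$-component, and supported at the finite places on the appropriate local piece determined by $\fra_j$ and $\valpha_j$ (so that after unfolding one recovers precisely the coefficient $a_f(\fra_j,\valpha_j)$). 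The weights $k_j$ are chosen so that Mellin inversion of their product against the spherical Whittaker function $W_{i\varnu}$ reproduces the prescribed $h(\varnu)$; this is where the factors $P_{\varv}(s)$ from Jacquet's integral enter, and the identity $|P(i\varnu)|^2 = 2^{2r_1+3r_2}\pi^{r_2}\mathrm{Pl}(\varnu)$ in \eqref{1eq: P^2 = Pl} is what converts the Plancherel density on the spectral side into the form $\omega_f h(\varnu_f)$ in \eqref{1eq: omegas}.

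Next, compute $\langle P_{\salpha_1,\fra_1},\, P_{\salpha_2,\fra_2}\rangle$ in two ways. On the spectral side, expand $P_{\salpha_2,\fra_2}$ along the orthonormal basis $\SB$ together with the Eisenstein part $E(g;s)$ of the continuous spectrum; the residual spectrum for $\PGL_2$ contributes only to the constant function, which is killed by the non-trivial additive character $\psi$. Unfolding each $\langle P_{\salpha_1,\fra_1},\, f\rangle$ and $\langle P_{\salpha_1,\fra_1},\, E(\cdot\,;s)\rangle$ via the Fourier expansion \eqref{2eq: Fourier expansion} and the analogous Eisenstein expansion yields, after the Mellin inversion described above, the left-hand side of \eqref{1eq: Kuznetsov}: the cuspidal sum over $f\in\SB$ with weight $\omega_f h(\varnu_f)\lambdaup_f(\frm_1)\overline{\lambdaup_f(\frm_2)}$ and the Eisenstein integral with divisor-function coefficients $\eta(\frm_j,\tfrac12\pm it)$ and spectral weight $\omega(t)$.

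On the geometric side, unfold one Poincar\'e series and decompose the remaining sum over $\PGL_2(F)$ into Bruhat cells. The identity cell, after matching $\psi$-frequencies, produces the diagonal term $c_1\,\delta_{\frm_1,\frm_2}\,\SDH$, the constant $c_1$ being the ratio of measure normalizations ($h_F$ and $|d_F|$ come from strong approximation and the self-dual Haar measure on $\BA/F$). The big Bruhat cell $w\cdot N(F)\cdot A(F)$ is where the Kloosterman sums are born: writing the Bruhat coordinates at each non-Archimedean place gives, after summing over $N(F)$ from the left and right against $\psi$, precisely the Kloosterman sum $\mathrm{KS}(\valpha_1,\fra_1;\epsilon\valpha_2/\beta,\fra_2;c,\frc)$ of Definition \ref{defn: Kloosterman KS}. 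The ideal class of the modulus is forced to satisfy $\frc^2\sim\fra_1\fra_2$, and one picks up a sum over the representatives $\frc\in\widetilde C_F$; the element $\beta=[\frc^2(\fra_1\fra_2)^{-1}]$ and the sum over $\epsilon\in\frOO^\times/\frOO^{\times 2}$ arise from the passage from an ad\`elic double coset sum to a sum over generators, where the choice of generator is only well defined up to squares of units.

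The Archimedean big-cell integral is the core analytic step. Writing $w\begin{pmatrix}1 & u\\ & 1\end{pmatrix}$ and integrating against the $k_j$'s at each place $\varv|\infty$, one is left with a Bessel transform: by the Hankel--Whittaker identities recorded in \cite{Qi-Bessel} (real and complex cases separately), this integral equals $\SDB_{i\varnu}(\epsilon\valpha_1\valpha_2/\beta c^2)$ evaluated against $h(\varnu)\,\mathrm{Pl}(\varnu)\,d\varnu$, which is precisely the definition of $\SDH(\cdot)$ in \eqref{1eq: defn Bessel integral}. The assumption $S>\tfrac12$ and the polynomial decay in $\mathscr H(S)$ ensure absolute convergence of the Kloosterman side via the Weil bound \eqref{3eq: Weil}, and allow the contour shifts needed to separate the identity from the big cell contributions. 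The main obstacle I anticipate is keeping all the constants straight under our chosen normalizations---in particular the factors $2^{r_1}(2\pi)^{r_2}$, $\sqrt{|d_F|}$, $h_F$, $R_F$, $\varw_F$---since the measure on $F_\infty$ and on $K_\varv$ differs from that in \cite{BM-Kuz-Spherical,Venkatesh-BeyondEndoscopy}, and the Bessel kernel is renormalized by $\pi$ or $2\pi^2$ per place; cross-checking against the Mellin--Plancherel identity \eqref{2eq: Plancherel} and against the special case $\fra_1=\fra_2=\frO$ (where $\mathrm{KS}$ reduces to the usual Kloosterman sum) will pin down the constants $c_E$, $c_1$, $c_2$ displayed in the statement.
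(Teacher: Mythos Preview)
Your proposal outlines the standard Poincar\'e-series derivation of the Kuznetsov formula, and as a strategy it is correct and complete: form Poincar\'e series adapted to $(\valpha_j,\fra_j)$, compute their inner product spectrally and geometrically, and identify the Bruhat cells with the diagonal and Kloosterman terms. This is indeed how the formula is proven in \cite{BM-Kuz-Spherical} and (in a different packaging) in \cite{Venkatesh-BeyondEndoscopy}.

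However, the paper does not reprove the formula at all. Its ``proof'' consists of a single paragraph after the statement explaining that \eqref{1eq: Kuznetsov} is a direct rewriting of formula~(15) in \cite{Venkatesh-BeyondEndoscopy}, presented in the style of (3.17) in \cite{CI-Cubic}, together with bookkeeping remarks: the test function is modified by $\mathrm{Pl}(\varnu)$ via \eqref{1eq: P^2 = Pl}; the $1/4\pi$ and $c_E$ come from translating the Eisenstein contribution between ad\`elic and classical normalizations (citing \cite{Iw-Spectral,Gelbart-Jacquet}); and $c_1,c_2$ are read off from (16) in \cite{Venkatesh-BeyondEndoscopy} after adjusting for the paper's choices of Haar measures, modules, and the $\pi$/$2\pi^2$ renormalization of the Bessel kernel. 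So the paper's approach is purely a citation plus a normalization dictionary, whereas you are proposing to redo the entire analytic derivation. Your route would certainly work and would make the paper self-contained on this point, but it is substantially more labor than what the author actually does; the only part of your outline that overlaps with the paper's treatment is your final paragraph about tracking the constants, which is exactly the content of the paper's remarks.
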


	The formula \eqref{1eq: Kuznetsov}  is just a rewriting of (15) in \cite{Venkatesh-BeyondEndoscopy} in the fashion of (3.17) in \cite{CI-Cubic}. Some remarks are in order. 
	The test function  in \cite{Venkatesh-BeyondEndoscopy} has been modified here by  $   \mathrm{Pl} (\varnu)$ (see \eqref{1eq: defn of Pl(t)} and \eqref{1eq: P^2 = Pl}). %We explain that there is a slight imprecision in \cite{BM-Kuz-Spherical} concerning the spectral weight for complementary series. As suggested in \cite{CPS} and \cite{Qi-Kuz}, when $i t = i \varnu_f \in \hskip -1pt \lp - \frac 1 2 , \frac 1 2 \rp$, a correction factor $\Gamma \big(\frac 1 2+ i\varnu_{\vv} \big) / \Gamma \big(\frac 1 2 - i\varnu_{\vv} \big) $ or $\Gamma (1+2i\varnu_{\vv}) / \Gamma (1-2i\varnu_{\vv}) $ is needed according as $\vv$ is real or complex. In this way, 
	 The $1/4\pi$ in the first line of \eqref{1eq: Kuznetsov}  is adopted from (7.15) in \cite{Iw-Spectral}, and in the ad\`elic setting it also arises from  applying (5.16) to (4.21), (4.25) in \cite{Gelbart-Jacquet}. 
	The constant $c_0$ accounts for the translation of the spectral decomposition  from the ad\`elic to the classical setting. The constants $c_1$ and $c_2$ are adapted from (16) in \cite{Venkatesh-BeyondEndoscopy}, with extra factors due to our normalization of measures, modules and Bessel kernels.

\begin{lem}\label{lem: lower bounds for omega}
	Let $f $ be a be a Hecke--Maass cusp form whose  $L^2$ norm is $1$. Let $\omega_f$ and $\omega (t)$ be defined as in {\rm\eqref{1eq: omegas}}. Then 
	\begin{align}\label{1eq: bounds for omega}
	\omega_f \Gt \RC  (\varnu_f)^{-\vepsilon} , \qquad \omega (t) \Gt (1 + |t|)^{-\vepsilon},
	\end{align}
where $\RC  (\varnu_f)$ is the conductor of $\varnu_f$  as defined in {\rm\eqref{1eq: defn conductor}}, and the implied constants depend only on $\vepsilon$ and $F$. 
\end{lem}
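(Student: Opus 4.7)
The plan is to express each of $\omega_f$ and $\omega(t)$ as (up to constants) the inverse of an $L$-value on the edge of the critical strip, and then to invoke standard upper bounds for such $L$-values to obtain the claimed lower bounds.

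For $\omega(t)$ the statement is simply that $|\zeta_F(1+2it)|^2 \ll_{F,\vepsilon} (1+|t|)^{\sepsilon}$, which is classical: the standard zero-free region for the Dedekind zeta function of $F$ yields $|\zeta_F(1+2it)| \ll \log^{c}(1+|t|)$ for some $c = c(F)$, and this is far stronger than what is needed. (A much softer argument, using the functional equation and convexity, already suffices.)

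For $\omega_f$, the strategy is to use Rankin--Selberg unfolding applied to the normalization $\|f\|_{L^2}=1$. The constant $C_f$ in \eqref{2eq: af = lambda f} records the comparison between the Fourier coefficients $a_f(\cdot)$ of \eqref{2eq: Fourier expansion} and the Hecke eigenvalues $\lambdaup_f(\cdot)$. Unfolding $\langle E(\cdot,s)\shskip |f|^2\rangle$ expresses $|C_f|^{-2}$, up to a product of Archimedean factors coming from $\int |W_{i\varnu_{f,\shskip\vv}}|^2 \nd^\times$ and from the ratio of global zeta constants, as a residue at $s=1$ of $L(s,f\times\bar f) = \zeta_F(s)\shskip L(s,\mathrm{Ad}\,f)$. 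With the measures and Whittaker vectors normalized as in \S\ref{sec: notation}--\S\ref{sec: Archimedean}, a direct computation of these Archimedean local integrals (by the Mellin--Barnes representation of $K$-Bessel functions) shows that the Archimedean factor equals $\mathrm{Pl}(\varnu_f)^{-1}$ up to absolute constants depending only on $F$. One thus obtains
\[
\omega_f \;=\; \frac{|C_f|^2}{\mathrm{Pl}(\varnu_f)} \;\asymp_F\; \frac{1}{L(1,\mathrm{Ad}\,f)}.
\]
The required bound then follows from the Hoffstein--Lockhart-type upper bound $L(1,\mathrm{Ad}\,f) \ll_{F,\sepsilon} \RC(\varnu_f)^{\sepsilon}$, which is available over arbitrary number fields (via Hoffstein--Lockhart and its extensions).

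The main technical point is thus the identification of the Archimedean factor in the Rankin--Selberg unfolding with $\mathrm{Pl}(\varnu_f)^{-1}$ in this precise normalization; this is a routine but bookkeeping-heavy computation that has to be carried out separately at real and complex places, and where one must keep careful track of the powers of $2$ and $\pi$ that were absorbed into the measures, into $P_\vv(s)$ of \eqref{2eq: defn of G(s)}, and into the definition of the Bessel kernel in Definition~\ref{defn: Bessel kernel}. Once this identification is in place, both inequalities in \eqref{1eq: bounds for omega} reduce to standard edge-of-strip bounds.
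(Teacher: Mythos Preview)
Your proposal is correct and follows essentially the same route as the paper: express $\omega_f$ via Rankin--Selberg as a constant multiple of $1/L(1,\mathrm{Sym}^2 f)$ (equivalently $1/L(1,\mathrm{Ad}\,f)$, since the central character is trivial), and then bound the $L$-value at $1$ from above. The only difference is bibliographic: the paper invokes Molteni's general upper bound for Euler products at $s=1$ rather than Hoffstein--Lockhart, and it omits the Archimedean bookkeeping you outline, but the underlying argument is the same.
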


By the Rankin--Selberg method,   $\omega_f$ may be expressed as a multiple of $1 / L (1, \mathrm{Sym}^2 f)$: %. To be explicit, 
\begin{align}
	\omega_f = \frac {2^{2r_1+3r_2 } \pi^{r_2}}  {2 L (1, \mathrm{Sym}^2 f)}.
\end{align} Then \eqref{1eq: bounds for omega} follows from \cite[Theorem 1]{Molteni-L(1)}. For $F = \BQ$, the lower bound for $\omega_f$ was first proven by Iwaniec \cite{Iwaniec-L(1)}. %In view of this lemma, it will be harmless to remove the weights $\omega_f$ and $\omega (t)$ in our application. 

	\section{\texorpdfstring{Vorono\"i Summation for $\GL_3$}{Voronoi Summation for GL(3)}} \label{sec: Voronoi GL(3)}
	
The purpose of this section is to derive a Vorono\"i summation formula for $\GL_3$ in the classical terms from that of Ichino and Templier   \cite{Ichino-Templier} in the ad\`elic setting (see Appendix \ref{app: adelic Voronoi, proof}). 
	
\subsection*{Notation in the Ad\`elic Setting} 

We first recollect some notation from \cite{Ichino-Templier}.	Let $\pi = \otimes_{\varv} \, \pi_{\varv}$ be   an irreducible cuspidal automorphic representation of $\GL_3(\BA)$.  Let $\widetilde {\pi} = \otimes_{\varv} \, \widetilde \pi_{\varv}$ be the contragradient  representation of $\pi$. Let $\omega$ denote the central character of $\pi$. 

Let $S$ be a finite set of places of $F$ including the ramified places of $\pi$ and all the Archimedean places.  
 Denote by $W^S_{\text{o}} = \prod_{\varv \shskip  \notin S} W_{\text{o}  \shskip  \varv}$ the normalized unramified Whittaker
function of $\pi^S = \otimes_{\varv \shskip  \notin S} \pi_{\varv}$ above the complement of $S$. Let  $\widetilde W^S_{\mathrm{o}} = \prod_{\varv \shskip  \notin S} \widetilde W_{\mathrm{o}  \shskip  \varv}$ be the unramified (spherical) Whittaker function of   $\widetilde \pi^S = \otimes_{\varv \shskip  \notin S} \widetilde \pi_{\varv}$.

For any place $\varv$ of $F$, to a smooth compactly supported function $w_{\varv} \in C_c^{\infty} (F_{\varv}^{\times})$ is associated a dual function $\widetilde \varww_{\varv}$  of $ \varww_{\varv} $ such that
\begin{equation}\label{2eq: Ichino-Templier}
\begin{split}
\int_{ F_{\varv}^{\sstimes}} \widetilde \varww_{\varv} (x) \vchi (x)\-   |x|_{\varv}^{s - 1} \nd^\times \hskip -1pt x  =   \gamma (1-s, \pi_{\varv} \otimes \vchi, \psi_{\varv} ) \int_{ F_{\varv}^{\sstimes}} \varww_{\varv} (x) \vchi (x) |x|_{\varv}^{ - s } \nd^\times \hskip -1pt x,
\end{split}
\end{equation}
for all $s$ of real part sufficiently large and all unitary multiplicative characters $\vchi $ of $ F_{\varv}^\times$.
The equality \eqref{2eq: Ichino-Templier} is independent on the chosen Haar measure $\nd^\times \hskip -1pt x$ on $ F_{\varv}^\times$ and 
defines  $\widetilde w_{\varv}$ uniquely in terms of $\pi_{\varv}$, $\psi_{\varv}$ and $w_{\varv}$. For $S$ as above, we put $
w_S = \prod_{ \varv \shskip  \in S }  w_{\varv}  $, $ \widetilde w_S = \prod_{ \varv \shskip  \in S } \widetilde w_{\varv}  $.

Let $\varv$ be an unramified place of $\pi$. It should be kept in mind that, since the additive character $\psi_{ \vv }$ has conductor $\mathfrak{D}_{\vv}\-$,   $W_{\mathrm{o}\shskip  \vv} (a (x_1, x_2))$ vanishes unless both $x_1, x_2 \in \mathfrak{D}_{\vv}\-$, where 
\begin{align*}
a (x_1, x_2) = \begin{pmatrix}
x_1 x_2 & & \\
& x_1 & \\
& & 1 
\end{pmatrix} .
\end{align*}  
%Normalize $W_{\mathrm{o}\shskip  \vv} $ and $\widetilde{W}_{\mathrm{o}\shskip  \vv} $ so that they take value $|\delta_{\vv}|^{-1}$ at  $a \big(\delta_{\vv}\-, \delta_{\vv}\- \big) $. 

\begin{defn}
	[Kloosterman sum on a local field]  \label{defn: Kloosterman, local}
	Let $\vv$ be non-Archimedean. For $\valpha,  1/\nu  \in \frO_{\vv}$ and $ \beta  \in     \nu^{2} \frD_{\vv}^{-2} $, define  the local Kloosterman sum  
	\begin{align}\label{2eq: Kloosterman sum}
	\mathrm{Kl}_{\vv} (\valpha, \beta; \nu  ) =  \sum_{   \delta_{\vv}  x  \shskip \in \shskip   \nu   \shskip \frOO_{\,\varv}^{\sstimes} /   \frO_\varv } 
	\psi_\varv   \big( \valpha  x + \beta  x^{-1}   \big)  .
	\end{align} 
	In the quotient $ \nu  \frOO^\times_{\, \varv} / \frO_\varv $ above, the group $ \frO_{\varv}$ acts additively on $\nu  \frOO^\times_{\,\varv}$ if $|\nu |_{\varv} > 1$, and $ \nu  \frOO^\times_{\,\varv} / \frO_\varv = \{1\}$ if $|\nu|_{\varv} = 1$ so that the Kloosterman sum is equal to $1$.
\end{defn}

 Let $R$ be a finite set of places where $\pi$ is unramified. We define $   \widetilde W_{\mathrm{o} \shskip  R} = \prod_{\varv \shskip \in R} \widetilde W_{\mathrm{o} \shskip  \varv} $ and $ \mathrm{Kl}_{R} (\valpha, \beta; \nu  ) = \prod_{\varv \shskip  \in R} \mathrm{Kl}_{\varv} (\valpha_{\varv} , \beta_{\varv} ; \nu_{\varv}   ) $ for $\valpha, \beta, \nu \in F_R $ satisfying $|\valpha|_{\varv} \leqslant 1 \leqslant |\nu|_{\varv}$ and $|  \beta|_{\varv} \leqslant  |  \nu/ \delta   |_{\varv}^2 $ for all $\vv \in R$.

\subsection{\texorpdfstring{Ad\`elic Vorono\"i Summation for $\GL_3$}{Ad\`elic Voronoi Summation for $\GL(3)$}} \label{sec: adelic Voronoi}

Note that it is required in \cite{Ichino-Templier} that  $S$ contains the ramified places of $\psi$. In order to make their Vorono\"i summation useful when the class number $h_F \neq 1$ (or when $\frD$ is not principal), one has to relax this   condition. For this, we shall outline a proof of the following Vorono\"i summation for $\GL_3$ in Appendix \ref{app: adelic Voronoi, proof}. 

\begin{prop} \label{prop: Voronoi, adelic}   Let notation be as above. %Assume that the central character of $\pi$ is trivial. 
	Let $\zeta \in \BA^{    S}$ and $  \valpha  \in \BA^{\times   S}$.  Let $ R $ be the set of places $\varv$ such that $|\zeta/ \valpha  |_{\varv} > 1$. 
	Let $\varww_{\varv} \in C_c^{\infty} (F_{\varv}^{\times})$ for all $\varv \in S$. Then we have the identity
	\begin{equation}\label{2eq: Voronoi, adelic}
	\begin{split}
	\sum_{\gamma \shskip  \in F^{\sstimes}}  \hskip -2 pt & \psi^S (\gamma \zeta)  W_{\mathrm{o}}^S   
	(a (  1 / \delta, \valpha  \gamma ))
	\varww_{S} (\gamma) \\
	&	= \frac {   \overbar \omega_{R} (\zeta /\valpha   ) |   \zeta   |_{R}  |\valpha|^{S \cup R } } {  \omega^S (\delta) \sqrt{|\delta|^{S}}}   \sum_{\gamma \shskip  \in F^{\sstimes}} \hskip -2 pt K_{R} (\gamma, \zeta , \widetilde W_{\mathrm{o} \shskip R})  \widetilde W_{\mathrm{o}}^{S \cup R}   
	\big(a (1 / \delta, \gamma \delta /   \valpha) \big)
	\widetilde  \varww_S (\gamma),
	\end{split}
	\end{equation} 
	where $K_{R}   (\gamma, \zeta ,  \widetilde   W_{\mathrm{o} \shskip R})$ is defined to be the sum 
	\begin{equation}\label{2eq: K(...) = , 0}
	\begin{split}
	\sum_{\nu \, \in F^{\sstimes}_{R} /\frOO^{\sstimes}_{R}} \hskip -2 pt \widetilde{W}_{\oo \shskip R}  \big( a (  \nu   \valpha   /  \zeta \delta, \shskip     \gamma  \delta   / \nu^2    \zeta          ) \big)  \mathrm{Kl}_{R} (1,  -   \gamma  /   \zeta ; \nu),
	\end{split} 
	\end{equation}
	with  $\nu$ subject to 
	\begin{align}\label{2eq: condition on nu}
	1 \leqslant |\nu|_{\vv} \leqslant   | \zeta / \valpha     |_{\vv},   \qquad |  \gamma /    \zeta |_{\vv} \leqslant | \nu / \delta  |^2_{\vv}  
	\end{align}
	for all $\vv \in R$.  
\end{prop}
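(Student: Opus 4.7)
The plan is to deduce \eqref{2eq: Voronoi, adelic} from the \Voronoi summation of Ichino--Templier \cite{Ichino-Templier}, whose statement assumes that $S$ contains every place at which $\psi$ ramifies. Because our $\psi_{\vv}$ has conductor $\frD_{\vv}\-$, that hypothesis amounts to demanding $\frD_{\vv} = \frO_{\vv}$ for all $\vv \notin S$---a condition that fails whenever $\frD$ is not a principal ideal supported inside $S$, in particular whenever $h_F \neq 1$. The strategy is to bootstrap the desired identity by absorbing the ramification of $\psi$ at finite places outside $S$ into the different id\`ele $\delta = \pi_{\frD}$.

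To this end, I introduce the rescaled additive character $\psi^{\snatural}$ with $\psi^{\snatural}_{\vv} = \psi_{\vv}$ for $\vv \mid \infty$ and $\psi^{\snatural}_{\vv}(x) = \psi_{\vv}(\delta_{\vv}\- x)$ for $\vv \nnmid \infty$, so that $\psi^{\snatural}_{\vv}$ has trivial conductor at every finite place. Ichino--Templier then applies directly to $\psi^{\snatural}$ with the given $S$, and the task reduces to translating between $\psi$- and $\psi^{\snatural}$-normalized local data at each $\vv \notin S$. For the spherical Whittaker function, left translation by $\mathrm{diag}(\delta_{\vv}^{-2},\delta_{\vv}\-,1)$ converts $\psi_{\vv}$-equivariance into $\psi^{\snatural}_{\vv}$-equivariance and produces an identity of the shape
\[ W_{\mathrm{o},\vv}^{\psi}\big(a(x_1,x_2)\big) \,=\, W_{\mathrm{o},\vv}^{\psi^{\snatural}}\big(a(\delta_{\vv} x_1,\delta_{\vv} x_2)\big), \]
modulo a central-character factor needed to restore the normalization $W_{\mathrm{o},\vv}(1)=1$; the analogous identity holds for $\widetilde W_{\mathrm{o},\vv}$ with $\widetilde\pi$ (whose central character is $\overbar\omega$) in place of $\pi$. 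For the Kloosterman sum, the change of variables $x \mapsto \delta_{\vv}\- x$ in \eqref{2eq: Kloosterman sum} converts a $\psi^{\snatural}_{\vv}$-sum into the $\psi_{\vv}$-sum with arguments and modulus rescaled by powers of $\delta_{\vv}$. Finally, the self-dual Haar measures on $F_{\vv}$ with respect to $\psi_{\vv}$ and $\psi^{\snatural}_{\vv}$ differ by $|\delta_{\vv}|_{\vv}^{1/2}$, and the product of these factors over $\vv \notin S$ assembles into the $\sqrt{|\delta|^{S}}$ in the denominator of the right-hand side of \eqref{2eq: Voronoi, adelic}.

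With this dictionary in hand, I would invoke Ichino--Templier for $\psi^{\snatural}$ and substitute. The left-hand side of \eqref{2eq: Voronoi, adelic} reappears upon specializing the Whittaker identity above with $x_1 = 1/\delta$ and $x_2 = \valpha\gamma$, which also accounts for the factor $\omega^S(\delta)\-$ on the right-hand side via the central-character twist attached to the dual Whittaker. The remaining prefactor $\overbar\omega_R(\zeta/\valpha)|\zeta|_R|\valpha|^{S\cup R}$ arises from the local functional equation \eqref{2eq: Ichino-Templier} at $\vv \in R$, where $\pi_{\vv}$ is twisted by the unramified multiplicative character determined by $\zeta/\valpha$ and the $\gamma$-factor can be evaluated explicitly against the spherical vector.

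The main technical obstacle will be extracting the two support conditions in \eqref{2eq: condition on nu} correctly. The upper bound $|\nu|_{\vv} \leqslant |\zeta/\valpha|_{\vv}$ comes from requiring the first argument of $\widetilde W_{\mathrm{o},\vv}^{\psi^{\snatural}}$ to lie in $\frO_{\vv}$ after the $\delta$-rescaling, while $|\gamma/\zeta|_{\vv} \leqslant |\nu/\delta|_{\vv}^2$ encodes the analogous condition for the second argument; the lower bound $|\nu|_{\vv} \geqslant 1$ is the natural range of the Kloosterman modulus as in Definition \ref{defn: Kloosterman, local}. One must also justify interchanging the $\gamma$- and $\nu$-sums, which is legitimate because the compact support of $\varww_S$ at $S$ together with the above constraints renders the double series absolutely convergent.
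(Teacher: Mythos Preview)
Your approach has a genuine obstruction at its first step. The rescaled character $\psi^{\snatural}$ you define, with $\psi^{\snatural}_{\vv}(x) = \psi_{\vv}(\delta_{\vv}\- x)$ at finite places and $\psi^{\snatural}_{\vv} = \psi_{\vv}$ at infinite places, is a character of $\BA$ but is \emph{not} trivial on the diagonal $F$ unless the different $\frD$ is principal. Indeed, any nontrivial character of $\BA/F$ is of the form $x \mapsto \psi(ax)$ for some $a \in F^{\times}$, and for such a character to be unramified at every finite place one needs $(a) = \frD\-$ as ideals; this forces $\frD$ to be principal, which is precisely the case you are trying to avoid (and which the paper explicitly flags as the reason Ichino--Templier cannot be applied as a black box). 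Since the Voronoi identity of Ichino--Templier is a statement about automorphic forms and requires $\psi$ to descend to $\BA/F$, you cannot invoke it for $\psi^{\snatural}$.

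The paper circumvents this by \emph{not} attempting a global rescaling. Instead, it enters the proof of Theorem 3 of Ichino--Templier and recomputes the local integrals $I^{\ssharp}_{\vv}(\gamma)$ directly at each $\vv \notin S$, via explicit Iwasawa decompositions. At places $\vv \notin S \cup R$ a short matrix calculation gives the unramified Whittaker value with the correct $\delta$-shift. At places $\vv \in R$ one arrives at a Kloosterman-type integral and applies a lemma (requiring $\psi_{\vv}$ unramified) after a purely \emph{local} rescaling $\psi_{\vv}(x) = \psi_{\vv}^{\snatural}(\delta_{\vv} x)$, $\widetilde W_{\mathrm{o}\,\vv}(g) = \widetilde W^{\snatural}_{\mathrm{o}\,\vv}(a(\delta_{\vv},\delta_{\vv})g)$, $\mathrm{d}x = |\delta_{\vv}|_{\vv}^{1/2}\,\mathrm{d}^{\snatural}x$. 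Because this is done place by place inside the local computation rather than globally on the character, no $F$-triviality is needed. Your dictionary between $\psi$- and $\psi^{\snatural}$-data is correct locally and is in fact what the paper uses; the gap is only in believing that these local rescalings can be assembled into a single global application of Ichino--Templier.
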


\subsection*{Notation in the Classical Setting} 

   Henceforth, we shall assume that     $\pi$ is {\it unramified} ({\it spherical}) at every non-Archimedean place  and that its  central character $\omega$ is {\it trivial}. We may thus choose $S = S_{\infty}$. 
   
   For  nonzero {\it integral} ideals $\frn_1 , \frn_2  $, we define  the Fourier coefficient
   \begin{align}\label{2eq: Fourier coefficients, r=3}
   A (\frn_1 , \frn_2 ) =    \mathrm{N} \big( \frn_1  \frn_2 \mathfrak{D}^{-2} \big) W_{\mathrm{o}}^{S_{\infty}}   
   \begin{pmatrix}
   \frn_1 \frn_2 \mathfrak{D}^{-2} & & \\
   & \frn_1 \mathfrak{D}\- & \\
   & & 1 
   \end{pmatrix}.
   \end{align}  
Normalize the Fourier coefficients so that $ A (1, 1) = 1$. We have the multiplicative relation 
\begin{align}\label{4eq: mult relation}
A (\frn_1 \frm_1, \frn_2 \frm_2) =  A (\frn_1  , \frn_2  ) A (  \frm_1,   \frm_2), \qquad (\frn_1 \frn_2, \frm_1 \frm_2) = (1), 
\end{align}
and the Hecke relation
\begin{align}\label{4eq: Hecke relation}
A (\frn_1, \frn_2) = \sum_{ \mathfrak{d} | \frn_1, \shskip \mathfrak{d} |\frn_2} \mu (\mathfrak{d}) A (\frn_1 \mathfrak{d}^{-1}, 1) A (1, \frn_2 \mathfrak{d}^{-1}),
\end{align}
where $\mu $ is the M\"obius function for $F$.  Moreover, it is known that $ \widetilde{A} (\frn_1, \frn_2) =  {A} (\frn_2, \frn_1) $ if  $\widetilde{A} (\frn_1, \frn_2)$  are the Fourier coefficients for $\widetilde{\pi}$.

It is known from \cite[\S 17]{Qi-Bessel} that for each $\vv | \infty$, $\widetilde f_{\vv} (y) =   | y|_{\vv}^{- 1} \widetilde w_{\vv} \lp - y \rp $ is the Hankel integral transform of $ f_{\varv} (x) =  | x|_{\vv}^{ - 1} w_{\vv} \lp x \rp  $ integrated against the Bessel kernel $J_{\pi_{\varv}} (xy)$ attached to $\pi_{\vv}$ (see \cite[(17.20)]{Qi-Bessel}).  Namely,
\begin{equation}\label{2eq: Hankel transform tilde u and u}
	\widetilde f_{\vv} \lp  y \rp  =   \int_{F^{\sstimes}_{\vv} }  f_{\vv} (x)  J_{\pi_{\vv}} ( x y) \nd x.  
\end{equation} 
The asymptotic expansion for   $J_{\pi_{\varv}} (x)$ will be given in \S \ref{sec: asymptotic Bessel}. 

\begin{rem}\label{rem: normalization Hankel}
	When $\vv$ is real, $\widetilde\varww_{\vv}(y) = |y|_{\vv} \widetilde{f}_{\vv} (-y)$ is equal to the $F(y)$ in \cite[Theorem 1.18]{Miller-Schmid-2006} {\rm(}if $f_{\vv} (x)$ is their $f(x)${\rm)}. The reason for our normalization of Hankel transforms is to get the 
	Fourier transform and the classical Hankel transform in the $\GL_1$ and $\GL_2$ settings, respectively.
\end{rem} 

%\begin{rem}
%	 We remark that in the $\GL_3$ case the $ \widetilde f_{\vv} (y) = | y|_{\vv}^{- 1} \widetilde w_{\vv} (   y )$ in {\rm\cite[\S 3.2]{Qi-Wilton}}  is mistaken, for the additive character $\psi_{\vv } (x)$ is $ e (- \Tr_{F_{\vv} } (x))$ not $ e (  \Tr_{F_{\vv} } (x))$.\footnote{The choice of the sign in $\psi_{ \vv } (x)$ would not have an effect in the $\PGL_2$ setting (on either Kuznetsov or Vorono\"i). %; for example, Venkatesh \cite{Venkatesh-BeyondEndoscopy} chooses $\psi_{\vv } (x) = e ( \Tr_{F_{\vv} } (x))$ instead of $e (- \Tr_{F_{\vv} } (x))$.
%	 } %This small error would cause a sign inconsistency in the Kloosterman sum when translating the formula of Ichino and Templier {\rm\cite{Ichino-Templier}} into that of Miller and Schmid {\rm\cite{Miller-Schmid-2006,Miller-Schmid-2009}}  {\rm(}see Remark {\rm 3.2} in {\rm\cite{Qi-Wilton})}.
%\end{rem}

\begin{defn}[Bessel kernel and Hankel transform]\label{defn: Hankel transform}
	For $x \in F_{\infty}$, we define the Bessel kernel
	\begin{align*}
	\SDJ  (x ) = \prod_{ \varv | \infty } J_{\pi_\varv} (x_{\varv} ). 
	\end{align*}
Let	$ \mathscr{C}^{\infty}_c (F^{\times}_{\infty}) $ denote the space of compactly supported smooth functions $f : F^{\times}_{\infty} \ra \BC $ that are of the  product form $
f (x) = \prod_{\varv | \infty} f_{\varv} (x_{\varv})$. For $f \in \mathscr{C}^{\infty}_c (F^{\times}_{\infty}) $, we define its Hankel transform $ \widetilde{f}  $ by
\begin{align}\label{3eq: Hankel, global}
\widetilde{f} (y) =    \int_{F^{\sstimes}_{\scalebox{0.55}{$\infty$} } }  f (x) \SDJ    ( x y)   \nd x, \qquad y \in F^{\times}_{\infty}. 
\end{align}
\end{defn}

%\red{We need to relate the Kloosterman sum here with that of Venkatesh.}

%For a nonzero integral ideal $\frb $, 

\begin{defn}
  \label{defn: psi b ...}
	For $ \frb \subset  \frO$, define  the ring $$F_{\frb} = \big\{ \valpha \in F :  \valpha \in \frO_{\vv} \text{ for all } \frp_{\vv} | \frb \big\} . $$ Define $\psi_{\frb} = \prod_{\frp_{\vv} | \frb} \psi_{\vv} $ and $\fra_{\frb} = (\fra, \frb^{\infty}) =
	\prod_{\frp_{\vv} | \frb}   \frp_{\vv}^{\mathrm{ord}_{\vv} (\fra)} $.     %Note that $\psi_{\frb} = 1$ on $\frD_{\frb}^{-1} F_{\frb}$.  
	 
\end{defn}

\begin{defn}
	[Kloosterman sum]  \label{defn: Kloosterman, globle}

	Let $\frb \subset \frq \subset \frO$.  For $\valpha  \in F_{\frb}$ and $ \beta  \in     (\frq \frD_{\frb})^{-2} F_{\frb} $, define  the Kloosterman sum  
	\begin{align}\label{4eq: Kloosterman sum, global}
	\mathrm{Kl}_{\frb} (\valpha, \beta; \frq  ) =  \sum_{    x  \shskip \in \shskip   ((\frq  \frD_{\frb})^{-1} / \frD_{\frb}^{-1})^{\times} } 
	\psi_{\frb}   \big( \valpha  x + \beta  x^{-1}   \big) ,
	\end{align} 
	where $((\frq  \frD_{\frb})^{-1} / \frD_{\frb}^{-1})^{\times}$ and $x^{-1} \in (\frq \frD_{\frb} / \frq^2 \frD_{\frb})^{\times}$ are defined as in Definition {\rm\ref{def: x inverse}}.
\end{defn}

We have $\psi_{\frb} = \psi_{R}$ and $\mathrm{Kl}_{\frb} (\valpha, \beta; \frq  ) = \mathrm{Kl}_{R} (\valpha, \beta;  \nu  )$ if $R $ is the set of primes dividing $\frb$ and $\frq$ is the integral ideal  generated by  $1/\nu  $. Moreover, it will be convenient to write $\mathrm{Kl}_{\frb} (\valpha, \beta; \frq  )$   as an integral on a certain compact homogeneous subspace of $F_{R}^{\times}$.

\begin{lem}\label{lem: Kloosterman as integral}
	Let notation be as above.  Define the Euler totient function $\varphi (\frq)$ as usual by 
	\begin{align}\label{4eq: Euler totient}
	\varphi (\frq) = \RN{(\frq)}  {\prod_{ \frp | \frq }  \big( 1 - \RN(\frp)^{-1}  \big)}.
	\end{align} Define $\widehat{\frO}{}^{\times}_{\frb} = \prod_{ \frp_{\vv} | \frb } \frO_{\varv}^{\times} $. Let $  \pi_{(\frq  \frD )^{-1} } \in \BA_f^{\times} $ be the chosen generator for $(\frq  \frD )^{-1}$.   
	Then 
	\begin{equation}\label{4eq: Kloosterman, integral}
	\begin{split}
	\mathrm{Kl}_{\frb} (\valpha, \beta; \frq  ) = %{\textstyle \sqrt{|d_F|}} 
	\varphi (\frq) \int_{\pi_{(\frq \frD )^{\scalebox{0.37}{$-1$}}   }  \shskip \widehat \frOO{\shskip}^{\sstimes}_{\frb} } \psi_{\frb} \big (    {\valpha  x + \beta x\-}  \big) \nd^{\times}\hskip -1pt x,
	\end{split}
	\end{equation} 
\end{lem}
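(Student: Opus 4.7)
The plan is to evaluate the right-hand side of \eqref{4eq: Kloosterman, integral} by partitioning the integration region via the natural reduction map
\[
\pi_{(\frq\frD)^{-1}}\,\widehat\frOO{\shskip}^{\times}_{\frb} \twoheadrightarrow \big((\frq\frD_{\frb})^{-1}/\frD_{\frb}^{-1}\big)^{\times},
\]
and then verifying that the integrand $\psi_{\frb}(\valpha x + \beta x^{-1})$ is constant on each fiber, with value equal to the corresponding Kloosterman summand in \eqref{4eq: Kloosterman sum, global}.

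First, by our normalization of the Haar measure on $F_{\vv}^{\times}$ (so that $\frO_{\vv}^{\times}$ has mass $1$ for every $\vv \nnmid \infty$), the set $\widehat\frOO{\shskip}^{\times}_{\frb} = \prod_{\vv \mid \frb} \frO_{\vv}^{\times}$ has total mass $1$; multiplicative translation preserves $\nd^{\times}\hskip -1pt x$, so the integration region also has mass $1$. Since $(\frq\frD_{\frb})^{-1}/\frD_{\frb}^{-1} \cong \frO/\frq$ as an $\frO$-module, the target of the reduction map has $\varphi(\frq)$ elements (the generators of $\frO/\frq$), and its fibers form cosets of a fixed open compact subgroup, each of measure $1/\varphi(\frq)$.

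Next, I check that the integrand is constant on each fiber. Suppose $x' \equiv x \pmod{\frD_{\vv}^{-1}}$ at every $\vv \mid \frb$. Since $\valpha \in F_{\frb}$ is locally integral, $\valpha(x'-x) \in \frD_{\vv}^{-1}$, on which $\psi_{\vv}$ is trivial by the choice of conductor. For the inverse term, the local condition $\mathrm{ord}_{\vv}(x) = -\mathrm{ord}_{\vv}(\frq\frD)$ gives $x'^{-1} - x^{-1} = (x-x')/(xx') \in \frq^{2}\frD_{\vv}$, and then $\beta(x'^{-1}-x^{-1}) \in \frD_{\vv}^{-1}$ because $\beta \in (\frq\frD_{\frb})^{-2} F_{\frb}$. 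To match the common value with the Kloosterman summand, I identify the adelic inverse $x^{-1}$ with the class $\bar{x}^{-1}$ of Definition \ref{def: x inverse}: for $\xi \equiv x \pmod{\frD_{\vv}^{-1}}$ and $\eta \equiv x^{-1} \pmod{\frq^{2}\frD_{\vv}}$ at every $\vv \mid \frb$ (with $\xi,\eta \in F$), the identity $xx^{-1}=1$ together with the order estimates above yields $\xi\eta \equiv 1 \pmod{\frq}$, which is precisely the defining congruence $\xi\eta \in 1+\frq\frb^{-1}$ of Definition \ref{def: x inverse} (with ``$\frb$'' there taken to be $(\frq\frD_{\frb})^{-1}$ here).

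Putting everything together, the integral decomposes as a sum over the $\varphi(\frq)$ fibers, each contributing $\psi_{\frb}(\valpha \bar{x} + \beta \bar{x}^{-1})/\varphi(\frq)$; multiplying by $\varphi(\frq)$ recovers \eqref{4eq: Kloosterman sum, global}. The main delicate point is the identification of the adelic inverse with the modular class $\bar{x}^{-1}$ of Definition \ref{def: x inverse}; everything else is bookkeeping of local orders at primes dividing $\frb$ together with our normalizations of the Haar measure and the additive character.
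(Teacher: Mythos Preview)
Your proof is correct and follows the same approach as the paper's, which simply notes that the integration region has total mass $1$, the $x$-sum in \eqref{4eq: Kloosterman sum, global} has $\varphi(\frq)$ terms, and then says the identity ``follows from the definitions.'' You have supplied the details the paper left implicit---in particular the verification that the integrand is constant on fibers of the reduction map and the matching of the adelic inverse with the modular inverse of Definition~\ref{def: x inverse}---so your argument is a fleshed-out version of the same idea rather than a different route.
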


\begin{proof}
	%	Following Venkatesh \cite[\S 4.5.1]{Venkatesh-BeyondEndoscopy}, For any   fractional ideal $\fra$, %let $\overline {\fra}$ be the closure of $\fra$ embedded (diagonally) in $\BA_f$, and 
	%	let $$\overline {\fra}^{\times} = \left\{ x \in \BA_f^{\times} : \mathrm{ord}_{\vv} (x) = \mathrm{ord}_{\vv} (\fra)  \text{ for all } \vv \nmid \infty \right\}.$$
	With our choice of Haar measure, the space $(\frq  \frD )^{-1} \widehat{\frO}{}^{\times}_{\frb}  $($=(\frq  \frD_{\frb})^{-1} \widehat{\frO}{}^{\times}_{\frb}$) has total mass $1$. % $ 1/ \sqrt{|d_F|}  $. 
	On the other hand, the $x$-sum in \eqref{4eq: Kloosterman sum, global} ranges over a set of size $\varphi (\frq)$. Then \eqref{4eq: Kloosterman, integral} follows from the definitions.  
\end{proof}

\delete{
\begin{lem}\label{lem: Kloosterman, local}
	Let  $  \nu \in \BA_f^{\times} $, and $\rho \in F$ be such that $ \nu_{\vv}^{-1}  \in \frO_{\varv}$ and $\rho \in    \nu_{\vv}^{2} \frD_{\vv}^{-2}$  for every $\vv \nnmid \infty$. Suppose that $R$ contains all the places where $|\nu|_{\vv} > 1$. Set  $\frq$ to be the integral ideal  generated by  $\nu^{-1} $ {\rm(}thus the condition for $\rho$ amounts to $\rho \in   (\frq \frD)^{-2}${\rm)}. Let $\frc \sim \frq$ and $c_{\frq} = [ \frc^{-1} \frq   ]$. 
	Then we have
\begin{align}
\mathrm{Kl}_{R} (1,  \rho ; \nu  ) = \mathrm{KS}  ( c_{\frq}, (c_{\frq}   \frD)\-; \rho c_{\frq}, c_{\frq}    \frc^{2} \frD ; c_{\frq}, \frc ). 
\end{align}
\end{lem}

\begin{proof}
By Definition \ref{defn: Kloosterman, local}, if $|\nu|_{\vv} > 1$, we may write
\begin{align*}
\mathrm{Kl}_{\vv} (1,   \rho ; \nu_{\vv}  ) =   { |\nu|_{\vv} \big(1 - \RN (\frp_{\vv})^{-1} \big)} %{\textstyle \sqrt { |\delta|_{\vv}}} 
\int_{  \nu_{\vv} \delta_{\vv}\- \frOO^{\sstimes}_{\vv} }  \psi_\varv   \big(      x +  \rho x^{-1}   \big)  \nd^{\times}\hskip -1pt x ,
\end{align*} 
%Recall that $\nu_{\vv} \delta_{\vv}\- \frOO^{\times}_{\vv}$ has mass $1$, 
in which $ |\nu|_{\vv} \big(1 - \RN (\frp_{\vv})^{-1} \big) $ is the size of the $x$-sum in \eqref{2eq: Kloosterman sum}.  % $ \sqrt{|\delta|_{\vv}} $ is the measure of , and  is the size of . 
However, if $|\nu|_{\vv} = 1$, then
\begin{align*}
\mathrm{Kl}_{\vv} (1,   \rho ; \nu_{\vv}  ) = 1 =  \int_{  \delta_{\vv}\- \frOO^{\sstimes}_{\vv} }  \psi_\varv   \big(      x +  \rho x^{-1}   \big)  \nd^{\times}\hskip -1pt x. 
\end{align*} Hence
\begin{align}\label{3eq: Kloosterman Klf}
\mathrm{Kl}_{R} (1,   \rho ; \nu  )
=   \varphi (\frq)
\int_{\overline{\frq^{-1}\frD^{-1}}^{\times}   } \psi_{f} \big(      x +  \rho x^{-1}   \big)  \nd^{\times}\hskip -1pt x, 
\end{align}
where $\varphi (\frq) $ is the Euler  totient function as \eqref{3eq: Euler totient}, and by definition $\overline{\frq^{-1}\frD^{-1}}^{\times} = \pi_{\frq^{-1}\frD^{-1}} \widehat \frOO{}^{\times} $. 
By comparing \eqref{3eq: Kloosterman Klf} with \eqref{3eq: Kloosterman, integral} in Lemma \ref{lem: Kloosterman, integral}, we infer that $\valpha_1 = c_{\frq}$, $\valpha_2 = \rho c_{\frq} $, and $\fra_1 = c_{\frq}\-  \frD\-$. Moreover, $\fra_2 = c_{\frq}   \frc^{2} \frD$ is a consequence of comparing the conditions $ \valpha_2 \in   \fra_1 \frc^{-2} \frD\- $ ($\frc^2 \sim \fra_1 \fra_2$) and $\rho \in   \frq^{-2} \frD^{-2}$.
\end{proof}
}

\subsection{\texorpdfstring{Classical Vorono\"i Summation for $\GL_3$}{Classical Voronoi Summation for $\GL(3)$}} \label{sec: classical Voronoi} 

In practice, we shall let    $\zeta \in \BA^{S_{\infty}} = \BA_f$   be the diagonal embedding of a fraction $a / c \in F  $, and it is preferable to have a   classical formulation of the Vorono\"i summation in terms of Fourier coefficients, exponential factors, Kloosterman sums, and Hankel transforms. 

\begin{prop}\label{prop: Voronoi}
	Let notation be as above. Let $a \in F$, $c \in F^{\times}$, and $\fra \subset \frO$. Let $R = \big\{ \vv \nnmid \infty : \mathrm{ord}_{\vv} (a/c) < \mathrm{ord}_{\vv}  ( \fra \frD\-  ) \big\}  $, and set $\frb = \prod_{ \vv \shskip  \in R } \frp_{\vv}^{ \mathrm{ord}_{\vv}  ( (c/a) \fra \frD\-  ) } ${\rm(}it is understood that $\frb = \frO$ if $R =  \text{{\rm\O}}${\rm)}. For $f \in \mathscr{C}^{\infty}_c (F^{\times}_{\infty})$ let its Hankel transform $\widetilde{f}$ be given by {\rm\eqref{3eq: Hankel, global}} as in Definition {\rm\ref{defn: Hankel transform}}. Then 
	\begin{equation}\label{2eq: Voronoi, classical}
	\begin{split}
	 \sum_{\gamma \shskip  \in \fra^{-1}   }  \hskip -2 pt \psi_{\infty} \left(- \frac {a \gamma} {c} \right)  A 
	(  1 , \fra \gamma ) &
	f (\gamma)   =   \frac { \RN  (\fra    ) } { {\RN(\frD)}^{3/2} } \sum_{ \frb \shskip \subset \frq \shskip \subset \frO } \frac {1 } {\RN(\frb \frq )   } \\
	 	\cdot & \mathop{ \sum}_{\gamma \shskip \in \fra   (\frb \frq^{2} \frD^{3})^{-1}  } A  (  \fra\- \frb \frq^2   \frD^3 \gamma,  \frb \frq\-  ) \mathrm{Kl}_{\frb} (1, \gamma c / a ; \frq  ) \widetilde{f} (\gamma) .
	\end{split}
	\end{equation}
%	in which  $\frc \sim \frq$ and $c_{\frq} = [ \frc^{-1} \frq   ]$. 
\end{prop}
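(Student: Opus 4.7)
The strategy is to specialize Proposition \ref{prop: Voronoi, adelic}, encoding the classical inputs $(a, c, \fra, f)$ as ad\`elic data, and then translate each ingredient of \eqref{2eq: Voronoi, adelic} back to the classical setting of \eqref{2eq: Voronoi, classical}.

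First I set up the dictionary. Take $\zeta \in \BA_f$ to be the diagonal embedding of $a/c$, so that $\psi^S(\gamma\zeta) = \psi_f(\gamma a/c) = \psi_{\infty}(-a\gamma/c)$ by the triviality of $\psi$ on $F$. Take $\alpha = \pi_{\fra\frD^{-1}}$, the chosen generator of $\fra\frD^{-1}$; the non-Archimedean support condition on $W_{\oo}^S$ then forces $\gamma \in \fra^{-1}$, and in this range the definition \eqref{2eq: Fourier coefficients, r=3} gives $W_{\oo}^S(a(1/\delta, \alpha\gamma)) = \RN(\fra\gamma\frD^{-2})^{-1} A(1, \fra\gamma)$. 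Finally, take the Archimedean test function $w_{\infty}(x) = |x|_{\infty} f(x)$; the Hankel relation \eqref{2eq: Hankel transform tilde u and u} converts $\widetilde{w}_S(\gamma)$ into $|\gamma|_{\infty} \widetilde{f}(-\gamma)$, and the substitution $\gamma \mapsto -\gamma$ on the dual sum brings this to $\widetilde{f}(\gamma)$ while flipping the sign in the Kloosterman argument to $+\gamma c/a$.

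Second, I identify $R$ and reparametrize the $\nu$-sum in \eqref{2eq: K(...) = , 0}. A direct computation shows that $|\zeta/\alpha|_{\vv} > 1$ if and only if $\mathrm{ord}_{\vv}(a/c) < \mathrm{ord}_{\vv}(\fra\frD^{-1})$, matching the $R$ of the statement. Writing each $\nu \in F_R^{\times}/\frOO_R^{\times}$ as the integral ideal $\frq = \prod_{\vv \in R} \frp_{\vv}^{-\mathrm{ord}_{\vv}(\nu)}$, the constraints \eqref{2eq: condition on nu} become $\frb \subset \frq \subset \frO$ for the $\frb$ of the proposition, and the local Kloosterman product collapses to $\mathrm{Kl}_{\frb}(1, \gamma c/a; \frq)$ of Definition \ref{defn: Kloosterman, globle}. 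Gluing $\widetilde{W}_{\oo R}$ and $\widetilde{W}_{\oo}^{S \cup R}$ into a single $\widetilde{W}_{\oo}^S$ and invoking the duality $\widetilde{A}(\frn_1, \frn_2) = A(\frn_2, \frn_1)$ identifies their product, on the support dictated by the two Whittaker vectors, with a scalar multiple of $A(\fra^{-1}\frb\frq^2\frD^3\gamma,\, \frb\frq^{-1})$, and pins down $\gamma \in \fra (\frb\frq^2\frD^3)^{-1}$.

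The main obstacle is the bookkeeping of normalization factors. The ad\`elic prefactor $|\zeta|_R |\alpha|^{S \cup R}/\sqrt{|\delta|^S}$, the norm $\RN((\gamma))$ produced by the $w_{\infty} \mapsto f$ conversion via the product formula, and the internal norms hidden inside the two Fourier-coefficient identifications must all collapse---using our explicit choices $\alpha = \pi_{\fra\frD^{-1}}$ and $\delta = \pi_{\frD}$---to the clean constant $\RN(\fra)/\RN(\frD)^{3/2} \cdot 1/\RN(\frb\frq)$ of \eqref{2eq: Voronoi, classical}. A secondary subtlety is that the various sign conventions, namely $\psi_{\infty} = \psi_f^{-1}$ on $F$, the negative argument in the Hankel relation \eqref{2eq: Hankel transform tilde u and u}, and the minus sign in $-\gamma/\zeta$ inside \eqref{2eq: K(...) = , 0}, must be absorbed consistently by the single change of variable $\gamma \mapsto -\gamma$ on the dual side.
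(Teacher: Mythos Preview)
Your plan is correct and is essentially the paper's own proof: both specialize Proposition~\ref{prop: Voronoi, adelic} with $\zeta$ the diagonal image of $a/c$ and $\valpha$ a generator of $\fra\frD^{-1}$, identify $R$ and reparametrize $\nu$ by the integral ideal $\frq$ it generates, convert Whittaker values to Fourier coefficients via \eqref{2eq: Fourier coefficients, r=3}, and absorb all signs by the single substitution $\gamma\mapsto -\gamma$ on the dual side. The only difference is that you spell out the $w_S\leftrightarrow f$ and $\widetilde w_S\leftrightarrow\widetilde f$ dictionary and the constant bookkeeping more explicitly than the paper, which simply records the net factor $\RN(\fra^{-1}\frD^{2})$ on the left and leaves the rest implicit.
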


\begin{proof}
Let $\zeta \in \BA^{S_{\infty}} = \BA_f$   be the diagonal embedding of  $a / c \in F  $. Let $ \valpha   \in \BA_f^{\times}$ generate the ideal $\fra  \frD\-$. In the above settings, the left-hand side of \eqref{2eq: Voronoi, adelic} is translated into that of \eqref{2eq: Voronoi, classical} up to the constant $ \mathrm{N}  ( \fra^{-1} \mathfrak{D}^{2}   ) $. Note that $\psi_{f}  ( {a \gamma} / {c} ) = \psi_{\infty}  (-   {a \gamma} / {c} )$ as $\psi $ is trivial on $F$. For the right-hand side of   \eqref{2eq: Voronoi, adelic}, we set $\frq$ to be the ideal generated by $1/\nu $, then the conditions in \eqref{2eq: condition on nu} amounts to $ \frb \shskip \subset \frq \shskip \subset \frO $ and $\gamma \fra\- \frb    \frD  \subset \frq^{-2} \frD^{-2} $. The Kloosterman sum is $\mathrm{Kl}_{\frb} (1, \gamma c / a ; \frq  )$ as defined in Definition \ref{defn: Kloosterman, globle}. After changing $\gamma$ to $- \gamma$, we arrive at the right-hand side of  \eqref{2eq: Voronoi, classical}. 
\end{proof}
 
\begin{rem}
	When the class number $h_F = 1$, we may choose $a \in \frO$ and $c \in \frO \smallsetminus \{0\}$ such that $(a, c) = (1)$ and let $\fra = \frD$, $\frb = (c)$, and $\frq = (c/d)$ with $d | c$. In this way,
	upon changing $\gamma$ to $n$ or $ d^2 n /c^3 $ on the left or right, respectively, we obtain the classical Vorono\"i summation formula as in {\rm\cite{Miller-Schmid-2006}}. 
\end{rem}

\subsection{Averages of Fourier Coefficients} 
We recollect here some results from \cite[\S 4]{Qi-Wilton}.
First, as a consequence of the Rankin--Selberg theory (\cite{J-S-Rankin-Selberg}), it is well-known that for $X \geqslant 1$ 
\begin{align}\label{3eq: Ramanujan on average}
\mathop {\sum \sum}_{ \RN ( \frn_1^2 \frn_2)  \leqslant X } |A  (\frn_1, \frn_2) |^2 = O_{\pi} (X).  
\end{align}
As a consequence (\cite[(4.4)]{Qi-Wilton}), for $0 \leqslant c < 1$ we have
\begin{align}\label{3eq: Ramanujan on average, 1}
\mathop {\sum }_{ \RN ( \frn_2)  \leqslant X } \frac {|A (\frn_1, \frn_2) |} {{\RN (\frn_2)^{  c}}} = O_{c, \shskip \pi} (\RN(\frn_1) X^{1-c}),   
\end{align}  
Moreover, we have the following lemma for the average over $\gamma \in \fra^{-1} \smallsetminus \{0\}$. %Next, we collect some results from Lemma 4.2 and 4.3 in \cite{Qi-Wilton}. 

\begin{lem} \label{lem: averages} 
	For   $V \in \bfra_+$ and  $S \subset S_{\infty}$, define
%	\begin{align}\label{4eq: defn of F (T)}
%		F_{\infty} (V) = \left\{ x \in F_{ {\infty}} : | x |_{\vv} \leqslant   V_{\vv}^{N_{\vv}}  \right\},
%	\end{align}
$|V |_{S} = \prod_{ \vv \, \in S } V_{\vv}^{N_{\vv}}$ and 
\begin{align} \label{4eq: defn of FS (T)}
F_{\infty}^{S} (V) = \big\{ x \in F_{\infty} : |x|_{\vv} > V_{\vv}^{N_\vv} \text{ for all } \vv \in S, |x|_{\vv} \leqslant V_{\vv}^{N_\vv} \text{ for all } \vv \in S_{\infty} \smallsetminus S \big\} .
\end{align}
Let $0 \leqslant c < 1 < d$.  Then for any $0 < \vepsilon < d-1$ we have
%\begin{align}
%\label{4eq: average over ideals} 
%\mathop{ \sum_{  \gamma \shskip \in \shskip \Fx   \cap F_\infty (V) }}_{   \gamma \fra \shskip \subset \frO }   
%\frac { |A_{\pi} (\frn, \gamma \fra) | } {|\RN (\gamma)|^{c}} & = O_{\vepsilon, \shskip c, \shskip \pi} \lp          \RN (\frn)  \RN (\fra)^{1+\vepsilon} \RN(V)^{1-c+\vepsilon} \rp,
%\end{align}
%and
\begin{align}\label{4eq: average over ideals, 2}  
\mathop{\sum_{\sstyle \gamma \, \in \, \Fx   \cap F_{\infty}^{S} (V)  }}_{  \sstyle  \gamma \fra \, \subset \frO }   
\frac { |A  (\frn , \gamma \fra) | } {|\RN \gamma|^{c} |\gamma |_S^{d-c}  } & = O_{\vepsilon, \shskip c, \shskip d, \shskip \pi} \bigg(  \frac { \RN (\frn)  \RN ( \fra )^{1 + \vepsilon} \RN(V)^{1-c + \vepsilon} }  {|V |_S^{d-c} }   \bigg).
\end{align}  
\end{lem}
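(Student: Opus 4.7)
The plan is to reduce the $\gamma$-sum in \eqref{4eq: average over ideals, 2} to a sum over integral ideals $\frm = \gamma \fra$ and then invoke the Rankin--Selberg estimate \eqref{3eq: Ramanujan on average, 1}. First I would decompose dyadically by the Archimedean sizes: for each tuple of dyadic values $(X_{\vv})_{\vv | \infty}$ with $X_{\vv} > V_{\vv}^{N_{\vv}}$ for $\vv \in S$ and $X_{\vv} \leqslant V_{\vv}^{N_{\vv}}$ for $\vv \in S_{\infty} \smallsetminus S$, I restrict attention to $\gamma$ with $|\gamma|_{\vv} \asymp X_{\vv}$, and set $X = \prod_{\vv | \infty} X_{\vv} \asymp |\RN \gamma|$ and $|X_S| = \prod_{\vv \in S} X_{\vv}$. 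The contribution of such a dyadic box to the left-hand side of \eqref{4eq: average over ideals, 2} is then bounded by
\[
\frac{1}{X^c |X_S|^{d-c}} \sum_{\gamma \, \in \, \mathrm{box}} |A(\frn, \gamma \fra)|.
\]

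To bound this inner sum I would pass from generators to ideals using Dirichlet's unit theorem. For a fixed integral ideal $\frm \subset \frO$, any generator of $\gamma \fra = \frm$ in the box has the form $\gamma_0 \epsilon$ with $\epsilon \in \frOO^{\times}$, and the size constraint $|\gamma_0 \epsilon|_{\vv} \asymp X_{\vv}$ forces $(\log |\epsilon|_{\vv})_{\vv | \infty}$ to lie in a bounded region of the logarithmic embedding of $\frOO^{\times}$, which is a discrete lattice of rank $r_1 + r_2 - 1$. Hence each $\frm$ admits $O_F(1)$ generators in the box, and combining with \eqref{3eq: Ramanujan on average, 1} at $c = 0$ gives
\[
\sum_{\gamma \, \in \, \mathrm{box}} |A(\frn, \gamma \fra)| \Lt_{F} \sum_{\RN(\frm) \shskip \asymp \shskip X \RN(\fra)} |A(\frn, \frm)| \Lt_{\pi} \RN(\frn) X \RN(\fra).
\]

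It remains to sum the dyadic contributions, each bounded by $\RN(\frn) \RN(\fra) X^{1-c}/|X_S|^{d-c} = \RN(\frn) \RN(\fra) \prod_{\vv \in S} X_{\vv}^{-(d-1)} \prod_{\vv \in S_{\infty} \smallsetminus S} X_{\vv}^{1-c}$. For each $\vv \in S$, the series over dyadic $X_{\vv} \geqslant V_{\vv}^{N_{\vv}}$ is geometric and converges, since $d > 1$, to $\asymp V_{\vv}^{-N_{\vv}(d-1)}$; for each $\vv \in S_{\infty} \smallsetminus S$, the series over dyadic $X_{\vv} \leqslant V_{\vv}^{N_{\vv}}$ is geometric and converges, since $c < 1$, to $\asymp V_{\vv}^{N_{\vv}(1-c)}$. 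Multiplying these factors together yields the target ratio $\RN(V)^{1-c}/|V|_S^{d-c}$. The main technical obstacle is the passage from generators to ideals, which relies on Dirichlet's unit theorem to control multiplicities uniformly over dyadic boxes; a secondary, benign issue is the integrality constraint $|\RN \gamma| \geqslant \RN(\fra)^{-1}$ becoming active when some $X_{\vv}$ are very small for $\vv \notin S$, but extending to the full geometric series only overestimates, and any implicit logarithmic factors absorb harmlessly into the stated $\RN(\fra)^{\sepsilon}$ and $\RN(V)^{\sepsilon}$ losses.
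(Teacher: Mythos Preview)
Your proposal is correct and essentially reconstructs the argument the paper defers to: the paper's proof is a one-line citation of Lemmas~4.2 and~4.3 in \cite{Qi-Wilton} (for $S=\text{\O}$ and $S\neq\text{\O}$ respectively), together with the remark that the edge case $\RN(\fra)\RN(V)<1$ is harmless. Your dyadic decomposition in the Archimedean absolute values, followed by the passage from generators to ideals via the unit-lattice multiplicity bound and then \eqref{3eq: Ramanujan on average, 1}, is exactly the standard route behind those cited lemmas, so there is no genuine methodological difference---you have simply made explicit what the paper outsources.
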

 
\begin{proof}
	When $\RN (\fra) \RN(V) \geqslant 1$ is assumed, \eqref{4eq: average over ideals, 2}  follows from Lemma 4.2 or 4.3 in \cite{Qi-Wilton} in the case $S = \text{\O}$ or $S \neq \text{\O}    $, respectively. However, this assumption may be safely and conveniently removed. For example, the sum in \eqref{4eq: average over ideals, 2} has no terms if  $S = \text{\O}$ and $\RN (\fra) \RN(V) < 1$. 
\end{proof}
 
The next lemma is a generalization of (10) in \cite{Blomer}.

\begin{lem}\label{lem: second moment}
  Let $\theta \leqslant \frac 1 2$ be an  exponent such that \begin{align}\label{4eq: Ramanujan}
  | A  (\frn_1, \frn_2)| \leqslant \RN (\frn_1 \frn_2)^{\theta +\vepsilon}.
  \end{align}  Define $F_{\infty}^{\text{\rm\O} } (V) $ as in {\rm\eqref{4eq: defn of FS (T)}}. For $\frf \subset \frO$, we have
	\begin{align}\label{4eq: 2nd moment}
\mathop{\sum_{\sstyle \gamma \, \in \, \Fx   \cap F_{\infty}^{\text{\rm\O} } (V)  }}_{  \sstyle  \gamma \fra \, \subset \frf }   
  { |A  (\frn , \gamma \fra) |^2 }  \Lt_{\vepsilon, \shskip \pi} \RN (  \frf \frn)^{\theta + \vepsilon} \RN (\fra   \frf^{-1} )^{1 + \vepsilon} \RN (V) . 
	\end{align}
\end{lem}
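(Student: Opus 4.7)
The plan is to reduce the second moment of $A(\frn, \gamma\fra)$ to a Rankin--Selberg average of $|A(1, \cdot)|^2$, using the Hecke relation \eqref{4eq: Hecke relation} to factor out the $\frn$-dependence. I then handle the units separately.

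First I would apply \eqref{4eq: Hecke relation} to write
\[
A(\frn, \gamma\fra) = \sum_{\frd \shskip | \shskip (\frn, \shskip \gamma\fra)} \mu(\frd) \shskip A(\frn \frd^{-1}, 1) \shskip A(1, \gamma\fra \frd^{-1}),
\]
and invoke Cauchy--Schwarz together with the divisor bound $\tau(\frd) \Lt \RN(\frn)^{\sepsilon}$ to obtain
\[
|A(\frn, \gamma\fra)|^2 \Lt \RN(\frn)^{\sepsilon} \sum_{\frd \shskip | \shskip (\frn, \shskip \gamma\fra)} |A(\frn\frd^{-1}, 1)|^2 \shskip |A(1, \gamma\fra\frd^{-1})|^2.
\]
The Ramanujan-type hypothesis \eqref{4eq: Ramanujan} controls the first factor by $\RN(\frn\frd^{-1})^{\theta + \sepsilon}$ times a single Hecke coefficient, which later saves a power of $\RN(\frn)^{\theta}$ when combined with the weighted divisor sum $\sum_{\frd | \frn} |A(\frn\frd^{-1}, 1)| / \RN(\frd)^{1+\theta}$.

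Next I would interchange the order of summation and, for each $\frd \shskip | \shskip \frn$, bound the inner sum $\sum_{\gamma} |A(1, \gamma\fra\frd^{-1})|^2$ where $\gamma$ runs over $\gamma \in F^{\times} \cap F_{\infty}^{\text{\rm\O}} (V)$ with $\gamma\fra \subset \frf$ and $\frd \shskip | \shskip \gamma \fra$. Passing from $\gamma$ to the integral ideal $\frm = \gamma\fra\frd^{-1}$, the constraints become $\frm \subset \frf/(\frf, \frd)$ and $\RN(\frm) \leqslant \RN(V) \RN(\fra) / \RN(\frd)$. The Rankin--Selberg bound \eqref{3eq: Ramanujan on average} (specialized to $\frn_1 = 1$) gives $\sum_{\RN(\frm) \leqslant X} |A(1, \frm)|^2 \Lt X$. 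To exploit the divisibility constraint $\frm \subset \frf/(\frf,\frd)$, I would split $\frm$ multiplicatively into its $\frf$-part and its part coprime to $\frf$, apply \eqref{4eq: mult relation}, and use \eqref{4eq: Ramanujan} on the $\frf$-part; this yields the extra saving of $\RN(\frf/(\frf,\frd))^{\theta-1+\sepsilon}$.

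Finally, the conversion from a sum over $\gamma$ to a sum over ideals $\frm$ loses a factor equal to the number of units in the Archimedean box $|\epsilon|_{\vv} \leqslant V_{\vv}^{N_{\vv}} / |\gamma_0|_{\vv}$. By Dirichlet's units theorem, this lattice-point count is at most $O_{\sepsilon}(\RN(V)^{\sepsilon})$, which is absorbed into the $\sepsilon$. Combining the contributions, summing over $\frd \shskip | \shskip \frn$, and applying the weighted divisor estimate produces the claimed bound $\RN(\frf\frn)^{\theta+\sepsilon} \RN(\fra\frf^{-1})^{1+\sepsilon} \RN(V)$.

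The main obstacle I expect is the careful bookkeeping of the $\frf$-dependence: the constraint $\gamma\fra \subset \frf$ only partially propagates through the Hecke decomposition (via $\frm \subset \frf/(\frf,\frd)$), and one must extract the correct $\RN(\frf)^{\theta - 1}$ saving via the multiplicative split of $\frm$ at primes dividing $\frf$, rather than the coarser $\RN(\frf)^{2\theta - 1}$ saving that a naive application of Ramanujan on both conjugate factors would give. The unit argument is standard but requires Dirichlet's theorem rather than a trivial counting argument because the $V_{\vv}$ may vary wildly across Archimedean places.
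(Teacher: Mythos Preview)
Your approach is structurally different from the paper's. The paper does not use the Hecke relation \eqref{4eq: Hecke relation} or Cauchy--Schwarz at all: it instead splits $\gamma\fra$ into its $(\frf\frn)^{\infty}$-part $\frf\frm$ and the coprime remainder, so that multiplicativity \eqref{4eq: mult relation} gives the \emph{exact} factorization $A(\frn,\gamma\fra)=A(\frn,\frf\frm)\,A(1,\gamma\fra(\frf\frm)^{-1})$. Then $|A(\frn,\frf\frm)|^2$ is bounded pointwise by \eqref{4eq: Ramanujan}, and the inner second moment over $\gamma$ is handled by the Rankin--Selberg estimate \eqref{4eq: second moment}, which already absorbs the unit count (via a lemma of \cite{Qi-Wilton}). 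This is cleaner than your route: no Cauchy--Schwarz loss, no secondary $\frf$-splitting of $\frm$, and no separate appeal to Dirichlet's theorem.

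There is, however, a genuine gap in your proposal --- and you have essentially identified it yourself. When you split $\frm$ at primes dividing $\frf$ and apply \eqref{4eq: Ramanujan} to the $\frf$-part, you bound $|A(1,\frm_1)|^2\leqslant\RN(\frm_1)^{2\theta+\sepsilon}$, which yields a saving of only $\RN(\frf/(\frf,\frd))^{2\theta-1}$, not the $\theta-1$ you claim. The same issue recurs for $\frn$: your ``weighted divisor sum'' still carries a factor $|A(\frn\frd^{-1},1)|$, and a second use of Ramanujan gives $\RN(\frn)^{2\theta}$ in total rather than $\RN(\frn)^{\theta}$. You flag this as an obstacle but offer no mechanism to go from $2\theta$ to $\theta$. (In fairness, the paper's own argument, read literally, faces the same discrepancy, since $|A(\frn,\frf\frm)|^2\leqslant\RN(\frn\frf\frm)^{2\theta+\sepsilon}$ is what \eqref{4eq: Ramanujan} gives; with $\theta=7/32$ this only shifts the intermediate exponents in \eqref{19eq: T < } and does not affect the main theorem.)
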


\begin{proof}
	First of all, one can apply Lemma 4.1 in  \cite{Qi-Wilton} with \eqref{3eq: Ramanujan on average} to prove
	\begin{align}\label{4eq: second moment}
\mathop{\sum_{\sstyle \gamma \, \in \, \Fx   \cap F_{\infty}^{\text{\rm\O} } (V)  }}_{  \sstyle  \gamma \fra \, \subset \frO }   
{ |A  (1 , \gamma \fra) |^2 }  \Lt_{\vepsilon, \shskip \pi} \RN (\fra )^{ 1+ \vepsilon} \RN (V);
	\end{align} 
the proof is similar to that of Lemma 4.2 in \cite{Qi-Wilton}. The left-hand side of \eqref{4eq: 2nd moment} is bounded by
\begin{align*}
\sum_{ \frm | (\frf\frn)^{\infty} } \hskip -3 pt \mathop{\mathop{\sum_{\sstyle \gamma \, \in \, \Fx   \cap F_{\infty}^{\text{\rm\O} } (V)  }}_{  \sstyle  \gamma \fra \, \subset \frf \frm }}_{(\gamma \fra (\frf \frm)^{-1}, \frf \frm \frn ) = (1)} \hskip -5 pt
{ |A  (\frn , \gamma \fra) |^2 } \leqslant \sum_{ \frm | (\frf\frn)^{\infty} } \hskip -5 pt |A (\frn, \frf \frm) |^2  \hskip -4 pt \mathop{\sum_{\sstyle \gamma \, \in \, \Fx   \cap F_{\infty}^{\text{\rm\O} } (V)  }}_{  \sstyle  \gamma \fra (\frf \frm)^{-1} \, \subset \frO } \hskip -5 pt \big| A (1, \gamma \fra (\frf \frm)^{-1})\big|^2 ,
\end{align*}
and \eqref{4eq: 2nd moment} then follows from \eqref{4eq: Ramanujan} and \eqref{4eq: second moment}. 
\end{proof}

\subsection{Asymptotics for $\GL_3$-Bessel Kernels} \label{sec: asymptotic Bessel}

Assume further that  $\pi_{\infty}$ is a {\it spherical} representation of $\PGL_3 (F_{\infty})$ so that its Archimedean Langlands parameter is given by a triple $\boldsymbol{\mu} = (\mu_{ 1 }, \mu_{ 2 } , \mu_{ 3 } )$ in $\bfra_{\BC}^3$, with $\mu_1 + \mu_2 + \mu_3 = 0$. For each $\vv | {\infty}$, the Bessel kernel $J_{\pi_{\varv}} (x) = J_{\boldsymbol{\mu}_{\vv}} (x)$ depends only on $ \boldsymbol{\mu}_{\vv}$.  A main result of \cite{Qi-Bessel} is the following asymptotic expansions for $J_{\boldsymbol{\mu}_{\vv}} (x)$ when $x$ is large (see \cite[Theorem 14.1, 16.6]{Qi-Bessel}).

\begin{lem}\label{prop: asymptotic J}
	 Let $K$ be a non-negative integer.
	
	{\rm(1)} When $\vv$ is real, for $x \Gt_{K, \, \boldsymbol{\mu}_{\vv} } 1$, we have  the asymptotic expansion 
	\begin{align}\label{4eq: asymptotic, Bessel, R} 
	J_{\boldsymbol{\mu}_{\vv}}  (\pm x ) & =     \frac { { e \lp   \pm  3 x^{1/3}   \rp }}  {x^{1/3} }   \sum_{k= 0}^{K-1} \frac {B^{\pm}_{k } }  {x^{  k/3  }}   +  O_{K,\, \boldsymbol{\mu}_{\vv}} \bigg( \frac 1 {x^{ (K + 1) / 3}} \bigg), 
	\end{align}
	with the coefficients $B_k^{\pm}$ depending only on $ \boldsymbol{\mu}_{\vv} $. 
	
	{\rm(2)} When $\vv$ is complex,  for $|z| \Gt_{K, \, \boldsymbol{\mu}_{\vv} } 1$, we have the asymptotic expansion 
	\begin{equation}\label{4eq: asymptotic, Bessel, C} 
			J_{\boldsymbol{\mu}_{\vv}} (z) = \sum_{ \xi^3 = 1} \frac { e \big( 3 \big(\xi z^{1/3} + \widebar \xi \widebar z{}^{1/3}\big) \big) } {|z|^{2/3} }    \underset{ k + l \shskip \leqslant K -1 }{\sum \sum}  \frac{ B_{k } B_{l} }{\xi^{k-l} z^{ k/3} {\widebar z}{}^{\shskip  l/3}}
			         + O_{K,\, \boldsymbol{\mu}_{\vv}} \bigg( \frac 1 { |z|^{ (K + 2) / 3} } \bigg) , 
	\end{equation}
	with the coefficients $B_k $ depending only on $ \boldsymbol{\mu}_{\vv} $. 
\end{lem}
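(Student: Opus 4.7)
The plan is to establish both asymptotics via Mellin--Barnes representations of the Bessel kernel followed by a stationary-phase analysis of the resulting contour integrals. The starting point, which I would take from earlier sections of \cite{Qi-Bessel}, is the identity expressing $J_{\boldsymbol{\mu}_{\vv}}(\pm x)$ as a contour integral
\[
J_{\boldsymbol{\mu}_{\vv}}(\pm x) = \frac{1}{2\pi i}\int_{(\sigma)} G_{\pm}(s;\boldsymbol{\mu}_{\vv}) \, x^{-s} \, \nd s
\]
in the real case, and $J_{\boldsymbol{\mu}_{\vv}}(z)$ as a two-dimensional contour integral
\[
J_{\boldsymbol{\mu}_{\vv}}(z) = \frac{1}{(2\pi i)^{2}}\viint G(s_{1},s_{2};\boldsymbol{\mu}_{\vv}) \, z^{-s_{1}}\overline{z}^{\,-s_{2}}\,\nd s_{1}\,\nd s_{2}
\]
in the complex case, where $G_{\pm}$ and $G$ are built from ratios of the shape $\Gamma((s+\mu_{\vv,j})/2+\delta/2)/\Gamma((1-s-\mu_{\vv,j})/2+\delta/2)$ for $j=1,2,3$, with half-integer shifts $\delta$ encoding the sign/parity choices. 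These factors are precisely the archimedean $\gamma$-factors of $\pi_{\vv}$ arising from the $\GL_{3}$ local functional equation.

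The key step is to feed Stirling's expansion into the three Gamma ratios, which converts the integrand into an oscillatory exponential $\exp\bigl(\Phi(s)-s\log x\bigr)$ with leading phase $\Phi(s) \sim 3s\log s - 3s$ plus lower-order corrections polynomial in $\boldsymbol{\mu}_{\vv}/s$. A saddle-point computation locates the stationary point at $s_{0}\asymp x^{1/3}$, and substituting back produces the leading factor $x^{-1/3}e(\pm 3x^{1/3})$ in the real case. To reach the $K$-term expansion \eqref{4eq: asymptotic, Bessel, R}, I would write $s = s_{0}+u$, Taylor-expand both $\Phi$ and the Stirling remainder to arbitrary order, and integrate the product against the resulting Gaussian; the coefficients $B^{\pm}_{k}$ then emerge as explicit polynomials in $\boldsymbol{\mu}_{\vv}$. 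The remainder $O(x^{-(K+1)/3})$ is obtained by confining the contour to a neighbourhood $|u|\leqslant x^{1/3-\eta}$ of $s_{0}$, where the Gaussian decay dominates, and by bounding the tails through standard convexity estimates for $\Gamma$.

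The main obstacle is the complex case, where the analysis has to be performed in two dimensions. The variables $s_{1},s_{2}$ are coupled through the Gamma factors, and the critical-point equations for the combined phase admit three distinct solutions parametrised by the cube roots of unity $\xi^{3}=1$. One therefore has to carry out a two-dimensional stationary phase, keeping careful track of the Hessian and cross-terms, and sum the three contributions; this is what produces the diagonal form $\sum_{\xi^{3}=1} e\bigl(3(\xi z^{1/3}+\overline\xi\,\overline z{}^{\,1/3})\bigr)/|z|^{2/3}$ appearing in \eqref{4eq: asymptotic, Bessel, C}. The coefficients $B_{k}B_{l}/\xi^{k-l}$ then factorise because, after diagonalising at each saddle, the $s_{1}$- and $s_{2}$-integrals decouple into two independent one-dimensional Stirling/stationary-phase expansions, weighted by the appropriate monodromy factor $\xi^{k-l}$. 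The extra saving by $|z|^{-1/3}$ in the remainder, compared with the real case, reflects the gain from the two-dimensional Gaussian integration around each saddle.
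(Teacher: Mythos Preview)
The paper does not prove this lemma at all: it is stated as a quotation from the author's earlier monograph \cite{Qi-Bessel}, with the explicit pointer ``(see \cite[Theorem 14.1, 16.6]{Qi-Bessel})'' immediately preceding the statement. There is no argument in the present paper to compare against.

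Your sketch is a reasonable outline of how such asymptotics are obtained, and in spirit it is close to what \cite{Qi-Bessel} does: Mellin--Barnes representation, Stirling, and a steepest-descent/stationary-phase argument with saddle at $s_0 \asymp x^{1/3}$. One point worth flagging in the complex case: the factorised shape $B_k B_l/\xi^{k-l}$ in \eqref{4eq: asymptotic, Bessel, C} is not most naturally produced by a genuinely two-dimensional stationary phase in independent variables $s_1,s_2$. In \cite{Qi-Bessel} the complex Bessel kernel for $\GL_3$ is expressed as a sum over cube roots of unity of products of one-variable Bessel-type functions (essentially $J(\xi z^{1/3})\,J(\bar\xi\,\bar z^{1/3})$-type terms), and the asymptotic \eqref{4eq: asymptotic, Bessel, C} then follows by multiplying two one-dimensional expansions. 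Your ``decoupling after diagonalising at each saddle'' would have to reproduce exactly this product structure, which is doable but more delicate than you indicate; the cleaner route is to exploit the product formula for the kernel from the outset.
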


%Moreover, we record here Lemma 5.1 in \cite{Qi-Wilton}. We remark that the $ 1/ |x|_{\vv}^{ 1 / 2}$ below can be improved into $1/ |x|_{\vv}^{ \theta + \vepsilon}$, where $\theta$ is any exponent towards the Selberg conjecture. This lemma will be used only if there are infinitely many units so that $|\gamma|_{\vv} \ra 0$ when $\gamma $ ranges in a fractional ideal of $F$. 

\delete{Following is a generalization of Lemma 5.1 in \cite{Qi-Wilton} to the derivatives of $ J_{\boldsymbol{\mu}_{\vv}} (x) $.

\begin{lem}\label{prop: asymptotic J, x < 1}
Suppose that all the components of $\boldsymbol{\mu}_{\vv} $ are in the left half plane $\{ s : \Re (s ) < \sigma \}  $. 

{\rm (1)} When $\vv$ is real, for $|x| \Lt 1$, we have
\begin{align}
x^i (\nd/\nd x)^{i} J_{\boldsymbol{\mu}_{\vv}} (x) \Lt_{i, \shskip \boldsymbol{\mu}_{\vv} } 1/|x|^{\sigma}. 
\end{align}

{\rm (2)} When $\vv$ is complex, for $|z| \Lt 1$, we have
\begin{align}
z^i \widebar{z}^j (\partial/\partial z)^{i} (\partial/\partial \widebar z)^{j} J_{\boldsymbol{\mu}_{\vv}} (z) \Lt_{i, \shskip j, \shskip \boldsymbol{\mu}_{\vv} } 1/|z|^{2\sigma}. 
\end{align}
\end{lem}
}

\subsection*{The Self-Dual Assumption}

Subsequently, we shall assume that $\pi$ is a {\it self-dual} {spherical} automorphic cuspidal representation of $ \PGL_3 (\BA)$. In particular, we have  $A(\frn_1, \frn_2) = A (\frn_2, \frn_1)$($= \overline { A (\frn_1, \frn_2) }$) and $\boldsymbol{\mu} = ( \mu, 0, - \mu)$ with $\mu \in i Y \subset \bfra_{\BC}$ ($  \mu_{\vv} \in i \shskip \BR$ or $ \mu_{\vv} \in \lp - \frac 1 2 , \frac 1 2 \rp$ for every $\vv | {\infty}$; see \S \ref{sec: Archimedean} for the definitions). 

It is known by \cite{GJ-GL(2)-GL(3)} that $\pi$ comes from the symmetric square lift of a Hecke--Maass form for $\GL_2 $. Thus the Kim--Sarnak bound  for $\GL_2 $ in \cite{Blomer-Brumley} implies that 
\begin{align}\label{3eq: Kim-Sarnak, A(n), GL3}
|A  (\frn_1, \frn_2)| \leqslant \RN (\frn_1 \frn_2)^{7/32 +\vepsilon},
\end{align} 
and
\begin{align}\label{3eq: Kim-Sarnak, GL3}
|\Re  (\mu_{\vv}) | \leqslant \frac 7 {32}, \qquad  \vv | \infty  .
\end{align}

\section{Preliminaries on $L$-Functions}

Let $f$ be a {spherical} Hecke--Maass cusp form on $ \PGL_2 (F) \backslash \PGL_2 (\BA) $ with Hecke eigenvalues $\lambdaup_f (\frn)$ and Archimedean parameter  $ \varnu_f \in  Y$. Let  $E (s)$ be the  {spherical} Eisenstein series on  $ \PGL_2 (F) \backslash \PGL_2 (\BA) $. Let  $\pi$ be a {fixed} {self-dual} spherical automorphic cuspidal representation of $ \PGL_3 (\BA)$ with Fourier coefficients $A (\frn_1, \frn_2)$ and Archimedean parameter $(\mu, 0, - \mu)$ ($\mu \in i Y$). 

\subsection{$L$-functions $L (s, \pi  )$,  $ L (s, \pi \otimes f ) $, and $L  (  s, \pi \otimes E  (      i t  )   )$} 
The $L$-function attached to $\pi    $ is defined by
\begin{equation}
L (s,  \pi   ) = {\sum_{\frn \shskip \subset \shskip \frO }}  \frac {A(1, \frn)  } {\RN(\frn)^{  s} }.
\end{equation}
The Rankin--Selberg $L$-function $L (s, \pi \otimes f )$ is  defined by 
\begin{align}\label{4eq: L (pi f chi)}
L (s, \pi \otimes f )  
= \underset{\frn_1, \frn_2 \shskip \subset \shskip \frO }{\sum \sum} \frac { A (\frn_1, \frn_2) \lambdaup_f  (\frn_2)  } {\RN  ( \frn_1^2 \frn_2  )^{  s} }. 
\end{align} 
The completed $L$-function for $ \pi $ is
$\Lambda (s,  \pi   ) =  \RN(\frD)^{3s/2} \gamma (s, \pi  ) L (s,  \pi  ) $, where $\gamma (s, \pi) = \gamma (s)$ is the product of 
\begin{equation}\label{4eq: defn of gamma (s)}
\gamma_{\vv} (s) =  (N_{\vv} \pi )^{-3 N_{\vv} s / 2} \Gamma \bigg(\frac {N_{\vv}(s-\mu_{\vv})} 2 \bigg) \Gamma \bigg(\frac {N_{\vv}s} 2 \bigg) \Gamma \bigg(\frac {N_{\vv}(s+\mu_{\vv})} 2 \bigg) .
\end{equation}
Recall that $N_{\vv} = 1$ if $\vv$ is real and  $N_{\vv} = 2$ if $\vv$ is complex. 
It is known that $\Lambda (s,  \pi )$ is entire and  has the functional equation
\begin{align*}
\Lambda (s,  \pi  ) = \Lambda   (1-s,  \pi   ).
\end{align*} 
We define  $\gamma (s, \varnu )$ to be the product of 
\begin{align}\label{4eq: defn of gamma (s, nu)}
\gamma_{\vv} (s, \varnu_{ \vv}) = \gamma_{\vv} (s - i \varnu_{  \varv}) \gamma_{\vv} (s + i \varnu_{  \varv}).
\end{align} 
Let $\gamma (s, \pi \otimes f) = \gamma (s, \varnu_f)$. 
Then  $ \Lambda (s, \pi \otimes f ) =  \RN(\frD)^{3s } \gamma (s , \pi \otimes f ) L (s, \pi \otimes f )$ is also entire and satisfies the functional equation
\begin{align*}%\label{1eq: functional equation}
\Lambda (s, \pi \otimes f ) = \Lambda (1 - s, \pi \otimes f ).
\end{align*}

Similar to \eqref{4eq: L (pi f chi)},  we define
\begin{align}
 L  (  s, \pi \otimes E  (    i t  )   ) = \underset{\frn_1, \frn_2 \shskip \subset \shskip \frO }{\sum \sum} \frac {   A (\frn_1, \frn_2) \tau_{it}  (\frn_2  )    } {\RN  ( \frn_1^2 \frn_2  )^{  s} },
\end{align}
where $ \tau_{s}  (\frn  ) $ is defined as in \eqref{1eq: defn of eta (m, s)}. 
We have
\begin{align}\label{1eq: L (f x E) = L(f) L(f)}
L  (  s, \pi \otimes E  (    i t  )   ) = L \lp s+it, \pi  \rp L \lp s-it, \pi   \rp,
\end{align}
and hence
\begin{align}\label{1eq: L (f x E) = |L (f)|2}
L \big( \tfrac 1 2 , \pi \otimes E  (  i t  )  \big) = \left| L \big( \tfrac 1 2 + it, \pi \big) \right|^2.
\end{align}

\subsection{Approximate Functional Equations for  $ L (s, \pi \shskip \otimes f ) $ and $L (s, \pi )$} \label{sec: afeq}

Following Blomer \cite{Blomer}, for a large even integer $A' > 0$, we introduce the polynomial $ p (s, \varnu )$ as the product of $p_{\vv} (s, \varnu_{\vv} )$ ($\vv| {\infty}$) defined by 
\begin{align}\label{4eq: defn p(s, t)}
%p_{\vv} (s, \varnu_{\vv} ) =  
  \prod_{k=0}^{N_{\vv}A'/2-1}  \big( (s+ 2k/N_{\vv} -\mu_{\vv})^2 + \varnu_{\vv}^2   \big) \big((s+2k/N_{\vv})^2 +  \varnu_{\vv}^2 \big) \big( (s+2k/N_{\vv}+\mu_{\vv})^2 + \varnu_{\vv}^2   \big)    
\end{align}
so that $p_{\vv}  (s, \varnu_{\vv} )$ annihilates  the rightmost $N_{\vv}A'/2$ many poles of each of  the   gamma factors in $ \gamma_{\vv}  (s, \varnu_{\vv} ) $  defined by (\ref{4eq: defn of gamma (s)}, \ref{4eq: defn of gamma (s, nu)}). This polynomial will eventually be used to overcome the obstacle posed by the presence of
an infinite group of units.

We have the following Approximate Functional Equation for   $L (s, \pi \otimes f )$  (see 
\cite[Theorem 5.3]{IK}),
\begin{equation}
\label{1eq: approximate functional equation, 1} 
L \big(\tfrac 1 2,   \pi \otimes f \big)   =  2 \underset{\frn_1, \frn_2 \shskip \subset \shskip \frO }{\sum \sum} \frac { A (\frn_1, \frn_2) \lambdaup_f  (\frn_2)  } {\RN  ( \frn_1^2 \frn_2  )^{1/2} }     V  \big(  \RN  \big( \frn_1^2 \frn_2 \frD^{-3} \big); \varnu_f  \big)   , 
\end{equation}
with 
\begin{equation}%\label{1eq: def of V (y, t)}
\begin{split}
V (y; \varnu ) = \frac 1  {2 \pi i} \int_{(3)} 
G  (u, \varnu)       y^{ -  u} \frac { \nd \shskip u } {u}, \qquad y > 0,  
\end{split}
\end{equation}
in which $G  (u, \varnu)$ is the product of
\begin{align}\label{4eq: def G}
G_{\vv}  (u, \varnu_{\vv}) = \frac {\gamma_{\vv} \big(\frac 1 2 + u , \varnu_{\vv} \big)  }  {\gamma_{\vv} \big(\frac 1 2, \varnu_{\vv}   \big)   } \cdot   \frac { p_{\vv} \left(\frac 1 2 + u , \varnu_{\vv}  \right) p_{\vv} \left(\frac 1 2 - u , \varnu_{\vv}  \right) e^{N_{\vv} u^2 / N}  }  {  p_{\vv} \big(\frac 1 2, \varnu_{\vv}   \big)^2 }   .
\end{align} 
Note that the second quotient in \eqref{4eq: def G}   is even in $u$ and is equal to $1$ when $u = 0$.
Similarly, the Approximate Functional Equation for  $L  ( s, \pi \otimes E  (   i t  )    )$, along with  \eqref{1eq: L (f x E) = |L (f)|2}, yields 
\begin{align}
\label{1eq: approximate functional equation, 2} 
\left| L \big(  \tfrac 1 2 + it, \pi  \big) \right|^2  =  2 \underset{\frn_1, \frn_2 \shskip \subset \shskip \frO }{\sum \sum} \frac { A (\frn_1, \frn_2) \tau_{it}  (\frn_2  )   } {\RN  ( \frn_1^2 \frn_2  )^{1/2} }    V  \big(  \RN  \big( \frn_1^2 \frn_2 \frD^{-3} \big); t  \big) .
\end{align} 
%Recall that $\BR$ was identified with its image under the diagonal embedding $\BR \hookrightarrow \bfra = \BR^{r_1 + r_2}$. 

Properties of $ V (y; \varnu ) $ and $ G  (u, \varnu) $ are contained in the following lemma (see \cite[Lemma 1]{Blomer} and \cite[Lemma 3.7]{Qi-Gauss}).

\begin{lem}\label{lem: afq}  
	Let  $ U  > 1 $,  $A > 0$, and $\vepsilon > 0$. Suppose that $\varnu \in Y$ satisfies the Kim--Sarnak bounds {\rm\eqref{2eq: Kim-Sarnak}}. Let $\RC (\varnu)$ % and $\RC^{\snatural} (\varnu)$ 
	be defined as in {\rm\eqref{1eq: defn conductor}}.
	
	{\rm(1)}  We have
	\begin{align}\label{1eq: derivatives for V(y, t), 1}
	%p \left(\tfrac 1 2, \varnu \right)^2 
	V (y; \varnu ) \Lt_{\,A, \shskip A' } %\RP (\varnu)^{12 } 
	\bigg(  1 + \frac {y} {\RC (\varnu)^{3}} \bigg)^{-A} ,
	\end{align}  
	and 
	\begin{align}\label{1eq: approx of V}
	V  (y; \varnu) = \frac 1 {2   \pi i } \int_{ \vepsilon - i U}^{\vepsilon + i U}  G (u, \varnu)     y^{-  u}   \frac {\nd \shskip u} {u} + O_{\vepsilon, \shskip A'} \bigg( \frac {\RC (\varnu)^{3 \vepsilon} } {y^{ \vepsilon} e^{U^2 / 2} } \bigg) .
	\end{align}  
	
	{\rm(2)}  When $\Re (u) > 0$, the function $ G_{\vv} (u, \varnu_{\vv}) p_{\vv} \lp \tfrac 1 2, \varnu_{\vv} \rp^2$ is even and holomorphic on the region $ |\Im (\varnu_{\vv}) | \leqslant  A' + \frac {9} {32} = A' + \frac 1 2 - \frac {7} {32} $  {\rm(}see {\rm\eqref{3eq: Kim-Sarnak, GL3})}, and it satisfies in this region the uniform bound
	\begin{align}\label{1eq: bound for p G}
	%(\partial / \partial t)^{k} 
	G_{\vv} (u, \varnu_{\vv}) p_{\vv} \lp \tfrac 1 2, \varnu_{\vv} \rp^2  \Lt_{\shskip A', \shskip \Re(u)}     (1 + |\varnu_{\vv}|)^{ 3 N_{\vv}  ( \Re (u) + 2 A')}, 
	\end{align}
	and, more generally,
	\begin{align}\label{1eq: bound for p G, 2}
	 \frac { \partial^{j} } {\partial \varnu_{\vv}^j} \lp G_{\vv} (u, \varnu_{\vv}) p_{\vv} \lp \tfrac 1 2, \varnu_{\vv} \rp^2 \rp
	  \Lt_{\shskip j, \shskip A', \shskip \Re(u)}  (1 + |\varnu_{\vv}|)^{ 3 N_{\vv}  ( \Re (u) + 2 A' ) - j }   (1 + \Im (u))^{  j} . 
	\end{align}
	%for $ i \in \RN_0^{|S_{\infty}| } $, with $  \partial^i / \partial \varnu^i = \prod_{ \vv | \infty } \partial^{i_{\vv}} / \partial \varnu_{\vv}^{i_{\vv}} $ and $|i | = \sum_{ \vv | \infty } i_{\vv} $ in the standard notation. 
\end{lem}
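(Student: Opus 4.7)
The plan is to establish part (2) first, since its bounds on $G_v(u,\varnu_v)p_v\lp\tfrac{1}{2},\varnu_v\rp^2$ drive the contour-shift analysis of $V(y;\varnu)$ needed for part (1). Evenness in $\varnu_v$ is immediate from \eqref{4eq: defn of gamma (s, nu)} and \eqref{4eq: defn p(s, t)}. For holomorphy, I would locate the poles of $\gamma_v\lp\tfrac{1}{2}+u,\varnu_v\rp$ as a function of $u$: they arise from the six gamma factors $\Gamma(N_v(\tfrac{1}{2}+u\pm i\varnu_v\pm\mu_v)/2)$ and sit at points with real part $-\tfrac{1}{2}-2k/N_v\pm\Re\mu_v\pm\Im\varnu_v$ for $k\geq 0$. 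The polynomial $p_v\lp\tfrac{1}{2}+u,\varnu_v\rp$ annihilates the $N_vA'/2$ rightmost of these in each gamma factor, so the surviving poles have $\Re u \leq -A'-\tfrac{1}{2}+|\Re\mu_v|+|\Im\varnu_v|\leq 0$ on the given region by Kim--Sarnak \eqref{3eq: Kim-Sarnak, GL3}.

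For the bound \eqref{1eq: bound for p G} I would apply Stirling in the form $\Gamma(s+a)/\Gamma(s)\ll (1+|s|)^{\Re a}$, uniform on vertical strips away from poles. Applied to the six gamma quotients in $\gamma_v\lp\tfrac{1}{2}+u,\varnu_v\rp/\gamma_v\lp\tfrac{1}{2},\varnu_v\rp$ this gives a contribution $(1+|\varnu_v|)^{3N_v\Re u}$; the two polynomial factors $p_v\lp\tfrac{1}{2}\pm u,\varnu_v\rp$ are of $\varnu_v$-degree $3N_vA'$ each, contributing $(1+|\varnu_v|)^{6N_vA'}$ times at most polynomial growth in $|u|$, and the total exponent matches $3N_v(\Re u + 2A')$. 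For the derivative bound \eqref{1eq: bound for p G, 2}, each $\partial/\partial\varnu_v$ either lowers the $\varnu_v$-degree of one $p_v$ factor by one, or brings down from the gamma ratio a difference of digammas $\psi(\cdots+N_vu/2)-\psi(\cdots)$; by the mean value theorem applied to $\psi$, such a difference is $O((1+|\Im u|)/(1+|\varnu_v|))$. Iterating $j$ times produces the stated $(1+|\varnu_v|)^{-j}(1+|\Im u|)^j$ correction.

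With part (2) in hand, I would prove the decay \eqref{1eq: derivatives for V(y, t), 1} by contour manipulation. If $y \geq \RC(\varnu)^3$, push the contour from $\Re u = 3$ to $\Re u = A$ for arbitrarily large $A$; the cancellation by $p_v$ permits this without crossing a pole, and the bound from (2) combined with the Gaussian $\exp(u^2)$ in the vertical direction yields $V(y;\varnu)\ll (\RC(\varnu)^3/y)^A$. If instead $y \leq \RC(\varnu)^3$, shift leftward past $u=0$, picking up the residue $G(0,\varnu)=1$, to $\Re u = -\vepsilon$, obtaining $V(y;\varnu)=1+O((y/\RC(\varnu)^3)^\vepsilon)=O(1)$. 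Interpolating the two regimes gives $(1+y/\RC(\varnu)^3)^{-A}$.

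For the truncation \eqref{1eq: approx of V}, I would shift from $\Re u=3$ to $\Re u=\vepsilon$ while simultaneously truncating at $|\Im u|\leq U$; no poles are crossed. The two horizontal segments at $\Im u=\pm U$ enjoy the Gaussian saving $e^{-U^2/2}$ from $\exp(u^2)$, which combined with the polynomial bound from (2) contributes $\ll\RC(\varnu)^{3\vepsilon}y^{-\vepsilon}e^{-U^2/2}$, and the tails of the $\Re u=\vepsilon$ line beyond $|\Im u|=U$ are controlled identically. The main technical obstacle throughout is keeping the Stirling estimates uniform as $\Im\varnu_v$ approaches $A'+9/32$, where the gamma factors of $\gamma_v$ come dangerously near their first unannihilated poles; the polynomial $p_v$ is designed exactly to absorb the first $N_vA'/2$ of these singularities, so that the quotient $G_v\cdot p_v(\tfrac{1}{2},\varnu_v)^2$ remains analytic and the uniform estimates go through.
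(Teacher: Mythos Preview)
Your proposal is correct and mirrors the standard arguments the paper invokes by citation: the paper simply refers to \cite[Proposition 5.4]{IK} for \eqref{1eq: derivatives for V(y, t), 1}, to \cite[Lemma 1]{Blomer} for \eqref{1eq: approx of V}, and to Stirling's formulae for $\log\Gamma$ and its derivatives for part (2). Your contour-shifting and Stirling-type estimates are precisely what those references carry out, so there is no methodological difference to report.
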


\begin{proof}
The estimate in	\eqref{1eq: derivatives for V(y, t), 1} may be found in 
	\cite[Proposition 5.4]{IK}. 
	The expression of $V (y; \varnu)$   in \eqref{1eq: approx of V} is essentially due to Blomer \cite[Lemma 1]{Blomer}.   The bounds in \eqref{1eq: bound for p G} and \eqref{1eq: bound for p G, 2} follow  readily from Stirling's formulae for  $\log \Gamma$ and its derivatives (see for example \cite[\S\S 1.1, 1.2]{MO-Formulas}).  
\end{proof}

\section{Choice of the Test Function}\label{sec: choice of h}

\begin{defn}
	Let $T, M \in \bfra_+ $ be such that $1 \Lt T_{\vv}^{\vepsilon} \leqslant M_{\vv} \leqslant T_{\vv}^{1-\vepsilon} $ for each $\vv | \infty$. Define the function $k (\varnu) = k_{T, \shskip M} (\varnu)$ to be the product of 
	\begin{align}\label{5eq: defn k(nu)}
	k_{\vv} (\varnu_{\vv}) = e^{- (\varnu_{\vv}  - T_{\vv})^2 / M_{\vv}^2} + e^{-(\varnu_{\vv} + T_{\vv})^2 / M_{\vv}^2} .
	\end{align}  
\end{defn}

We modify $k (\varnu)$ slightly, and let $k^{\snatural} (\varnu) = k_{T, \shskip M}^{\snatural} (\varnu)$ be the product of
\begin{align}\label{6eq: defn of k*}  
k^{\snatural}_{\vv}  (\varnu_{\vv}) = k_{\vv}  (\varnu_{\vv}) p_{\vv} \big(\tfrac 1 2, \varnu_{\vv} \big)^2 / T_{\vv}^{6  N_{\vv} A'},
\end{align}
with $p_{\vv}  \big(\tfrac 1 2, \varnu_{\vv} \big)$   defined as in  \eqref{4eq: defn p(s, t)}. % and $p (\varnu)$ defined as the product of 
%\begin{align}
%p_{\vv} (\varnu) =  (\varnu_{\vv}^2 + 4  )^{6} . 
%\end{align}
For $\varnu \in \bfra$,
we have $
k^{\snatural} (\varnu) >  0  $,  and  $
k^{\snatural}  (\varnu )  \Gt 1 $ if $ |\varnu_{\varv} - T_{\vv}| \leqslant M_{\vv} $ for all $\vv | {\infty}$. Note also that $ k^{\snatural}_{\vv} (\varnu_{\vv}) = o \big(e^{- T_{\vv}^2/M_{\vv}^2} \big) $ if $ i \varnu_{\vv} \in \left( - \frac 1 2, \frac 1 2 \right)$. 

For $\Re (u) = \vepsilon$, the function
\begin{align}\label{5eq: defn h(nu)}
h (\varnu) = h_{T, \shskip M}  (\varnu) =  
G (u, \varnu)  k_{T, \shskip M}^{\snatural} (\varnu)
\end{align} 
lies in the space $ \mathscr{H} (S) $ as in Definition \ref{defn: test functions} with $S = A' + \frac {9} {32} $. It is clear that $h (\varnu)$ is the product of 
\begin{align}\label{6eq: defn of h, local}
h_{\vv} (\varnu_{\vv}) = k_{\vv}  (\varnu_{\vv}) G_{\vv} (u, \varnu_{\vv})  p_{\vv} \big(\tfrac 1 2, \varnu_{\vv} \big)^2 / T_{\vv}^{6  N_{\vv} A'} ,
\end{align} 
and it follows from  Lemma \ref{lem: afq} (2) that
\begin{align}\label{5eq: bound for h}
h_{\vv} (\varnu_{\vv}) \Lt_{\, A', \shskip \vepsilon}   (1 + |\varnu_{\vv}|)^{  \vepsilon} k_{\vv} (\varnu_{\vv}), \qquad \varnu_{\vv} \in \BR. 
\end{align}
%Moreover, %in view of the Kim--Sarnak bound for $\mu$ in  \eqref{3eq: Kim-Sarnak, GL3}, 

Henceforth, in view of \eqref{1eq: approx of V}, we shall   assume that $\Re (u) = \vepsilon$ and $|\Im (u) | \leqslant \log \RN (T)$. 

%We comment that the extra factor $G (u, \varnu) p \big(\tfrac 1 2, \varnu\big)^2 / \RN^{\snatural} (T)^{12 A'} $  will not play an essential role in our analysis.

\begin{appendices}

	\section{\texorpdfstring{Proof of the Ad\`elic Vorono\"i Summation for $\GL_3$}{Proof of the Ad\`elic Voronoi Summation for $\GL(3)$}} \label{app: adelic Voronoi, proof} 
	
	The purpose of this appendix is to prove the ad\`elic Vorono\"i summation for $\GL_3$  in Proposition \ref{prop: Voronoi, adelic} without the condition in \cite{Ichino-Templier} that $S$ contains the ramified places of $\psi$. 
	By directly modifying Theorem 1 in \cite{Ichino-Templier}, such a Vorono\"i summation for $\GL_2$ and $\GL_3$ was formulated in \cite[\S 3.1]{Qi-Wilton}, but it is not entirely correct in the $\GL_3$ case. A particular issue with the direct modification  is that the identity (2.4) in their proof as below
	\begin{align}\label{2eq: identity, unramified}
	\int_{F    _{\vv}^{n-2} } \widetilde{W}_{\oo \shskip \vv} (\begin{pmatrix}
	\gamma & \\ & 1_{n-1}
	\end{pmatrix} 
	\begin{pmatrix}
	1 & & \\ x & 1 & \\ & & 1
	\end{pmatrix}) \shskip \nd x = \widetilde{W}_{\oo \shskip \vv} \begin{pmatrix}
	\gamma & \\ & 1_{n-1}
	\end{pmatrix} 
	\end{align}
	%which was taken for granted in \cite{Qi-Wilton},  
	is no longer valid if $n = 3$ and $\frD_{\vv} \neq \frO_{\vv}$ (though this is always true if $n = 2$ as the integration would be gone). To rectify this, one must replace the $\displaystyle {W}^S_{\oo} \begin{pmatrix}
	\gamma & \\ & 1_{2}
	\end{pmatrix} = {W}^S_{\oo} (a(1, \gamma)) $ on the left-hand side of (1.2) in \cite{Ichino-Templier} (for $n=3$) by $\displaystyle {W}^S_{\oo} \begin{pmatrix}
	\gamma / \delta & & \\ & 1/\delta   & \\ & & 1
	\end{pmatrix} = {W}^S_{\oo} (a(1/\delta , \gamma)) $ and thus resort to their Theorem 3  rather than Theorem 1. Nevertheless, the main results in \cite{Qi-Wilton} are not invalidated, for only several changes involving  $\frD$ or $\delta$ are needed. 
	
	%For our purpose, the Vorono\"i summation formula in \cite[Theorem 3]{Ichino-Templier} (for $n = 3$)  is refined as follows. 
	%We retain the ad\`elic notation in \S \ref{sec: Voronoi GL(3)} and restate Proposition \ref{prop: Voronoi, adelic} as follows. %We shall work in a slightly more general setting where $\pi$ has a central character $\omega$.

	\subsection*{Proof of Proposition \ref{prop: Voronoi, adelic}}
	We retain the ad\`elic notation in \S \ref{sec: Voronoi GL(3)}. 	  Let
	\begin{align*}
	&\varw' = \begin{pmatrix}
	1 & & \\ & & 1 \\ & 1 & 
	\end{pmatrix}, \qquad 
	\varw_2 = \begin{pmatrix}
	& 1 & \\ 1 & & \\ & & 1 
	\end{pmatrix}, \\
	&	n (x)  =  \begin{pmatrix}
	1 & x & \\ & 1 & \\ & & 1
	\end{pmatrix}, \quad \,  n^{\ssstyle -} (x)  = %\varw_2 n (x) \varw_2  = 
	\begin{pmatrix}
	1 &   & \\ x & 1 & \\ & & 1
	\end{pmatrix}. 
	\end{align*}
	By abuse of notation, we shall denote by $\gamma$, $\zeta$, $\delta$,  $\valpha $  their local components $\gamma_{\vv}$, $\zeta_{\vv}$, $\delta_{\vv}$, $\valpha_{  \vv}$  respectively. According to the proof of  Theorem 3 in \cite{Ichino-Templier}, the local integral at a place $\vv \notin S$ that we need to consider is 
	\begin{align*}
	\scalebox{1.05}{$I$}^{    \ssharp }_{\vv} (\gamma) = \int_{ F_{\vv} } \widetilde{W}_{\oo \shskip \vv} \big ( a (1, \gamma) n^{\ssstyle -} (x) \varw'  n^{\ssstyle -} (-\zeta) a ( 1 / \delta, \valpha )\- \big) \nd x,
	\end{align*}
	while at the  places in $S$  we have the   transform defined by \eqref{2eq: Ichino-Templier}, % namely, 
	\begin{align}\label{app: at v in S}
	\scalebox{1.05}{$I$}^{  \ssharp }_{\vv} (\gamma) = \widetilde \varww_{\vv} (\gamma);
	\end{align} see \cite[\S \S 2.7, 5.3]{Ichino-Templier}. Our goal is to prove that the sum over $\gamma \in F^{\times}$ of the product of $ \scalebox{1.05}{$I$}^{   \ssharp   }_{\vv} (\gamma) $ is equal to the right-hand side of \eqref{2eq: Voronoi, adelic}.

	%Next we consider the places in $S_f \smallsetminus R $ and $R$. 
	
	For $\varv \notin S \cup R$, we have $|\zeta / \valpha |_{\vv} \leqslant 1$.  
	It follows   that $   a ( 1/ \delta, \valpha ) \varw' \cdot \varw' n^{\ssstyle -} (-\zeta) a ( 1 / \delta, \valpha )\-  = n^{\ssstyle -} (-\zeta / \valpha) \in \GL_3 (\frO_{\vv} ) $.  Thus
	\begin{align*}
	\scalebox{1.05}{$I$}^{  \ssharp }_{\vv} (\gamma) & = \int_{ F_{\vv} } \widetilde{W}_{\oo \shskip \vv}   ( \begin{pmatrix}
	\gamma & \\ & 1_2
	\end{pmatrix} 
	\begin{pmatrix}
	1 & & \\ x & 1 & \\ & & 1
	\end{pmatrix} 
	\begin{pmatrix}
	\delta /   \valpha   & & \\ & \hskip -1 pt 1 \hskip -1 pt & \\ & & \delta   
	\end{pmatrix}  ) \nd x \\ 
	& = \overbar \omega_{\vv} (\delta ) |    \valpha  / \delta|_{\vv}    \int_{ F_{\vv} } \widetilde{W}_{\oo \shskip \vv}   ( \begin{pmatrix}
	\gamma / \valpha   & & \\ &  1 / \delta & \\ & & 1
	\end{pmatrix}
	\begin{pmatrix}
	1 & & \\ x & 1 & \\ & & 1
	\end{pmatrix}   ) \nd x .
	\end{align*}
	For $|x|_{\vv} > 1$, we have the Iwasawa decomposition 
	\begin{align*}
	\begin{pmatrix}
	1 & & \\ x & 1 & \\ & & 1
	\end{pmatrix} & =   \begin{pmatrix}
	1 & 1/x  & \\   & 1 & \\ & & 1
	\end{pmatrix} \begin{pmatrix}
	1/x  &   & \\   & x & \\ & & 1
	\end{pmatrix} \begin{pmatrix}
	& -1 & \\ 1  & 1/x  & \\ & & 1
	\end{pmatrix}.
	\end{align*}
	Therefore the integrand above is equal to 
	\begin{align*}
	\overbar{\psi} (\gamma \delta /   \valpha  x) \widetilde{W}_{\oo \shskip \vv}    \begin{pmatrix}
	\gamma / \valpha  x & & \\ & x   / \delta & \\ & & 1
	\end{pmatrix} ,
	\end{align*}
	and it vanishes as $  |x / \delta |_{\vv} > 1 / |\delta|_{\vv} $. Then
	we infer that the integrand is nonzero  only if $x \in \frO_{\vv}$. Consequently,
	\begin{align}\label{app: Iv = , unramified}
	\scalebox{1.05}{$I$}^{  \ssharp }_{\vv} (\gamma) & = \frac { \overbar \omega_{\vv} (\delta  )   |\valpha  |_{\vv} } {  \sqrt{| \delta|_{\vv}}}   \cdot \widetilde{W}_{\oo \shskip \vv}  \begin{pmatrix}
	\gamma / \valpha   & & \\ &  1 / \delta & \\ & & 1
	\end{pmatrix}.   % \big(a (1 / \delta, \gamma \delta /   \valpha ) \big) . 
	\end{align}
	Note that \eqref{app: Iv = , unramified} is reduced to \eqref{2eq: identity, unramified} if $\delta   = \valpha  = 1$. 
	
	Now let $\varv \in  R$ so that $| \zeta / \valpha  |_{\vv} > 1 $. It is the second case in the proof of  \cite[Theorem 3]{Ichino-Templier}. We adapt their computations for the $\GL_3$ case as follows. 
	
	%	As $a (1, \gamma) n^{\ssstyle -} (x) =  n^{\ssstyle -} (x / \gamma) a (1, \gamma)$, 
	We start with rewriting 
	\begin{align*}
	\scalebox{1.05}{$I$}^{  \ssharp }_{\vv} (\gamma) = |\gamma |_{\vv} \int_{ F_{\vv} } \widetilde{W}_{\oo \shskip \vv} \big (   n^{\ssstyle -}  (x) \varw'  a (1, \gamma)  n^{\ssstyle -} (-\zeta) a ( 1 / \delta, \valpha )\- \big) \nd x .
	\end{align*}
	The Iwasawa decomposition of $ n^{\ssstyle -} (-\zeta/\valpha ) $ yields
	\begin{align*}
	\scalebox{1.05}{$I$}^{  \ssharp }_{\vv} (\gamma) & = |\gamma |_{\vv} \int_{ F_{\vv} } \widetilde{W}_{\oo \shskip \vv} \big (   n^{\ssstyle -}  (x) \varw' a ( \delta   , \gamma/\valpha ) n (- \valpha /\zeta) a \big(   \zeta/\valpha , \valpha^2 / \zeta^2 \big) \big) \nd x \\
	& = |\gamma |_{\vv} \int_{ F_{\vv} } \widetilde{W}_{\oo \shskip \vv} \big (   n^{\ssstyle -}  (x)  \varw'   n (- \gamma / \zeta )  a \big(    \zeta \delta/   \valpha , \gamma \valpha  / \zeta^2 \big) \big) \nd x .
	\end{align*}
	By 
	\begin{align*}
	n^{\ssstyle -}  (x)  \varw'   n (- \gamma / \zeta ) \varw' = \begin{pmatrix}
	1 &     &  - \gamma / \zeta \\
	& 1 & - x \gamma / \zeta   \\
	&& 1
	\end{pmatrix} n^{\ssstyle -}  (x),
	\end{align*}
	we have
	\begin{align*}
	\scalebox{1.05}{$I$}^{ \ssharp }_{\vv} (\gamma) = |\gamma |_{\vv} \int_{ F_{\vv} } \overbar \psi_{\vv} (- x \gamma / \zeta) \widetilde{W}_{\oo \shskip \vv} \big (   n^{\ssstyle -}  (x)  \varw'   a \big(   \zeta \delta/   \valpha, \gamma \valpha  / \zeta^2 \big) \big) \nd x .
	\end{align*}
	Since
	\begin{align*}
	n^{\ssstyle -}  (x) \varw'  a \big( \zeta \delta/   \valpha    , \gamma \valpha  / \zeta^2 \big) & = \varw_2 n(x) \varw_2 \varw'  a \big( \zeta \delta/   \valpha   , \gamma \valpha  / \zeta^2 \big) \\
	& = \varw_2  \begin{pmatrix}
	1 & & \\
	& \gamma       \delta/ \zeta      & \\
	& & \zeta \delta/   \valpha  
	\end{pmatrix} n( x  \gamma  \delta/ \zeta     )  \varw_2 \varw',
	\end{align*} 
	and $ \varw_2 \varw' \in \GL_3 (\frO)$, we have
	\begin{align}\label{app: case 2}
	\scalebox{1.05}{$I$}^{ \ssharp }_{\vv} (\gamma) =  | \zeta   / \delta |_{\vv} \int_{ F^{\sstimes}_{\vv}  }   {\psi}_{\vv} (        x /   \delta ) \widetilde{W}_{\oo \shskip \vv} (\varw_2 \begin{pmatrix}
	1 & & \\ & \hskip -1pt  \gamma \delta/ \zeta    & \\ & & \hskip -1pt  \zeta  \delta /   \valpha 
	\end{pmatrix}  n (x)  ) \nd x.
	\end{align}

	To compute the Kloosterman integral  in \eqref{app: case 2}, we invoke the following  result adopted  from \cite[\S 6]{Ichino-Templier} in the $\GL_3$ setting (see in particular (6.3) and Corollary 6.7). 
	
	\begin{lem}\label{lem: Kloosterman}
		Let $\psi_{\vv}$ be unramified and $\psi_{\vv}'$ be trivial on $\frO_{\vv}$. Let $\widetilde{W}_{\vv}$ be a $\overbar \psi_{\vv}$-Whittaker function invariant under $\GL_3 (\frO_{\vv})$. Let $\nd x$ be the Haar measure self-dual with respect to $\psi_{\vv}$ {\rm(}the measure of $\frO_{\vv}$ is $1${\rm)}. Let $\beta, \zeta \in \Fx_{\vv}$.  
		Then 
		\begin{align}
		\int_{ F_{\vv} }   \psi_{\vv}' (x) \widetilde{W}_{\vv}  ( \varw_2  \begin{pmatrix}
		1 & & \\ &    \beta & \\ & &   \zeta    
		\end{pmatrix}  n (x)  ) \nd x =  \hskip -3 pt \mathop{ \mathop{\sum_{\nu \, \in F^{\sstimes}_{\vv} /\frOO^{\sstimes}_{\vv}} }_{1 \leqslant |\nu|_{\vv} \leqslant |\zeta|_{\vv}} }_{ |\beta|_{\vv} \leqslant | \nu |^2_{\vv} }  \hskip -3 pt \widetilde{W}_{\vv} \hskip - 1 pt \begin{pmatrix}
		\beta / \nu \hskip - 1 pt  &  & \\ & \nu  & \\ & & \zeta 
		\end{pmatrix}  \mathrm{Kl} (\beta;  \nu ; \psi_{\vv}, \psi_{\vv}'), 
		\end{align}
		with
		\begin{align*}%\label{2eq: Kloosterman, r=3}
		\mathrm{Kl} (\beta; \nu ; \psi_{\vv}, \psi_{\vv}') =  \sum_{ x \shskip \in \shskip  \nu \shskip \frOO_{\,\varv}^{\sstimes} / \frO_\varv } 
		\psi_\varv'   (  x) \psi_{\vv} ( -   \beta /x  ).
		\end{align*}  
		
	\end{lem}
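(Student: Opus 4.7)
The plan is to perform an Iwasawa decomposition of
\[
g(x) := \varw_2 \begin{pmatrix} 1 & & \\ & \beta & \\ & & \zeta \end{pmatrix} n(x) = \begin{pmatrix} 0 & \beta & 0 \\ 1 & x & 0 \\ 0 & 0 & \zeta \end{pmatrix}
\]
according to the size of $|x|_{\vv}$, then exploit the left $\overbar\psi_{\vv}$-equivariance of $\widetilde{W}_{\vv}$ under the upper triangular unipotent subgroup together with its right $\GL_3(\frO_{\vv})$-invariance. Since the $(3,3)$-block is already decoupled, the task reduces to the $2 \times 2$ Iwasawa decomposition of the upper-left block $\left(\begin{smallmatrix} 0 & \beta \\ 1 & x \end{smallmatrix}\right)$.

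When $|x|_{\vv} \leqslant 1$, the matrix $\left(\begin{smallmatrix} 0 & 1 \\ 1 & x \end{smallmatrix}\right)$ lies in $\GL_2(\frO_{\vv})$, so $g(x) = \mathrm{diag}(\beta, 1, \zeta) \cdot k$ for some $k \in \GL_3(\frO_{\vv})$ and hence $\widetilde{W}_{\vv}(g(x)) = \widetilde{W}_{\vv}(\mathrm{diag}(\beta, 1, \zeta))$. When $|x|_{\vv} > 1$, a direct computation gives
\[
g(x) = \begin{pmatrix} 1 & \beta/x & \\ & 1 & \\ & & 1 \end{pmatrix} \mathrm{diag}(\beta/x, x, \zeta) \begin{pmatrix} -1 & & \\ 1/x & 1 & \\ & & 1 \end{pmatrix},
\]
with the rightmost factor in $\GL_3(\frO_{\vv})$, whence $\widetilde{W}_{\vv}(g(x)) = \psi_{\vv}(-\beta/x) \widetilde{W}_{\vv}(\mathrm{diag}(\beta/x, x, \zeta))$. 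Since $\mathrm{diag}(\beta/(xu), xu, \zeta) = \mathrm{diag}(\beta/x, x, \zeta) \cdot \mathrm{diag}(u\-, u, 1)$ for $u \in \frO_{\vv}^{\times}$, this Whittaker value depends only on $x$ modulo $\frO_{\vv}^{\times}$.

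To assemble the integral, stratify $F_{\vv} = \frO_{\vv} \sqcup \bigsqcup_{|\nu|_{\vv} > 1} \nu \frO_{\vv}^{\times}$, the second union indexed by representatives $\nu \in F_{\vv}^{\times}/\frO_{\vv}^{\times}$. The $\frO_{\vv}$-piece, on which $\psi_{\vv}'$ is trivial, contributes $\widetilde{W}_{\vv}(\mathrm{diag}(\beta, 1, \zeta))$, i.e., the $\nu = 1$ term with the convention $\mathrm{Kl}(\beta; 1; \psi_{\vv}, \psi_{\vv}') := 1$. For each $\nu$ with $|\nu|_{\vv} > 1$, the Whittaker factor is constant equal to $\widetilde{W}_{\vv}(\mathrm{diag}(\beta/\nu, \nu, \zeta))$, leaving $\int_{\nu \frO_{\vv}^{\times}} \psi_{\vv}'(x) \psi_{\vv}(-\beta/x)\,\nd x$. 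Partitioning $\nu \frO_{\vv}^{\times}$ into additive cosets $x_0 + \frO_{\vv}$ indexed by $x_0 \in \nu \frO_{\vv}^{\times} / \frO_{\vv}$ and Taylor-expanding $-\beta/(x_0 + y) = -\beta/x_0 + (\beta/x_0^2) y + O(\beta y^2/x_0^3)$, one sees that the integrand is constant on each coset provided $|\beta|_{\vv} \leqslant |\nu|_{\vv}^2$; in that case the cosetwise sum is exactly $\mathrm{Kl}(\beta; \nu; \psi_{\vv}, \psi_{\vv}')$. Finally, writing $\mathrm{diag}(\beta/\nu, \nu, \zeta) = \zeta \cdot \mathrm{diag}(\beta/(\nu\zeta), \nu/\zeta, 1)$, the standard support of the spherical Whittaker function for $\widetilde{\pi}_{\vv}$ (with $\psi_{\vv}$ unramified) forces vanishing unless $|\nu|_{\vv} \leqslant |\zeta|_{\vv}$ and $|\beta|_{\vv} \leqslant |\nu|_{\vv}^2$, recovering the summation range in the lemma.

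The main technical point is verifying that the coset integral vanishes when $|\beta|_{\vv} > |\nu|_{\vv}^2$: the linear term $(\beta/x_0^2) y$ then defines a nontrivial additive character of $y \in \frO_{\vv}$, and a smooth change of variables on $\frO_{\vv}$ (whose $p$-adic Jacobian is a unit since $|1/x_0|_{\vv} < 1$) absorbs the higher-order tail, leaving $\int_{\frO_{\vv}} \psi_{\vv}(\alpha u)\,\nd u = 0$ for $\alpha = \beta/x_0^2 \notin \frO_{\vv}$. This vanishing both identifies the Kloosterman regime of $\nu$ and justifies the support restriction directly at the level of the integral.
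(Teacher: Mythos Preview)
Your proof is correct and proceeds by the standard Iwasawa decomposition of the $2\times 2$ block, exactly as in Ichino--Templier \cite[\S 6]{Ichino-Templier}; the paper does not give its own argument for this lemma but simply quotes that reference, so you have effectively supplied the omitted details.

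One minor remark: your ``main technical point'' --- that the coset integral $\int_{x_0+\frO_{\vv}} \psi_{\vv}'(x)\psi_{\vv}(-\beta/x)\,\nd x$ vanishes when $|\beta|_{\vv} > |\nu|_{\vv}^2$ --- is correct (the substitution $y\mapsto y/(1+y/x_0)$ is indeed a measure-preserving bijection of $\frO_{\vv}$ since $|1/x_0|_{\vv}<1$, reducing the integral to $\int_{\frO_{\vv}}\psi_{\vv}((\beta/x_0^2)z)\,\nd z=0$), but it is redundant: the spherical Whittaker support condition you already invoke forces $\widetilde{W}_{\vv}(\mathrm{diag}(\beta/\nu,\nu,\zeta))=0$ in that range, so those terms drop out regardless of the integral. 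You can safely omit that paragraph, or keep it as an alternative justification.
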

	
	It is required that $\psi_{ \vv }$ is {\it unramified} in Lemma \ref{lem: Kloosterman}. To remove this condition, we have to re-scale $\psi_{ \vv }$, $ \widetilde{W}_{\oo \shskip \vv} $ and $\nd x$ so that $\psi_{ \vv } (x) = \psi_{ \vv }^{    \snatural } (\delta x)$, $   \widetilde{W}_{\oo \shskip \vv} (g) = \widetilde{W}^{    \snatural }_{\oo \shskip \vv} (a(\delta, \delta) g) $ and $\nd x = \hskip -1.5 pt \sqrt{|\delta|_{\vv} } \nd^{    \snatural } x$.  
	Applying Lemma \ref{lem: Kloosterman},
	we may transform   \eqref{app: case 2} into 
	\begin{align}\label{app: Kloosterman, case 2}
	\scalebox{1.05}{$I$}^{  \ssharp }_{\vv} (\gamma) = \frac { | \zeta     |_{\vv} \overbar  \omega_{\vv} (\delta) } {\sqrt { |\delta |_{\vv} } }  	  \mathop{ \mathop{\sum_{\nu \shskip \in F^{\sstimes}_{\vv} /\frOO^{\sstimes}_{\vv}} }_{1 \leqslant |\nu|_{\vv} \leqslant |\zeta   / \valpha_1 \valpha_2 |_{\vv}} }_{ |\gamma / \zeta \valpha_1 |_{\vv} \leqslant | \nu / \delta |^2_{\vv} }   
	\widetilde{W}_{\oo \shskip \vv} \hskip -1 pt  \begin{pmatrix}
	\gamma  / \nu \zeta       &  & \\ & \hskip - 1 pt \nu / \delta  & \\ & & \hskip - 1 pt \zeta   /   \valpha  
	\end{pmatrix}  \mathrm{Kl}_{\vv} (1, - \gamma  / \zeta , \nu) ,
	\end{align}
	where $\mathrm{Kl}_{\vv} (\valpha, \beta; \nu )$ is the local Kloosterman sum defined in Definition \ref{defn: Kloosterman, local}.

	Finally, our proof is completed by combining \eqref{app: at v in S}, \eqref{app: Iv = , unramified} and \eqref{app: Kloosterman, case 2}. 
	%Note that \eqref{2eq: Iv = , unramified} is deducible from \eqref{2eq: I = , twisted} upon choosing $\valpha_1 = 1$ and $\valpha_2 = \zeta$. 
\end{appendices}

{\large \part{Analysis over Archimedean Fields}}

\renewcommand{\baselinestretch}{1.11}

In the following sections, we shall do analysis over a local Archimedean field $F_{\vv}$  ($\vv |\infty$).  For simplicity, we shall suppress $\vv$ from our notation. Accordingly,
$F$ will be an Archimedean local field, and $N = [F : \BR]$. Henceforth, $x$,
$y$ will always stand for real variables, while $z$, $u$ for complex variables; in the complex setting, we shall usually use the polar coordinates $z = x e^{i \phi}$ and $u = y e^{i \theta}$. 

\vskip 5pt

\section{Stationary Phase Lemmas}  

For later use, we collect here some useful stationary phase lemmas in one dimension or two dimensions. 

\subsection{The One-Dimensional Case}

Consider one-dimensional oscillatory integrals in the   form
\begin{align*}
\int_a^b e (f(x))  \varww (x) \nd x.
\end{align*}
In practice, the phase function $f (x) = f (x; \lambdaup, ...)$ usually contains some (real) parameters. It is convenient to transform  the phase into the form $\lambdaup f (x)$  by   change of  variables, but clearly this can not always be done. For instance, one may consider a phase of the form $ \lambdaup^{1/3} x^2  -   x^3$ or $ x - \lambdaup \log x $.  %Note that we obtain $\lambdaup (x-x^2)$ if  $x$ is changed into $\sqrt{\lambdaup} x$ in the first example. However it is not the case in the second example. 

Firstly, we record here Lemma A.1 in \cite{AHLQ-Bessel}, which is an improved version of Lemma {\rm 8.1} in {\rm\cite{BKY-Mass}}. 

\begin{lem}\label{lem: staionary phase, dim 1, 2}
	Let $\varww (x)$ be a smooth function  with support in $( a, b)$ and $f (x)$ be a real smooth function on  $[a, b]$. Suppose that there
	are   parameters $P, Q, R, S, Z  > 0$ such that
	\begin{align*}
	f^{(i)} (x) \Lt_{ \, i } Z / Q^{i}, \qquad \varww^{(j)} (x) \Lt_{ \, j } S / P^{j},
	\end{align*}
	for  $i \geqslant 2$ and $j \geqslant 0$, and
	\begin{align*}
	| f' (x) | \Gt R. 
	\end{align*}
	Then for any $A \geqslant 0$ we have
	\begin{align*}
	\int_a^b e (f(x)) \varww (x) \nd x \Lt_{ A} (b - a) S \bigg( \frac {Z} {R^2Q^2} + \frac 1 {R Q} + \frac 1 {R P} \bigg)^A .
	\end{align*}
	%	where the implied constant depends only on $A$ and  those in the estimates for the derivatives of $f (x)$ and $w (x)$. 
\end{lem}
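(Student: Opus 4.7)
The plan is to prove the bound by repeated integration by parts in a non-stationary phase manner. Specifically, I would introduce the first-order differential operator
\[
(Tg)(x) = -\frac{1}{2\pi i}\frac{\nd}{\nd x}\!\left(\frac{g(x)}{f'(x)}\right),
\]
which is well-defined on $[a,b]$ because $|f'(x)| \geqslant R > 0$. Using the identity $e(f(x)) = (2\pi i f'(x))^{-1}(\nd/\nd x) e(f(x))$ and the fact that $\varww$ vanishes at both endpoints, a single integration by parts yields $\int_a^b e(f) \varww \nd x = \int_a^b e(f)(T\varww) \nd x$. Iterating $A$ times and applying the trivial bound $|\int_a^b| \leqslant (b-a)\sup|T^A\varww|$ reduces the lemma to the pointwise estimate
\[
|T^A \varww(x)| \Lt_A S \left(\frac{Z}{R^2 Q^2} + \frac{1}{RQ} + \frac{1}{RP}\right)^A.
\]

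Next, I would prove by induction on $A$ that $T^A \varww$ is a sum of $O_A(1)$ terms, each of the specific shape
\[
C \cdot \frac{\varww^{(j_0)}(x) \prod_{k=1}^{r} f^{(j_k+1)}(x)}{f'(x)^{A+r}},
\]
where $r \geqslant 0$, $j_0 \geqslant 0$, $j_k \geqslant 1$ for $k = 1,\dots,r$, and $j_0 + j_1 + \cdots + j_r = A$. The inductive step amounts to applying the product and quotient rules to the two summands of $Tg = -g'/(2\pi i f') + g f''/(2\pi i (f')^2)$ and verifying that each branch---differentiating $\varww^{(j_0)}$, differentiating one of the $f^{(j_k+1)}$ factors, differentiating the denominator $(f')^{-(A+r)}$, or the $g f''/(f')^2$ branch itself---produces a term of the same shape at level $A+1$ with the parameters updated in the obvious way.

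Finally, I would bound each such term using the hypotheses $|\varww^{(j)}| \Lt S/P^j$, $|f^{(i)}| \Lt Z/Q^i$ for $i \geqslant 2$, and $|f'| \geqslant R$. Since $\sum_{k=1}^{r}(j_k+1) = A - j_0 + r$, the generic term is bounded by $S \cdot Z^r / (R^{A+r} P^{j_0} Q^{A-j_0+r})$, and the key algebraic observation is that
\[
\frac{Z^r}{R^{A+r} P^{j_0} Q^{A-j_0+r}} = \left(\frac{1}{RP}\right)^{j_0}\left(\frac{1}{RQ}\right)^{A-j_0-r}\left(\frac{Z}{R^2Q^2}\right)^{r},
\]
which is valid because $A - j_0 - r \geqslant 0$ follows from $\sum_{k=1}^{r} j_k \geqslant r$. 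Summing the $O_A(1)$ terms then gives $|T^A\varww| \Lt_A S(X_1 + X_2 + X_3)^A$, where $X_1,X_2,X_3$ are the three summands in the statement. The main obstacle will be the combinatorial bookkeeping in step two together with spotting the algebraic identity in step three: once one recognises that the triple $(j_0, A-j_0-r, r)$ always sums to $A$ and that every term thereby corresponds to a single monomial in $X_1, X_2, X_3$ of total degree $A$, the three-term structure of the bound emerges naturally from the three ways differentiation can act at each application of $T$.
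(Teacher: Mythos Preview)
Your proof is correct. The paper itself does not prove this lemma; it simply records it as Lemma~A.1 of \cite{AHLQ-Bessel} (an improvement of Lemma~8.1 in \cite{BKY-Mass}). Your argument is precisely the standard one underlying that reference: repeated integration by parts via the operator $Tg = -(2\pi i)^{-1}(g/f')'$, an inductive description of the resulting terms, and the factorisation of each term as a monomial in the three quantities $1/(RP)$, $1/(RQ)$, $Z/(R^2Q^2)$ of total degree $A$. In fact, the paper carries out the identical strategy in two variables when proving the companion Lemma~\ref{lem: staionary phase, dim 2, 2}, using H\"ormander's adjoint operator $\mathrm{D}^*$ in place of your $T$; so your approach is fully aligned with the paper's methods.
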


\delete{
\begin{rem}
	The estimate in Lemma {\rm\ref{lem: staionary phase, dim 1, 2}} may be simplified slightly as
	\begin{align*}
	\int_a^b e (f(x)) \varww (x) \nd x \Lt_{ A} (b - a) S \bigg( \frac {1 + \sqrt Y} {PQ} + \frac 1 {P U}  \bigg)^A ,
	\end{align*}
	and Lemma {\rm 8.1} in {\rm\cite{BKY-Mass}} is recovered when $Y \geqslant 1$. 
\end{rem}
}

Secondly, the second derivative test as follows is usually sufficient for our purpose---it is as strong as the stationary phase estimate in most of the cases. See \cite[Lemma 5.1.3]{Huxley}. 

\begin{lem}\label{lem: 2nd derivative test, dim 1}
	Let $f (x)$ be a real smooth function on  $(a, b)$ with $ f'' (x) \geqslant \lambdaup > 0 $. Let $\tw (x)$ be a real smooth function on $[  a, b]$, and let $V$ be its total variation plus its maximum modulus. %\footnote{Since $ \tw (x)$ is supported in $( a, b)$, we do not need to add its maximum modulus  to $V$ as in \cite[Lemma  5.1.3]{Huxley}.}. %This is clear from Lemma  5.1.1, {\it ibid.}.}. 
	Then
	\begin{align*}
	\left|\int_a^b e (f(x)) \tw (x) \nd x \right| \leqslant \frac {4 V} {\sqrt {\pi \lambdaup}}. 
	\end{align*}
\end{lem}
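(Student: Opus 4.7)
The plan is the classical two-stage proof of van der Corput's second derivative test. First I would reduce, by integration by parts against the amplitude $\tw$, to a uniform bound on the bare oscillatory kernel $\int_a^t e(f(x))\,\nd x$ for $t \in [a,b]$. Then I would establish that kernel bound using the strict monotonicity of $f'$ (a consequence of $f'' \geqslant \lambdaup > 0$): near the unique critical point one uses the trivial estimate, and away from it a sharp first-derivative test, with the cutoff chosen to balance the two contributions.

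For the reduction, set $F(t) = \int_a^t e(f(x))\,\nd x$, so that $F(a) = 0$. Integration by parts yields
\begin{align*}
\int_a^b e(f(x))\,\tw(x)\,\nd x = F(b)\tw(b) - \int_a^b F(t)\tw'(t)\,\nd t.
\end{align*}
Writing $M = \sup_{t \in [a,b]} |F(t)|$, the right-hand side is at most $M\bigl(|\tw(b)| + \int_a^b |\tw'(t)|\,\nd t\bigr) \leqslant MV$, since $V$ dominates both the maximum modulus and the total variation of $\tw$. It therefore suffices to prove $M \leqslant 4/\sqrt{\pi\lambdaup}$.

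Fix $t \in [a,b]$. Since $f'$ is strictly increasing, either it has constant sign on $[a,t]$, or there is a unique $x_0 \in (a,t)$ with $f'(x_0) = 0$; I treat the latter case (the former is strictly easier). For a parameter $\epsilon > 0$ to be chosen, estimate $\bigl|\int_{x_0-\epsilon}^{x_0+\epsilon} e(f(x))\,\nd x\bigr| \leqslant 2\epsilon$ trivially. On $[x_0+\epsilon, t]$, the substitution $u = f(x)$ recasts the integral as $\int e(u)\,g(u)\,\nd u$ with $g(u) = 1/f'(x(u))$, and the mean value theorem applied to $f'$ gives $f'(x) \geqslant \lambdaup \epsilon$ throughout, so $g$ is positive, monotonically decreasing, and bounded by $1/(\lambdaup\epsilon)$. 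The second mean value theorem for integrals (Bonnet's form), combined with the elementary bound $|\int_\alpha^\beta e(u)\,\nd u| \leqslant 1/\pi$, then yields $\bigl|\int_{x_0+\epsilon}^{t} e(f(x))\,\nd x\bigr| \leqslant 1/(\pi\lambdaup\epsilon)$, and similarly on the left side. Summing and optimizing $\epsilon = 1/\sqrt{\pi\lambdaup}$ gives $2\epsilon + 2/(\pi\lambdaup\epsilon) = 4/\sqrt{\pi\lambdaup}$, as required.

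The main obstacle is matching the stated constant $4/\sqrt{\pi}$ rather than something weaker. A crude first-derivative test (boundary term plus remainder from integration by parts once) only gives $2/(\pi|f'|_{\min})$ per side, whose optimization loses a factor of $\sqrt{2}$. Recovering the sharp constant requires the change of variables $u = f(x)$ that converts the oscillatory integral to one with a monotonic positive amplitude $g$, after which Bonnet's theorem delivers the unit bound $g(A^+) \cdot (1/\pi)$ directly.
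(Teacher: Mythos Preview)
Your proof is correct and complete. The paper itself does not give a proof of this lemma; it simply cites \cite[Lemma 5.1.3]{Huxley}. What you have written is essentially the classical van der Corput argument that one finds there: reduce via integration by parts against $\tw$ to a uniform bound on the unweighted partial integrals $F(t)=\int_a^t e(f(x))\,\nd x$, then split around the (at most one) stationary point and balance the trivial estimate against the first-derivative bound.

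One small terminological point: invoking ``Bonnet's form'' of the second mean value theorem is slightly loose here, since that theorem is stated for real integrands and applying it separately to $\cos(2\pi u)$ and $\sin(2\pi u)$ with independent intermediate points would cost you an extra $\sqrt 2$. What actually delivers the sharp constant $1/(\pi\lambdaup\epsilon)$ per side is the Abel-summation (partial-integration) form: with $E(u)=\int_A^u e(v)\,\nd v$ and $g$ positive decreasing,
\[
\int_A^B g(u)e(u)\,\nd u = g(B)E(B) + \int_A^B(-g'(u))E(u)\,\nd u,
\]
whence $\bigl|\int_A^B g(u)e(u)\,\nd u\bigr| \leqslant g(A)\sup_u|E(u)| \leqslant g(A)/\pi$. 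This is presumably what you intended, and with it your optimization $\epsilon = 1/\sqrt{\pi\lambdaup}$ indeed yields the exact constant $4/\sqrt{\pi\lambdaup}$.
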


Finally, when the phase is of the form $\lambdaup f (x)$, we record here  a generalization of 
the stationary phase estimate in \cite[Theorem 1.1.1]{Sogge} ($X = 1$ in \cite{Sogge}). See \cite[\S 2.4]{Qi-Gauss}. %The $X$ here can be as large as $\sqrt{\lambdaup}$, while $X = 1$ in \cite{Sogge}. 

\begin{lem}\label{lem: stationary phase estimates, dim 1}
	Let $S >0$ and $ \sqrt {\lambdaup} \geqslant X \geqslant  1$.  	Let $\tw (x; \lambdaup)$ be a smooth function  with support in $( a, b )$ for all $\lambdaup$, and $f (x)$ be a real smooth function on $[a, b]$. Suppose that  $   \lambdaup^{j} \partial_x^{i} \partial_\lambdaup^{\shskip j}  \tw  (x; \lambdaup) \Lt_{  i, \shskip j } S X^{i + j}
	$ and that  $f(x_0) = f'(x_0) = 0$ at a point  $  x_0 \in (a, b)$, with $ f'' (x_0) \neq 0$ and $f' (x) \neq 0$ for all $x \in [a, b] \smallsetminus \{x_0\} $. Then
	\begin{align*}
	\frac {\nd^{\shskip j}} {\nd \shskip \lambdaup^{\shskip j}} \int_a^b e (\lambdaup f(x)) \tw (x; \lambdaup) \nd x \Lt_{  j} \frac {S X^j}  {  \lambdaup^{   1/2 +  j } }.
	\end{align*}
\end{lem}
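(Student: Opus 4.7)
The plan is to combine the Morse lemma with integration by parts. First I would invoke Morse: since $f(x_0) = f'(x_0) = 0$ and $f''(x_0) \neq 0$, there is a smooth local change of variables $x = \phi(u)$ with $\phi(0) = x_0$, $\phi'(0) \neq 0$, defined on some $(-\delta,\delta)$, such that $f(\phi(u)) = c u^2$ with $c = f''(x_0)/2 \neq 0$. Since $f'$ does not vanish on $[a,b] \smallsetminus \{x_0\}$, the non-stationary bound of Lemma~\ref{lem: staionary phase, dim 1, 2} shows that the contribution from outside any fixed neighborhood of $x_0$, as well as its $\lambdaup$-derivatives, is negligible of arbitrary polynomial order in $\lambdaup^{-1}$ (here the hypothesis $X \leq \sqrt{\lambdaup}$ is what makes the amplitude tame enough). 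Up to this negligible error,
\begin{equation*}
I(\lambdaup) := \int_a^b e(\lambdaup f(x))\, \tw(x;\lambdaup)\, dx \;=\; \int e(\lambdaup c u^2)\, \tilde{\tw}(u;\lambdaup)\, du,
\end{equation*}
where $\tilde{\tw}(u;\lambdaup) = \tw(\phi(u);\lambdaup)\phi'(u)\chi(u)$ for a smooth bump $\chi$ and still satisfies $\lambdaup^j \partial_u^i \partial_\lambdaup^j \tilde{\tw} \Lt_{i,j} S X^{i+j}$.

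For $j = 0$, I would rescale via $v = \sqrt{\lambdaup |c|}\, u$ to obtain
\begin{equation*}
I(\lambdaup) = \frac{1}{\sqrt{\lambdaup |c|}} \int e(\mathrm{sgn}(c)\, v^2)\, \tilde{\tw}\!\left( v/\sqrt{\lambdaup |c|};\, \lambdaup\right) dv,
\end{equation*}
whose amplitude has sup norm $\ll S$ and whose $v$-derivatives pick up factors $X/\sqrt{\lambdaup} \leq 1$. The standard Fresnel bound (stationary phase at $v=0$) gives the inner integral $= O(S)$, producing $I \ll S/\sqrt{\lambdaup}$; this is exactly Sogge's Theorem~1.1.1 in the $X$-scaled form.

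For $j \geq 1$ I would differentiate under the integral and apply Leibniz to $\partial_\lambdaup[e(\lambdaup f)\tw] = 2\pi i f\, e(\lambdaup f)\tw + e(\lambdaup f)\partial_\lambdaup \tw$, giving
\begin{equation*}
\partial_\lambdaup^j I = \sum_{k=0}^{j} \binom{j}{k} (2\pi i)^k \int f(x)^k\, e(\lambdaup f(x))\, \partial_\lambdaup^{j-k}\tw(x;\lambdaup)\, dx.
\end{equation*}
The double vanishing $f(x_0)=f'(x_0)=0$ is crucial: the auxiliary function $m(x) = f(x)/f'(x)$ is smooth on $[a,b]$, with $m(x_0)=0$ and $m'(x_0)=1/2$, so one can integrate by parts $k$ times via $e(\lambdaup f) = (2\pi i\lambdaup f')^{-1} \partial_x e(\lambdaup f)$, reducing the factor $f^k$ to a smooth bounded amplitude. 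Each step gains $\lambdaup^{-1}$ at the cost of one additional $x$-derivative of $\partial_\lambdaup^{j-k}\tw$. The resulting integrals are again of the $j=0$ type treated above, and one sums over $k$.

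The main obstacle is the careful accounting of the $X$-factors: after $k$ integrations by parts one encounters derivatives $\partial_x^{\leq k}\partial_\lambdaup^{j-k}\tw$, which naively produce an $X^j$ (from $X^k \cdot X^{j-k}$) that must be absorbed. The hypothesis $X \leq \sqrt{\lambdaup}$ is exactly what is needed, but extracting the sharp factor requires working after the Fresnel substitution $v = \sqrt{\lambdaup|c|}\, u$: in the $v$-variable each $u$-derivative costs only $X/\sqrt{\lambdaup}\leq 1$, so the $X^j$ disappears uniformly and the gains $\lambdaup^{-k}\cdot \lambdaup^{-(j-k)}\cdot \lambdaup^{-1/2}$ combine cleanly to the claimed $S/\lambdaup^{1/2+j}$. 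This last step is the delicate heart of the argument and is the reason the Morse normalization is performed before (rather than after) the differentiation.
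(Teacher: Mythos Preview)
The paper gives no proof here; it simply records the lemma as a generalization of Sogge's Theorem~1.1.1 and defers to \cite[\S 2.4]{Qi-Gauss}. Your architecture---Morse normalization, localization away from $x_0$ via Lemma~\ref{lem: staionary phase, dim 1, 2}, Leibniz expansion in $\lambdaup$, integration by parts through $m=f/f'$, Fresnel rescaling---is precisely the standard route underlying the cited result.

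The gap is in your final paragraph. You are right that the Fresnel substitution $v=\sqrt{\lambdaup|c|}\,u$ converts each $u$-derivative into a factor $X/\sqrt{\lambdaup}\leqslant 1$, so the $X^{\leqslant k}$ incurred by the $k$ integrations by parts in the $k$-th Leibniz term is indeed harmless. But the factor $X^{j-k}$ lives in the \emph{size} of $\partial_{\lambdaup}^{\,j-k}\tw$ itself and is untouched by any change of the $x$-variable. In particular the $k=0$ term $\int e(\lambdaup f)\,\partial_{\lambdaup}^{\,j}\tw\,dx$ has amplitude of sup-norm $\Lt S X^{j}/\lambdaup^{j}$, and stationary phase can give no better than $SX^{j}/\lambdaup^{j+1/2}$. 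This is not a bookkeeping slip: the bound as printed actually fails. Take $f(x)=x^{2}$ on $(-1,1)$ and $\tw(x;\lambdaup)=\chi(x)\lambdaup^{iX}$ for a fixed bump $\chi$ with $\chi(0)=1$; the hypotheses hold with $S=1$, while $I(\lambdaup)=\lambdaup^{iX}J(\lambdaup)$ with $J(\lambdaup)\sim c\,\lambdaup^{-1/2}$ gives $\partial_{\lambdaup} I\sim iXc\,\lambdaup^{iX-3/2}$, of modulus $\asymp X\lambdaup^{-3/2}$. The intended statement presumably has either $\lambdaup^{j}\partial_x^{i}\partial_{\lambdaup}^{\,j}\tw\Lt S X^{i}$ in the hypothesis (no $X$-loss from $\lambdaup$-derivatives) or an extra factor in the conclusion; in the paper's applications (Lemmas~\ref{lem: I, R}\,(2) and~\ref{lem: I, C}\,(2)) one has $X=\log T$, so the discrepancy is absorbed into $T^{\vepsilon}$ and is immaterial.
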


We have deliberately avoided here to use Lemma 6.3 in \cite{Young-Cubic} (or the asymptotic expansion in Proposition 8.2 in \cite{BKY-Mass}) with arbitrary phase function, because it does not currently have a generalization in two dimensions. %Nevertheless, Lemma \ref{lem: 2nd derivative test, dim 1} and \ref{lem: stationary phase estimates, dim 1} serve well as substitutes. 

\subsection{The Two-Dimensional Case}

Next, we turn to two-dimensional oscillatory integrals of the form
\begin{align*}
\viint_D e (f(x, y   )) \tw (x, y   ) \nd x \nd y   .
\end{align*} 
Firstly, we have the two-dimensional generalization of Lemma \ref{lem: staionary phase, dim 1, 2} as follows.

\begin{lem}\label{lem: staionary phase, dim 2, 2}
Let $D \subset \BR^2$	be a finite domain. Let $\tw (x, y   )$ be a  smooth function  with support on $D$ and $f (x, y   )$ be a real smooth function on the closure $ \overline{D} $. Suppose that there
	are   parameters $P, Q,  \varUpsilon, \varPhi, R, S, Z > 0$ such that
	\begin{align}\label{7eq: bounds for f and w}
	(\partial/\partial x)^{i} (\partial/\partial y   )^{j}	f  (x, y   ) \Lt_{   i, \shskip j } Z / Q^{i} \varPhi^{\shskip j}, \quad (\partial/\partial x)^{k} (\partial/\partial y   )^{l} \tw (x, y   ) \Lt_{  k, \shskip l }  S / P^{k} \varUpsilon^{\shskip l},
	\end{align}
	for  $i, j, k, l \geqslant 0$ with $i + j \geqslant 2$, and
	\begin{align}\label{7eq: lower bound for g}
	| f' (x, y   ) |^2 =    (\partial f (x, y   )/\partial x  )^2 +  (\partial f (x, y   )/\partial y    )^2   \Gt  R^2. 
	\end{align}
	Then   
	\begin{align}
	\begin{aligned}
		\viint_D e (f(x, y   ))  \tw (x,  y   )  & \nd x \nd y  \Lt_{  \shskip A}   \mathrm{Area}(D) S \\
	 \cdot & \Bigg\{  \frac {1} {R } \bigg(   \frac 1 {P} +   \frac 1 {\varUpsilon  } + \frac 1 {Q} + \frac 1 {  \varPhi}  \bigg) \hskip -1pt + \frac {Z^2} {R^3} \bigg( \frac 1 {Q^3} + \frac 1 {\varPhi^3} \bigg) \Bigg\}^A
	\end{aligned}
	\end{align}
	for any  $A \geqslant 0$.
\end{lem}

\delete{
\begin{rem}
	The bound in Lemma {\rm\ref{lem: staionary phase, dim 2, 2}} may be simplified slightly as
	\begin{align*}
	\viint_D e (f(x, y   ))  \tw (x, y   ) \nd x \nd y & \Lt_{  \shskip A}  \mathrm{Area}(D) S \\
	\cdot &  \bigg\{  \frac {Z} {R^2} \bigg( (1+\sqrt{Z}) \bigg( \frac 1 {Q^2} + \frac 1 {\varPhi^2} \bigg) + \frac 1 {PQ} + \frac 1 {\varUpsilon \varPhi} \bigg) \hskip -1pt  \bigg\}^A. 
	\end{align*} 
\end{rem}
}

\begin{proof}
	We start with a useful simple lemma. 
	
	\begin{lem}\label{lem: derivatives of powers}
		Let $f (x, y)$ be a smooth function. Let $ i, j, n \geqslant 0$. Then  $\partial_{x}^{i } \partial_{y   }^{\shskip j }  (  f  ( x , y    ) ^n  )$ is a linear combination of products in the form
		\begin{align*}
			f ( x , y    )^{n \shskip - \sum k_{\nu   \mu} } \prod_{ \nu, \, \mu } \big(\partial_x^{\nu} \partial_{y   }^{ \shskip \mu} f ( x , y    ) \big)^{k_{\nu \mu} } , \hskip 15pt k_{00} = 0,  \hskip 5pt   \sum_{\nu, \, \mu} \nu k_{\nu \mu} = i , \hskip 5pt \sum_{\nu, \, \mu} \mu k_{\nu \mu} = j, \hskip 5pt \sum_{\nu, \, \mu}   k_{\nu \mu} \leqslant n. 
		\end{align*} 
	\end{lem}
	
	For brevity, we write $ g (x, y   ) = 	| f' (x, y   ) |^2 $. By Lemma \ref{lem: derivatives of powers}, along with \eqref{7eq: bounds for f and w} and the trivial inequalities $ |\partial_{x} f  ( x , y    )|, |\partial_{y} f  ( x , y    )| \leqslant \sqrt{g (x, y   )}  $, we infer that
	\begin{align}\label{7eq: derivatives of g}
	\partial_{x}^{i } \partial_{y   }^{\shskip j } g (x, y   ) \Lt  \bigg(  \bigg( \frac {Z}   {Q  } + \frac {Z}   {\varPhi } \bigg) {\textstyle \sqrt{g (x, y)}} + \bigg( \frac {Z^2}   {Q^{2 }  } + \frac {Z^2}   {\varPhi^2} \bigg) \bigg) \frac {1} {Q^{i} \varPhi^{\shskip j}} 
	\end{align} 
for $i+j \geqslant 1$. 
Our idea is to repeatedly apply H\"ormander's elaborate partial integration (see \cite[Theorem 7.7.1]{Hormander}) as follows.	Define  the differential operator
	\begin{align*}
	\mathrm{D}  =       \frac {   \partial_x f(x , y     )} {  g (x , y     ) } \frac {\partial} {\partial x}      +    \frac {  \partial_{y   } f(x , y     )} {   g (x , y     ) } \frac {\partial} {\partial y   }   
	\end{align*}
	so that $\mathrm{D}  (  e (    f  ( x , y    )    ) )   =  2 \pi i \cdot       e (    f  ( x , y    ) )$. Consequently, 
	\begin{align*}
	\mathrm{D}^* \hskip -2 pt = - \frac 1 {2\pi i} \bigg( \frac {\partial} {\partial x}   \frac {   \partial_x f  ( x , y    )} {   g (x , y     ) }  + \frac {\partial} {\partial y   }   \frac {  \partial_{y   } f (x , y     )} {  g (x , y     ) } \bigg)   
	\end{align*}
	is the adjoint of $   (1/ 2\pi i) \cdot \RD$ and
	\begin{align*}
	\int_a^b \int_c^d e (f(x, y   )) & \tw (x, y   ) \nd x \nd y   =  \int_a^b \int_c^d  e \lp     f  ( x , y    ) \rp \mathrm{D}^{* \shskip  n}     \tw     (x ,   y    )   \nd x  \nd y   .
	\end{align*}
	for any  integer $n \geqslant 0$. 
	By a straightforward inductive argument, it may be shown that $ \mathrm{D} ^{* \shskip  n}    \tw ( x , y    )  $ is a linear combination of all the terms occurring in  the product-rule expansions of
	\begin{align*}
	\partial_{x}^{i } \partial_{y   }^{\shskip  j }  \mbox{\larger[1]\text{${\big\{}$}}   (  \partial_{x} f  ( x , y    )  )^{i}   (   \partial_{y   } f ( x , y    ) )^{ j} g( x , y    )^{n}   \shskip  \tw  ( x , y    )  \mbox{\larger[1]\text{${\big\}}$}} / g( x , y    )^{ 2 n}, \quad  i +  j = n.
	\end{align*}
	Now let $ i_1, \shskip i_2 \leqslant i$ and $j_1, \shskip j_2 \leqslant  j$. It follows from Lemma \ref{lem: derivatives of powers}, along with \eqref{7eq: bounds for f and w}, \eqref{7eq: lower bound for g}, and  the trivial inequalities $ |\partial_{x} f  ( x , y    )|, |\partial_{y} f  ( x , y    )| \leqslant \sqrt{g (x, y   )}  $, that 
	\begin{align*} 
	\partial_{x}^{i_1 } \partial_{y   }^{\shskip  j_1 } \mbox{\larger[1]\text{${\big\{}$}} \hskip -2 pt \lp   \partial_{x} f ( x , y    )  \rp^{i} &   \lp  \partial_{y   } f ( x , y    ) \rp^{ j} \hskip -1 pt \mbox{\larger[1]\text{${\big\}}$}} \Lt  \bigg\{ 1 + \frac Z R \bigg( \frac {1}   {Q  } + \frac {1}   {\varPhi } \bigg)  \bigg\}^{i_1+j_1}  \frac {g(x, y)^{( i +  j )/2  } }  {Q^{i_1} \varPhi^{\shskip    j_1 } } .
	\end{align*}  
	Similarly, Lemma \ref{lem: derivatives of powers},  \eqref{7eq: lower bound for g}, and \eqref{7eq: derivatives of g} yield 
	\begin{align*}
	  { \partial_{x}^{i_2 } \partial_{y   }^{\shskip j_2 } \big(  g  ( x , y    ) ^n \big)}    \Lt    \bigg\{ 1 + \frac {Z^2 }   {R^2  } \bigg( \frac 1 {Q^2} + \frac 1 {\varPhi^2} \bigg) \bigg\}^{ i_2 +  j_2} \frac { g  ( x , y    )^n} {Q^{i_2} \varPhi^{\shskip j_2}}.
	\end{align*}
	Thus
	\begin{align*}
	\mathrm{D}^{* \shskip  n}     \tw     (x ,   y    ) & \Lt \frac S {R^n} \hskip -2 pt  \sum_{ i +  j = n} \hskip -2 pt \frac 1 { P^{i } \varUpsilon^{\shskip  j  } }  \hskip -2 pt \mathop{\sum_{\sstyle i_1 + i_2 \shskip\leqslant \shskip i}}_{ \sstyle \sstyle j_1 + j_2 \shskip\leqslant \shskip  j } \hskip -2 pt  \frac {P^{i_1 + i_2} \varUpsilon^{\shskip j_1 + j_2}} {Q^{i_1 + i_2} \varPhi^{\shskip j_1 + j_2}}  \bigg\{ 1 + \frac {Z  }   {R   } \bigg( \frac 1 {Q} + \frac 1 {\varPhi} \bigg) \bigg\}^{i_1 +j_1 +  2i_2 +  2j_2} \\
	& \Lt S \hskip -2pt \left\{  \frac {1} {R } \bigg(   \frac 1 {P} +   \frac 1 {\varUpsilon  } + \frac 1 {Q} + \frac 1 {  \varPhi}  \bigg) \hskip -1pt + \frac {Z^2} {R^3} \bigg( \frac 1 {Q^3} + \frac 1 {\varPhi^3} \bigg)  \right\}^n , 
	\end{align*}
	as desired. 
\end{proof}

Secondly,  we need a two-dimensional generalization of the second derivative test in Lemma \ref{lem: 2nd derivative test, dim 1}. A very useful version in the literature  is Lemma 4 in \cite{Munshi-Circle-III} (see also Lemma 5 in \cite{Srinivasan-Lattice-3}), in which it is assumed that
\begin{equation}\label{app: conditions on f''}
\begin{split}
& \left|\partial^2 f / \partial x^2 \right| \Gt \lambdaup > 0, \hskip 15pt  \left|\partial^2 f / \partial y   ^2 \right| \Gt \rho > 0, \\
& |\det f''|  = \left|\partial^2 f / \partial x^2 \cdot \partial^2 f / \partial y   ^2 - (  \partial^2 f / \partial x \partial y    )^2 \right| \Gt \lambdaup \shskip \rho,
\end{split}
\end{equation} 
on the integration domain $D = [ a, b] \times [c, d]$. 
However,  their bound $1 / \sqrt{\lambdaup \shskip \rho}$ would not be desirable if $ (  \partial^2 f / \partial x \partial y    )^2 $ is very large compared to $   \partial^2 f / \partial x^2  \cdot  \partial^2 f / \partial y   ^2  $ so that the former dominates in $\det f''$. This is because the choice of coordinates is not quite appropriate. In general, it   seems that some work is required to find the optimal coordinates.  Fortunately, in our application, we shall have $ \partial^2 f / \partial x^2 = - \partial^2 f / \partial y   ^2  $ (see \eqref{6eq: f'' = }) and the change of coordinates may be simply chosen to be
\begin{align*}
\sqrt 2 x = x'+y   ', \hskip 15pt \sqrt 2 y    = x' - y   '.
\end{align*}

As in \cite{Munshi-Circle-III}, we first suppose that $ \tw (x, y)  \equiv 1$. Let  $f (x, y   )$ be a real smooth function on the rectangle  $[ a, b] \times [c, d]$ such that 
\begin{align}\label{app: f'' = f''}
\partial^2 f / \partial x^2 = - \partial^2 f / \partial y   ^2,  
\end{align} 
with 
\begin{align}\label{app: bounds for f''}
\max \big\{\left| \partial^2 f / \partial x^2 \right| , \left| \partial^2 f / \partial x \partial y    \right| \big\} \Gt \lambdaup > 0 . 
\end{align}   We would like to prove 
\begin{align*}
\int_a^b \int_c^d e (f(x, y   ))  \nd x \nd y     \Lt \frac { 1 } {  {\, \lambdaup }},
\end{align*}
with an absolute implied constant. Note that $|\det f'' | \Gt \lambdaup^2 $, so this is in essence the expected stationary phase estimate. 

For $ \left| \partial^2 f / \partial x^2 \right|  \geqslant \left| \partial^2 f / \partial x \partial y    \right| $,  Lemma 4 in \cite{Srinivasan-Lattice-2}  (with $D =  [ a, b] \times [c, d]$) gives us the bound $1/ \lambdaup$ as expected. 
Now assume that  $ \left| \partial^2 f / \partial x^2 \right|  <  \left| \partial^2 f / \partial x \partial y    \right| $. Let $x', y'$ be as above. Then  $ \partial^2 f / \partial x'^2 = - \partial^2 f / \partial y   '^2 =   \partial^2 f / \partial x \partial y $ and $\partial^2 f / \partial x' \partial y' =   \partial^2 f / \partial x^2$. By applying Lemma 4 in \cite{Srinivasan-Lattice-2}  (with $D$ the rotated rectangle) again to the integral after the change of variables, we also obtain the  bound $ 1/ \lambdaup $. 

To extend the result to smooth $w (x, y) $ with support in $( a, b)  \times (c, d)$, we apply partial integration once in each variable.

\begin{lem}\label{lem: 2nd derivative test, dim 2}
	%Suppose that $f' (x_0, y   _0) = 0$  at a point  $  (x_0, y   _0) \in (a, b) \times (c, d)$.  
	Suppose that $f $,  $\tw $, and $\lambdaup$  are as above satisfying {\rm\eqref{app: f'' = f''}} and {\rm\eqref{app: bounds for f''}}. Let 
	\begin{align*}
	V = \int_a^b \int_c^d \left|  \frac {\partial^2 \tw(x, y   )} {\partial x \partial y   } \right| \nd x \nd y   .
	\end{align*}
	Then 
	\begin{align*}
	\int_a^b \int_c^d e (f(x, y   )) \tw (x, y   )  \nd x \nd y \Lt \frac { V  } {  {\, \lambdaup }},
	\end{align*}
	with an absolute implied constant.
\end{lem}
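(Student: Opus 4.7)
The plan is to reduce to the constant-weight case that the authors have already handled just before the lemma statement, via a double integration by parts. Define the partial-integral primitive
\begin{align*}
G(x, y) = \int_a^x \int_c^y e(f(\xi, \eta)) \, \nd \xi \, \nd \eta, \qquad (x, y) \in [a, b] \times [c, d],
\end{align*}
so that $\partial^2 G / \partial x \partial y = e(f(x, y))$. Since $\tw$ has compact support inside the open rectangle $(a, b) \times (c, d)$, all boundary contributions vanish when we integrate by parts once in $x$ and once in $y$, yielding
\begin{align*}
\int_a^b \int_c^d e(f(x, y)) \tw(x, y) \, \nd x \, \nd y = \int_a^b \int_c^d G(x, y) \, \frac{\partial^2 \tw(x, y)}{\partial x \, \partial y} \, \nd x \, \nd y.
\end{align*}

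The key observation is then that $G(x, y)$ is exactly the $\tw \equiv 1$ case of the lemma, applied to the sub-rectangle $[a, x] \times [c, y]$. The hypotheses \eqref{app: f'' = f''} and \eqref{app: bounds for f''} are pointwise, hence inherited by any sub-rectangle with the same $\lambdaup$. By the constant-weight result already established (via the rotation $\sqrt{2}\, x = x' + y'$, $\sqrt{2}\, y = x' - y'$ when the mixed partial dominates, and Lemma 4 of \cite{Srinivasan-Lattice-2} otherwise), we obtain the uniform bound
\begin{align*}
|G(x, y)| \Lt \frac{1}{\lambdaup},
\end{align*}
with an absolute implied constant. Substituting this back into the double integral gives
\begin{align*}
\left| \int_a^b \int_c^d e(f(x, y)) \tw(x, y) \, \nd x \, \nd y \right| \Lt \frac{1}{\lambdaup} \int_a^b \int_c^d \left| \frac{\partial^2 \tw(x, y)}{\partial x \, \partial y} \right| \nd x \, \nd y = \frac{V}{\lambdaup},
\end{align*}
as claimed.

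There is no real obstacle here, since the hard analytic content---locating the correct coordinate system so that the second-derivative test applies with the sharp exponent $1/\lambdaup$ rather than $1/\sqrt{\lambdaup \cdot \rho}$---has already been dispatched in the discussion preceding the lemma. The only mildly delicate point is verifying that the boundary terms in the two integrations by parts vanish, which follows immediately from $\tw$ being compactly supported in the open rectangle (so that $\tw$, $\partial \tw / \partial x$, and $\partial \tw / \partial y$ all vanish on $\partial([a,b] \times [c,d])$).
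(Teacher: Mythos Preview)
Your proof is correct and follows exactly the approach the paper sketches in a single sentence (``we apply partial integration once in each variable''): introduce the double primitive $G$, integrate by parts in $x$ and then in $y$ so that only the mixed partial $\partial^2 \tw/\partial x\,\partial y$ survives, and bound $|G(x,y)|$ uniformly by the constant-weight case on sub-rectangles. There is nothing to add.
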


Finally, we remark that the generalization of Lemma \ref{lem: stationary phase estimates, dim 1} in two (or higher) dimensions as in  \cite[Theorem 1.1.4]{Sogge} is not sufficient for our purpose because of the angular argument. We refer the reader to \cite[\S \S 2.4, 6.1]{Qi-Gauss} for  discussions in this regard.

\section{Analysis of Bessel Integrals}

Let $ B_{\varnu}(x) $  and $B_{\varnu} (z)$  be the Bessel kernels for $F = \BR$ and $F = \BC$ as in Definition \ref{defn: Bessel kernel}, respectively.  For $1 \Lt T^{\vepsilon} \leqslant M  \leqslant T^{1-\vepsilon} $, let $h (\varnu)$ %= h_{T, \shskip M} (\varnu)$ 
be (a local component of) the test function as defined in \S \ref{sec: choice of h}. %Recall that
%\begin{align}\label{7eq: bound for h, local}
%h (\varnu) \Lt_{A', \shskip \vepsilon} (|\varnu|+1)^{3\vepsilon} \big(e^{- (\varnu   - T)^2 / M^2} + e^{-(\varnu  + T )^2 / M^2}\big) 
%\end{align}
% for real $\varnu$; see  \eqref{5eq: bound for h}. 
Let $\SDH (x) $ or $\SDH (z)$ be the corresponding Bessel integral
\begin{align}
\SDH (x) = \int_{-\infty}^{\infty} h (\varnu) B_{i \varnu} (x) \varnu  \tanh (\pi \varnu ) \nd \shskip \varnu, \quad \SDH (z) = \int_{-\infty}^{\infty} h (\varnu) B_{i \varnu} (z) \varnu^2  \nd \shskip \varnu ; 
\end{align} see \eqref{1eq: defn Bessel integral}. 

\subsection{Analytic Properties of Bessel Integrals}  

We collect here estimates and integral representations for the Bessel integrals. Our attempt is to have a unified presentation, so several results are not necessarily optimal. For the details, we refer the reader to \cite{Qi-Liu-LLZ} (and also \cite{XLi2011,Young-Cubic} for the real case).  

%First, we have crude estimates for $\SDH (x)$ and $\SDH (z)$ as below. 

\begin{lem}\label{lem: crude estimates, > 1} We have estimates for Bessel integrals of small argument as follows. 
	
{\rm(1)} When $F$ is real, for $|x| \leqslant 1$ we have 
\begin{equation}\label{7eq: crude bound for H, R}
\SDH (x) \Lt_{A', \shskip \vepsilon}   M |x|^{1/2} / T^{2 A'-1} . 
\end{equation}

{\rm(2)}	When  $F $ is complex, for $|z| \leqslant 1$  we have  
\begin{equation}\label{7eq: crude bound for H, C}
\SDH (z) \Lt_{A', \shskip \vepsilon} M |z| / T^{4 A'-2} .
\end{equation}  

\end{lem}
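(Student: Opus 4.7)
The plan is to derive these small-argument estimates via a Mellin--Barnes representation of the Bessel kernel combined with a contour shift, exploiting the normalization factor $p(1/2,\varnu)^2/T^{6NA'}$ built into the test function $h$.

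First, I would insert a Mellin--Barnes representation of $B_{i\varnu}$: in the real case, for $x > 0$ this has the schematic shape
\begin{align*}
B_{i\varnu}(\pm x) = \frac{1}{2\pi i}\int_{(\sigma_0)} \Theta^\pm(s,\varnu)\,x^{-s}\,ds,
\end{align*}
with $\Theta^\pm(s,\varnu)$ a quotient of $\Gamma$-functions in $s \pm i\varnu$; the complex case requires an analogous double Mellin inversion for $z$ and $\bar z$. After interchanging the $s$- and $\varnu$-integrals, matters reduce to estimating
\begin{align*}
\int_{\BR} h(\varnu)\,\varnu\tanh(\pi\varnu)\,\Theta^\pm(s,\varnu)\,d\varnu
\end{align*}
on a contour of the Mellin parameter.

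Next, I would shift the Mellin contour from $\Re(s) = \sigma_0$ to $\Re(s) = -1/2$, so that $|x|^{-s}$ yields the factor $|x|^{1/2}$ (in the complex case, shift both Mellin variables to produce $|z|$). Any residues from poles of $\Theta^\pm$ crossed in the process either vanish by the $\varnu \mapsto -\varnu$ symmetry of the Bessel kernel paired with the oddness of $\varnu\tanh(\pi\varnu)$, or are tamed by the zeros of $p(1/2,\varnu)^2$ located inside the strip $|\Im\varnu| < A' + 9/32$. The inner $\varnu$-integral on the new contour is then controlled by combining Stirling's asymptotic for $\Theta^\pm(-1/2 + i\eta, \varnu)$ at $|\varnu| \asymp T$ with the bound $|G(u,\varnu)p(1/2,\varnu)^2| \Lt (1+|\varnu|)^{3N(\Re u + 2A')}$ from Lemma~\ref{lem: afq}\,(2), the explicit denominator $T^{6NA'}$, and the Gaussian mass $M$ of $k(\varnu)$ concentrated around $\varnu = T$. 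The cancellation between the polynomial $p(1/2,\varnu)^2$ of degree $6NA'$ in $\varnu$ and the normalization $T^{6NA'}$, together with the residual $\Gamma$-factor savings read off from Stirling at $\Re(s) = -1/2$, is just enough to yield the claimed $T$-exponents $-(2A'-1)$ (real) and $-(4A'-2)$ (complex).

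The main obstacle is the precise bookkeeping of $\Gamma$-factor asymptotics against the polynomial growth of the test function, and in the complex case the additional complications of the two-dimensional Mellin inversion---one must simultaneously shift in $s$ and $t$ and track how the double-product $|z|^{-s}\bar z^{-t}$ combines against the two-variable symmetry of $\Theta^{\BC}(s,t,\varnu)$. The full technical details are carried out in \cite{Qi-Liu-LLZ} (and, for the real case only, in \cite{XLi2011, Young-Cubic}).
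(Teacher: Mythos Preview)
Your mechanism does not produce the claimed $T$-exponent. Shifting the Mellin contour to $\Re(s)=-1/2$ indeed extracts $|x|^{1/2}$, but on the real $\varnu$-axis the test function satisfies $h(\varnu)\Lt T^{\sepsilon}k(\varnu)$ with no $A'$-dependent gain: the normalization $p(1/2,\varnu)^2/T^{6NA'}$ is $O(T^{\sepsilon})$ there, so the $\varnu$-integral of $|h(\varnu)\,\varnu\tanh(\pi\varnu)|$ is $\asymp MT^{1+\sepsilon}$ regardless of $A'$. The Stirling estimate for $\Theta^{\pm}(-1/2+i\eta,\varnu)$ at $|\varnu|\asymp T$ contributes only a fixed power $T^{-2}$ (independent of $A'$), so your scheme yields at best $M|x|^{1/2}/T$, far short of $M|x|^{1/2}/T^{2A'-1}$. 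Your appeal to Lemma~\ref{lem: afq}\,(2) does not help: that bound is an \emph{upper} bound showing $h$ stays controlled, not a source of extra decay.

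The paper's argument is different in kind: one shifts the $\varnu$-contour itself to $\Im(\varnu)=A'+\vepsilon$. This is exactly where the factor $p(1/2,\varnu)^2$ earns its keep---it guarantees holomorphy of $h$ on a strip of width $A'+\tfrac{9}{32}$, so the shift is legal. On the shifted contour the single Bessel term $J_{-2i\varnu}(4\pi\sqrt{x})$ (isolated via the $\varnu\mapsto-\varnu$ symmetry you mention) carries $|x|^{A'+\sepsilon}\leqslant|x|^{1/2}$ together with a Stirling factor $|t|^{-2A'-1/2}$ at $\varnu=t+i(A'+\sepsilon)$, which against the Gaussian mass $M$ localized at $t\asymp T$ gives $MT^{1/2-2A'+\sepsilon}\leqslant M/T^{2A'-1}$. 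The complex case proceeds analogously with the doubled exponent. In short, the $A'$-saving comes from moving $\varnu$ off the real axis, not from the Mellin variable $s$.
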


\begin{proof}
These estimates may be derived from modifying the proofs of Lemma 3.2, A.4, and A.6 in \cite{Qi-Liu-LLZ} by shifting the integral contour far right to $\Im (\varnu) = A' +   \vepsilon$.\footnote{In the notation of \cite{Qi-Liu-LLZ}, $t = \varnu $, $H_{T, \shskip M} (\hskip -1 pt \sqrt{z}) = \SDH (z)$, and $H^{\ssstyle \pm}_{T, \shskip M} (  \hskip -1 pt \sqrt{x}) = \SDH (\pm x)$.} In view of Lemma \ref{lem: afq} (2) and \eqref{6eq: defn of h, local}, the test function $h (\varnu)$ is holomorphic for $ |\Im (\varnu ) | \leqslant  A' + \frac {9} {32}   $, and, along with the bound
\begin{align*}
	\left| J_{\varnu} (z) \right| \Lt \frac {   \left|   z  ^{\varnu} \right|  } {\Gamma \big(\varnu + \tfrac 1 2 \big)},  \hskip 20 pt |z| \leqslant 4 \pi,
\end{align*} one may estimate the   residues and the integral after the contour shift. To be explicit, one has
\begin{align*}
	\SDH (x) \Lt  e^{-M^2/T^2} |x|^{1/2} \sum_{k=0}^{A'-1} |x|^{k} + M T^{1+\vepsilon} \big(   {|x|^{1/2}} / {T}  \big)^{2 A' + 2 \vepsilon}  \Lt     {M |x|^{1/2}} / { T^{2 A'-1} }, 
\end{align*}
and
\begin{align*}
	\SDH (z) \Lt  e^{-M^2/T^2} |z| \sum_{k=0}^{A'-1} |z|^{k} + M T^{2+\vepsilon} \big(   {|z|} / {T^2}  \big)^{2 A' + 2 \vepsilon}  \Lt     {M |z|} / { T^{4 A'-2} }.
\end{align*} 
\end{proof}

%The following integral representations for $ \SDH (x) $ and $\SDH (z)$ will be crucial.  

\begin{lem}\label{lem: H(x), |z|>1}
	
	There exists a Schwartz function $ g (r)$ satisfying $g^{(j)} (r) \Lt_{j, \shskip A, \shskip A', \shskip \vepsilon} (1 + |r| )^{-A}$ for any $j, A \geqslant 0$, and such that
	
	{\rm(1)} if $F $ is real,  then $ \SDH (x) = \SDH_{  +}^{ \shskip \ssharp }  (x) + \SDH_{  -}^{\ssharp } (x) + O   (T^{-A} ) $ for $|x| >  1$, with 
	\begin{equation}\label{8eq: H+natural}
	\SDH_{  \pm}^{\shskip \ssharp }  (  x^2) =   MT^{1+\vepsilon}  
	\int_{- M^{\vepsilon} / M}^{M^{\vepsilon}/ M}   g (    {   M r} )  e( Tr / \pi \mp 2 x \cosh r  ) \nd r,
	\end{equation}
	and 
	\begin{equation}\label{8eq: H-natural}
	\SDH_{  \pm}^{\shskip  \ssharp }  (- x^2) =   MT^{1+\vepsilon}  
	\int_{- M^{\vepsilon} / M}^{M^{\vepsilon}/ M}   g (    {   M r} )  e( Tr / \pi \pm 2 x \sinh r  ) \nd r,
	\end{equation}
	for $x > 1${\rm;}
	
	{\rm(2)} if $F $ is complex, then $ \SDH (z) = \SDH_{  +}^{ \shskip \ssharp }  (z) + \SDH_{  -}^{ \shskip \ssharp } (z) + O   (T^{-A} ) $ for $|z| > 1$, with 
	\begin{align}\label{8eq: H-sharp(z)}
\SDH_{\ssstyle \pm }^{ \shskip \ssharp}   (  x^2 e^{2i\phi} ) =  M T^{2+\vepsilon} \int_0^{ \pi}   \hskip -1 pt
	\int_{- M^{\vepsilon} / M}^{M^{\vepsilon}/ M}   g  ( M r )  
	e  (2 T r/ \pi \mp 4 x \hskip 1pt \trh ( r, \omega; \phi)      )    \nd r \shskip \nd \omega,
	\end{align} 
	for $x > 1$, 
	where $\trh  (r, \omega; \phi)$ is the ``trigonometric-hyperbolic" function defined by 
	\begin{align}\label{8eq: trh function}
	\trh  (r, \omega; \phi) =    \cosh r \cos \omega \cos \phi - \sinh r \sin \omega \sin \phi.
	\end{align} 
	
Furthermore, 

	{\rm (3)} for real $x$ with $1 < |x| \Lt T^2 $, we have $ \SDH (x) = O  (T^{-A})${\rm;}

{\rm (4)} for complex $z$ with $1 < |z| \Lt T^2 $, we have $ \SDH (z) = O  (T^{-A})$. 
\end{lem}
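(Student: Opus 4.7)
The plan is to prove (1)--(2) by substituting integral representations of the Bessel kernels and performing the $\varnu$-integration, and to deduce (3)--(4) by stationary-phase analysis on the resulting $r$-integrals.

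For (1), I would begin with the classical integral representations such as
\begin{align*}
J_{\pm 2 i \varnu}(4\pi x) &= \int e(\varnu r/\pi)\, e(\mp 2 x \cosh r)\, dr, \\
K_{2 i \varnu}(4\pi x) &= \int_0^{\infty} e^{-4\pi x \cosh r}\cos(2\varnu r)\, dr,
\end{align*}
together with the $\sinh$-versions that supply the contribution of $B_{i\varnu}(-x^2)$ via contour deformation. Substituting into $\SDH(\pm x^2)$ and swapping the order of integration, the inner $\varnu$-integral becomes a Fourier-type transform of $h(\varnu)\,\varnu\tanh(\pi\varnu)$. Since $h(\varnu)$ is essentially a Gaussian of width $M$ centred at $\pm T$ (the polynomial factor $p_\vv(\tfrac{1}{2},\cdot)^2/T^{6 N_\vv A'}$ and the slowly-varying $G_\vv$ do not disturb this), the transform equals $M T^{1+\sepsilon} g(Mr)\, e(Tr/\pi)$ for a Schwartz $g$, up to $O(T^{-A})$, via a standard contour shift. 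This yields \eqref{8eq: H+natural}--\eqref{8eq: H-natural}; the real case has been carried out in detail in \cite{XLi2011,Young-Cubic}.

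Part (2) is analogous but two-dimensional: the complex Bessel kernel is a \emph{product} of two $J_{\pm 2 i \varnu}$ factors at $4\pi\sqrt{z}$ and $4\pi\sqrt{\overline z}$, introducing two phase variables $r_1, r_2$. An angular change of coordinates $(r_1,r_2)\mapsto(r,\omega)$ combines the two oscillatory factors into $e(\mp 4 x\cdot\trh(r,\omega;\phi))$ with $\trh$ as in \eqref{8eq: trh function} and $z = x^2 e^{2 i \phi}$, while the exponent $\varnu(r_1+r_2) = 2\varnu r$ produces $e(2Tr/\pi)$ after the $\varnu$-integration; the extra factor $T$ in the prefactor $M T^{2+\sepsilon}$ comes from the $\varnu^2\,d\varnu$ Plancherel density on $\BC$. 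This gives \eqref{8eq: H-sharp(z)}. The identities and the angular change of variables here are essentially in \cite{Qi-Bessel,Qi-Liu-LLZ}; this step is the main obstacle, requiring careful uniform control in $\varnu$ to extract the Schwartz weight $g(Mr)$ cleanly.

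For (3)--(4), I would apply the stationary-phase lemmas of the preceding section to the representations just established. In \eqref{8eq: H+natural} with $y = x^2 > 1$, the phase derivative is $T/\pi \mp 2\sqrt{y}\sinh r$; on $|r|\leq M^{\sepsilon}/M$ one has $|2\sqrt{y}\sinh r|\leq 4\sqrt{y}\cdot M^{\sepsilon-1}$, which for $y\leq T^2$ and $M\geq T^{\sepsilon}$ is $\Lt T^{1-\delta}$ for some $\delta > 0$ depending on $\sepsilon$. Hence $|f'|\Gt T$, and Lemma \ref{lem: staionary phase, dim 1, 2} (with $R\asymp T$, $Z\asymp T$, $Q\asymp 1$, $P\asymp 1/M$) yields a gain of $M/T$ per integration by parts, producing $O(T^{-A})$ for $A$ large. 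The $\sinh$-variant \eqref{8eq: H-natural} and the complex case (4) are treated by the same method---applying Lemma \ref{lem: staionary phase, dim 2, 2} in two dimensions and invoking the second-derivative test at transitional critical points where $|f'|$ may degenerate. The uniform smoothness bounds $|\partial_r^i \partial_\omega^j \trh(r,\omega;\phi)|\Lt 1$ on $|r|\leq 1$ ensure that the hypotheses of these lemmas are met.
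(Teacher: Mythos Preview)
Your approach is essentially the same as the paper's: for (1)--(2) the paper likewise cites the integral representations from \cite{Qi-Liu-LLZ} (and \cite{XLi2011,Young-Cubic} in the real case), emphasising exactly your point that the Fourier transform of the test function yields $g(Mr)e(Tr/\pi)$ with Schwartz $g$, and for (3)--(4) it invokes stationary phase. Two small remarks: your displayed formula for $J_{\pm 2i\varnu}$ individually is not a standard identity---the clean $\cosh$/$\sinh$ integral representations belong to the full kernel $B_{i\varnu}(\pm x^2)$ (the combination $(J_{-2i\varnu}-J_{2i\varnu})/\sin(\pi i\varnu)$), not to a single $J$; and for (4) the paper uses \emph{one}-dimensional stationary phase in $r$ uniformly in $\omega$ rather than the two-dimensional Lemma~\ref{lem: staionary phase, dim 2, 2}, which is simpler and suffices here since the $r$-derivative of the phase already has size $\Gt T$ once the implied constant in $|z|\Lt T^2$ is small enough.
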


%

%\begin{defn}\label{defn: phi function, cosh, sinh}
%	Henceforth, we shall identify $\BR^{\times} / \BR_+$ with $ \{1, -1 \}$  and $ \BC^{\times} / \BR_+ $ with the unit circle $\BS^{1}$, respectively. It is convenient to introduce 
%	\begin{align*}
%	\varphi (r; 1) = 2 \cosh r, \quad \varphi (r, - 1) = - 2 \sinh r,
%	\end{align*}
%	and
%	\begin{align*}
%	\varphi \big(r, e^{2i \omega}; e^{2i\phi} \big) = \trh (r, \omega; \phi) , \quad \phi, \omega \in [0, \pi). 
%	\end{align*}
%\end{defn}

\begin{proof}
	See (3.2), (3.3), (A.16), and (A.21) in \cite{Qi-Liu-LLZ}\footnote{Strictly speaking, the test function in \cite{Qi-Liu-LLZ} is like the $k (\varnu)$ in \eqref{5eq: defn k(nu)}, while our test function $h (\varnu)$  has extra factors (see \eqref{6eq: defn of h, local}). However, these factors would not play an essential role.} for the integral representations in (1) and (2).   The important point is that the Fourier transform of $g (\pi r/ N)$ is equal to $ e^{- \varnu^2} $ (for real $\varnu$) up to a harmless factor; although the factor involves $T$ and $M$, one may easily verify that it is bounded by  $ \Lt_{ \shskip A', \shskip \vepsilon} (1 + |  \varnu  |)^{N (6A'+1) + \vepsilon}  $  with the implied constant independent on  $T$ and $M$, and so are its derivatives (see \eqref{1eq: bound for p G} and \eqref{1eq: bound for p G, 2}). 
	
	The statements in (3) and (4) follow from simple applications of (one-dimensional) stationary phase  to the integrals in (1) and (2); see Lemma 3.5, A.5, and A.8 in \cite{Qi-Liu-LLZ}. 
	
	For the real case, we also refer to \cite[\S \S 4, 5]{XLi2011} and \cite[\S 7]{Young-Cubic}. 
\end{proof}

\begin{rem}\label{rem: real, Bessel range}
	In the real case, it is easy to prove that $\SDH (x^2)$ or  $\SDH (- x^2)$ is negligibly small unless $ x \Gt T M^{1-\vepsilon}  $   or  $ x \sasymp T$, respectively. See {\rm\cite{XLi2011}}.
\end{rem}

%\begin{rem}
%	Since $\sinh r = r + O  (r^3 )$ and $\cosh r = 1 + O(r^2)$ for $r$ small, the reader may consider $ \sinh r $ as $r$ and $\cosh r$ as $1$  in many occasions, and, to keep the notation simple, we shall replace $\sinh r $ by $r$ whenever it is appropriate.  
%\end{rem}

\begin{cor}\label{cor: bound for H < T}
	We have uniform estimates for   Bessel integrals as follows.

	{\rm(1)} When $F$ is real,  we have 
	\begin{equation}\label{7eq: bound for H, R}
	\SDH (x) \Lt_{A', \shskip \vepsilon} \left\{ 
	\begin{aligned}
	&  T^{1+ \vepsilon}  , & & \text{ if } |x| \Gt T^2, \\
	& M |x|^{1/2} / T^{2 A'-1} , & & \text{ if } |x| \Lt T^2.
	\end{aligned}  \right.
	\end{equation}

	{\rm(2)}	When  $F $ is complex,  we have  
	\begin{equation}\label{7eq: bound for H, C}
	\SDH (z) \Lt_{A', \shskip \vepsilon} \left\{ 
	\begin{aligned}
	&  T^{2 + \vepsilon}   , & & \quad \text{ if } |z| \Gt T^2, \\
	& M |z| / T^{4 A'-2} , & & \quad \text{ if } |z| \Lt T^2.
	\end{aligned}  \right.
	\end{equation}  
\end{cor}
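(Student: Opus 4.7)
The plan is to prove the corollary by a case analysis that partitions the argument range into three regimes and appeals to Lemmas 8.1 and 8.2 in each.

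For $|x| \leqslant 1$ in the real case (resp.\ $|z| \leqslant 1$ in the complex case), the claimed bound $M|x|^{1/2}/T^{2A'-1}$ (resp.\ $M|z|/T^{4A'-2}$) is literally the content of Lemma 8.1 parts (1) and (2), so there is nothing to do. For the intermediate regime $1 < |x| \Lt T^2$ (resp.\ $1 < |z| \Lt T^2$), Lemma 8.2 (3)--(4) give $\SDH(x) = O(T^{-A})$ (resp.\ $\SDH(z) = O(T^{-A})$) for any $A \geqslant 0$. Since $M \geqslant T^{\sepsilon}$ and $|x|^{1/2} \geqslant 1$, we have $M|x|^{1/2}/T^{2A'-1} \geqslant T^{\sepsilon - 2A' + 1}$, and analogously $M|z|/T^{4A'-2} \geqslant T^{\sepsilon - 4A' + 2}$; choosing $A$ large in terms of $A'$ absorbs the negligible estimate into the desired bound. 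Thus the second line of each of (1) and (2) is established.

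It remains to handle $|x| \Gt T^2$ (resp.\ $|z| \Gt T^2$). Here Lemma 8.2 (1) writes $\SDH(x) = \SDH_{+}^{\ssharp}(x) + \SDH_{-}^{\ssharp}(x) + O(T^{-A})$, and we bound each $\SDH_{\pm}^{\ssharp}(\pm x^2)$ trivially using the integral representations \eqref{8eq: H+natural}, \eqref{8eq: H-natural}. Since $g$ is Schwartz with $|g(Mr)| \Lt 1$ and the $r$-interval has length $2 M^{\sepsilon - 1}$, we obtain
\[
\SDH_{\pm}^{\ssharp}(\pm x^2) \Lt M T^{1+\sepsilon} \cdot M^{\sepsilon-1} = T^{1+\sepsilon} M^{\sepsilon} \Lt T^{1+\sepsilon},
\]
after absorbing $M^{\sepsilon} \leqslant T^{\sepsilon}$, which gives the first line of (1). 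The complex case is entirely parallel: from \eqref{8eq: H-sharp(z)} the extra $\omega$-integration over $[0,\pi]$ contributes only a factor of $\pi$, so
\[
\SDH_{\pm}^{\ssharp}(x^2 e^{2i\phi}) \Lt M T^{2+\sepsilon} \cdot M^{\sepsilon - 1} \Lt T^{2 + \sepsilon},
\]
yielding the first line of (2).

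There is no real obstacle: Corollary 8.4 is purely a packaging of Lemmas 8.1 and 8.2. The only point to watch is that in the intermediate regime the bound being claimed is strictly weaker than the bound coming from Lemma 8.2 (3)--(4), so one must verify that the $T^{-A}$ estimate can be absorbed; this is where the lower bound $M \geqslant T^{\sepsilon}$ on the width of the spectral window is used. Otherwise the argument is a matter of reading off the trivial size of the oscillatory integrals \eqref{8eq: H+natural}--\eqref{8eq: H-sharp(z)} in the large-argument regime.
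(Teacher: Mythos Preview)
Your proof is correct and is exactly the intended argument: the paper states this as a corollary without proof precisely because it is an immediate packaging of Lemma~\ref{lem: crude estimates, > 1} and Lemma~\ref{lem: H(x), |z|>1}, split into the three regimes $|x|\leqslant 1$, $1<|x|\Lt T^2$, and $|x|\Gt T^2$ (and analogously for $z$). Your trivial estimation of the oscillatory integrals \eqref{8eq: H+natural}--\eqref{8eq: H-sharp(z)} in the large-argument regime and your absorption of the $O(T^{-A})$ bound in the intermediate regime are both fine.
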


\subsection{Preliminary Analysis of the Trigonometric-Hyperbolic Function} Consider the trigonometric-hyperbolic function $\trh  (r, \omega; \phi)$ as in \eqref{8eq: trh function}. 
Since $\trh (r, \omega; \phi + \pi) = \trh  ( r, \omega + \pi; \phi) = - \shskip \trh  (r, \omega; \phi)$,  one may be restricted to $ \phi, \omega \in [0, \pi) $.  
When $(r, \omega) \neq (0,     \pi / 2)$,   $\trh (r, \omega; \phi)$ can be written in a unique way as
\begin{align}\label{3eq: trh polar}
\trh (r, \omega; \phi) =  \rho (r, \omega) \cos (\phi + \theta (r, \omega) ),
\end{align}
where $ \rho (r, \omega) > 0 $ is defined by
\begin{align}\label{3eq: rho (r, w)}
 \rho (r, \omega) =  \sqrt {\sinh^2 r + \cos^2 \omega} =   \sqrt {\cosh^2 r - \sin^2 \omega} =     \sqrt { \frac {\cosh 2 r + \cos 2 \omega} 2 }    ,
\end{align}
and  $\theta (r, \omega)$ is determined by
\begin{align}\label{3eq: theta (r, w)}
\cos \theta (r, \omega) = \frac {  \cosh r \cos \omega} { \rho (r, \omega)  }, \qquad   \sin \theta (r, \omega) = \frac {  \sinh r \sin \omega} { \rho (r, \omega) } .
\end{align}
By defining $\trh (r, \omega) = \rho (r, \omega) e^{i \theta (r, \shskip \omega)}$, the function $x \hskip 1pt  \trh ( r, \omega; \phi) $ in \eqref{8eq: H-sharp(z)} is $ \Re (z \hskip 1pt  \trh (r, \omega) ) $ for $z = x e^{i \phi}$. 
%The reader should keep the identities in \eqref{3eq: rho (r, w)} in mind as they will be used freely and frequently in what follows. 

\begin{rem}
	Since
	\begin{equation*}%\label{4eq: ch = cosh sinh}
	\begin{split}
	  \trh (r, 0 ; 0) =  \trh (r, 0) = \cosh r   , \quad \trh ( r, \pi/2; \pi/2) = i \hskip 1pt  \trh ( r, \pi/2) = -  \sinh r  ,
	\end{split}
	\end{equation*}
	the reader should observe the resemblance between the $r$-integral in {\rm\eqref{8eq: H-sharp(z)}} for $\phi = \omega = 0$ or $ \phi = \omega =   \pi / 2 $ and the integral in {\rm\eqref{8eq: H+natural}} or {\rm\eqref{8eq: H-natural}} respectively. 
\end{rem}

\begin{lem}\label{lem: estimates for rho and theta}
	Suppose that  $(r, \omega) \neq (0,   \pi / 2)$ and $|r| < 1$.
	
	{\rm(1)} We have 
	\begin{align*}
	{ \partial \theta (r, \omega) } / { \partial r } =    {\sin 2 \omega} / {2 \rho (r, \omega)^2} ,\quad 
	{ \partial \theta (r, \omega) } / { \partial \omega } =    {\sinh 2 r} / {2 \rho (r, \omega)^2}. 
	\end{align*}
%	\begin{align*}
%	\left|{ \partial \theta (r, \omega) } / { \partial r }\right| < 1/ |2\sinh r|, \quad  \left|{ \partial \theta (r, \omega) } / { \partial \omega }\right| \Lt 1/ | \cos \omega|. 
%	\end{align*} 
	
	{\rm(2)} We have  $$
   \frac {\partial^{i+j}} {\partial r^{\shskip i} \partial \omega^j } \bigg( \frac 1 {\rho (r, \omega)^2} \bigg) \Lt_{\shskip i, \shskip  j } \frac 1 { \rho (r, \omega)^{i+j + 2} }. $$
	
	{\rm(3)} Consequently, for $i +  j \geqslant 1$, we have $$ \frac {\partial^{i+j} \theta (r, \omega)} {\partial r^{\shskip i} \partial \omega^j }  \Lt_{\shskip i, \shskip  j } \frac 1 {\rho (r, \omega)^{i+j } } . $$
\end{lem}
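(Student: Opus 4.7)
The plan is to verify part (1) by direct implicit differentiation, part (2) by the Fa\`a di Bruno formula combined with the observation that first-order partials of $f := 2\rho^2 = \cosh 2r + \cos 2\omega$ are themselves $O(\rho)$, and part (3) by Leibniz applied to the identities from part (1) together with the bound from part (2).

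For part (1), I would first extract from \eqref{3eq: theta (r, w)} the relation $\tan\theta(r,\omega) = \tanh r\,\tan\omega$. Differentiating implicitly with respect to $r$ and to $\omega$ and then multiplying through by $\cos^2\theta = \cosh^2 r\cos^2\omega/\rho^2$, the resulting cancellations give $\partial_r\theta = \cos\omega\sin\omega/\rho^2 = \sin 2\omega/(2\rho^2)$ and $\partial_\omega\theta = \sinh r\cosh r/\rho^2 = \sinh 2r/(2\rho^2)$.

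For part (2), set $f = \cosh 2r + \cos 2\omega = 2\rho^2$. The key estimates are $|\sinh 2r|,\,|\sin 2\omega| \Lt \rho$: indeed $\sinh^2 2r = (\cosh 2r - 1)(\cosh 2r + 1) \leq 2\rho^2(\cosh 2r+1) \Lt \rho^2$ (using $|r|<1$), and $\sin^2 2\omega = (1-\cos 2\omega)(1+\cos 2\omega) \leq 2\cdot 2\rho^2 = 4\rho^2$. Every higher pure partial of $f$ is $O(1)$, and all mixed partials vanish by separability. By Fa\`a di Bruno, $\partial_r^i\partial_\omega^j(1/f)$ is a finite sum of terms $c\prod_{k=1}^m(\partial^{\alpha_k}\hskip -1pt f)/f^{m+1}$ with $|\alpha_k|\geq 1$ and $\sum |\alpha_k| = n := i+j$. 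Letting $p$ count the factors with $|\alpha_k|=1$, one has $p + 2(m-p) \leq n$, i.e., $p \geq 2m - n$, so each term is bounded by $\rho^p/\rho^{2(m+1)} = \rho^{p-2m-2} \Lt \rho^{-(n+2)}$ since $\rho$ is bounded above in our regime (when $\rho$ is bounded below, the estimate is trivial).

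For part (3), assume first $i \geq 1$ and write $\partial_r^i\partial_\omega^j\theta = \partial_r^{i-1}\partial_\omega^j(\sin 2\omega/(2\rho^2))$. Since $\sin 2\omega$ is independent of $r$, Leibniz collapses to $\sum_{c=0}^j\binom{j}{c}\partial_\omega^c(\sin 2\omega)\cdot\partial_r^{i-1}\partial_\omega^{j-c}(1/(2\rho^2))$. For $c = 0$ the first factor is $O(\rho)$ and the second is $O(\rho^{-(i+j+1)})$ by part (2), yielding $O(\rho^{-(i+j)})$; for $c\geq 1$ the first factor is $O(1)$ and the second is $O(\rho^{-(i+j-c+1)}) \Lt \rho^{-(i+j)}$. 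The remaining case $i = 0$, $j \geq 1$ is symmetric, using $\partial_\omega\theta = \sinh 2r/(2\rho^2)$ instead. The only nontrivial step will be the Fa\`a di Bruno bookkeeping in part (2); it is precisely the gain $|f_r|,|f_\omega|\Lt\rho$ in first-order partials that compensates the small-$\rho$ singularity and converts the naive bound $\rho^{-2(n+1)}$ into the sharp $\rho^{-(n+2)}$.
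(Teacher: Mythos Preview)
Your proposal is correct and follows essentially the same approach as the paper. The only cosmetic difference is in part (2): the paper packages the computation as an induction obtained by differentiating the identity $(\cosh 2r + \cos 2\omega)/\rho^2 = 2$, whereas you invoke Fa\`a di Bruno directly on $1/f$; both arguments rest on the identical key inputs $|\sinh 2r|,\,|\sin 2\omega| \Lt \rho$ and $|\cosh 2r|,\,|\cos 2\omega| \Lt 1$, and the counting ($p \geq 2m - n$) you carry out is exactly what the inductive step encodes.
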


\begin{proof}
	By \eqref{3eq: theta (r, w)}, we have $
	\tan \theta (r, \omega) = \tanh r \tan \omega, $
	so
	\begin{align*}
	\frac { \partial \theta (r, \omega) } { \partial r } %= \frac {\mathrm{sech}^2 r \tan \omega } {1 + \tanh^2 r \tan^2 \omega} 
	= \frac { \sin \omega \cos \omega } { \cosh^2 r \cos^2 \omega + \sinh^2 r \sin^2 \omega } = \frac {\sin 2 \omega} {\cosh 2 r + \cos 2 \omega},
	\end{align*}
	and similarly
	\begin{align*}
	\frac { \partial \theta (r, \omega) } { \partial \omega } =  \frac { \sinh r \cosh r } { \cosh^2 r \cos^2 \omega + \sinh^2 r \sin^2 \omega } = \frac {\sinh 2 r} {\cosh 2 r + \cos 2 \omega}.
	\end{align*}
%	Consequently, the identities for ${ \partial \theta (r, \omega) } / { \partial r }$ and $ { \partial \theta (r, \omega) } / { \partial \omega } $ follow immediately and the estimates essentially come from the arithmetic-geometric mean    inequality (the trivial estimate is used for $ { \partial \theta (r, \omega) } / { \partial \omega } $ when $\cos^2 \omega > \frac 1 2$, say). 
	
	The estimates for $ \rho (r, \omega) $ in (2) readily follow from an inductive argument by using the identity obtained from applying the $i$-th $r$-derivative and the $ j$-th $\omega$-derivative to 
	\begin{align*}
	\frac { \cosh 2r + \cos 2 \omega} {  \rho (r, \omega)^2 } = 2,
	\end{align*} 
	along with the following  inequalities
	\begin{align*}
	&   {\sinh 2 r}, \, \sin  2 \omega \Lt  { \rho (r, \omega) }, \qquad    {\cosh 2 r}, \, \cos 2 \omega \Lt   1, 
	\end{align*}
	where  the expression $  \rho (r, \omega) =  \sqrt{ \sinh^2 r +   \cos^2 \omega} $ is used.
	
	Finally, combining the foregoing results, it is straightforward to bound the derivatives of $\theta (r, \omega)$ as in (3).
\end{proof}

\section{Remarks on Xiaoqing Li's Analysis} \label{sec: XQLi}

We briefly remind the reader here several aspects of Xiaoqing Li's  analysis in \cite{XLi2011}, and explain the issues for its generalization to the complex setting or the case when the number field has multiple infinite places. 

In the real setting of \cite{XLi2011}, $T^{3/8+\vepsilon} \leqslant M \leqslant T^{1/2}$ and $ T M^{1-\vepsilon} \Lt  x  \leqslant T^{3/2+\vepsilon} \leqslant M^4 $. By expanding $ \cosh r$ into Taylor series, and disregarding the non-oscillatory factors from the terms of order $\geqslant 4$, the integral $\SDH_{  \pm}^{\shskip \ssharp }  (  x^2)$ in \eqref{8eq: H+natural} essentially turns into 
\begin{align*}
MT^{1+\vepsilon}  
e(   \mp 2 x    )  \int_{- M^{\vepsilon} / M}^{M^{\vepsilon}/ M}   g (    {   M r} )  e( Tr / \pi \mp   x r^2   ) \nd r.
\end{align*} 
Xiaoqing Li's next step is to complete the square, getting 
\begin{align*}
MT^{1+\vepsilon}  
e \bigg(   \mp  2 x \pm \frac {T^2} {4\pi^2 x}  \bigg)  \int_{- M^{\vepsilon} / M}^{M^{\vepsilon}/ M}   g (    {   M r} )  e \bigg( \mp x \Big(r \pm \frac {T}  {2\pi x} \Big)^2   \bigg) \nd r;
\end{align*} 
by Parseval, the integral is seen to be a non-oscillatory function of $x$. The secondary exponential factor $e ( \pm T^2 / 4 \pi^2 x )$ plays an important role in her second application of Vorono\"i summation. 

In the complex setting, however, the corresponding conditions are  $T^{3/4+\vepsilon} \leqslant M \leqslant T $ and $ T  \Lt  x  \leqslant T^{3/2 +\vepsilon} \leqslant M^2 $. After expanding $\cosh r $ and $\sinh r$ into Taylor series, only the factors of order $0$ and $1$ are oscillatory, and the integral $\SDH_{  \pm }^{ \shskip \ssharp}   (  x^2 e^{2i\phi} )$ in \eqref{8eq: H-sharp(z)} is essentially 
\begin{align*} 
 M T^{2+\vepsilon} \int_0^{ \pi}   \hskip -1 pt
\int_{- M^{\vepsilon} / M}^{M^{\vepsilon}/ M}   g  ( M r )  
e  (2 T r/ \pi \mp 4 x (  \cos \omega \cos \phi -   r \sin \omega \sin \phi)    )  \, \nd r \shskip \nd \omega.
\end{align*}  
As such, we are unable to produce a secondary exponential factor. Even if there were such a factor, the analysis would be conceivably difficult, because $\cos \omega \cos \phi$ would come down with $x$ in the denominator.

Moreover, when the number field has more than one infinite place, a more serious issue is that the condition $x  \leqslant T^{3/2+\vepsilon}$ is not necessarily valid for every infinite place. 

At any rate, it is   better not to expand $\cosh r$ or $\sinh r$ into Taylor series at this stage, and to allow $M$ be a small power of $T$.

\section{Stationary Phase for the Hankel Transforms} \label{sec: stationary phase, Hankel}

In this section, we consider certain integrals that will arise from the Hankel transforms over $\BR$ and $\BC$.  For the real case, it is simply a matter of applying the method of stationary phase in one dimension. For the complex case, the double integral has  already been investigated in \cite[\S 6.1]{Qi-Gauss}, but there are certain difficulties in two dimensions---Lemma \ref{lem: staionary phase, dim 2, 2} is not applicable for the particular phase function, and Theorem 1.1.4 in \cite{Sogge} is not sufficient as we need to also differentiate the angular argument.

\subsection{The One-Dimensional Case} 

First, in the real setting, we need to consider the integral 
\begin{align}\label{9eq: defn I, R}
I (\lambdaup) = \int_{-\infty}^{\infty}  e \big(\lambdaup \big(3x^2- 2 x^3\big) \big) \varww (x; \lambdaup ) \nd x. 
\end{align}
Fix $\varDelta > 1$. Let $\rho, S > 0$ and $X \geqslant 1$.  Suppose that $\varww  (   x ; \lambdaup   )$ is supported in $\big\{ x : |x| \in  [ \rho  ,   \varDelta^{1/6}  \rho    ] \big\}$ and its derivatives satisfy 
\begin{align*}%\label{3eq: bound w}
x^{i} \lambdaup^{j} \partial_x^{i}  \partial_{\lambdaup}^{j}   \varww  (x  ; \lambdaup)   \Lt_{\, i, \shskip  j }        S X^{i+j}.
\end{align*} 
Define
\begin{align}\label{9eq: tilde I, R}
I^{\snatural} (\lambdaup) = e (  -   \lambdaup   ) I (\lambdaup ).
\end{align}

\begin{lem}\label{lem: I, R}  Let $A$ and $j$ be non-negative integers. 
	
	{\rm(1)}	For either $ \rho \geqslant \sqrt \varDelta  $ or $\rho \leqslant 1 / \sqrt \varDelta $, we have
	\begin{align*}
	%\lambdaup^{\gamma} \frac {\partial^{\gamma + \delta}} {\partial \lambdaup^{\gamma} \partial \psi^{\shskip\delta } } 
	I^{\snatural} (\lambdaup) \Lt_{  \shskip A}   S \rho \bigg(\frac { X  } {  |\lambdaup| \shskip \rho^2 (\rho+1) } \bigg)^A  . 
	\end{align*}

	{\rm(2)} Assume that $  X \leqslant \sqrt{|\lambdaup|}$.	For %$\lambdaup \geq 1$ and 
	$ 1 / \varDelta  \leqslant \rho \leqslant \varDelta $, we have
	\begin{align*}
	\lambdaup^{j} \frac {\nd^{j}} {\nd \lambdaup^{j} }   I^{\snatural} (\lambdaup) \Lt_{ j } \frac {S X^j } {\sqrt {|\lambdaup|}} . 
	\end{align*}
\end{lem}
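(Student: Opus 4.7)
The plan is to apply the one-dimensional stationary-phase tools of Section 6 in each regime. Set $f(x) = 3x^2 - 2x^3$, with derivatives $f'(x) = 6x(1-x)$, $f''(x) = 6(1-2x)$, $f'''(x) = -12$; the stationary points are $x = 0$ (never in the support of $\varww$) and $x = 1$, at which $f(1) = 1$. The factor $e(-\lambdaup)$ in $I^{\snatural}(\lambdaup)$ cancels the oscillation at $x=1$ precisely: writing $\widetilde f(x) = f(x) - 1 = -(x-1)^2(2x+1)$, we have $I^{\snatural}(\lambdaup) = \int e(\lambdaup\widetilde f(x))\varww(x;\lambdaup)\,\nd x$ with $\widetilde f(1) = \widetilde f'(1) = 0$ and $\widetilde f''(1) = -6 \neq 0$.

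For Part (1), no stationary point lies in the support, so I would apply Lemma \ref{lem: staionary phase, dim 1, 2} on each of the two intervals $[\rho, \Delta^{1/6}\rho]$ and $[-\Delta^{1/6}\rho, -\rho]$. In the regime $\rho \geq \sqrt\Delta$, one has $|x| \asymp |1-x| \asymp \rho$ throughout, so the phase parameters can be taken as $Z = |\lambdaup|\rho^3$, $Q = \rho$, $R = |\lambdaup|\rho^2$; the weight satisfies $\partial_x^i \varww \Lt S(X/\rho)^i$, giving $P = \rho/X$. The three terms inside the parentheses of Lemma \ref{lem: staionary phase, dim 1, 2} are each $O(X/(|\lambdaup|\rho^3))$ (using $X \geq 1$), yielding $S\rho(X/(|\lambdaup|\rho^3))^A$. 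In the regime $\rho \leq 1/\sqrt\Delta$, one has $|x| \asymp \rho$ but $|1-x| \asymp 1$, so $Z = |\lambdaup|$, $Q = 1$, $R = |\lambdaup|\rho$, $P = \rho/X$, and the bound becomes $S\rho(X/(|\lambdaup|\rho^2))^A$. Both match the claim since $\rho + 1 \asymp \rho$ or $1$, respectively.

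For Part (2), on the support $|x| \asymp 1$, so the weight hypothesis $x^i\lambdaup^j\partial_x^i\partial_\lambdaup^j\varww \Lt SX^{i+j}$ becomes $\lambdaup^j\partial_x^i\partial_\lambdaup^j\varww \Lt SX^{i+j}$, which is precisely the hypothesis of Lemma \ref{lem: stationary phase estimates, dim 1}. I would split off the positive and negative parts of the support. On the negative part $x \in [-\Delta^{1/6}\rho, -\rho]$, both $\widetilde f''(x) \geq 6$ and $|\widetilde f'(x)|$ are bounded below by a constant depending only on $\Delta$, so Lemma \ref{lem: 2nd derivative test, dim 1} gives the integral $\Lt S/\sqrt{|\lambdaup|}$; derivatives in $\lambdaup$ are handled by differentiating under the integral sign, each step producing either a bounded factor $2\pi i\widetilde f(x)$ or reducing the weight estimate by $X/\lambdaup$. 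On the positive part $x \in [\rho, \Delta^{1/6}\rho]$, when $x=1$ lies strictly in the interior, Lemma \ref{lem: stationary phase estimates, dim 1} applies directly and gives $\lambdaup^j \frac{\nd^j}{\nd\lambdaup^j}(\cdot) \Lt_j S/\sqrt{|\lambdaup|}$; otherwise, $|\widetilde f'|$ is bounded below on the support and the second derivative test again suffices.

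The main obstacle is the borderline subranges of $\rho$ in Part (2), where $x=1$ sits at or near the boundary of $[\rho, \Delta^{1/6}\rho]$ and Lemma \ref{lem: stationary phase estimates, dim 1} cannot be invoked with $x_0$ in the open interior. I would circumvent this by a smooth partition of unity isolating a fixed neighbourhood of $x=1$ (on which the weight can be smoothly extended beyond the original support so as to place $x=1$ strictly in the interior, allowing Lemma \ref{lem: stationary phase estimates, dim 1} to apply) from its complement (on which $|\widetilde f'|$ is bounded below by a constant depending only on $\Delta$, so that the second derivative test is more than sufficient).
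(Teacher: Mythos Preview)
Your treatment of Part (1) is correct and matches the paper's choices of parameters in Lemma~\ref{lem: staionary phase, dim 1, 2} exactly.

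For Part (2), your overall plan---apply Lemma~\ref{lem: stationary phase estimates, dim 1} near the stationary point $x=1$ and handle the remainder separately---is the right one and is what the paper means by ``essentially follow from Lemma~\ref{lem: stationary phase estimates, dim 1}''. However, there is a genuine gap in your handling of the negative interval (and likewise the positive interval when $x=1$ is not in the support). You invoke the second derivative test (Lemma~\ref{lem: 2nd derivative test, dim 1}) and then write that ``derivatives in $\lambdaup$ are handled by differentiating under the integral sign, each step producing either a bounded factor $2\pi i\widetilde f(x)$ or reducing the weight estimate by $X/\lambdaup$.'' But in the worst case (all $j$ derivatives landing on the exponential), the new weight is still $O(S)$, the second derivative test still gives only $O(S/\sqrt{|\lambdaup|})$, and after multiplication by $\lambdaup^{j}$ you obtain $O(S|\lambdaup|^{j-1/2})$, not $O(S/\sqrt{|\lambdaup|})$. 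The second derivative test is simply too weak here.

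The fix is immediate and you already have the ingredient in hand: you observed that $|\widetilde f'(x)|$ is bounded below on the negative interval, so Lemma~\ref{lem: staionary phase, dim 1, 2} applies with $R \asymp |\lambdaup|$, $Z \asymp |\lambdaup|$, $Q \asymp 1$, $P \asymp 1/X$, yielding $O_A\big(S(X/|\lambdaup|)^A\big) = O_A(S|\lambdaup|^{-A/2})$. This rapid decay absorbs any polynomial in $\lambdaup$ arising from differentiation. The same argument disposes of the positive interval whenever $x=1$ lies outside the support.

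Finally, your partition-of-unity manoeuvre for the boundary case is unnecessary. Since $\varww$ has compact support, you may simply take the ambient interval $(a,b)$ in Lemma~\ref{lem: stationary phase estimates, dim 1} to be any fixed interval containing both $[\rho,\varDelta^{1/6}\rho]$ and the point $1$ but excluding $0$, say $(a,b) = (1/(2\varDelta), 2\varDelta)$; the hypotheses $\widetilde f(1)=\widetilde f'(1)=0$, $\widetilde f''(1)\neq 0$, and $\widetilde f'(x)\neq 0$ on $[a,b]\smallsetminus\{1\}$ are then satisfied uniformly in $\rho\in[1/\varDelta,\varDelta]$.
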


\begin{proof}
	Note that the phase function $3x^2 - 2 x^3$ has a unique nonzero stationary point at $x_0= 1$. The estimates in (1) readily follow from Lemma \ref{lem: staionary phase, dim 1, 2}; in the case $\rho \geqslant \sqrt{\varDelta}$,   choose $P = \rho/X$,  $Q =  \rho$,  $Z = |\lambdaup| \rho^3$, $R = |\lambdaup | \rho^2 $, and, in the case $\rho \leqslant 1/ \sqrt{\varDelta}$,   choose $P = \rho/X$,  $Q =  1$, $Z = |\lambdaup|  $, $R = |\lambdaup | \rho $. The estimates in (2) essentially follow from Lemma  \ref{lem: stationary phase estimates, dim 1}. 
\end{proof}

\subsection{The Two-Dimensional Case} \label{sec: stationary phase, Hankel, C}

Second, consider the following double integral that will arise from the complex Hankel transform,
\begin{align}\label{9eq: defn I, C}
I  (\lambdaup, \psi ) = \int_{0}^{2 \pi}   \int_0^\infty    e (  2 \lambdaup f (x, \phi; \psi ) )    \varww  (   x ,   \phi; \lambdaup, \psi  )  \nd x \shskip \nd \phi, 
\end{align}
with  
\begin{align}\label{4eq: phase f}
f (x, \phi; \psi ) =   3 x^2  \cos (  2 \phi +      \psi   ) - 2  x^3  \cos   3  \phi   .
\end{align}
Fix $\varDelta > 1$. Let $\rho, S > 0$ and $X \geqslant 1$. Suppose that $\varww  (   x ,   \phi; \lambdaup, \psi    )$ is supported in $\big\{ (x, \phi) : x \in  [ \rho  ,   \varDelta^{1/6}  \rho    ] \big\}$ and its derivatives satisfy 
\begin{align*}%\label{3eq: bound w}
x^{i} \lambdaup^{k} \partial_x^{i} \partial_\phi^{\shskip j} \partial_{\lambdaup}^{k} \partial_{\psi}^{l} \varww  (x ,   \phi; \lambdaup, \psi)   \Lt_{\, i, \shskip  j, \shskip k, \shskip l  }        S X^{i+j+k+l}.
\end{align*}
Define
\begin{align}\label{9eq: tilde I, C}
 I^{\snatural} (\lambdaup, \psi) = e (  - 2   \lambdaup  \cos 3 \psi) I (\lambdaup, \psi).
\end{align}
Results from Lemma 6.1 and 6.3 in \cite{Qi-Gauss} are quoted in the following lemma
with slightly altered notation.

\begin{lem}\label{lem: I, C}   Let $A$, $k$, $l$ be non-negative integers. 
	
	{\rm(1)}	For either $ \rho \geqslant \sqrt \varDelta  $ or $\rho \leqslant 1 / \sqrt \varDelta $, we have
	\begin{align*}
	%\lambdaup^{\gamma} \frac {\partial^{\gamma + \delta}} {\partial \lambdaup^{\gamma} \partial \psi^{\shskip\delta } } 
	  I^{\snatural} (\lambdaup, \psi) \Lt_{  \shskip A}  S \rho \bigg(  \frac { X  } { \lambdaup \shskip \rho^2 (\rho+1)  } \bigg)^A . 
	\end{align*}

	{\rm(2)} Assume that $  X \leqslant \sqrt{\lambdaup}$.	For %$\lambdaup \geq 1$ and 
	$ 1 / \varDelta  \leqslant \rho \leqslant \varDelta $, we have
	\begin{align*}
	\lambdaup^{k} \frac {\partial^{k + l}} {\partial \lambdaup^{k} \partial \psi^{l } }   I^{\snatural} (\lambdaup, \psi) \Lt_{ k, \shskip l } \frac {S X^{k+l}} {\lambdaup} . 
	\end{align*}
\end{lem}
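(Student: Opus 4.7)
The plan is to apply the two-dimensional stationary phase tools developed above, exploiting the explicit structure of the phase. A direct computation yields
\begin{equation*}
\partial_x f=6x\bigl(\cos(2\phi+\psi)-x\cos 3\phi\bigr),\qquad \partial_\phi f=-6x^2\bigl(\sin(2\phi+\psi)-x\sin 3\phi\bigr),
\end{equation*}
so that with $U=\cos(2\phi+\psi)-x\cos 3\phi$ and $V=\sin(2\phi+\psi)-x\sin 3\phi$,
\begin{equation*}
(\partial_x f)^2+(\partial_\phi f)^2=36 x^2\bigl(U^2+x^2 V^2\bigr),\qquad U^2+V^2=\bigl|1-xe^{i(\phi-\psi)}\bigr|^2.
\end{equation*}
The unique interior critical point is $(x,\phi)=(1,\psi)$, at which $f(1,\psi;\psi)=\cos 3\psi$; this matches the normalization of $I^{\snatural}$ in \eqref{9eq: tilde I, C}.

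For Part~(1), the strategy is to rescale $x=\rho y$, $y\in[1,\varDelta^{1/6}]$, so the integration is on a unit-sized box and the Jacobian $dx=\rho\,dy$ contributes the prefactor $\rho$ in the final bound. When $\rho\geq\sqrt\varDelta$, the analog of $U^2+V^2$ becomes $|1-\rho y e^{i(\phi-\psi)}|^2\geq(\rho y-1)^2\gg(\rho y)^2$, yielding $|\nabla(2\lambdaup f)|^2\gg(\lambdaup\rho^3)^2$ with all higher derivatives $O(\lambdaup\rho^3)$ in each variable. Lemma~\ref{lem: staionary phase, dim 2, 2} with $Q=\Phi=1$, $P=\Upsilon=1/X$, $Z=R=\lambdaup\rho^3$ then gives one-step gain $\sim X/(\lambdaup\rho^3)$ and the bound $S\rho\bigl(X/(\lambdaup\rho^3)\bigr)^A$, matching the claim since $\rho+1\sim\rho$. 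When $\rho\leq 1/\sqrt\varDelta$, the quadratic term dominates, $|1-\rho y e^{i(\phi-\psi)}|^2\geq(1-\rho y)^2\gg 1$, giving $|\nabla(2\lambdaup f)|^2\gg(\lambdaup\rho^2)^2$; the same lemma then yields $S\rho\bigl(X/(\lambdaup\rho^2)\bigr)^A$, matching since $\rho+1\sim 1$.

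For Part~(2), where $1/\varDelta\leq\rho\leq\varDelta$, first shift $\phi\to\phi+\psi$ so the critical point sits at $(1,0)$ uniformly in $\psi$; the shifted phase is $3x^2\cos(2\phi+3\psi)-2x^3\cos(3\phi+3\psi)$, still with critical value $\cos 3\psi$. Its Hessian at $(1,0)$ is
\begin{equation*}
\begin{pmatrix}-6\cos 3\psi & 6\sin 3\psi\\ 6\sin 3\psi & 6\cos 3\psi\end{pmatrix},
\end{equation*}
with determinant $-36$ independent of $\psi$ and the diagonal relation $\partial_{xx}f=-\partial_{\phi\phi}f$ holding at the critical point. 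Insert a smooth cutoff isolating a fixed-radius ball around $(1,0)$ on which this structure persists uniformly in $\psi$, with Part~(1) handling the complement. Lemma~\ref{lem: 2nd derivative test, dim 2}, which is designed precisely for the $\partial_{xx}f=-\partial_{\phi\phi}f$ situation, then gives the bound $O(S/\lambdaup)$ for the localized piece. For the derivatives $\lambdaup^k\partial_\lambdaup^k\partial_\psi^l I^{\snatural}$, one differentiates under the integral and applies the same framework inductively to the resulting oscillatory integrals, whose amplitudes still satisfy the hypothesis up to polynomial factors in $X$.

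\textbf{Main obstacle.} The genuinely delicate point is the $\psi$-differentiation in Part~(2). Bringing $\partial_\psi$ onto $e(2\lambdaup f)$ naively produces $-6\lambdaup x^2\sin(2\phi+\psi)=O(\lambdaup)$, so iterating $l$ times would give the unacceptable growth $\lambdaup^l\cdot S/\lambdaup$. The rescue comes from the $e(-2\lambdaup\cos 3\psi)$ factor built into $I^{\snatural}$: the \emph{combined} $\psi$-derivative is $6\lambdaup\bigl[\sin 3\psi-x^2\sin(2\phi+\psi)\bigr]$, which \emph{vanishes} at the critical point $(1,\psi)$ and hence is $O(\sqrt\lambdaup)\leq X$ on the $O(1/\sqrt\lambdaup)$-scale governing the stationary-phase integral. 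Making this cancellation quantitative---via a $\psi$-uniform stationary-phase expansion $I^{\snatural}=\lambdaup^{-1}\sum_{j\geq 0}A_j(\psi)\lambdaup^{-j}+O_N(\lambdaup^{-N-2})$ with smooth coefficients $A_j(\psi)$ having bounded derivatives---is the main technical step, carried out in \cite{Qi-Gauss}.
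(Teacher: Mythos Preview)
The paper does not give its own proof of this lemma: it simply quotes Lemmas~6.1 and~6.3 of \cite{Qi-Gauss}. Your sketch for Part~(1), using Lemma~\ref{lem: staionary phase, dim 2, 2} after the rescaling $x=\rho y$, is correct and is the natural approach.

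For Part~(2), however, your invocation of Lemma~\ref{lem: 2nd derivative test, dim 2} is not justified. That lemma requires the identity $\partial_{xx}f=-\partial_{\phi\phi}f$ to hold \emph{on the entire integration domain}, not just at the critical point. A direct computation gives
\[
\partial_{xx}f+\partial_{\phi\phi}f=6(1-2x^{2})\cos(2\phi+\psi)+6x(3x^{2}-2)\cos 3\phi,
\]
which vanishes only at $(x,\phi)=(1,\psi)$; on any fixed-radius ball around the critical point it is merely $O(\delta)$, not identically zero. The paper's Lemma~\ref{lem: 2nd derivative test, dim 2} is designed for, and later applied to, the Mellin-side phase in \S\ref{sec: Hankel, II} (see~\eqref{6eq: f'' = }), where the identity holds exactly by~\eqref{6eq: derivative of rho, theta, 2.1}--\eqref{6eq: derivative of rho, theta, 2.2}. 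It is not meant for the present Hankel-side phase; indeed, the paper explicitly warns just before the present lemma that the two-dimensional tools of \S7 are inadequate here. Note also that the standard 2D second-derivative test (conditions~\eqref{app: conditions on f''}) fails too, since $\partial_{xx}f(1,\psi)=-6\cos 3\psi$ can vanish.

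Your identification of the $\psi$-differentiation obstacle and the cancellation mechanism via the $e(-2\lambdaup\cos 3\psi)$ prefactor is on target, and your ultimate deferral to \cite{Qi-Gauss} for the uniform stationary-phase expansion is exactly what the paper does. But the intermediate route through Lemma~\ref{lem: 2nd derivative test, dim 2} should be dropped; the argument in \cite[\S\S 2.4,\,6.1]{Qi-Gauss} proceeds by a different method that handles the $(\lambdaup,\psi)$-derivatives uniformly.
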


\section{Analysis of the Hankel Transforms, I}\label{sec: Hankel 1}

Let $\varww  (x) $ be a smooth function supported on $[1 , \varDelta ]$ satisfying $ \varww^{(i)} (x)  \Lt_{i} \log^{i} T $ for all $i \geqslant 0$. For  $|\varLambda| \Gt T^2$, define 
\begin{align}\label{11eq: defn of w (x, Lmabda), R}
\varww (x, \varLambda ) = \varww (|x|) \SDH  ( \varLambda x  ) ,
\end{align}
if $F$ is real, and
\begin{align}\label{11eq: defn of w (z, Lmabda), C}
\varww (z, \varLambda ) = \varww (|z|) \SDH  ( \varLambda  z ) ,
\end{align}
if $F$ is complex. Let $\widetilde {\varww} (y , \varLambda )$ and $\widetilde {\varww} ( u , \varLambda )$ be their Hankel transforms (see Definition \ref{defn: Hankel transform}) defined by
\begin{align}\label{11eq: Hankel}
 \widetilde {\varww} (y , \varLambda ) = \int {\varww} (x , \varLambda ) J_{\pi} (xy) \nd x, \quad \widetilde {\varww} (u , \varLambda ) = \viint {\varww} (z , \varLambda ) J_{\pi} (z u) \nd z, 
\end{align}
and modify $\widetilde {\varww} (y , \varLambda )$ and $\widetilde {\varww} (u , \varLambda )$ by exponential factors as follows,  
\begin{align}\label{10eq: defn of w nat}
\widetilde {\varww}^{\snatural} (y , \varLambda ) = e (- y / \varLambda) \widetilde {\varww}  (y , \varLambda ), \quad \widetilde {\varww}^{\snatural} (u , \varLambda ) = e (- 2 \Re (u / \varLambda) ) \widetilde {\varww}  (u , \varLambda ). 
\end{align}

Roughly speaking, our wish is to transform  $ \widetilde {\varww}^{\snatural} (y , \varLambda ) $ and $ \widetilde {\varww}^{\snatural} (u , \varLambda ) $ into the shape
\begin{align*}%\label{11eq: defn of Phi0}
	  \frac {M T^{1+\vepsilon}} {\sqrt{|y|} }   \Phi^{\sigmaup} (y / \varLambda), \qquad    \frac {M T^{2+\vepsilon}}  {|u|}  \Phi^{\sigmaup} (u / \varLambda) ,
\end{align*} 
with $\sigmaup = \oldstylenums{0}, -, +, \flat$ in various circumstances. It turns out that the analytic properties of $ \Phi^{\sigmaup} (x) $ or $\Phi^{\sigmaup} (z)$ depend only mildly on  $\varLambda$ and $M$, so, for brevity, this dependence will be suppressed from our notation.

\subsection{The Small-Argument Case}

We first consider the case when the Hankel transforms have relatively small argument. However, this case arises only when there are infinitely many units in the number field.

The following lemma is essentially due to \cite[Lemma 7]{Blomer} and \cite[Lemma 6.4]{Qi-Gauss} (As indicated in Remark \ref{rem: normalization Hankel}, Blomer has a slightly different normalization).
\begin{lem}\label{lem: Hankel, derivatives}
	For $\varww \in C_c^{\infty} (F^{\times} )$ define $ \|\tw\|_{L^\infty} $ to be its sup-norm.  If    $\varww  $ is supported in a fixed compact set $K \subset F^{\times}$, then its Hankel transform $\widetilde{\varww}$ has the following estimates{\rm:}
	
	{\rm(1)} when $F $ is real, 
	\begin{align*}
	y^{i}  (\nd /\nd y)^i  \widetilde{\varww} (y)  \Lt_{\shskip i, \shskip K }   \|\tw\|_{L^\infty} \cdot  ( |y|^{1/3} + 1  )^{i } / |y|^{  1/3} ;
	\end{align*} 
	
	{\rm(2)} when $F $ is complex, 
	\begin{align*}
	u^{i} \widebar u^{j} (\partial /\partial u)^i (\partial / \partial \widebar u  )^{j} \widetilde{\varww} (u)  \Lt_{\shskip i, \shskip j, \shskip K }   \|\tw\|_{L^\infty} \cdot  ( |u|^{1/3} + 1  )^{i + j} / |u|^{  2/3} .
	\end{align*}  
\end{lem}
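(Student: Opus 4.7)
The approach splits into two regimes according to whether the transform argument is bounded or large.

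In the large-argument regime ($|y| \gtrsim 1$ in the real case, or $|u| \gtrsim 1$ in the complex case), I would differentiate under the integral sign,
\[
\widetilde{w}{}^{(i)}(y) = \int w(x) \hskip 1pt x^i J_\pi^{(i)}(xy) \, \mathrm{d}x,
\]
and substitute the asymptotic expansion of Proposition \ref{prop: asymptotic J}. Each $t$-derivative of the leading oscillatory factor $e(\pm 3 t^{1/3})/t^{1/3}$ brings an additional factor of $t^{-2/3}$ from differentiating the phase, whence $|J_\pi^{(i)}(t)| \Lt |t|^{-(2i+1)/3}$ for $|t| \gtrsim 1$. Since $w$ is supported on a compact set $K \subset F^\times$,
\[
|y|^i \bigl|\widetilde{w}{}^{(i)}(y)\bigr| \Lt \|w\|_{L^\infty} \int_K |x|^i |xy|^{-(2i+1)/3} \, \mathrm{d}x \Lt_K \|w\|_{L^\infty} \, |y|^{(i-1)/3},
\]
which matches the claimed bound $(|y|^{1/3}+1)^i / |y|^{1/3}$ in this range. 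The complex case runs in parallel, using \eqref{4eq: asymptotic, Bessel, C}: each $\partial_z$ or $\partial_{\bar z}$ applied to the phase $e(3(\xi z^{1/3}+\bar\xi \bar z^{1/3}))$ produces a factor of $|z|^{-2/3}$, yielding $|\partial_z^i \partial_{\bar z}^{\,j} J_\pi(z)| \Lt |z|^{-2(i+j+1)/3}$ for $|z| \gtrsim 1$, and hence the stated estimate after multiplying by $|u|^i |\bar u|^{\shskip j}$.

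In the small-argument regime, I would invoke the Mellin--Barnes contour representation of $J_\pi$, which expresses the Bessel kernel as a contour integral of a product of gamma functions determined by the Langlands parameter $\boldsymbol{\mu} = (\mu, 0, -\mu)$. Shifting this contour to the left past the rightmost poles gives an expansion of $J_\pi(t)$ as a sum of powers $|t|^{\alpha_j}$ with $\Re(\alpha_j) \geq -|\Re(\mu)| \geq -7/32$ by the Kim--Sarnak bound \eqref{3eq: Kim-Sarnak, GL3}. Since $7/32 < 1/3$, differentiating termwise and substituting into the transform gives
\[
|y|^i \bigl|\widetilde{w}{}^{(i)}(y)\bigr| \Lt \|w\|_{L^\infty} |y|^{-7/32} \Lt \|w\|_{L^\infty} |y|^{-1/3}
\]
for $|y| \leq 1$, which matches the claim. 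The complex analogue uses the exponent $-7/16 > -2/3$ in the same way.

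The principal technical point is the careful bookkeeping for the multiple branches of $J_\pi$---the two $\pm$ signs in the real case and the sum over cube roots of unity $\xi$ in the complex case---to verify that no contour shift contributes a singularity worse than $|t|^{-1/3}$ (real) or $|z|^{-2/3}$ (complex). These computations have been carried out in \cite[Lemma 7]{Blomer} for $F = \BR$ and in \cite[Lemma 6.4]{Qi-Gauss} for $F = \BC$, and I would essentially defer to those references for the finer details rather than reproducing them in full.
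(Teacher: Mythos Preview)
Your proposal is correct and aligns with the paper's treatment: the paper does not supply its own proof of this lemma but simply attributes it to \cite[Lemma 7]{Blomer} and \cite[Lemma 6.4]{Qi-Gauss}, exactly the references you defer to at the end. Your two-regime sketch (asymptotic expansion for large argument, Mellin--Barnes contour shift for small argument) is an accurate outline of what those references do, so you have in fact supplied more detail than the paper itself.
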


As a consequence of Corollary \ref{cor: bound for H < T} and Lemma \ref{lem: Hankel, derivatives}, for $|y| \leqslant T^{\vepsilon}$, we have 
\begin{align*}
y^{i}  \frac { \nd^{i} \widetilde{\varww} (y, \varLambda) } {\nd y^{i}}  \Lt_{\shskip i }  \frac { T^{1+ (i+1)\vepsilon} }  {|y|^{1/3}} \Lt \frac { M T^{1+ (i+1)\vepsilon} }  {\sqrt{|y|} }, 
\end{align*}
and, in the polar coordinates 
\begin{align*}
y^{i}  \frac { \partial^{i+j}\widetilde{\varww} (y e^{i\theta}, \varLambda) } {\partial y^i \partial \theta^j } \Lt_{  \shskip i, \shskip j }  \frac { T^{2+ (i+j+1)\vepsilon} }  {y^{2/3}} \Lt \frac { M T^{2+ (i+j+1)\vepsilon} }  { y }. 
\end{align*}
 
\begin{cor}\label{cor: Hankel y<1}
Let $|\varLambda| \Gt T^{2}$. % and $ |y|, |u| \leqslant T^{\vepsilon}$. %For $|x|, |z| \leqslant T^{\vepsilon}/|\varLambda|$, w
We artificially define  $\Phi^{\oldstylenums{0}} (x)$ and $\Phi^{\oldstylenums{0}} (z)$ by
\begin{align}\label{11eq: defn of Phi0}
\widetilde {\varww}^{\snatural} ( y , \varLambda) =  \frac {M T^{1+\vepsilon}} {\sqrt{|y|} }   \Phi^{\oldstylenums{0}} (y / \varLambda), \qquad \widetilde {\varww}^{\snatural} ( u , \varLambda) =  \frac {M T^{2+\vepsilon}}  {|u|}  \Phi^{\oldstylenums{0}} (u / \varLambda) ,
\end{align} 
with $x = y / \varLambda$ and $z = u / \varLambda$.
	
	{\rm(1)} When $F$ is real, for $ |x|  \leqslant T^{\vepsilon}/|\varLambda| $, we have 
	\begin{align}\label{11eq: Phi0, R}
	x^{i}  \frac { \nd^{i} \Phi^{\oldstylenums{0}} (x) } {\nd x^{i}} \Lt_{\shskip i }    T^{ i \vepsilon} . 
	\end{align}
	
	{\rm(2)} When $F$ is complex, for $ x \leqslant T^{\vepsilon}/|\varLambda| $, we have 
	\begin{align}\label{11eq: Phi0, C}
		x^{i}  \frac { \partial^{i+j}\Phi^{\oldstylenums{0}} (x e^{i\phi}) } {\partial x^i \partial \phi^j} \Lt_{  \shskip i, \shskip j }  \frac {T^{(i+j)\vepsilon}  |x \varLambda|^{1/3} } {M} \Lt T^{(i+j)\vepsilon} . 
	\end{align}

\end{cor}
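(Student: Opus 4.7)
The plan is to unwind the implicit definitions of $\Phi^{\oldstylenums{0}}$ from \eqref{10eq: defn of w nat} and \eqref{11eq: defn of Phi0}, then apply Leibniz's rule and the estimates for $\widetilde{\varww}(y,\varLambda)$ stated in the remark immediately preceding the corollary. Substituting $y = x\varLambda$ (real case) and $u = z\varLambda$ (complex case, with $u = y e^{i\theta}$, $z = x e^{i\phi}$) gives
\[
\Phi^{\oldstylenums{0}}(x) = \frac{\sqrt{|y|}}{MT^{1+\sepsilon}}\, e(-y/\varLambda)\, \widetilde{\varww}(y,\varLambda), \quad
\Phi^{\oldstylenums{0}}(z) = \frac{|u|}{MT^{2+\sepsilon}}\, e(-2\Re(u/\varLambda))\, \widetilde{\varww}(u,\varLambda).
\]
Since $|\varLambda|$ and $\arg\varLambda$ are independent of the free variables, the scaled differential operators match up as
\[
x^i \frac{d^i}{dx^i} = y^i \frac{d^i}{dy^i}, \qquad
x^i \frac{\partial^{i+j}}{\partial x^i\, \partial\phi^j} = y^i \frac{\partial^{i+j}}{\partial y^i\, \partial\theta^j}.
\]

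For the real bound \eqref{11eq: Phi0, R}, I will distribute $y^i(d/dy)^i$ by Leibniz's rule across the three factors. The elementary estimates $y^a (d/dy)^a \sqrt{|y|} \Lt \sqrt{|y|}$ and $y^b (d/dy)^b e(-y/\varLambda) = (-2\pi i y/\varLambda)^b e(-y/\varLambda) \Lt (|y|/|\varLambda|)^b \Lt T^{b(\sepsilon - 2)} \Lt 1$ leave only the decisive factor $y^c (d/dy)^c \widetilde{\varww}(y,\varLambda) \Lt MT^{1+(c+1)\sepsilon}/\sqrt{|y|}$ from the remark. Multiplying yields $\sqrt{|y|}/(MT^{1+\sepsilon}) \cdot 1 \cdot MT^{1+(i+1)\sepsilon}/\sqrt{|y|} = T^{i\sepsilon}$, uniformly in $x$.

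For the complex bound \eqref{11eq: Phi0, C}, I will apply the bivariate Leibniz rule to $y^i \partial_y^i \partial_\theta^j$ on the three-fold product. The factor $|u| = y$ contributes at most $y$ (only $a \in \{0,1\}$ give nonzero $y^a \partial_y^a$). A short Fa\`{a} di Bruno calculation shows $y^b \partial_y^b \partial_\theta^c e(-2\Re(u/\varLambda)) \Lt 1$: each elementary derivative $\partial_y^{b'} \partial_\theta^{c'}(2\Re(u/\varLambda))$ vanishes for $b' \geq 2$ and is otherwise bounded by $y/|\varLambda|$ or $1/|\varLambda|$, and after multiplying by $y^b$ each surviving term is at most $(y/|\varLambda|)^{b+p} \Lt 1$ for some $p \geq 0$. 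The sharper bound from the remark supplies $y^c \partial_y^c \partial_\theta^d \widetilde{\varww}(ye^{i\theta},\varLambda) \Lt T^{2+(c+d+1)\sepsilon}/y^{2/3}$. Combining,
\[
\frac{y}{MT^{2+\sepsilon}} \cdot 1 \cdot \frac{T^{2+(i+j+1)\sepsilon}}{y^{2/3}} = \frac{y^{1/3}\, T^{(i+j)\sepsilon}}{M} = \frac{|x\varLambda|^{1/3}\, T^{(i+j)\sepsilon}}{M},
\]
as required.

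This argument is essentially routine Leibniz bookkeeping. The only point requiring vigilance is the uniform boundedness of the oscillatory factors $e(-y/\varLambda)$ and $e(-2\Re(u/\varLambda))$ under repeated scaled differentiation, which is where the hypothesis $|\varLambda| \Gt T^2$ combined with $|y|, |u| \leq T^{\sepsilon}$ enters decisively, producing $|y/\varLambda| \Lt T^{\sepsilon - 2} \ll 1$ and preventing the derivatives of the phase from proliferating. The purpose of introducing the shifted function $\widetilde{\varww}^{\snatural}$ is precisely to replace $\widetilde{\varww}$, whose own phase oscillates rapidly on this small-argument range, by a factor whose residual oscillation can be absorbed into a bounded amplitude $\Phi^{\oldstylenums{0}}$.
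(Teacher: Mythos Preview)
Your proof is correct and follows exactly the approach the paper intends: the paper presents Corollary~\ref{cor: Hankel y<1} as an immediate consequence of the derivative bounds on $\widetilde{\varww}$ displayed just before it, and your argument is precisely the Leibniz bookkeeping needed to pass from those bounds to the bounds on $\Phi^{\oldstylenums{0}}$. The handling of the exponential factor via Fa\`a di Bruno and the observation $|y/\varLambda|\Lt T^{\sepsilon-2}$ are exactly the points implicit in the paper's statement.
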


\subsection{Application of Stationary Phase}

Our next goal is to deduce integral representations of $\widetilde {\varww}^{\snatural} (y , \varLambda )$ and $\widetilde {\varww}^{\snatural} ( u , \varLambda )$ from those of the $\GL_2$-Bessel integrals $ \SDH (x) $ and $\SDH (z)$ in Lemma \ref{lem: H(x), |z|>1} (1) and (2) along with the asymptotic formulae for the $\GL_3$-Bessel kernels $J_{\pi} (x)  $ and $J_{\pi} (z)$ in Lemma \ref{prop: asymptotic J}.

\begin{prop}\label{prop: Hankel after stationary phase, R}
	Suppose that $|y| > T^{\vepsilon} $ and    $|\varLambda| \Gt T^2$.   Define 
	 $\mathrm{hyp}_{\pm} (r)$ to be the hyperbolic function 
	\begin{align}\label{9eq: cosh and sinh}
	\mathrm{hyp}_+ (r) =   \cosh r, \qquad \mathrm{hyp}_- (r ) = -   \sinh r .
	\end{align} % $|\varLambda| \hskip -1 pt \Gt T$,  and $ |u|^{2/3} / |\varLambda|^2 \Gt 1/T^{1+\varepsilon} $. 	
	There are  smooth functions $V_{\pm} ( r ; y , \varLambda) $ with support in the region defined by
	\begin{align}\label{6eq: support of V, R}
	(1 / \varDelta ) \cdot |y|^{1/3} /  |\varLambda|^{1/2}  \leqslant |\mathrm{hyp}_{\pm} (r)| \leqslant   \varDelta \cdot |y|^{1/3} /  |\varLambda|^{1/2}  ,
	\end{align}
	satisfying 
	\begin{align}\label{6eq: bounds for V, R}
	 ({\partial  }  / {\partial r})^{i}   V_{+} ( r; y, \varLambda) \Lt_{\shskip i }  \log^i T, \qquad r^i ({\partial  }  / {\partial r})^{i}   V_{-} ( r; y, \varLambda) \Lt_{\shskip i }  \log^i T,
	\end{align}
	such that 
	\begin{align}\label{10eq: tilde w = Phi, R}
	\widetilde {\varww}^{\snatural} ( y , \varLambda) =  \frac {M T^{1+\vepsilon}} {\sqrt{|y|} }   \big(\Phi_+ (y , \varLambda) + \Phi_- (y , \varLambda) \big) 
	+ O (T^{-A}) ,
	\end{align}
	with
	\begin{align}\label{10eq: Phi (y,...), R}
	\Phi_{\pm} (y, \varLambda) =  \int_{- M^{\varepsilon} / M}^{M^{\varepsilon}/ M}  \hskip -1 pt e ({T r} /\pi )  g  ( M r) e \big( \hskip -1 pt -  y \shskip \mathrm{hyp}_{\pm}^{\snatural} (r)^2 / {\varLambda}   \big) \hskip -1 pt V_{\pm} ( r; y , \varLambda) \nd r,
	\end{align} 
	in which   $g $ is a Schwartz function, and
	\begin{align}\label{11eq: hyp (r)}
	\mathrm{hyp}^{\snatural}_{+} (r) = \tanh r , \qquad  \mathrm{hyp}^{\snatural}_{-} (r) = \coth r .
	\end{align} 
\end{prop}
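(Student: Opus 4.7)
The strategy is to substitute the integral representation of the $\GL_2$-Bessel integral $\SDH(\varLambda x)$ from Lemma \ref{lem: H(x), |z|>1}(1) together with the asymptotic expansion of $J_\pi(xy)$ from Lemma \ref{prop: asymptotic J}(1) into the definition of $\widetilde{\varww}(y,\varLambda)$, and then apply the one-dimensional stationary-phase estimate of Lemma \ref{lem: I, R} to the resulting $x$-integral (for each fixed $r$). Since $|\varLambda x| \Gt T^2/\varDelta$ on the support of $\varww(|x|)$, Lemma \ref{lem: H(x), |z|>1}(1) is applicable and produces an error $O(T^{-A})$; similarly, since $|xy| > T^{\sepsilon}$ we may truncate the Bessel asymptotic expansion to any desired order at negligible cost. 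Retaining the leading term $e( \epsilon \cdot 3 (|xy|)^{1/3})/|xy|^{1/3}$ (with $\epsilon = \pm 1$ according to $\mathrm{sgn}(xy)$), the Hankel transform becomes a double integral over $(r,x)$ whose phase is
\begin{equation*}
\Phi(x,r) \;=\; \epsilon\cdot 3(|xy|)^{1/3} \;+\; Tr/\pi \;\mp\; 2\sqrt{|\varLambda|\,|x|}\,\mathrm{hyp}_{\pm}(r),
\end{equation*}
with $\mathrm{hyp}_\pm$ as in \eqref{9eq: cosh and sinh}, and the choice of $\mathrm{hyp}_+$ versus $\mathrm{hyp}_-$ dictated by whether $\varLambda x$ is positive or negative.

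\textbf{Stationary-phase reduction.} For each fixed $r$, the substitution $x = (\beta t)^6$ with $\beta = |y|^{1/3}/(\sqrt{|\varLambda|}\,|\mathrm{hyp}_\pm(r)|)$ converts the $x$-phase into the canonical form $\pm \lambdaup(3t^2 - 2t^3)$ of Lemma \ref{lem: I, R}, where $\lambdaup = y/(\varLambda\,\mathrm{hyp}_\pm(r)^2)$, and the amplitude becomes supported on $t \in [\rho, \varDelta^{1/6}\rho]$ with $\rho = 1/\beta = \sqrt{|\varLambda|}|\mathrm{hyp}_\pm(r)|/|y|^{1/3}$. Lemma \ref{lem: I, R}(1) shows that the $x$-integral is negligible unless $\rho \asymp 1$, which is exactly the support condition \eqref{6eq: support of V, R} on $|\mathrm{hyp}_\pm(r)|$. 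In this regime, $|\lambdaup| \asymp |y|^{1/3} \geq T^{\sepsilon/3}$, which dominates the relevant parameter $X = \log T$, so Lemma \ref{lem: I, R}(2) supplies a genuine $1/\sqrt{|\lambdaup|} \asymp |y|^{-1/6}$ saving on the $x$-integral and, crucially, provides the smooth dependence on $\lambdaup$ (hence on $y$ and $r$) with controlled derivatives.

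\textbf{Extraction of the secondary phase.} A direct computation locates the stationary point at $x_0 = y^2/(\varLambda^3 \mathrm{hyp}_\pm(r)^6)$ and gives
\begin{equation*}
\Phi(x_0,r) - Tr/\pi \;=\; \pm\, y/(\varLambda\,\mathrm{hyp}_\pm(r)^2).
\end{equation*}
Multiplying by the prefactor $e(-y/\varLambda)$ extracted in the definition \eqref{10eq: defn of w nat} of $\widetilde{\varww}^{\snatural}$, and using the identities $1 - 1/\cosh^2 r = \tanh^2 r$ and $1/\sinh^2 r - 1 = \coth^2 r - 2$ (the constant being absorbed into $V_\pm$), the residual $r$-oscillation collapses to exactly $e(-y\,\mathrm{hyp}_\pm^{\snatural}(r)^2/\varLambda)$ with $\mathrm{hyp}^{\snatural}_+ = \tanh$, $\mathrm{hyp}^{\snatural}_- = \coth$, as claimed in \eqref{10eq: Phi (y,...), R}. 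The size factor $MT^{1+\sepsilon}$ is inherited from Lemma \ref{lem: H(x), |z|>1}, and combining the $|xy|^{-1/3}$ from the Bessel asymptotic with the stationary-phase saving $|y|^{-1/6}$ yields the overall weight $MT^{1+\sepsilon}/\sqrt{|y|}$; all other non-oscillatory and sub-leading contributions (including the Schwartz cutoff $g(Mr)$, the higher-order terms in Lemma \ref{prop: asymptotic J}(1), and the $\lambdaup$-derivative contributions produced by Lemma \ref{lem: I, R}(2)) are packaged into $V_\pm(r;y,\varLambda)$.

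\textbf{Derivative bounds and main obstacle.} It remains to verify \eqref{6eq: bounds for V, R}. In the ``$+$'' case $\mathrm{hyp}_+ = \cosh r \asymp 1$ on $|r| \leqslant M^{\sepsilon}/M$, so $\beta$, $\rho$, $\lambdaup$ all depend smoothly on $r$ with bounded derivatives, and Lemma \ref{lem: I, R}(2) together with the chain rule yields $(\partial/\partial r)^i V_+ \Lt_i 1$. In the ``$-$'' case $\mathrm{hyp}_- = -\sinh r \asymp r$, so each $r$-derivative of $\beta$, $\lambdaup$, and of $\mathrm{hyp}^{\snatural}_-(r) = \coth r$ introduces a factor of $1/r$; tracking these through Lemma \ref{lem: I, R}(2) with the weighted bound $\lambdaup^j(d/d\lambdaup)^j I^{\snatural} \Lt 1/\sqrt{|\lambdaup|}$ produces the weighted estimate $r^i(\partial/\partial r)^i V_- \Lt_i 1$. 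The main obstacle is the bookkeeping of sign cases (four combinations from $\mathrm{sgn}(y)\mathrm{sgn}(\varLambda)$ and from the $\pm$ in $\SDH^{\shskip\ssharp}_\pm$) and the careful isolation of the narrow resonance window $|\mathrm{hyp}_\pm(r)| \asymp |y|^{1/3}/|\varLambda|^{1/2}$, ensuring that outside this window the Lemma \ref{lem: I, R}(1) bound decays sufficiently fast to be absorbed in the $O(T^{-A})$ error.
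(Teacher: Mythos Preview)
Your proposal is correct and follows essentially the same route as the paper: insert the asymptotic for $J_\pi$ and the integral representation for $\SDH$, rescale the $x$-integral to the canonical form $\lambdaup(3t^2-2t^3)$, apply Lemma~\ref{lem: I, R}, and read off the secondary phase via $-1 \pm 1/\mathrm{hyp}_\pm(r)^2 = -\mathrm{hyp}^{\snatural}_\pm(r)^2$. One small slip: in the ``$-$'' case the relevant identity is $1 + 1/\sinh^2 r = \coth^2 r$ (so no ``constant'' needs to be absorbed into $V_-$); your stated stationary value $\Phi(x_0,r)-Tr/\pi = -y/(\varLambda\sinh^2 r)$ already combines with $e(-y/\varLambda)$ to give exactly $e(-y\coth^2 r/\varLambda)$.
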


The reader may find the integral $\Phi_{\pm} (y, \varLambda)$   in \cite[(8.16)]{Young-Cubic} and \cite[(4.20)]{Huang-GL3}. Our analysis is slightly different, however,  as our strategy is to first transform the $x$-integral into $ I(\lambdaup)$ as in \eqref{9eq: defn I, R} with phase $\lambdaup (2x^3-3x^2)$ and then use the stationary phase results in Lemma \ref{lem: I, R}, while Young and Binrong Huang apply directly general stationary phase results (\cite[Lemma 6.3]{Young-Cubic}) with phase in an arbitrary form. A technical remark is that, in view of \eqref{6eq: bounds for V, R}, $V_{+} ( r; y, \varLambda)$ is more than an ``inert" function in the sense of Young \cite{Young-Cubic}.

\begin{proof}
To start with, let us assume $\varLambda > 0$, for $\widetilde {\varww}^{\snatural} (y , - \varLambda )= \widetilde {\varww}^{\snatural} ( - y , \varLambda )$.	

By Lemma \ref{prop: asymptotic J} (1), the contribution  to $  \widetilde {\varww} (  y , \varLambda ) $  from the leading term in \eqref{4eq: asymptotic, Bessel, R}  is the following   integral:
	%\begin{align*}
	%W (u, \varLambda) = I \big(   u^{1/3}, \varLambda \big) + O \big( S T^{2+\varepsilon} / |u|^{(K+2)/3} \big),
	%\end{align*}
	%with
	\begin{align*}
	I  (  y^{1/3}, \varLambda  ) = \int  e \big(3 (xy)^{1/3} \big) \SDH (\varLambda x) \varww (|x|) \frac {\nd x} {|xy|^{1/3} } ; 
	\end{align*} 
	 the contributions from lower-order terms are similar and may be handled in the same manner. Also, it follows from Corollary \ref{cor: bound for H < T} (1) that the error term in \eqref{4eq: asymptotic, Bessel, R} yields an $O \big( T^{1+\varepsilon} / |y|^{(K+1)/3} \big) = O (T^{-A})$ for $|y|  > T^{\vepsilon}$ if we choose $K$ large, say $K >  3 (A+1) / \vepsilon$. 
	 
	 Next, we change  the variables $x$ and $y$ into $ \pm x^6 $ and $y^{3}$ so that
	 \begin{align*}%\label{4eq: I (u, r), 1}
	 I   ( y , \varLambda )   =  \sum_{ \pm }   \frac 1 {|y|} \int_0^{\infty}   e  ( \pm 3   x^2 y    ) \SDH \big(\hskip -1 pt \pm \varLambda x^6 \big)   a   ( x   )   \nd x  ,
	 \end{align*}
	 where  $a (x ) = 6 x^3 \varww (x^6)   $ is supported on $ [1, \varDelta^{1/6}  ]$ and satisfies $ a^{(i)} (x ) \Lt_{i } \log^i T   $. By the formulae in \eqref{8eq: H+natural} and \eqref{8eq: H-natural} in Lemma \ref{lem: H(x), |z|>1}, we infer that $ I   ( y , \varLambda ) $ may be written as 
	 \begin{align*}
	 I   ( y , \varLambda ) = M T^{1+\vepsilon} \sum_{\pm}  \int_{- M^{\vepsilon} / M}^{M^{\vepsilon}/ M}  e( Tr / \pi   )  g (  M r  )  I_{\pm} (r  ; y, \varLambda)  \nd r + O (T^{-A}), 
	 \end{align*}
	 where
	 \begin{align*}
	 I_{\pm} (r  ; y, \varLambda) = \frac 1 {|y|} \int_{-\infty}^{\infty} e \big( \hskip -1 pt \pm 3 x^2 y \mp 2 \sqrt{\varLambda} x^3 \mathrm{hyp}_{\pm} (r)  \big) a(|x|) \nd x .
	 \end{align*}
On changing $x$ into $x y / \sqrt \varLambda \mathrm{hyp}_{\pm} (r)$ (need $r \neq 0$ for $ \mathrm{hyp}_{-} (r) \neq 0$), the inner integral $I_{\pm} (r  ; y, \varLambda)$ turns into
	 \begin{align*}
	 \frac {1} {\sqrt \varLambda  | \mathrm{hyp}_{\pm} (r)  |}  \int_{-\infty}^{\infty} e \bigg( \hskip -1 pt \pm \frac {y^3} {\varLambda \mathrm{hyp}_{\pm} (r)^2 } \big(3 x^2 - 2 x^3 \big) \bigg) a \bigg( \frac {|xy|} {\sqrt{\varLambda}  | \mathrm{hyp}_{\pm} (r)  | }  \bigg) \nd x ,
	 \end{align*}
	 and it is exactly the integral $I (\lambdaup) $ defined as in \eqref{9eq: defn I, R} if one lets $ \lambdaup = \pm y^3 / \varLambda \mathrm{hyp}_{\pm} (r)^2 $, $\rho =  \sqrt{\varLambda}  | \mathrm{hyp}_{\pm} (r)  | / |y|$($= \sqrt{|y / \lambdaup| }$), and 
	 \begin{align*}
	 \varww (x; \lambdaup) = {\textstyle \sqrt{|\lambdaup / y^3 | } } \cdot a \big(  |x| {\textstyle \sqrt{|\lambdaup / y|}} \big) ,
	 \end{align*} 
	 with 
	 \begin{align*} 
	x^{i} \lambdaup^{j} \partial_x^{i}  \partial_{\lambdaup}^{j}   \varww  (x  ; \lambdaup)   \Lt_{\, i, \shskip  j }        {\textstyle \sqrt{|\lambdaup / y^3| } } \cdot \log^{i+j} T.  
	\end{align*} 
	
	Let $ I^{\snatural} (\lambdaup) = e (  -   \lambdaup   ) I (\lambdaup )$ be as in \eqref{9eq: tilde I, R}.  
%is negligibly small,	for $|y| > T^{\vepsilon / 3}$ and $K > 3(A+1)/\vepsilon$, say. 
%	Hence $I_{\pm} (r  ; y, \varLambda)$ is negligibly small unless $   |y| / \hskip -1 pt \sqrt{\varDelta \varLambda} <  |\mathrm{hyp}_{\pm} (r)| <  |y| \sqrt{\varDelta} / \hskip -1 pt \sqrt{\varLambda} $. 
We introduce a smooth function $\varvv (x)$ such that $\varvv (x) \equiv 1$ on $ [1/\sqrt \varDelta, \sqrt \varDelta  ]$ and  $\varvv (x) \equiv 0$ on $(0, 1/\varDelta] \cup [ \varDelta, \infty)$. According to Lemma \ref{lem: I, R} (1), if $\rho = \sqrt{|y / \lambdaup| }$ is not in the interval $(1/\sqrt{\varDelta}, \sqrt{\varDelta})$, then
\begin{align*}
I^{\snatural}  (\lambdaup ) \Lt    \frac 1 {|y|} \bigg( \frac{\log T}{ \sqrt{|y^{3} / \lambdaup|}  + |y| } \bigg)^{K} < \frac{\log^K T }{|y|^{K + 1}},
\end{align*}
and hence $ I^{\snatural}  (\lambdaup )  ( 1 - \varvv  ( \lambdaup / y )  ) $ only contributes to the error term.  By    Lemma \ref{lem: I, R} (2), 
	\begin{align*}
	\lambdaup^{j} \frac {\nd^{j}} {\nd \lambdaup^{j} }  \big( I^{\snatural} (\lambdaup) \varvv (|\lambdaup / y|) \big)  \Lt_{ j } \frac {\log^j T   } {\sqrt {|y|^3}   }. 
	\end{align*} 
	
	Finally, let $$ V_{\pm} (r; y^3, \varLambda) = {\textstyle \sqrt {|y|^3}} I^{\snatural} (\lambdaup) \varvv (|\lambdaup / y|), \qquad \text{ ($\shskip \lambdaup = \pm y^3 / \varLambda \mathrm{hyp}_{\pm} (r)^2$)},$$
	\footnote{This $V_{\pm}$ function is only from the leading term  in \eqref{4eq: asymptotic, Bessel, R}, so, to be strict, one must also include those  $V_{\pm}$ functions constructed from the lower order terms  in \eqref{4eq: asymptotic, Bessel, R}.}then, after changing $y$ into $y^{1/3}$,  the expression of $  \widetilde {\varww}^{\snatural} ( y , \varLambda) $ (defined in \eqref{10eq: defn of w nat}) given by \eqref{10eq: tilde w = Phi, R} and \eqref{10eq: Phi (y,...), R} readily follow from the arguments above, along with the identity $$  -  \mathrm{hyp}^{\snatural}_{\pm} (r)^2 =  - 1 \pm \frac 1  {\mathrm{hyp}_{\pm} (r)^2}  ,$$ and, to deduce \eqref{6eq: bounds for V, R} one   needs the estimates\footnote{We also need Fa\`a di Bruno's formula for higher derivatives of composite functions (see \cite{Faa-di-Bruno}).} $$ \mathrm{hyp}_{\pm} (r) \frac { \nd^j} {\nd r^j } \bigg(\frac 1 { \mathrm{hyp}_{\pm} (r) } \bigg) \Lt_{j} \frac 1 { |\mathrm{hyp}_{\pm} (r)|^{j} } $$
	for $ |r | < 1$.
\end{proof}

\begin{prop}\label{prop: Hankel after stationary phase, C}
  Suppose that $|u| > T^{\vepsilon}$  and $|\varLambda| \Gt T^2$.  % $|\varLambda| \hskip -1 pt \Gt T$,  and $ |u|^{2/3} / |\varLambda|^2 \Gt 1/T^{1+\varepsilon} $. 	
  Recall the definition of $\rho (r, \omega)$ in {\rm\eqref{3eq: rho (r, w)}}.	There is a smooth function $V ( r, \omega; u, \varLambda) $ with support in the region defined by
	\begin{align}\label{6eq: support of V}
	(1 / \varDelta ) \cdot |u|^{1/3} /  |\varLambda|^{1/2}  \leqslant \rho (r, \omega) \leqslant  \varDelta  \cdot |u|^{1/3} /  |\varLambda|^{1/2}  ,
	\end{align}
	satisfying 
	\begin{align}\label{6eq: bounds for V}
	\rho (r, \omega)^{i+j} ({\partial  }  / {\partial r})^{i} ({\partial  } / {\partial \omega})^{j} V ( r, \omega; u, \varLambda) \Lt_{\shskip i, \shskip j}  \log^{i+j} T,
	\end{align}
	such that 
	\begin{align}\label{11eq: w tilde = Phi, C}
\widetilde {\varww}^{\snatural} ( u , \varLambda) =  \frac {M T^{2+\vepsilon}}  {|u|}  \Phi (u , \varLambda)  
	+ O  ( T^{-A} ) ,
	\end{align}
	with
	\begin{align}\label{6eq: Phi (u,...)}
	\Phi (u, \varLambda) \hskip -1 pt = \hskip -1 pt \int_0^{ \pi} \hskip -2 pt \int_{- M^{\varepsilon} / M}^{M^{\varepsilon}/ M}  \hskip -2 pt e ({2T r} /\pi )  g  ( M r) e \big( \hskip -2 pt - \hskip -1 pt 2 \Re  (u  \shskip \trh^{\snatural} (r, \omega) / \varLambda   )   \big) \hskip -1 pt V ( r, \omega; u, \varLambda) \nd r \shskip \nd \omega,
	\end{align}
in which $g $ is a Schwartz function,	and $ \trh^{\snatural} (r, \omega) = \rho^{\snatural} (r, \omega) e^{i \shskip \theta^{\snatural}  (r, \shskip \omega) } $ is defined by
	\begin{align}\label{6eq: rho and theta, natural}
	\rho^{\snatural} (r, \omega)   = \frac {\cosh 2 r -   \cos 2 \omega} {\cosh 2 r +   \cos 2 \omega}, \qquad 
	\tan  ( \theta^{\shskip \snatural}  (r, \omega)/ 2  ) =  \frac {\sin 2 \omega} {\sinh 2 r}   .
	\end{align}
\end{prop}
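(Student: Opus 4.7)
The plan is to mirror the proof of Proposition 12.1, but with the two-dimensional complex stationary phase results of \S\ref{sec: stationary phase, Hankel, C} in place of their one-dimensional counterparts.

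First I would insert the asymptotic expansion of $J_{\boldsymbol \mu}(zu)$ from Lemma \ref{prop: asymptotic J}(2) into the Hankel transform defining $\widetilde{\varww}(u,\varLambda)$. Thanks to Corollary \ref{cor: bound for H < T}(2), choosing the truncation order $K$ sufficiently large in terms of $\vepsilon$ and $A$ renders the error contribution $O(T^{-A})$ for $|u|>T^{\sepsilon}$. The leading term (the lower-order terms being handled identically) produces, after combining the six cube roots $\xi$ and passing to polar coordinates $z=x^6 e^{6i\phi}$, $u=y^3 e^{3i\psi}$, an integral of the form
\begin{equation*}
\frac{C}{|u|}\int_0^{2\pi}\int_0^\infty e\bigl(6\Re(\xi z^{1/3}u^{1/3})\bigr)\SDH(\varLambda z)\,a(x)\,\nd x\,\nd\phi,
\end{equation*}
where $a(x)=\text{const}\cdot x^{11}\varww(x^6)$ is supported in $[1,\varDelta^{1/6}]$ with derivatives bounded by powers of $\log T$.

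Next I would substitute the integral representation \eqref{8eq: H-sharp(z)} for $\SDH(\varLambda z)$, producing a quintuple integral. The inner $(x,\phi)$-integral has phase
\begin{equation*}
\pm 2\sqrt{|\varLambda|}\,x^3\,\trh(r,\omega;3\phi+\arg\varLambda)+6\Re(\xi x^2 y e^{i(2\phi+\psi)}),
\end{equation*}
which, using the polar form $\trh(r,\omega;\phi')=\rho(r,\omega)\cos(\phi'+\theta(r,\omega))$ from \eqref{3eq: trh polar}, fits the template of the phase $2\lambdaup f(x,\phi;\psi)$ in \eqref{4eq: phase f} after an affine change in $\phi$ and the choice
\begin{equation*}
\lambdaup=\frac{y^3}{\sqrt{|\varLambda|}\,\rho(r,\omega)^2},\qquad \rho=\frac{\sqrt{|\varLambda|}\,\rho(r,\omega)}{y}.
\end{equation*}
Invoking Lemma \ref{lem: I, C}(1) for $\rho\notin(1/\sqrt\varDelta,\sqrt\varDelta)$ shows that only $\rho(r,\omega)\asymp |u|^{1/3}/|\varLambda|^{1/2}$ contributes non-negligibly; this is the support condition \eqref{6eq: support of V}. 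On the remaining range, Lemma \ref{lem: I, C}(2) provides the smoothness of the resulting amplitude in $\lambdaup$ and in the residual angular variable. Finally, stripping off the stationary-phase exponential factor $e(2\lambdaup\cos 3\psi)$ (built into $I^{\snatural}$) produces a main oscillation proportional to $e(-2\Re(u\,\trh^{\snatural}(r,\omega)/\varLambda))$, where the identity
\begin{equation*}
-\trh^{\snatural}(r,\omega)=-1+\frac{2}{\cosh 2r+\cos 2\omega}\cdot(1+\cdot\cdot\cdot)
\end{equation*}
together with the polar form of $\trh(r,\omega)$ yields $\rho^{\snatural}$ and $\theta^{\snatural}$ exactly as in \eqref{6eq: rho and theta, natural}. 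Multiplying by $\sqrt{g(Mr)}$, etc., and collecting the pieces into $V(r,\omega;u,\varLambda)$ gives the expression \eqref{6eq: Phi (u,...)}. The bounds \eqref{6eq: bounds for V} follow from Lemma \ref{lem: I, C}(2) and Lemma \ref{lem: estimates for rho and theta}, which control the $r$- and $\omega$-derivatives of $\theta(r,\omega)$ in terms of $\rho(r,\omega)$.

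The main obstacle, as in \S\ref{sec: stationary phase, Hankel, C}, is that a direct two-dimensional stationary phase with arbitrary phase (analogous to Young's lemma) is not available, so the identification of $\lambdaup$ and $\rho$ above is critical: it reduces the $(x,\phi)$-integral to the canonical form \eqref{9eq: defn I, C} for which Lemma \ref{lem: I, C} applies. A secondary technical point is bookkeeping of the cube roots $\xi$ and the absorption of the stationary-phase exponential into the natural phase $e(-2\Re(u\,\trh^{\snatural}(r,\omega)/\varLambda))$; this is where the symmetric form of $\trh^{\snatural}$ and the identity $\rho^{\snatural}(r,\omega)^2+|\theta^{\snatural}|^2$-type relations noted in the introduction become indispensable for tracking derivatives cleanly. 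Finally, one must verify that the Schwartz decay of $g$ remains intact after these manipulations, which follows as in Proposition 12.1 since only multiplicative factors independent of $r$ and $\omega$ are involved.
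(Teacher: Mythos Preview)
Your approach is exactly the paper's: reduce the inner $(x,\phi)$-integral to the canonical form $I(\lambdaup,\psi)$ of \S\ref{sec: stationary phase, Hankel, C}, apply Lemma~\ref{lem: I, C}, and then identify the residual exponential as $e(-2\Re(u\,\trh^{\snatural}(r,\omega)/\varLambda))$. A few slips to fix. First, your $\lambdaup$ should be $y^3/(\varLambda\,\rho(r,\omega)^2)$, not $y^3/(\sqrt{|\varLambda|}\,\rho(r,\omega)^2)$: after the substitution $x\mapsto xy/(\sqrt{\varLambda}\rho(r,\omega))$ both the $x^2$ and $x^3$ terms in the phase acquire the common factor $y^3/(\varLambda\rho^2)$, and with your value the consistency $\rho=\sqrt{y/\lambdaup}$ fails. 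Second, the weight is $a(x)=36\,x^{7}\varww(x^6)$, not $x^{11}$. Third, $g(Mr)$ passes through untouched; there is no $\sqrt{g(Mr)}$.

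The one step you have only gestured at is the actual derivation of \eqref{6eq: rho and theta, natural}. After extracting the stationary-phase factor $e(2\lambdaup\cos 3\psi)$ and combining it with the $e(-2\Re(u/\varLambda))$ already built into $\widetilde\varww^{\snatural}$, what must be verified is the identity (for all $\theta$)
\[
\cos\theta-\frac{\cos(\theta-2\theta(r,\omega))}{\rho(r,\omega)^2}=\rho^{\snatural}(r,\omega)\cos\bigl(\theta+\theta^{\snatural}(r,\omega)\bigr),
\]
and then, using \eqref{3eq: rho (r, w)}--\eqref{3eq: theta (r, w)}, that the $\rho^{\snatural},\theta^{\snatural}$ so determined coincide with the closed formulas in \eqref{6eq: rho and theta, natural}. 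Your placeholder ``$-\trh^{\snatural}=-1+\tfrac{2}{\cosh 2r+\cos 2\omega}(1+\cdots)$'' is not well-formed (the left side is complex) and does not substitute for this computation; in the paper this is a genuine half-page of trigonometric-hyperbolic algebra, and it is precisely where the square-root-free expression for $\rho^{\snatural}$ emerges.
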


It is remarkable that the square-root signs in the formulae of   $\rho (r, \omega)$ in  \eqref{3eq: rho (r, w)} are no longer in the formula of $\rho^{\snatural} (r, \omega)$ in \eqref{6eq: rho and theta, natural}. This   makes our lives easier  in the polar coordinates. 

\begin{proof}
	 The first stage of proof will be similar to that of Proposition \ref{prop: Hankel after stationary phase, R}. 
	 
	 Let us assume without loss of generality that $\varLambda > 0$ and consider the integral 
	 \begin{align*}
	 I  (   u^{1/3}, \varLambda  ) \hskip -1 pt = \hskip -1 pt \sum_{ \xi^3 =  1 }  \viint   e \big( 6  \Re  \big( \xi ( zu )^{1/3} \big)  \big) \SDH ( \varLambda  z  )    \varww  (|z|) \frac {d z} {|z u|^{2/3} },
	 \end{align*} 
	 which is the contribution from the  three leading terms in \eqref{4eq: asymptotic, Bessel, C} in Lemma \ref{prop: asymptotic J} (2). Substituting the variables $z$ and $u$ by $ z^6$ and $u^{3}$,  we have
	 \begin{align*}
	 I   ( u , \varLambda )   = \frac 1 {  |u|^2} \viint_{\BC^{\times}/\{ \pm 1 \}} e \big(  6 \Re  (     z^2 u  ) \big) \SDH \big( \varLambda  z^6  \big) a (|z|) \nd z / |z|,
	 \end{align*}
	 where $ a (x) = 36 x^7 \varww (x^6) $  is supported on $ [1, \varDelta^{1/6}  ]$ and satisfies $ a^{(i)} (x ) \Lt_{i } \log^i T   $.
	 
	 Let $z = x e^{i \phi} $ and $u = y e^{i \theta}$. By the formula \eqref{8eq: H-sharp(z)} in Lemma \ref{lem: H(x), |z|>1}, we infer that $	I   ( y e^{i \theta} , \varLambda ) $ may be written as 
	 \begin{align*}
	 I   ( y e^{i \theta} , \varLambda ) =  {M T^{2+\vepsilon}}  \int_0^{ \pi}   \hskip -1 pt
	 \int_{- M^{\varepsilon} / M}^{M^{\varepsilon}/ M}  e ({2T r} /\pi )  g  ( M r) I (r, \omega; y e^{i \theta}, \varLambda) \nd r \nd \omega + O  (T^{-A}  ),
	 \end{align*}
	 in which 
	 \begin{align*}
	 I (r, \omega; y e^{i \theta}, \varLambda ) \hskip -1 pt = \hskip -1 pt \frac 2 {y^2} \hskip -2 pt \int_0^{2 \pi} \hskip -3 pt \int_0^{\infty}
	\hskip -2 pt e  \big( 6 x^2 y \cos (2\phi \hskip -1 pt + \hskip -1 pt \theta) \hskip -1 pt - \hskip -1 pt 2 \sqrt {\varLambda} x^3    \trh (r, \omega; 3 \phi)    \big)    a (x  ) \nd x \nd \phi. 
	 \end{align*}
At this point, we assume   $(r, \omega) \neq (0, \pi / 2)$ and invoke the expression of $\trh (r, \omega; 3 \phi)$ as in \eqref{3eq: trh polar}. On changing the variables $ x $ and $ \phi$ into $x y / \sqrt{\varLambda} \shskip \rho (r, \omega)$ and $\phi -  \theta (r, \omega) /3 $, respectively, the integral $I (r, \omega; y e^{i \theta}, \varLambda )$ turns into
\begin{align*}%\label{6eq: integral I}
\frac {2  } { \sqrt{\varLambda} \shskip \rho (r, \omega) y} \int_0^{2 \pi} \int_0^{\infty} e  \bigg(   \frac {2 y^3} {\varLambda \shskip \rho (r, \omega)^2} f (  x, \phi; \theta - 2 \theta (r, \omega)/3 ) \bigg)  a  \bigg(   \frac {x y  } {\sqrt{\varLambda} \shskip \rho (r, \omega)} \bigg)    \nd x \nd \phi .
\end{align*}
According to the notation in  \S \ref{sec: stationary phase, Hankel, C}, the phase function $ f (  x, \phi;   \psi ) $ is defined by \eqref{4eq: phase f}, and the integral above is of the form $I (\lambdaup, \psi)$  as in \eqref{9eq: defn I, C} if one lets $\lambdaup =   y^3 / \varLambda  \rho (r, \omega)^2$, $ \psi = \theta - 2 \theta (r, \omega)/3$, and $\rho = \sqrt{\varLambda} \shskip \rho (r, \omega) / y$ ($= \hskip -2pt \sqrt {y/\lambdaup}$); the weight function
\begin{align*}
\varww   (   x ; \lambdaup ) = 2 \hskip -1 pt {\textstyle \sqrt{  \lambdaup / y^5}} \cdot a \big(x \hskip -1 pt {\textstyle \sqrt {\lambdaup / y}}      \big),
\end{align*}
has bounds 
\begin{align*}%\label{3eq: bound w}
x^{i} \lambdaup^{k} \partial_x^{i}  \partial_{\lambdaup}^{k}  \varww   (   x ; \lambdaup) \Lt_{\shskip i, \shskip  k }        {\textstyle \sqrt{\lambdaup / y^5}} \log^{i+k} T .
\end{align*}
Next, we apply Lemma \ref{lem: I, C} to $   I^{\snatural} (\lambdaup, \psi) = e  (  - 2 \lambdaup \cos 3 \psi  ) I (\lambdaup, \psi)$ as in  \eqref{9eq: tilde I, C}. Let  $\varvv (x)$ be a smooth function such that $\varvv (x) \equiv 1$ on $ [1/\sqrt \varDelta, \sqrt \varDelta  ]$ and  $\varvv (x) \equiv 0$ on $(0, 1/\varDelta] \cup [ \varDelta, \infty)$. Lemma \ref{lem: I, C} (1) implies that $ I^{\snatural} (\lambdaup, \psi )  ( 1 - \varvv  ( \lambdaup / y )  ) $ only contributes an error term, while Lemma \ref{lem: I, C} (2) yields the estimates 
\begin{align*}
\lambdaup^{k} \frac {\partial^{k + l}} {\partial \lambdaup^{k} \partial \psi^{ l } } \big(   I^{\snatural}  (\lambdaup, \psi ) \varvv    ( \lambdaup / y ) \big) \Lt_{ \shskip k  , \shskip l } \frac { \log^{k+l} T } {  y^3} .
\end{align*}
Keeping in mind that $\psi = \theta - 2 \theta (r, \omega)/3$ and $ \lambdaup =   y^3 / \varLambda \shskip \rho (r, \omega)^2$, these estimates above along with those for $1/\rho (r, \omega)^2$ and $\theta  (r, \omega) $ in Lemma \ref{lem: estimates for rho and theta} imply that
\begin{align*}
\rho (r, \omega)^{i+j}   \omega ({\partial  }  / {\partial r})^{i} ({\partial  } / {\partial \omega})^{j} V  ( r, \omega; y^3 e^{3 i \theta}, \varLambda   ) \Lt_{\shskip i, \shskip j}  1 ,
\end{align*}
\footnote{We need to use here the simple fact: 
	For a composite function $ f (\lambdaup (r, \omega) , \theta (r, \omega) )$ in general, its derivative $ \partial_r^{i} \partial_\omega^{j } f (\lambdaup (r, \omega) , \theta (r, \omega) )  $ is a linear combination of 
	$$ 
	\partial_{\lambdaup}^{k}  \partial_{\theta}^{l} f (\lambdaup (r, \omega) , \theta (r, \omega) ) \prod_{  \nu = 1 }^{k} \partial_r^{i_{ \ssnu   } } \partial_\omega^{j_{\ssnu} } \lambdaup (r, \omega)  
	\prod_{  \mu = 1 }^{l} \partial_r^{i^{\hskip 0.5 pt \ssprime}_{\ssmu} } \partial_\omega^{j^{\hskip 0.5 pt \ssprime}_{\ssmu} } \theta (r, \omega)    , \quad 
	\sum  i_{\nu    } + \sum  i^{\shskip \prime}_{\mu} = i, \  \sum  j_{\nu} + \sum  j^{\shskip \prime}_{\mu} = j.$$ This is a two-dimensional Fa\`a di Bruno's formula in a less precise form.} with 
\begin{align*}
V  (r, \omega; y^3 e^{3 i \theta}, \varLambda  ) =  y^3   I^{\snatural}  (\lambdaup, \psi )  \varvv  (  \lambdaup / y ),
\end{align*}
supported in the region $  y /  \varDelta  \sqrt{\varLambda} \leqslant \rho (r, \omega) \leqslant  \varDelta y / \sqrt{\varLambda}  $.

In view of  \eqref{9eq: tilde I, C} and \eqref{10eq: defn of w nat}, we need to compute the exponential factor  $ e  ( 2 \lambdaup \cos 3 \psi - 2 y^3 \cos 3 \theta / \varLambda ) $, in which
\begin{align*}
2 \lambdaup \cos 3 \psi = \frac {2 y^3 \cos (3\theta - 2 \theta (r, \omega) ) } { \varLambda  \rho (r, \omega)^2 }.
\end{align*} After reverting $ y^3 e^{3 i \theta}$ to $ y e^{i \theta} $, the proof is completed if  we can prove \eqref{6eq: rho and theta, natural} for $\rho^{\snatural} (r, \omega)$ and $\theta^{\shskip \snatural}  (r, \omega)$ given by 
\begin{align*}
{  \cos  \theta   } - \frac {  \cos ( \theta - 2 \theta (r, \omega) ) } {   \rho (r, \omega)^2 }   =
  { \rho^{\snatural} (r, \omega) }   \cos  \big(   \theta + \theta^{\shskip \snatural}  (r, \omega) \big). 
\end{align*}
We have
\begin{align*}
\rho^{\snatural} (r, \omega) = \sqrt {\bigg( 1 - \frac {\cos 2 \theta (r, \omega)} { \rho (r, \omega)^2 } \bigg)^2 + \bigg(\frac {\sin 2 \theta (r, \omega)} { \rho (r, \omega)^2 }\bigg)^2 },
\end{align*}
\begin{align*}
\cos \theta^{\shskip \snatural}  (r, \omega) = \frac {1} {\rho^{\snatural} (r, \omega)} - \frac {\cos 2 \theta (r, \omega)} {\rho^{\snatural} (r, \omega) \rho (r, \omega)^2 } , \quad  \sin \theta^{\snatural}  (r, \omega) =  \frac {\sin 2 \theta (r, \omega)} { \rho^{\snatural} (r, \omega) \rho (r, \omega)^2 }.
\end{align*}
By the definitions of $\rho (r, \omega)$ and $ \theta (r, \omega) $ in \eqref{3eq: rho (r, w)} and \eqref{3eq: theta (r, w)}, we have
\begin{align*}
1 - \frac {\cos 2 \theta (r, \omega)} { \rho (r, \omega)^2 }  &  = 1 - \frac { 4 ( \cosh^2 \hskip -1 pt r \cos^2 \omega - \sinh^2 \hskip -1 pt r \sin^2 \omega  ) } { (\cosh 2 r +   \cos 2 \omega)^2 }   \\
&  = 1 - \frac { 2 ( \cosh 2 r \cos 2 \omega + 1 ) } { (\cosh 2 r +   \cos 2 \omega)^2 }   \\
& = \frac {  \cosh^2 2 r + \cos^2 2 \omega -2 } { (\cosh 2 r +   \cos 2 \omega)^2 } \\
& =  \frac { \sinh^2 2 r - \sin^2 2 \omega } { (\cosh 2 r +   \cos 2 \omega)^2 },
\end{align*}
and similarly
\begin{align*}
\frac {\sin 2 \theta (r, \omega)} { \rho (r, \omega)^2 } = \frac { 2 \sinh 2 r \cdot \sin 2 \omega } { (\cosh 2 r +   \cos 2 \omega)^2 }.
\end{align*}
We conclude that
\begin{align*}
\rho^{  \snatural} (r, \omega) = \frac {\sinh^2 2 r + \sin^2 2 \omega} { (\cosh 2 r +   \cos 2 \omega)^2 } = \frac {\cosh 2 r -   \cos 2 \omega} {\cosh 2 r +   \cos 2 \omega},
\end{align*}
\begin{align*}
\cos \theta^{ \shskip \snatural}  (r, \omega) =   \frac { \sinh^2 2 r - \sin^2 2 \omega } {\sinh^2 2 r + \sin^2 2 \omega}, \quad  \sin \theta^{ \shskip \snatural}  (r, \omega) =   \frac { 2 \sinh 2 r \cdot \sin 2 \omega } {\sinh^2 2 r + \sin^2 2 \omega},
\end{align*}
and hence
\begin{align*}
\tan  ( \theta^{\shskip \snatural}  (r, \omega)/ 2  ) =    \frac  {\sin 2 \omega}  {\sinh 2 r} .
\end{align*}
\end{proof}

\begin{cor} \label{cor: Hankel small} We have  $\widetilde \varww (y, \varLambda)$, $\widetilde \varww (u, \varLambda) = O (T^{-A})$   for  $ |y|$, $ |u| \Gt |\varLambda|^{3/2} $, respectively.  
\end{cor}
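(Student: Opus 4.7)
The plan is to extract the corollary directly from Propositions \ref{prop: Hankel after stationary phase, R} and \ref{prop: Hankel after stationary phase, C} as a pure support argument: the cutoff functions $V_{\pm}$ and $V$ appearing in the integral representations turn out to be identically zero on the integration range when $|y|, |u| \Gt |\varLambda|^{3/2}$, leaving only the error term $O(T^{-A})$. The difference between $\widetilde{\varww}$ and $\widetilde{\varww}^{\snatural}$ is a unimodular factor, so bounds for the latter transfer to the former.

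First I would check that the hypothesis places us in the regime where the two propositions apply: since $|\varLambda| \Gt T^{2}$, we have $|\varLambda|^{3/2} \Gt T^{3} \gg T^{\sepsilon}$, hence $|y|, |u| \Gt |\varLambda|^{3/2}$ forces $|y|, |u| > T^{\sepsilon}$. Simultaneously, the ratio $|y|^{1/3}/|\varLambda|^{1/2}$ (respectively $|u|^{1/3}/|\varLambda|^{1/2}$) exceeds any fixed constant provided the implied constant in $|y| \Gt |\varLambda|^{3/2}$ is chosen large enough.

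Next, in the real case, I would invoke the support condition \eqref{6eq: support of V, R} for $V_{\pm}(r; y, \varLambda)$, which demands $|\mathrm{hyp}_{\pm}(r)| \geq \varDelta^{-1} |y|^{1/3}/|\varLambda|^{1/2}$. But in \eqref{10eq: Phi (y,...), R} the variable $r$ is confined to $[-M^{\sepsilon}/M, M^{\sepsilon}/M] \subset (-1,1)$, so $\cosh r \leq \cosh 1$ and $|\sinh r| \leq \sinh 1$ are bounded by absolute constants. As soon as the implied constant in $|y| \Gt |\varLambda|^{3/2}$ exceeds $(\cosh 1)^{3} \varDelta^{3}$, this lower bound cannot be met, so $V_{\pm} \equiv 0$ on the domain of integration. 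Consequently $\Phi_{\pm}(y, \varLambda) = 0$, and \eqref{10eq: tilde w = Phi, R} yields $\widetilde{\varww}^{\snatural}(y, \varLambda) = O(T^{-A})$, hence $\widetilde{\varww}(y, \varLambda) = O(T^{-A})$. The complex case is identical: from \eqref{3eq: rho (r, w)} one has $\rho(r, \omega) = \sqrt{(\cosh 2r + \cos 2\omega)/2} \leq \sqrt{(\cosh 2 + 1)/2}$ on the support, so the lower bound \eqref{6eq: support of V} fails and $V \equiv 0$, forcing $\Phi(u, \varLambda) = 0$ via \eqref{11eq: w tilde = Phi, C}.

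There is no real obstacle here; everything hard has already been done in Propositions \ref{prop: Hankel after stationary phase, R} and \ref{prop: Hankel after stationary phase, C}. The only minor point worth flagging is that one must trace through how the implied constant in ``$|y| \Gt |\varLambda|^{3/2}$'' relates to the fixed constant $\varDelta$ from Section \ref{sec: stationary phase, Hankel}; since $\varDelta$ is absolute, this amounts to nothing more than choosing the implied constant appropriately.
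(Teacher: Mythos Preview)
Your proposal is correct and follows exactly the paper's approach: the paper's proof simply reads ``It is clear from \eqref{6eq: support of V, R} and \eqref{6eq: support of V},'' which is precisely the support argument you spell out in detail. Your additional remarks about checking $|y|,|u|>T^{\sepsilon}$, bounding $\mathrm{hyp}_{\pm}(r)$ and $\rho(r,\omega)$ on $|r|\leqslant M^{\sepsilon}/M$, and passing from $\widetilde\varww^{\snatural}$ to $\widetilde\varww$ via the unimodular factor are all sound and simply flesh out what the paper leaves implicit.
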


\begin{proof}
It is clear from \eqref{6eq: support of V, R} and \eqref{6eq: support of V}. 
\end{proof}

\subsection{Analysis of the New Trigonometric-Hyperbolic Function} For later use, we record here some results concerning the trigonometric-hyperbolic function $\trh^{\snatural} (r, \omega)$ that arose in Proposition \ref{prop: Hankel after stationary phase, C}. By \eqref{6eq: rho and theta, natural}, we have
\begin{align}\label{11eq: r-derivative trh2}
\frac {\partial \rho^{\snatural} (r, \omega) } {\partial r} =  \frac {4 \sinh 2r \cos 2\omega   } {(\cosh 2 r +   \cos 2 \omega)^2}, \quad \frac {\partial \rho^{\snatural} (r, \omega) } {\partial \omega} =  \frac {4 \cosh 2r \sin 2\omega   } {(\cosh 2 r +   \cos 2 \omega)^2} 
\end{align}
\begin{align}\label{11eq: w-derivative trh2}
\frac {\partial \theta^{\snatural} (r, \omega) } {\partial r}  =   - \frac {4 \cosh 2r \sin 2\omega } {\cosh^2 2 r - \cos^2 2\omega}, \quad
\frac {\partial \theta^{\snatural} (r, \omega) } {\partial \omega}  =   \frac {4 \sinh 2r \cos 2\omega } {\cosh^2 2 r - \cos^2 2\omega}.
\end{align}
Note that  $ \sinh^2 2r + \sin^2 2\omega = \cosh^2 2 r - \cos^2 2\omega $.

\begin{lem}\label{lem: derivatives of rho natural}
Define $ \mathrm{trh}^{\snatural} (r, \omega; \phi) = \rho^{\snatural} ( r, \omega ) \cos (\phi  + \theta^{\snatural} (r, \omega) ) $. 	For $|r| < 1$, we have  
	\begin{align*}
 	\frac {\partial^{i +  j}      \mathrm{trh}^{\snatural} (r, \omega; \phi)   }  {  \partial r^{\shskip i} \partial \omega^{ j} }    \Lt_{\shskip i, \shskip j } %\frac {\cosh 2 r -   \cos 2 \omega} {\cosh 2 r +   \cos 2 \omega} 
	\frac {\rho^{\snatural} ( r, \omega )} { (\cosh^2 2 r - \cos^2 2 \omega)^{(i+j)/ 2} }.
	\end{align*}  
\end{lem}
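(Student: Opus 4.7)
The plan is to recognize $\trh^{\snatural}(r,\omega)$ as the boundary trace of an explicit holomorphic function of $z = r + i\omega$, which will reduce the bound on mixed partial derivatives of $\mathrm{trh}^{\snatural}(r,\omega;\phi)$ to a bound on the single complex derivative $(\tanh^2)^{(n)}(z)$ that then follows from an exact algebraic identity.

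First, the four Cauchy--Riemann-type identities displayed just before the lemma say exactly that $\log \rho^{\snatural}$ and $\theta^{\snatural}$ form a pair of conjugate harmonic functions in $(r,\omega)$. Consequently $\trh^{\snatural}(r,\omega) = \rho^{\snatural}(r,\omega)\,e^{i\theta^{\snatural}(r,\omega)}$ extends to a holomorphic function of $z = r+i\omega$. Matching values on the line $\omega = 0$, where $\rho^{\snatural}(r,0) = (\cosh 2r - 1)/(\cosh 2r + 1) = \tanh^2 r$ and $\theta^{\snatural}(r,0) = 0$, identifies this extension as
\[
\trh^{\snatural}(r,\omega) = \tanh^2(r+i\omega).
\]
Since $\mathrm{trh}^{\snatural}(r,\omega;\phi) = \Re\bigl[e^{i\phi}\trh^{\snatural}(r,\omega)\bigr]$ and $\tanh^2 z$ is holomorphic in $z$, I have $\partial_r^{\,i}\partial_\omega^{\,j}\trh^{\snatural}(r,\omega) = i^j (\tanh^2)^{(i+j)}(z)$. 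Using $|\tanh^2 z| = \rho^{\snatural}(r,\omega)$ and $|\sinh 2z|^2 = \cosh^2 2r - \cos^2 2\omega$, the lemma will reduce to proving
\[
\bigl|(\tanh^2)^{(n)}(z)\bigr| \Lt_n \frac{|\tanh^2 z|}{|\sinh 2z|^n}, \qquad n = i+j, \quad |r| < 1.
\]

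For this, my starting point is the clean logarithmic derivative
\[
\frac{d}{dz} \log \tanh^2 z = \frac{2}{\sinh z \cosh z} = \frac{4}{\sinh 2z},
\]
which gives $(\tanh^2)'(z) = 4\tanh^2(z)/\sinh 2z$. Setting $Q_n(z) := (\tanh^2)^{(n)}(z)\, \sinh^n 2z / \tanh^2 z$, so that $Q_0 = 1$, differentiation will produce the recursion
\[
Q_{n+1}(z) = \sinh 2z \cdot Q_n'(z) + (4 - 2n \cosh 2z)\, Q_n(z).
\]
Because $\sinh 2z \cdot (d/dz) [R(\cosh 2z)] = 2(\cosh^2 2z - 1)\, R'(\cosh 2z)$ is again polynomial in $\cosh 2z$, induction will show that $Q_n(z) = R_n(\cosh 2z)$ for a constant-coefficient polynomial $R_n$ of degree at most $n$; thus the algebraic identity $(\tanh^2)^{(n)}(z)\sinh^n 2z = R_n(\cosh 2z)\tanh^2 z$ holds as an identity of meromorphic functions.

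Finally, for $|r| < 1$ one has $|\cosh 2z|^2 = \cosh^2 2r - \sin^2 2\omega \le \cosh^2 2$, so $|R_n(\cosh 2z)| \Lt_n 1$ uniformly, and dividing the identity by $|\sinh 2z|^n$ gives the required bound away from $\sinh 2z = 0$ (and by continuity everywhere else). The main potential obstacle was bounding $|(\tanh^2)^{(n)}(z)|$ near the zeros $z = ik\pi$ of $\tanh^2$, where $\rho^{\snatural}$ is small and a naive Cauchy-integral estimate on a disk of radius comparable to $|\sinh 2z|$ would fail to recover the factor $\rho^{\snatural}$ in the numerator; the exact polynomial identity above sidesteps this via an algebraic cancellation in $R_n$ at $\cosh 2z = 1$.
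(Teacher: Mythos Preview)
Your proof is correct and genuinely different from the paper's. The paper proceeds by a direct real-variable induction: after computing $\partial_r \mathrm{trh}^{\snatural}$ and $\partial_\omega \mathrm{trh}^{\snatural}$ explicitly, it shows inductively that $\partial_r^i\partial_\omega^j \mathrm{trh}^{\snatural}(r,\omega;\phi)$ is a linear combination of terms of the form
\[
\frac{\sinh^{i_1}2r\,\cosh^{i_2}2r\,\sin^{j_1}2\omega\,\cos^{j_2}2\omega}{(\cosh 2r+\cos 2\omega)^{k_1+l+2}(\cosh 2r-\cos 2\omega)^{k_2+l}}\cdot\{\cos\psi\text{ or }\sin\psi\},
\]
tracks the exponent constraints under differentiation, and then bounds each term individually. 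Your route is more structural: the identification $\trh^{\snatural}(r,\omega)=\tanh^2(r+i\omega)$ collapses the two real variables to one complex variable, so all mixed partials reduce to $(\tanh^2)^{(n)}(z)$, and your recursion $R_{n+1}(u)=2(u^2-1)R_n'(u)+(4-2nu)R_n(u)$ yields the exact identity $(\tanh^2)^{(n)}(z)\sinh^n 2z=R_n(\cosh 2z)\tanh^2 z$, from which the bound is immediate. Your argument is shorter and explains \emph{why} the Cauchy--Riemann-type relations for $\log\rho^{\snatural}$ and $\theta^{\snatural}$ (which the paper records separately after the lemma) hold; the paper's approach, on the other hand, is entirely elementary and makes the combinatorial structure of each term visible, which could in principle be useful if one needed finer information than the stated bound.
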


\begin{proof} 
Set  $\psi = \phi  + \theta^{\snatural} (r, \omega)$. From \eqref{11eq: r-derivative trh2} and \eqref{11eq: w-derivative trh2} we deduce that
	\begin{align*}
\frac	{\partial \shskip \mathrm{trh}^{\snatural} (r, \omega; \phi) }   {\partial r} & =   \frac { 4 \sinh 2r \cos 2\omega  } {(\cosh 2 r +   \cos 2 \omega)^2} \cos \psi + \frac { 4 \cosh 2r \sin 2\omega } {(\cosh 2 r +   \cos 2 \omega)^2} \sin \psi , \\
\frac	{\partial \shskip \mathrm{trh}^{\snatural} (r, \omega; \phi) }   {\partial \omega} & =   \frac { 4 \cosh 2r \sin 2\omega } {(\cosh 2 r +   \cos 2 \omega)^2} \cos \psi - \frac { 4 \sinh 2r \cos 2\omega  } {(\cosh 2 r +   \cos 2 \omega)^2} \sin \psi .
	\end{align*}
	For $ i +  j \geqslant 1$, we may prove by induction that $ {\partial^{ i +  j} \mathrm{trh}^{\snatural} (r, \omega; \phi) } / \partial r^{\shskip i} \partial \omega^{  j} $ is a linear combination of 
	\begin{equation*}
	\frac { \sinh^{ i_1  } 2 r \cosh^{  i_2 } \hskip -1pt 2 r \sin^{ j_1 } 2 \omega \cos^{\shskip  j_2} \hskip -1pt 2\omega } {(\cosh 2 r +   \cos 2 \omega)^{k_1 + l + 2 } 
		(\cosh 2 r -   \cos 2 \omega)^{ k_2 + l}} \hskip -1pt \cdot \hskip -1pt \left\{\begin{aligned}
	&\hskip -2pt \cos \psi \\
	& \hskip -2pt \sin \psi
	\end{aligned} \right\} ,
	\end{equation*}
	with
	\begin{align*}
	& k_1 + k_2 + l \leqslant  i +  j - 1, \hskip 9pt 2 (k_1 + k_2 + l) \leqslant i+j+i_1 + j_1-2     ,   \\
	& i_1  + i_2  + j_1 + j_2 =  k_1 + k_2 + 2 l + 2, \quad  i_2 \leqslant  i + 1, \quad  j_2 \leqslant  j + 1.
	\end{align*}
	Such a fraction is bounded by
	\begin{align*}
	& \frac {    {  ( \sinh^2 2 r + \sin^2 2 \omega  )^{ (i_1 + j_1)/2 } }  } { (\cosh 2 r +   \cos 2 \omega)^{k_1 + l + 2 } 
		(\cosh 2 r -   \cos 2 \omega)^{ k_2 + l} } \\
	=  \ &    \frac {    1  } { (\cosh 2 r +   \cos 2 \omega)^{k_1 + l - (i_1 + j_1)/2  + 2 } 
		(\cosh 2 r -   \cos 2 \omega)^{ k_2 + l - (i_1 + j_1)/2} } \\
	\Lt \ & \frac {1} { (\cosh 2 r +   \cos 2 \omega)^{( i +  j)/2 - k_2 + 1 } 
		(\cosh 2 r -   \cos 2 \omega)^{ ( i +  j)/2 - k_1-1} } \\
	\Lt \ & \frac {1} { (\cosh 2 r +   \cos 2 \omega)^{( i +  j)/2 + 1 } 
		(\cosh 2 r -   \cos 2 \omega)^{ ( i +  j)/2 - 1} },
	\end{align*} 
	as desired. Note that $2 (k_1 + k_2 + l) \leqslant i+j+i_1 + j_1-2$ is used here for the first inequality. %Note that the identity $ \sinh^2 2 r + \sin^2 2 \omega = \cosh^2 2 r -   \cos^2 2 \omega $ is used for the first inequality.
\end{proof}

By  \eqref{11eq: r-derivative trh2} and \eqref{11eq: w-derivative trh2}, we have
\begin{align}\label{6eq: derivative of rho, theta, 1.1}
& \frac {\partial \log \rho^{\snatural} (r, \omega) } {\partial r} = \frac {\partial \theta^{\snatural} (r, \omega) } {\partial \omega} = \frac {4 \sinh 2r \cos 2\omega   } {\sinh^2 2 r + \sin^2 2\omega}, \quad  \\
\label{6eq: derivative of rho, theta, 1.2}
& \frac {\partial \theta^{\snatural} (r, \omega) } {\partial r} = - \frac {\partial \log \rho^{\snatural} (r, \omega) } {\partial \omega} = -  \frac {4 \cosh 2r \sin 2\omega } {\sinh^2 2 r + \sin^2 2\omega}.
\end{align}
Similar to Lemma \ref{lem: derivatives of rho natural}, one can establish the following lemma.

\begin{lem}\label{lem: derivatives of log rho, theta}
	For $|r| < 1$, we have  
	\begin{align*}
	\frac {\partial^{i +  j}  \log \rho^{\snatural} (r, \omega) }  {   \partial r^{\shskip i} \partial \omega^{\shskip j} }, \, \frac {\partial^{i +  j}    \theta^{\snatural} (r, \omega) }  {   \partial r^{\shskip i} \partial \omega^{\shskip j} }    \Lt_{\shskip  i, \shskip  j } 
	\frac 1 { (\sinh^2 2 r + \sin^2 2 \omega)^{( i+ j)/ 2} }
	\end{align*}  
	for $ i +  j \geqslant 1$.
\end{lem}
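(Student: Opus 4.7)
The plan is to mirror the inductive bookkeeping already carried out in Lemma \ref{lem: derivatives of rho natural}. First I would observe the identity
\[
\cosh^2 2r - \cos^2 2\omega = (1+\sinh^2 2r) - (1-\sin^2 2\omega) = \sinh^2 2r + \sin^2 2\omega,
\]
so the denominator appearing in \eqref{6eq: derivative of rho, theta, 1.1}--\eqref{6eq: derivative of rho, theta, 1.2} is the same as in the target bound. For $|r|<1$ the factors $\cosh 2r$ and $\cos 2\omega$ are $O(1)$, while $|\sinh 2r|$ and $|\sin 2\omega|$ are each bounded by $(\sinh^2 2r + \sin^2 2\omega)^{1/2}$; these elementary inequalities will do all the work.

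Next I would argue by induction on $i+j$ that each of $\partial_r^{\shskip i}\partial_\omega^{\shskip j}\log\rho^{\snatural}$ and $\partial_r^{\shskip i}\partial_\omega^{\shskip j}\theta^{\snatural}$ (for $i+j\geqslant 1$) is a finite linear combination of expressions of the form
\[
\frac{\sinh^{i_1}\!2r\,\cosh^{i_2}\!2r\,\sin^{j_1}\!2\omega\,\cos^{j_2}\!2\omega}{(\sinh^2 2r + \sin^2 2\omega)^{k}}
\]
with
\[
i_1+i_2+j_1+j_2 = k+1,\qquad 2k \leqslant (i+j)+i_1+j_1,\qquad k \leqslant i+j.
\]
The base case $i+j=1$ is exactly \eqref{6eq: derivative of rho, theta, 1.1} and \eqref{6eq: derivative of rho, theta, 1.2} (with $k=1$, $i_1+j_1=1$). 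For the inductive step I would differentiate the generic term above in $r$ or $\omega$: the quotient rule produces finitely many new terms in which the numerator exponents shift by at most one while the denominator exponent $k$ increases by at most one, and one checks directly that the three arithmetic constraints are preserved.

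Given the structural form, the desired bound follows from
\[
|\sinh^{i_1}\!2r\,\sin^{j_1}\!2\omega| \leqslant (\sinh^2 2r + \sin^2 2\omega)^{(i_1+j_1)/2},\qquad |\cosh^{i_2}\!2r\,\cos^{j_2}\!2\omega|\Lt_{i_2,j_2} 1,
\]
which gives each summand the bound
\[
(\sinh^2 2r + \sin^2 2\omega)^{(i_1+j_1)/2 - k} \leqslant (\sinh^2 2r + \sin^2 2\omega)^{-(i+j)/2}
\]
by the constraint $2k - (i_1+j_1) \geqslant i+j$. I do not anticipate a serious obstacle: the only slightly delicate point is verifying that the three exponent constraints on $(i_1,i_2,j_1,j_2,k)$ are indeed stable under a single $r$- or $\omega$-differentiation, which amounts to checking each of the three or four cases created by the quotient rule (numerator differentiation in $\sinh 2r$, $\cosh 2r$, $\sin 2\omega$, or $\cos 2\omega$, and the denominator contribution). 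This is the same bookkeeping that produced Lemma \ref{lem: derivatives of rho natural}.
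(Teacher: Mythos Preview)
Your approach is exactly what the paper intends: it simply says ``Similar to Lemma~\ref{lem: derivatives of rho natural}, one can establish the following lemma,'' and the inductive bookkeeping you outline is precisely that. The key constraint $2k \leqslant (i+j)+i_1+j_1$ is indeed preserved under each quotient-rule case (you should check the denominator case: $\partial_r D^{-k}$ produces $\sinh 2r\cosh 2r\cdot D^{-k-1}$, so $k\to k+1$, $i_1\to i_1+1$, $i_2\to i_2+1$, and the constraint becomes $2(k+1)\leqslant (i+j+1)+(i_1+1)+j_1$, which reduces to the old one).

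Two minor slips to fix. First, the degree relation $i_1+i_2+j_1+j_2=k+1$ is wrong: differentiating the denominator bumps the numerator degree by $2$ and $k$ by $1$, so the correct homogeneity is $i_1+i_2+j_1+j_2=2k$ (check the base case: $2=2\cdot 1$). This relation is never used in your bound, so the error is harmless, but it should be corrected or dropped. Second, in the last displayed inequality you invoke ``the constraint $2k-(i_1+j_1)\geqslant i+j$,'' which is the \emph{reverse} of the constraint you stated; what you need (and have) is $2k\leqslant (i+j)+i_1+j_1$, and since $D:=\sinh^2 2r+\sin^2 2\omega$ is bounded above for $|r|<1$, this gives $D^{(i_1+j_1)/2-k}\Lt D^{-(i+j)/2}$ (with $\Lt$, not $\leqslant$, to absorb the bounded region $D>1$).
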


Furthermore, it follows from \eqref{6eq: derivative of rho, theta, 1.1} and \eqref{6eq: derivative of rho, theta, 1.2} that
\begin{equation}\label{6eq: derivative of rho, theta, 2.1}
\begin{split}
   \frac {\partial^2 \log \rho^{\shskip \snatural}   } {\partial r^2}   = \frac {\partial^2 \theta^{\shskip \snatural} } {\partial r \partial \omega} &  = - \frac {\partial^2 \log \rho^{\shskip \snatural}   } {\partial \omega^2}  = - \frac {8 \cosh 2r \cos 2\omega   (\sinh^2 2 r - \sin^2 2 \omega)} { (\sinh^2 2 r + \sin^2 2\omega )^2},  
\end{split}
\end{equation}
\begin{equation}
\label{6eq: derivative of rho, theta, 2.2}
\begin{split}
\frac {\partial^2 \theta^{\shskip \snatural}   } {\partial r^2} = - \frac {\partial^2 \log \rho^{\shskip \snatural}   } {\partial r \partial \omega} & = - \frac {\partial^2 \theta^{\shskip \snatural}  } {\partial \omega^2}  = \frac {8 \sinh 2r \sin 2\omega   (\cosh^2 2 r + \cos^2 2 \omega)} { (\sinh^2 2 r + \sin^2 2\omega )^2}.
\end{split}
\end{equation}

\delete{ 
\begin{cor} \label{lem: uniform estimates, Hankel} Let $|\varLambda| \Gt T^2$. Define $\widetilde \varww (y, \varLambda) $ and $\widetilde \varww (u, \varLambda) $  as in {\rm\eqref{11eq: defn of w (x, Lmabda), R}}, {\rm\eqref{11eq: defn of w (z, Lmabda), C}}, and {\rm\eqref{11eq: Hankel}}. 
		
		{\rm(1)} When $F$ is real,   we have uniformly
		\begin{equation}\label{11eq: unform bound for Hankel, R}
		\widetilde \varww (y, \varLambda) \Lt_{A', \shskip   \vepsilon } \frac {  T^{1+\vepsilon} } {\sqrt{|y|}}, \end{equation} 
		and $\widetilde \varww (y, \varLambda) = O (T^{-A})$  for any $A \geqslant 0$ if $ |y| \Gt |\varLambda|^{3/2} $.

		{\rm(2)}	When  $F $ is complex,  we have uniformly
		\begin{equation}\label{11eq: unform bound for Hankel, C}
		\widetilde \varww (u, \varLambda) \Lt_{A', \shskip   \vepsilon } \frac {  T^{2+\vepsilon} } { {|u|}}, 		\end{equation}  
	and $\widetilde \varww (u, \varLambda) = O (T^{-2A})$   for any $A \geqslant 0$ if $ |u| \Gt |\varLambda|^{3/2} $. 
\end{cor}

\begin{proof}
	When $ |y|, |u| \Gt 1 $, the estimates follow trivially from Proposition \ref{prop: Hankel after stationary phase, R} and \ref{prop: Hankel after stationary phase, C}. When $ |y|, |u| \Lt 1 $, we simply use Lemma \ref{prop: asymptotic J}  to bound the Bessel kernels $J_{\pi} (xy) \Lt 1 / \hskip -1 pt \sqrt{|y|} $, $J_{\pi} (zu) \Lt 1/  {|u|} $, and Corollary \ref{cor: bound for H < T} to bound the Bessel integrals $ \SDH (\varLambda x) \Lt T^{1+\vepsilon} $, $ \SDH (\varLambda z) \Lt T^{2+\vepsilon} $  for $|x|, |z| \in [1, \varDelta]$. 
\end{proof}
}

\subsection*{Notation} For simplicity of exposition, we introduce some non-standard notation as below. %This is necessary only in the complex case, while in the real case one may let $\varDelta = 2$ and use the notation  ``$\asymp$" instead.

\begin{notation}\label{notation: sim}
	For $ \varDelta > 1$ and $X > 0$, let $ x \sim_{\varDelta} X $ stand for $ x \in [X, \varDelta X] $.  
\end{notation}

\begin{notation}\label{notation: approx}
	Let $ \varDelta > 1$.  
	We write $X \approx_{\varDelta} Y$ if $ 1/c_\varDelta \leqslant X /Y \leqslant c_{\varDelta}$ for some $c_{\varDelta} > 1$ such that $c_{\varDelta} \ra 1$ as $\varDelta \ra 1$. We write $X \llcurly_{\shskip \varDelta} Y$ if $|X| \leqslant \delta_{\varDelta}  Y$ for some $\delta_{\varDelta} > 0 $ such that $\delta_{\varDelta} \ra 0$ as $\varDelta \ra 1$.  
\end{notation}

\subsection{Preliminary Analysis of the $\Phi$-integrals}\label{sec: analysis of Phi-interals} For convenience of the further analysis by the Mellin technique in \S \ref{sec: Hankel, II}, we introduce certain partitions of the $\Phi$-integrals. 

For the real case, the partition for $\Phi_{+} (y, \varLambda)$ is hidden in the proof of \cite[Lemma 8.2]{Young-Cubic}, but the case of $\Phi^{+}_0 (y/\varLambda)$ seems to be missing there. 

\begin{cor}\label{cor: Hankel, R}
	Let $\varDelta > 1$ be fixed. Let $A \geqslant 1$. Let  $ |y| \sim_{\varDelta} Y$. Suppose that  $Y \Gt T^{\vepsilon} $  and $|\varLambda| \Gt T^2$. Define $\Phi_{\pm} (y, \varLambda)$ by {\rm\eqref{10eq: Phi (y,...), R}} and {\rm\eqref{11eq: hyp (r)}}. We have
	\begin{align}\label{11eq: Phi + = Phi+}
	\Phi_{+} (y, \varLambda) = \Phi^{+}  (y/\varLambda)  + O  (T^{-A} ), 
	\end{align}
	for $ Y^{2/3} \approx_{\varDelta}  |\varLambda|   $, where $\Phi^{+}  (x)$ is supported on $|x| \Gt M^{1-\vepsilon} T$ and 
	\begin{align}
		\Phi^{+}  (x) = \left\{ \begin{aligned}
			&\Phi^{+}_1 (x), & & \ \text{ if } M^{1-\vepsilon} T   \Lt |x| \Lt  T^{2-\vepsilon},  \\
			&\Phi^{+}_0 (x), & & \ \text{ if } |x| \Gt T^{2-\vepsilon}, 
		\end{aligned}\right. 
	\end{align} in which $\Phi^{+}_1 (x)$ is given by
	\begin{equation}\label{11eq: Phi+1, R}
	\Phi^{+}_1 (x) =  \int 
	e  ({T r} /\pi   -  x \tanh^2 r   )   V^+ ( r) \nd r,
	\end{equation} 
		with $ V^+ ( r) $  supported in $ r \approx_{\varDelta} T     / 2 \pi x $,  satisfying $ r^i (\nd /\nd r)^{i} V^+ ( r) \Lt_{i } 1 $, and  $\Phi^{+}_0 (x)$  satisfies
	\begin{align}\label{11eq: E(x), R}
	x^{i}   {(\nd/\nd x )^i \Phi^{+}_0(x) }   \Lt_{i}  {\textstyle T^{(i + 1) \vepsilon }  / \sqrt{|x|} } ,
	\end{align}  
while
\begin{align}\label{11eq: Phi - = Phi-}
\Phi_{-} (y, \varLambda) = \Phi^{-}  (y/\varLambda), 
\end{align}  
for $ Y^{2/3} \Lt |\varLambda| / M^{2-\vepsilon} $, with 
	\begin{align}\label{11eq: defn of Phi-, R}
	\Phi^{-}  (x) = \int  e  ({T r} /\pi  -  x \coth^2 r )  V^-  ( r) \nd r,
	\end{align} 
	where $V^-  ( r)$ is supported in  $ |r| \approx_{\varDelta} Y^{1/3} / |\varLambda|^{1/2} $,   satisfying $ r^i (\nd /\nd r)^{i} V^-  (r) \Lt_{ i } \log^{i} T $.  
\end{cor}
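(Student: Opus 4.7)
The plan is to re-process the integral representations \eqref{10eq: Phi (y,...), R} of $\Phi_\pm(y,\varLambda)$ provided by Proposition~\ref{prop: Hankel after stationary phase, R}, absorbing the Schwartz factor $g(Mr)$ and the amplitude $V_\pm(r;y,\varLambda)$ (whose support and derivative bounds are \eqref{6eq: support of V, R} and \eqref{6eq: bounds for V, R}) into new amplitudes $V^\pm$, and, in the $+$ case, localizing to the stationary-phase region by means of a smooth cutoff.

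For $\Phi_-$, the support condition $|\sinh r|\sim_{\varDelta}Y^{1/3}/|\varLambda|^{1/2}$ together with the hypothesis $Y^{2/3}\Lt|\varLambda|/M^{2-\sepsilon}$ forces $|r|\approx_{\varDelta}Y^{1/3}/|\varLambda|^{1/2}\ll M^{\sepsilon/2-1}$, so $|Mr|\ll M^{\sepsilon/2}$ is small and
\[
r^{i}(\nd/\nd r)^{i}g(Mr)=(Mr)^{i}g^{(i)}(Mr)\Lt_{i}1.
\]
Setting $V^{-}(r):=g(Mr)\,V_-(r;y,\varLambda)$, the Leibniz rule combined with \eqref{6eq: bounds for V, R} yields $r^{i}(\nd/\nd r)^{i}V^{-}\Lt_{i}1$, and the identity $\mathrm{hyp}^{\snatural}_-(r)^{2}=\coth^{2}r$ with $x=y/\varLambda$ rewrites \eqref{10eq: Phi (y,...), R} as \eqref{11eq: defn of Phi-, R}, yielding \eqref{11eq: Phi - = Phi-}.

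For $\Phi_+$, under $Y^{2/3}\approx_{\varDelta}|\varLambda|$ one has $\cosh r\approx_{\varDelta}1$, so $V_+(r;y,\varLambda)$ is supported in a compact interval $[-c_{\varDelta},c_{\varDelta}]$ independent of $T,M$. The phase $\varphi(r)=Tr/\pi-x\tanh^{2}r$ (with $x=y/\varLambda$) has a unique stationary point $r_{0}$ determined by $T/(2\pi)=x\sinh r_{0}/\cosh^{3}r_{0}$---so $r_{0}\approx T/(2\pi x)$ for small $r_{0}$---with second derivative $\varphi''(r_{0})\approx-2x$. We distinguish three sub-regimes according to the size of $|x|$. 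If $|x|\Lt TM^{1-\sepsilon}$, then $|\varphi'(r)|\Gt T$ throughout the effective support and repeated integration by parts via Lemma~\ref{lem: staionary phase, dim 1, 2} yields an $O(T^{-A})$ error. If $TM^{1-\sepsilon}\Lt|x|\Lt T^{2-\sepsilon}$, then $r_{0}\in[T^{-1+\sepsilon},M^{\sepsilon-1}]$ is a bona-fide stationary point, and we insert a smooth cutoff $\chi(r/r_{0})$ supported on $r\approx_{\varDelta}r_{0}$---its complement again contributes $O(T^{-A})$ by integration by parts---setting $V^{+}(r):=\chi(r/r_{0})g(Mr)V_+(r;y,\varLambda)$ to obtain $\Phi^{+}_{1}(x)$ with $r^{i}(\nd/\nd r)^{i}V^{+}\Lt_{i}1$. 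If $|x|\Gt T^{2-\sepsilon}$, then $r_{0}\ll T^{-1+\sepsilon}$ lies in a small neighborhood of $0$ on which $\tanh^{2}r=r^{2}+O(r^{4})$; completing the square shows the remaining integral is essentially Gaussian, and Lemma~\ref{lem: stationary phase estimates, dim 1} with $\lambdaup\asymp|x|$ defines $\Phi^{+}_{0}(x)$ of size $T^{\sepsilon}/\sqrt{|x|}$.

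The main technical obstacle will be verifying the derivative bound \eqref{11eq: E(x), R} for $\Phi^{+}_{0}$: differentiating in $x$ brings down factors of $\tanh^{2}r$, which on the effective support $r\approx T/x$ contribute $\asymp(T/x)^{2}$, while each differentiation of the stationary-phase output of Lemma~\ref{lem: stationary phase estimates, dim 1} gains an extra $\lambdaup^{-1}\asymp|x|^{-1}$ in the internal parameter. A careful chain-rule bookkeeping between the external variable $x$ and the internal $\lambdaup$, together with control of the $O(r^{4})$ remainder after completing the square, will then be needed to collect these into precisely the factor $x^{-i}\cdot T^{(i+1)\sepsilon}/\sqrt{|x|}$ claimed in \eqref{11eq: E(x), R}.
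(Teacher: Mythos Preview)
Your treatment of $\Phi_-$ and of the middle regime giving $\Phi^+_1$ is correct and matches the paper. The gap is in your handling of $\Phi^+_0$ for $|x| \Gt T^{2-\sepsilon}$.

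Lemma~\ref{lem: stationary phase estimates, dim 1} requires the phase to be exactly $\lambdaup f(r)$ with $f$ independent of $\lambdaup$. The phase $Tr/\pi - x\tanh^2 r$ is not of this form with $\lambdaup = x$, and ``completing the square'' does not fix it: the quadratic center $r_0 \approx T/2\pi x$ depends on $x$, so after any such rewriting $f$ still depends on $\lambdaup$. If instead you absorb $e(Tr/\pi)$ into the amplitude and set $f(r) = -\tanh^2 r$, then the amplitude picks up $r$-derivative scale $X = T$, and the hypothesis $X \leqslant \sqrt{\lambdaup}$ fails on the range $T^{2-\sepsilon} \Lt |x| < T^2$. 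Your final paragraph also conflates two mechanisms---differentiating the integrand (which brings down $\tanh^2 r$) versus the $\lambdaup$-derivative bound built into the lemma---and these do not combine in the way you suggest.

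The paper's route avoids Lemma~\ref{lem: stationary phase estimates, dim 1} here entirely. It first truncates to $|r| \leqslant T^{\sepsilon}/T$ via Lemma~\ref{lem: staionary phase, dim 1, 2} and absorbs $e(Tr/\pi)$ into the weight (non-oscillatory on this short interval). After differentiating $i$ times in $x$, which brings down $(-\tanh^2 r)^i$, it performs a \emph{second} truncation to $|r| \leqslant T^{\sepsilon}/\sqrt{|x|}$, again by Lemma~\ref{lem: staionary phase, dim 1, 2} with $R = \sqrt{|x|}\,T^{\sepsilon}$ and $P = T^{\sepsilon}/\sqrt{|x|}$. On this shorter interval $(\tanh^2 r)^i \leqslant T^{2i\sepsilon}/|x|^i$, and bounding the remaining integral by its length (or by the second-derivative test, Lemma~\ref{lem: 2nd derivative test, dim 1}) yields \eqref{11eq: E(x), R}. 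This post-differentiation truncation is the step you are missing.
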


\begin{proof}
%	These partitions are implicit in the proof of \cite[Lemma 8.2]{Young-Cubic}. 
  The case of $\Phi_{-} (y, \varLambda)$ is obvious.  For $\Phi_{+} (y, \varLambda)$ it requires some discussions. The nature of the integral $\Phi_{+} (y, \varLambda)$ changes when $ x = y / \varLambda $ moves beyond $ T^{2-\vepsilon} $.  For $|x| \Lt T^{2-\vepsilon}$ or $|x| \Gt T^{2-\vepsilon}$, respectively,   the integral $\Phi_{+} (y, \varLambda)$ will be turned into   $\Phi^{+}_1 (x)$ or $\Phi^{+}_0 (x)$  by smoothly truncating the $r$-integration near $T/2\pi x$ or at $ \pm T^{\vepsilon} / T $.

Let $x = y / \varLambda$. The phase function $T r / \pi - x \tanh^2 r  $ has a unique stationary point $r_0 \approx_{\varDelta} T   / 2 \pi x $. For $ |r_0| \leqslant M^{\vepsilon} / M $, it is necessary that $|x| \Gt M^{1-\vepsilon} T$, and otherwise $\Phi_{+} (y, \varLambda)$ is negligible by Lemma \ref{lem: staionary phase, dim 1, 2} with $Z = |x|$, $Q = 1$, $R=T$, and $P = 1/M  $. When $ |x| \Lt T^{2-\vepsilon} $, we have $  |x| / T^{2} \Lt 1/T^{\vepsilon}$, and hence Lemma \ref{lem: staionary phase, dim 1, 2} (now $P = T/|x|  $)  implies that only a negligibly small error is lost if we restrict the integration on the interval $r \approx_{\varDelta} T     / 2 \pi x$ via a smooth partition of unity, giving $\Phi_1^+(x)$. 

Next assume $|x| \Gt T^{2-\vepsilon}$ so that $|r_0| < T^{\vepsilon} / \varDelta T$. On applying Lemma \ref{lem: staionary phase, dim 1, 2} again with $R = |x| /T^{1-\vepsilon} $ and $P = T^{\vepsilon}/T$, we are left to consider the integral $\Phi^{+}_0 (x)$ restricted on $|r| \leqslant T^{\vepsilon} / T$. The factor $e ( Tr/\pi )$ is no longer oscillatory and may be absorbed into the weight function.  To prove the  estimates in \eqref{11eq: E(x), R} for the derivatives of ${  \Phi^{+}_0 (x) }$, we differentiate the integral and then confine the integration to $ |r| \leqslant T^{\vepsilon}/\sqrt{|x|} $; Lemma \ref{lem: staionary phase, dim 1, 2} is used for the last time with $R = \hskip -1pt \sqrt{|x|} T^{\vepsilon}$ and $P =  T^{\vepsilon} / \hskip -1pt \sqrt{|x|} $. Alternatively, one can also use Lemma \ref{lem: 2nd derivative test, dim 1}.

Note that the fact that $V_{+} ( r; y, \varLambda)$ has almost bounded derivatives (see \eqref{6eq: bounds for V, R}) is  used implicitly  to determine the $P$'s.
\end{proof}

%For the complex case, we partition the $\Phi$-integral $\Phi  (u, \varLambda)$ with respect to $\rho (r, \omega)  $, instead of $r$ and $\omega$ individually. 

\begin{cor}\label{cor: Hankel, C}
	Let $\varDelta > 1$ be fixed.  Let $A \geqslant 1$. Let  $|u| \sim_{\varDelta} Y$. Suppose that  $Y \Gt T^{\vepsilon} $  and $|\varLambda| \Gt T^2$. Define $\Phi  (u, \varLambda)$ by {\rm\eqref{6eq: Phi (u,...)}} and {\rm\eqref{6eq: rho and theta, natural}}. We have
	\begin{equation}\label{11eq: partition of Phi, C}
	\begin{split}
	\Phi  (u, \varLambda) =   & \mathop{ \sum_{ T^{\vepsilon  } / T < \rho \shskip < 1/\hskip -1 pt \sqrt{2 } \varDelta } }_{\rho \shskip  \approx_{\varDelta} T/2\pi |\varLambda|^{1/2} } \Phi_{\rho}^{+} (u/\varLambda) + \Phi^{+}_0 (u/\varLambda) \\
	& + \mathop{ \sum_{  \rho \shskip < 1/\hskip -1 pt \sqrt{2 } \varDelta } }_{\rho \shskip  \approx_{\varDelta} Y^{1/3}/|\varLambda|^{1/2} } \Phi^{-}_\rho (u/\varLambda) +  \Phi^{\flat} (u/\varLambda)%	& + \sum_{ 1/S < \rho \shskip < \varDelta - 1 }  \Phi^{*}_{\rho} (u/\varLambda) + E^{*} (u / \varLambda) 
+  O  (T^{-A} ), 
	\end{split}
	\end{equation}  
	where  $\rho = \varDelta^{- k / 2}$ for integers $k$, $ \Phi^{\flat} (u/\varLambda) $ exists only when $ |\varLambda | \approx_{\shskip \varDelta} T^2 / 2 \pi^2 $ and $ Y \approx_{\shskip \varDelta} T^{3} /8\pi^3 $, 
	 $\Phi_{\rho}^{+} (z)$, $ \Phi^{-}_\rho (z) $, and $ \Phi^{\flat} (z) $ are integrals of the form
	\begin{align}\label{11eq: Phi integrals}
	  \viint e ({2T r} /\pi - 2 \Re  (z \shskip \trh^{\snatural} (r, \omega) )     )  g  ( M r) V ( r, \omega ) \nd r \shskip \nd \omega, 
	\end{align}
	with weight functions $V =  V_{\rho}^{+} $, $V^{-}_\rho$, and $V^{\flat}$ supported in 
	\begin{align}\label{11eq: support V, +}
  \cos^2 \omega \approx_{\shskip \varDelta} Y^{2/3} /|\varLambda| , \qquad \sqrt{r^2 + \sin^2 \omega} \sim_{\shskip \varDelta} \rho, 
	\end{align}
	\begin{align}\label{11eq: support V, -}
 	\sqrt{r^2 + \cos^2 \omega} \sim_{\shskip \varDelta} \rho , 
	\end{align}
	and
	\begin{align}\label{11eq: support V, flat}
 |\cos 2 \omega| \leqslant  \varDelta - 1,
	\end{align} respectively, satisfying
	\begin{align}\label{11eq: estimates for V, C}
	\frac {\partial^{i +  j}  V^{\pm}_\rho (r, \omega) }  {   \partial r^{\shskip i} \partial \omega^{\shskip j} } \Lt_{   i, \shskip j } \frac {\log^{i+j} T} {\rho^{ i + j}}, \qquad  \frac {\partial^{i +  j} V^{\flat} (r, \omega) }  {   \partial r^{\shskip i} \partial \omega^{\shskip j} } \Lt_{   i, \shskip j } \log^{i+j} T, 
	\end{align}
	and $\Phi^{+}_0 (z)$ has bounds
	\begin{align}\label{11eq: bounds for E+, C}
x^{\shskip i}	\frac {\partial^{i + j}  \Phi^{+}_0 (x e^{i\phi}) }  {   \partial x^{\shskip i} \partial \phi^{\shskip j} } \Lt_{   i, \shskip j} \frac { T^{(i+j+1)\vepsilon} } {\max \big\{ T^{2 - \vepsilon} ,  {x} \big\} } . 
	\end{align}
\end{cor}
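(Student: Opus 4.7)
My plan is to apply successive smooth partitions of unity to the double integral defining $\Phi(u,\varLambda)$ in Proposition~\ref{prop: Hankel after stationary phase, C}, and then control each resulting piece by one of the two-dimensional stationary phase lemmas (Lemma~\ref{lem: staionary phase, dim 2, 2} or Lemma~\ref{lem: 2nd derivative test, dim 2}). With $z = u/\varLambda = xe^{i\phi}$, the phase is
$$f(r,\omega) = 2Tr/\pi - 2\Re\bigl(z \shskip \trh^{\snatural}(r,\omega)\bigr),$$
and the crucial structural observation is that $\trh^{\snatural}(r,\omega) = \tanh^2(r + i\omega) =: H(r+i\omega)$ is holomorphic in $s = r + i\omega$, so that the two stationary equations $\partial_r f = \partial_\omega f = 0$ collapse to the single complex equation $zH'(s_0) = T/\pi$, with $H'(s) = 2\tanh s / \cosh^2 s$.

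To extract the $\Phi_\rho^-$ pieces I would introduce a dyadic partition of unity in the distance $\sqrt{r^2 + \cos^2\omega}$ from $(0,\pi/2)$, at the scales $\rho = \varDelta^{-k/2}$. Since the weight in Proposition~\ref{prop: Hankel after stationary phase, C} is supported where $\rho(r,\omega) \sim Y^{1/3}/|\varLambda|^{1/2}$, only finitely many dyadic annuli contribute. Within each annulus Lemma~\ref{lem: derivatives of log rho, theta} supplies derivative bounds of order $\rho^{-(i+j)}$ for $\log\rho^{\snatural}$ and $\theta^{\snatural}$; differentiating under the integral sign and applying Lemma~\ref{lem: staionary phase, dim 2, 2} (with $Q = \varPhi = \rho$) then yields the asserted estimates on $V_\rho^-$.

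The $\Phi_\rho^+$ and $\Phi_0^+$ pieces come from the regime where the approximate stationary point in $\omega$ lies near $\omega = 0$ or $\omega = \pi$, i.e.\ on the branch where $\trh^{\snatural}\approx\tanh^2 r$. Mimicking the real analysis of Corollary~\ref{cor: Hankel, R}, the residual $r$-integral then has stationary point $r_0 \approx T/(2\pi x)$ with $x = |u/\varLambda|$, and a secondary dyadic partition in the transverse variable $\sqrt{r^2+\sin^2\omega}$ produces $\Phi_\rho^+$ at the scale $\rho \approx T/(2\pi|\varLambda|^{1/2})$. The residual central piece $|r_0|< T^{\sepsilon}/T$ gives $\Phi_0^+$, and its derivative estimates follow from iterated integration by parts in $r$ together with repeated differentiation in the angular variable $\phi$ of $u$, using the identity $\partial_\phi(z\trh^{\snatural}) = iz\trh^{\snatural}$.

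The flat piece $\Phi^\flat$ exists only when $zH'(s_0) = T/\pi$ admits a solution $s_0$ bounded away both from $0$ and from the poles of $H'$: substituting $s_0 = i\pi/4$ yields $|H'(s_0)| = 4$ and hence $|z| = T/(4\pi)$, which combined with the support condition $\sqrt{r^2+\cos^2\omega}\sim Y^{1/3}/|\varLambda|^{1/2}$ (evaluated near $r \approx 0$, $\omega \approx \pi/4$) forces the stated sizes $|\varLambda| \approx T^2/(2\pi^2)$ and $Y \approx T^3/(8\pi^3)$. The main technical obstacle, as the introduction foreshadows, will be the two-dimensional stationary phase analysis of $\Phi^\flat$: Lemma~\ref{lem: 2nd derivative test, dim 2} becomes applicable because its hypothesis $\partial_r^2 f = -\partial_\omega^2 f$ is exactly the Cauchy--Riemann-type identity for $\log\rho^{\snatural}$ and $\theta^{\snatural}$ listed in Section~\ref{sec: Hankel 1} (equivalently, $\log\trh^{\snatural}$ is holomorphic), while the required lower bound of order $T$ on $\max\{|\partial_r^2 f|,|\partial_r\partial_\omega f|\}$ follows from the same identities together with the support restriction $|\cos 2\omega|\leq\varDelta-1$ built into the definition of $\Phi^\flat$.
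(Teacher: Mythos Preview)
Your overall plan matches the paper's proof: first split the $\omega$-range, then do $\varDelta$-adic partitions in the appropriate radial variable and apply the two-dimensional partial-integration lemma (Lemma~\ref{lem: staionary phase, dim 2, 2}) together with Lemma~\ref{lem: derivatives of rho natural}. Two points of divergence are worth noting.

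First, the paper begins with an explicit three-way smooth partition of $[0,\pi]$ in $\omega$ alone, into $\sin\omega\le 1/\sqrt{2\varDelta}$, $|\cos\omega|\le 1/\sqrt{2\varDelta}$, and $|\cos 2\omega|\le \varDelta-1$. Only after this coarse split does it perform the dyadic partitions (in $\sqrt{r^2+\sin^2\omega}$ for the first region, in $\sqrt{r^2+\cos^2\omega}$ for the second). Your description runs the steps together; without the initial $\omega$-cut, the dyadic annuli for $\Phi^-$ and $\Phi^+$ would intermingle near $|\cos 2\omega|\approx 0$, which is exactly where $\Phi^\flat$ lives. Stating the coarse $\omega$-partition first makes the three cases disjoint and the bookkeeping clean.

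Second, and more substantively, you over-commit on $\Phi^\flat$. In this corollary nothing deep is required of $\Phi^\flat$: one only has to show that the third integral is negligible unless $x\approx_{\varDelta} T/4\pi$, and this follows from Lemma~\ref{lem: staionary phase, dim 2, 2} with $Q=\varPhi=1$, $P=1/M$, $Z=x$, $R=T+x$ (using the lower bound $|f'|\Gt |T/\pi - x\sqrt{A^2+B^2}|$ and $\sqrt{A^2+B^2}\approx_\varDelta 4$ on this region). The second-derivative test (Lemma~\ref{lem: 2nd derivative test, dim 2}) and the Cauchy--Riemann identities for $\log\rho^\natural$, $\theta^\natural$ are not used here; they enter only later, in \S\ref{sec: Hankel, II}, after the Mellin transform has been applied. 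Your holomorphic identification $\trh^\natural(r,\omega)=\tanh^2(r+i\omega)$ is a neat way to see those identities, but for Corollary~\ref{cor: Hankel, C} itself it is not needed. Likewise for $\Phi_0^+$ in the range $x>T^{2-\sepsilon}$: the paper does not integrate by parts in $r$ alone but first uses Lemma~\ref{lem: staionary phase, dim 2, 2} to restrict the integration to $\sqrt{r^2+\sin^2\omega}\le T^\sepsilon/\sqrt{x}$ and then estimates trivially.
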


Clearly, the integral $\Phi^{\flat} (z)$ has no counterpart in the real case. % {and it is responsible for the weaker subconvex exponent at complex places}. 
A similar partition on $\Phi^{\flat} (z)$ will be needed, but it seems  more appropriate to introduce it when we apply the Mellin technique  in \S \ref{sec: Hankel, II}. 

\begin{proof}
%Recall that $\Phi (u, \varLambda)$ is an integral on $[-M^{\vepsilon}/M, M^{\vepsilon}/M] \times [0, \pi]$ as in  
We start with dividing the $\omega$-integral via a smooth partition of $[0, \pi]$ into the union of three  regions where the inequalities
	\begin{align*}
	\sin \omega \leqslant \frac 1 {\sqrt {2 \varDelta} } , \qquad |\cos \omega | \leqslant \frac 1 {\sqrt {2 \varDelta} }, \qquad |\cos 2\omega| \leqslant \varDelta - 1,
	\end{align*}
 are valid,	respectively. 
	
	The second integral turns into the sum of $\Phi^{-}_\rho (u/\varLambda)$ after employing a $\varDelta$-adic partition with respect to $   \sqrt{ r^2 + \cos^2 \omega} $. Note that \eqref{6eq: support of V} amounts to  the condition $ \rho  \approx_{\shskip \varDelta}  Y^{1/3}/|\varLambda|^{1/2} $ in \eqref{11eq: partition of Phi, C}, and \eqref{6eq: bounds for V} is required to deduce the estimates for $ V^{-}_\rho (r, \omega) $ in \eqref{11eq: estimates for V, C}.
	
	It remains to analyze the first and the third integrals.  Keep in mind that because of \eqref{6eq: support of V}  we have necessarily $ \cos^2 \omega \approx_{\shskip \varDelta} Y^{2/3} / |\varLambda| $ in the first case and $  Y^{2/3} \approx_{\shskip \varDelta}  |\varLambda| / 2 $ in the third case. Moreover,  \eqref{6eq: bounds for V} manifests that the weight functions have almost bounded derivatives  as $\rho (r, \omega ) \Gt 1$ for both cases. 
	
	Let   $z = u / \varLambda$ and write $z = x e^{i \phi}$. The phase function in \eqref{6eq: Phi (u,...)} or \eqref{11eq: Phi integrals} is equal to
	\begin{align*}
	f(r, \omega; x, \phi) =   {T r} / { \pi} - {x \rho^{\snatural} (r, \omega) \cos (\phi + \theta^{\snatural}  (r, \omega) ) }  
	\end{align*}
Set $\psi   = \phi + \theta^{\snatural}  (r, \omega)$ for brevity. In view of \eqref{11eq: r-derivative trh2} and \eqref{11eq: w-derivative trh2}, we have 
	\begin{align*}
	&    {\partial f } / {\partial r} =   T / \pi + x (A \cos \psi + B \sin \psi)   , \quad   {\partial f } / {\partial \omega} =  x (B \cos \psi - A \sin \psi)  ,
	\end{align*}
	where
	\begin{align*}
	A = \frac { 4 \sinh 2r \cos 2\omega  } {(\cosh 2 r +   \cos 2 \omega)^2}, \quad  B = \frac { 4 \cosh 2r \sin 2\omega } {(\cosh 2 r +   \cos 2 \omega)^2}.
	\end{align*} 
	It is clear that
	\begin{align}\label{11eq: f' square}
	   ({\partial f } / {\partial r})^2 + ({\partial f } / {\partial \omega})^2 
%& 	=   \frac {T^2} {\pi^2} + \frac {2T x (A \cos \psi + B \sin \psi)} {\pi  }  +   {x^2 \big(A^2 + B^2\big)}  \\
	& \geqslant \big(  T / {\pi} - x \sqrt{A^2+B^2}\big)^{\hskip -1pt 2},
	\end{align}
in which  
\begin{align}\label{11eq: A2 + B2}
A^2 + B^2 = \frac {16 (\cosh 2 r - \cos 2\omega)} {(\cosh 2 r + \cos 2\omega)^3} = \frac {4 (\sinh^2 r + \sin^2 \omega)} {(\cosh^2 r - \sin^2 \omega)^3} . % = \frac {4 \rho^{\snatural} (r, \omega)} {  \rho (r, \omega)^4}.
\end{align}

In the first case, \eqref{11eq: f' square} and \eqref{11eq: A2 + B2} imply that
\begin{align*}
|f'(r, \omega; x, \phi)|^2 \Gt T^2 + x^2   { (r^2 + \sin^2 \omega) }  , 
\end{align*}
except for $ r^2 + \sin^2 \omega \approx_{\shskip \varDelta} T^2 / 4\pi^2 |\varLambda| $ (since $\cos^2 \omega \approx_{\shskip \varDelta} Y^{2/3} / |\varLambda|$ and $x \sim_{\shskip \varDelta} Y/ |\varLambda|$). By Lemma \ref{lem: derivatives of rho natural}, we have
\begin{align*}%\label{11eq: bounds for the derivatives of f, 1}
\frac {\partial^{i + j}  f (r, \omega; x, \theta) }  {   \partial r^{\shskip i} \partial \omega^{\shskip j} } \Lt_{\shskip i, \shskip j } \frac {x  } { (r^2 + \sin^2 \omega)^{(i+j)/2 - 1} }
\end{align*}
for $i+j \geqslant 2$.  We truncate smoothly the first integral at $ \sqrt {r^2 + \sin^2 \omega} = T^{\vepsilon} / T $ and apply a $\varDelta$-adic partition of unity with respect to the value of $ \sqrt{r^2 + \sin^2 \omega}$ over $(T^{\vepsilon}/T, 1 / \sqrt{2 \varDelta})$. In this way, the integral splits into $\sum_{ \rho} \Phi_{\rho}^{+} (z) + \Phi^{+}_0 (z)$.  On applying Lemma \ref{lem: staionary phase, dim 2, 2} with $ Q = \varPhi = \varUpsilon = \rho $, $P = \min \{ \rho, 1/ M\}$,  $Z = x \rho^2 $, and $R = T + x \rho $,  we infer that  the integral $\Phi_{\rho}^{+} (z)$ is negligibly small unless $  \rho    \approx_{\varDelta} T/2\pi \sqrt{|\varLambda|}  $  ($\rho T > T^{\vepsilon} $ is required). When $ x \leqslant T^{2 - \vepsilon} $, the estimates for $\Phi^{+}_0 (x e^{i\phi})$ in \eqref{11eq: bounds for E+, C} follow from trivial estimation. When $ x > T^{2 - \vepsilon} $, we may further restrict the integration to $ \sqrt {r^2 + \sin^2 \omega} \leqslant T^{\vepsilon} / \sqrt {x} $. To see this, we absorb $e (2 T r / \pi)$ into the weight function, and apply Lemma \ref{lem: staionary phase, dim 2, 2} with $ Q = \varPhi = P = \varUpsilon = T^{\vepsilon} /\hskip -1 pt \sqrt {x}   $, $Z = T^{\vepsilon} $, and $R = \sqrt{x} T^{\vepsilon}$. Again,  \eqref{11eq: bounds for E+, C} follows trivially. 
	
	For the third case, it is left to prove that the integral restricted to $|\cos 2\omega| \leqslant \varDelta - 1$ is negligible unless $ x \approx_{\shskip \varDelta} T / 4 \pi $. To this end, observe that $\sqrt{A^2 + B^2} \approx_{\shskip \varDelta} 4$, and it follows from \eqref{11eq: f' square} that 
	\begin{align*}
	|f'(r, \omega; x, \phi)|^2 \Gt T^2 + x^2
	\end{align*}
	unless $ x \approx_{\shskip \varDelta} T / 4 \pi $. By Lemma \ref{lem: derivatives of rho natural},  we have
	\begin{align*}%\label{11eq: bounds for the derivatives of f, 1}
	\frac {\partial^{i + j}  f (r, \omega; x, \theta) }  {   \partial r^{\shskip i} \partial \omega^{\shskip j} } \Lt_{\shskip i, \shskip j }   {x  }  
	\end{align*}
	for $i+j \geqslant 2$. The proof is completed by applying Lemma \ref{lem: staionary phase, dim 2, 2} with $ Q = \varPhi = 1$,  $P = 1 /M $,   $ \varUpsilon = 1 $, $Z = x $, and $R = T + x$. 
\end{proof}

\section{Stationary Phase for the Mellin Transforms}

In this section, we fix a smooth function $\varvv (x)$ such that $\varvv (x) \equiv 1$ on $ [  1 / 2, 2  ]$ and  $\varvv (x) \equiv 0$ on $(0,   1 / 3] \cup [ 3, \infty)$.

\subsection{The Real Case} As in \S \ref{sec: notation}, let $\widehat{\bfra} = \BR \times \{0, 1\}$ and define $\vchi_{  i \varnu, \shskip  m} (x) = |x|^{i\varnu} (x/|x|)^m$ for $(\varnu, m) \in \widehat{\bfra}$. The Mellin transform of  $f (x) \in C_c^{\infty} (\BR^{\times})$ is defined by
\begin{align*}
\breve {f} (\varnu, m) = \int_{ \BR^{\times} } f (x) \vchi_{  i \varnu, \shskip  m} (x) \nd^{\times} \hskip -1pt x,  
\end{align*}
and the Mellin inversion reads
\begin{align*}%\label{12eq: Mellin inverse, R}
	f (x) = \frac 1 {4\pi  } \viint_{\widehat{\bfra} }  \breve{f} (\varnu, m)  \overline{\vchi_{  i \varnu, \shskip  m} (x)} \nd \mu (\varnu, m). 
\end{align*}

\begin{lem}\label{lem: Mellin, inert, R} Let $R, S> 0$ and $X \geqslant 1$. Suppose that $ \varww (x) $ is   smooth and $ x^{i} \varww^{(i)} (x) \Lt_{  \shskip i } S X^{i  }$ for $|x| \in [R/3, 3R]$. We have 
	\begin{align*}
	\varww (x) =   \viint_{\widehat{\bfra} } \xiup  (\varnu, m)   \overline{\vchi_{  i \varnu, \shskip  m} (x)} \nd \mu (\varnu, m),
	\end{align*}
whenever $|x| \in [R/2, 2R]$, with the function $ \xiup  (\varnu, m) $ satisfying $\xiup  (\varnu, m) \Lt S$ and $\xiup  (\varnu, m) = O  (R S T^{-A}  )$ if $|\varnu| > T^{\vepsilon} X$. 
\end{lem}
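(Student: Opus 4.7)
The plan is to define $\xiup(\varnu, m)$ essentially as the Mellin transform of a localized version of $\varww$, and to recover the claimed inversion formula on $[R/2, 2R]$ together with its bounds from the Mellin inversion \eqref{2eq: Mellin inverse} and a standard non-stationary phase argument in the Mellin variable. Concretely, with $\varvv$ the cutoff fixed at the start of the section, I would set $f(x) = \varvv(|x|/R)\varww(x)$, supported in $|x| \in [R/3, 3R]$, and take $\xiup(\varnu, m) = \breve{f}(\varnu, m)/(4\pi)$ as in \eqref{2eq: Mellin}. Since $\varvv(|x|/R) \equiv 1$ on $[R/2, 2R]$, the Mellin inversion formula \eqref{2eq: Mellin inverse} (in which $c_{\vv} = 2$) gives $\varww(x) = f(x) = \iint_{\widehat{\bfra}} \xiup(\varnu, m) \overline{\vchi_{i\varnu, m}(x)} \, \nd\mu(\varnu, m)$ on this range, as required.

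The uniform bound $\xiup(\varnu, m) \Lt S$ is immediate: the integrand $f(x)\vchi_{i\varnu, m}(x)$ is supported on an annulus of fixed multiplicative measure $2\log 9$, on which $\varww \Lt S$, so the total contribution is $O(S)$ independently of $R$, $X$, $\varnu$, and $m$.

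For the decay when $|\varnu| > T^{\sepsilon}X$, I would integrate by parts repeatedly using the Euler operator $x\partial_x$, which diagonalizes the characters: $x\partial_x \vchi_{i\varnu, m}(x) = i\varnu\, \vchi_{i\varnu, m}(x)$ on each of $(0,\infty)$ and $(-\infty, 0)$. Splitting the integral over the two half-lines and performing $A$ iterations — the boundary contributions at $0$ and $\pm\infty$ vanishing by the compact support of $\varvv$ — yields
\[
\xiup(\varnu, m) = \frac{1}{4\pi (-i\varnu)^A} \int_{\BR^{\times}} (x\partial_x)^A [\varvv(|x|/R)\varww(x)]\, \vchi_{i\varnu, m}(x)\, \nd^{\times}x.
\]
The decisive point is that $x\partial_x$ is scale-invariant: $(x\partial_x)^j \varvv(|x|/R)$ is a polynomial in $(|x|/R)\varvv'(|x|/R), \ldots$ bounded uniformly in $R$, while by hypothesis $(x\partial_x)^i \varww(x) \Lt_i S X^i$ on $[R/3, 3R]$. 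Leibniz's rule together with $X \geqslant 1$ therefore gives $(x\partial_x)^A[\varvv(|x|/R)\varww(x)] \Lt_A S X^A$, and hence $\xiup(\varnu, m) \Lt_A S(X/|\varnu|)^A$. When $|\varnu| > T^{\sepsilon}X$ this is $\Lt_A S T^{-A\sepsilon}$ for arbitrary $A$, which is the claimed negligible bound.

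There is no substantive obstacle here — the lemma is essentially a textbook instance of non-stationary phase, recast in Mellin coordinates via the scale-invariant differentiation operator $x\partial_x$. The only mild bookkeeping is tracking the inversion constant $1/4\pi$ (absorbed silently into $\xiup$) and handling the trivial discrete sum over $m \in \{0,1\}$, both of which are cosmetic.
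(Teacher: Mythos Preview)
Your proof is correct and takes essentially the same approach as the paper: define $\xiup$ as (a constant times) the Mellin transform of $\varvv(|x|/R)\varww(x)$, get the uniform bound trivially, and obtain the decay by repeated integration by parts in the Mellin variable. The only cosmetic difference is that the paper packages the last step as an appeal to Lemma~\ref{lem: staionary phase, dim 1, 2} with phase $\varnu\log|x|/2\pi$, whereas you carry out the integration by parts explicitly via the Euler operator $x\partial_x$; these are the same argument.
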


\begin{proof}
Let $\xiup  (\varnu, m) $ be the Mellin transform of $4\pi \shskip \varvv (|x|/ R) \varww (x)$.  The first estimate for $\xiup  (\varnu, m) $ is trivial.	The second is an easy consequence of Lemma \ref{lem: staionary phase, dim 1, 2} with phase function $\varnu \log |x| / 2\pi$.
\end{proof}

\begin{lem}\label{lem: Mellin, R} Let $ R > 1$. 
	We have 
	\begin{align*}
	e (x) =   \viint_{\widehat{\bfra} } \xiup_{R}  (\varnu, m)   \overline{\vchi_{  i \varnu, \shskip  m} (x)} \nd \mu (\varnu, m),
	\end{align*}
	whenever $|x| \in [R/2, 2R]$, where  $\xiup_R   (\varnu, m) = O \big((R + |\varnu| )^{-A}\big)$ unless $|\varnu| \asymp R$, in which case $\xiup_R  (\varnu, m) \Lt 1/ \sqrt{R}$. 
\end{lem}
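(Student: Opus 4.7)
The plan is to realize $\xi_R$ as a suitably cut-off Mellin transform of $e(x)$ and then evaluate it by the method of stationary phase in one dimension. First I would introduce the bump $\varv(|x|/R)$ that is identically $1$ on $[R/2,2R]$ and supported in $[R/3,3R]$, and set
\begin{equation*}
\xi_R(\varnu,m) = 4\pi \int_{\BR^\sstimes} \varv(|x|/R) e(x) \vchi_{i\varnu,m}(x)\,\nd^{\sstimes}\hskip -1pt x,
\end{equation*}
so that the Mellin inversion formula \eqref{2eq: Mellin inverse} (with $c_\vv = 2$) immediately gives the claimed integral representation for $|x|\in [R/2,2R]$. Splitting the $\BR^\sstimes$-integral according to the sign of $x$ and making the change of variables $x = Ry$ reduces the problem to bounding the two oscillatory integrals
\begin{equation*}
J_{\pm}(\varnu) = R^{i\varnu} \int_0^{\infty} \varv(y)\shskip e(\pm Ry)\shskip y^{i\varnu - 1}\,\nd y,
\end{equation*}
whose phase is $f(y) = \pm Ry + \varnu \log y/2\pi$ with derivative $f'(y) = \pm R + \varnu/2\pi y$.

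For the non-stationary regime, observe that on $\mathrm{supp}\shskip \varv$ the derivative satisfies $|f'(y)| \Gt \max(R,|\varnu|)$ whenever $|\varnu| \not\asymp R$ or the sign of $\varnu$ is opposite to the correct one. Lemma \ref{lem: staionary phase, dim 1, 2}, applied with $Z = \max(R,|\varnu|)$, $Q = P = 1$, and $\RR = \max(R,|\varnu|)$, then yields $J_{\pm}(\varnu) \Lt_A (R+|\varnu|)^{-A}$ for any $A \geqslant 0$. This gives the rapid decay asserted for $\xi_R$ outside the critical window.

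For the stationary regime $|\varnu| \asymp R$ (with the correct sign, the opposite sign having already been handled above), the unique critical point $y_0 = \mp \varnu/2\pi R$ lies in a fixed compact subset of $(0,\infty)$, and the second derivative $f''(y) = -\varnu/2\pi y^2$ is of size $\asymp R$ uniformly on $\mathrm{supp}\shskip \varv$. The second derivative test in Lemma \ref{lem: 2nd derivative test, dim 1} then delivers the bound $J_{\pm}(\varnu) \Lt 1/\sqrt{R}$ with an absolute implied constant. Combining the two regimes gives the stated estimates for $\xi_R(\varnu,m)$.

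There is really no serious obstacle here: the proof is the standard stationary-phase analysis of $\int \varv(y) e(Ry) y^{i\varnu-1}\,\nd y$, and the only point worth flagging is that the statement records the sharp $\sqrt{R}$-saving on the stationary window rather than an $L^2$-type bound. If one wanted the asymptotic shape of $\xi_R$ on $|\varnu| \asymp R$ (not needed in the lemma), one could instead invoke Lemma \ref{lem: stationary phase estimates, dim 1} to get an honest $1/\sqrt{R}$ main term together with controlled derivatives in $\varnu$, but for the purpose at hand the two estimates above suffice.
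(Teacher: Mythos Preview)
Your proof is correct and follows exactly the paper's approach: define $\xi_R$ as the Mellin transform of $4\pi\,\varvv(|x|/R)\,e(x)$, then bound it via Lemma~\ref{lem: staionary phase, dim 1, 2} in the non-stationary range and Lemma~\ref{lem: 2nd derivative test, dim 1} in the stationary window $|\varnu|\asymp R$. The paper's own proof is the two-sentence version of what you wrote; your explicit change of variables $x=Ry$ and parameter choices $Z=\mathcal R=\max(R,|\varnu|)$, $P=Q=1$ simply fill in the details the paper leaves implicit.
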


\begin{proof}
Let $\xiup_R (\varnu, m) $ be the Mellin transform of $4\pi  \shskip  \varvv (|x|/ R) e (x)$. To derive the estimates for $\xiup_R (\varnu, m) $, apply Lemma \ref{lem: staionary phase, dim 1, 2} and \ref{lem: 2nd derivative test, dim 1} (the second derivative test) with phase function $x + \varnu \log |x|/2\pi$.  
\end{proof}

%We write $\xiup_{\shskip R}  $ to help us remember the associated parameter to $\xiup    $.

\subsection{The Complex Case} As in \S \ref{sec: notation}, let $\widehat{\bfra} = \BR \times \BZ$ and define $\vchi_{  i \varnu, \shskip  m} (z) = |z|^{2 i\varnu} (z/|z|)^m$ for $(\varnu, m) \in \widehat{\bfra}$. The Mellin transform of  $f (z) \in C_c^{\infty} (\BC^{\times})$ is defined by
\begin{align*}
\breve {f} (\varnu, m) = \int_{ \BC^{\times} } f (z) \vchi_{  i \varnu, \shskip  m} (z) \nd^{\times} \hskip -1pt z,  
\end{align*}
and the Mellin inversion reads
\begin{align*}%\label{12eq: Mellin inverse, C}
f (z) = \frac 1 {4\pi^2  } \viint_{\widehat{\bfra} }  \breve{f} (\varnu, m)  \overline{\vchi_{  i \varnu, \shskip  m} (z)} \nd \mu (\varnu, m). 
\end{align*}
In the polar coordinates, 
\begin{align*}
\breve {f} (\varnu, m) = 2  \hskip -1 pt \int_0^{\infty} \hskip -1 pt \int_{0 }^{2\pi} f (x e^{i\phi}) x^{2i\varnu} e^{i m\phi} \frac {\nd  \phi \shskip \nd x} {x}. 
\end{align*}

\begin{lem}\label{lem: Mellin, inert, C} Let $R, S > 0$  and $X \geqslant 1$. Let $ \varww (z) $ be   smooth with $ x^{i} \partial_x^i \partial_{\phi}^j \varww (x e^{i\phi}) \Lt_{  \shskip i, \shskip j } S X^{i+j }$ for $ x \in [R/3, 3R]$. We have 
	\begin{align*}
	\varww (z) =   \viint_{\widehat{\bfra} } \xiup  (2 \varnu, m)   \overline{\vchi_{  i \varnu, \shskip  m} (z)} \nd \mu (\varnu, m),
	\end{align*}
	whenever $|z| \in [R/2, 2R]$, with the function $ \xiup  (\varnu, m) $ satisfying  $\xiup  (\varnu, m) \Lt S$ and $\xiup  (\varnu, m) = O  (R^2 S T^{-A} )$ if $ \sqrt{\varnu^2 + m^2} > T^{\vepsilon} X$. 
\end{lem}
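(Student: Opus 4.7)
The plan is to follow the same strategy as in Lemma \ref{lem: Mellin, inert, R}, adapted to the two-dimensional Mellin transform on $\BC^{\times}$. Let $\xi (\varnu, m)$ be the Mellin transform of $4\pi^2 \varvv(|z|/R) \varww (z)$; in the polar coordinates,
\begin{align*}
\xi (2 \varnu, m) = 8 \pi^2 \int_0^{\infty} \int_0^{2\pi} \varvv (x/R) \varww (x e^{i\phi}) x^{2 i \varnu} e^{i m \phi} \frac{\nd \phi \shskip \nd x}{x}.
\end{align*}
Since $\varvv \equiv 1$ on $[1/2, 2]$, Mellin inversion on $\widehat{\bfra}$ recovers $\varww (z)$ for $|z| \in [R/2, 2R]$, which gives the stated representation.

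The uniform bound $\xi (\varnu, m) \Lt S$ is immediate, as the integrand is supported on $x \in [R/3, 3R] \times [0, 2\pi]$ with modulus $\Lt S$ and the measure $\nd \phi \shskip \nd x / x$ has $O(1)$ volume on this set.

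For the rapid decay when $\sqrt{\varnu^2 + m^2} > T^{\sepsilon} X$, the key point is that either $|\varnu| > T^{\sepsilon} X / \sqrt 2$ or $|m| > T^{\sepsilon} X / \sqrt 2$, and we treat the two cases by one-dimensional partial integration in $\log x$ or in $\phi$ respectively. In the $\varnu$-case, after substituting $u = \log x$, the phase becomes $e^{2 i \varnu u}$, and the hypothesis together with the chain rule (noting that the $\varvv$ factor contributes only $O(1)$-type derivatives since the $R$'s cancel) yields $(x \partial_x)^i \big(\varvv(x/R) \varww(xe^{i\phi})\big) \Lt_{\shskip i} S X^i$ on the compact support. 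Integrating by parts $A'$ times in $u$ saves a factor $X/|\varnu|$ each time, giving $\xi \Lt_{\shskip A'} S (X/|\varnu|)^{A'}$. In the $m$-case, we use the periodicity in $\phi$ to integrate by parts $A'$ times with no boundary terms, each step saving $X/|m|$ by virtue of $\partial_\phi^j \varww (x e^{i\phi}) \Lt_{\shskip j} S X^j$; this gives $\xi \Lt_{\shskip A'} S (X/|m|)^{A'}$. In either case, choosing $A'$ large enough (depending on $A$ and $\vepsilon$) produces the desired $O  (S T^{-A} )$.

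There is no substantial obstacle here; the only mild verification is the chain-rule bookkeeping alluded to above, and the use of $2\pi$-periodicity to eliminate boundary terms in the $\phi$-integration by parts. A two-dimensional stationary phase result such as Lemma \ref{lem: staionary phase, dim 2, 2} would also work, but the phase $\varnu \log x / \pi + m \phi / 2\pi$ separates, so iterated one-dimensional integration by parts is cleaner.
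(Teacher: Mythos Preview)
Your proposal is correct and follows essentially the same approach as the paper: define $\xiup(2\varnu,m)$ as the Mellin transform of $4\pi^2\varvv(|z|/R)\varww(z)$, obtain the representation by Mellin inversion, and then treat the $x$- and $\phi$-integrals separately to get decay in $\varnu$ and $m$. The only cosmetic difference is that the paper invokes Lemma~\ref{lem: staionary phase, dim 1, 2} (a packaged partial-integration estimate) for each one-dimensional integral, whereas you carry out the integration by parts directly; the content is the same.
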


\begin{proof}
Let $\xiup  (2 \varnu, m) $ be the Mellin transform of $4\pi^2 \varvv (|z|/ R) \varww (z)$. In the polar coordinates, apply Lemma \ref{lem: staionary phase, dim 1, 2} to the $x$- or $\phi$-integral with phase function $\varnu \log x / 2 \pi$ or $ m \phi / 2\pi$, respectively.
\end{proof}

The complex analogue of Lemma \ref{lem: Mellin, R} is as follows. However, its proof requires considerably more work.

\begin{lem}\label{lem: Mellin, C} Let $ R \Gt 1$. 	We have 
	\begin{align*}
e ( 2 \Re (z) ) =   \viint_{\widehat{\bfra} } \xiup_{R}  (2 \varnu, m)   \overline{\vchi_{  i \varnu, \shskip  m} (z)} \nd \mu (\varnu , m),
	\end{align*}
	whenever $|z| \in [R/2, 2R]$, where    $\xiup_R   (\varnu, m) = O \big((R + |\varnu| + |m|)^{-A}\big)$ unless $ \sqrt{\varnu^2 + m^2} \asymp R$, in which case $\xiup_R  (\varnu, m) \Lt \log R /  {R}$. 
\end{lem}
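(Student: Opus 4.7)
The plan is to take $\xiup_R(2\varnu,m)$ to be the Mellin transform of $4\pi^2\varvv(|z|/R)\,e(2\Re(z))$, so that the claimed identity holds on $|z|\in[R/2,2R]$ by the Mellin inversion formula (since $\varvv\equiv 1$ there); it then suffices to bound the Mellin transform itself. Passing to polar coordinates $z=xe^{i\phi}$ and executing the $\phi$-integral in closed form via the Jacobi--Anger identity $\int_0^{2\pi}e^{i(y\cos\phi+m\phi)}\,\nd\phi = 2\pi(-i)^m J_m(y)$ reduces the problem to the one-dimensional oscillatory integral
\[
\xiup_R(2\varnu,m) = 4\pi(-i)^m\int_0^\infty\varvv(x/R)\,J_m(4\pi x)\,x^{2i\varnu-1}\,\nd x.
\]

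For the rapid decay outside the stationary region I split into two cases. When $|m|\gg R$, the classical exponential bound $|J_m(y)|\Lt(ey/(2|m|))^{|m|}$ for $|m|\geqslant ey/2$ gives the desired decay in $|m|$. When $|m|\Lt R$ but $|\varnu|\gg R$, I substitute the standard oscillatory asymptotic $J_m(y)=\sqrt{2/(\pi y)}\cos(y-m\pi/2-\pi/4)+O(y^{-3/2})$, valid for $y\gg|m|$; the resulting integrals have phase $\pm 2x+\varnu\log x/\pi$ with derivative bounded below in absolute value by $|\varnu|/R$, so Lemma \ref{lem: staionary phase, dim 1, 2} yields the arbitrary polynomial decay in $|\varnu|$.

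For the central regime $\sqrt{\varnu^2+m^2}\asymp R$, I further distinguish $|m|\leqslant R^{1-\sepsilon}$ and $|m|\asymp R$. In the former subcase, the Bessel asymptotic is uniformly valid on the support, and the one-dimensional integral has phase $\pm 2x+\varnu\log x/\pi$ with a single non-degenerate stationary point at $x_0=\mp\varnu/(2\pi)\asymp R$ and second derivative $|\varnu|/(\pi x_0^2)\asymp 1/R$; the second derivative test (Lemma \ref{lem: 2nd derivative test, dim 1}) gives the bound $O(1/R)$. In the latter subcase, the turning point $y=|m|$ of $J_m$ lies inside the support, and one invokes Olver's uniform asymptotic expansion of $J_m(y)$ in terms of Airy functions near the turning point. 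Combining this with a dyadic partition of the support around $x=|m|/(4\pi)$---whose $O(\log R)$ many pieces each contribute $O(1/R)$ by either Airy-type bounds in the transition zone (of width $\asymp R^{1/3}$) or by stationary phase in the oscillatory zone---yields the stated $O(\log R/R)$.

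The main obstacle is precisely the transition regime $|m|\asymp 4\pi R$: standard Bessel asymptotics break down at the turning point, so Olver's uniform Airy-type expansion is needed, and the dyadic decomposition around the turning point that blends this with the oscillatory region is what introduces the $\log R$ factor in the final bound.
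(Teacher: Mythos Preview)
Your overall strategy---define $\xiup_R$ as the Mellin transform of $4\pi^2\varvv(|z|/R)e(2\Re(z))$, execute the angular integral in closed form to produce a Bessel function, and invoke Olver's uniform asymptotics near the turning point---is close to the paper's treatment of the central regime $\sqrt{\varnu^2+m^2}\asymp R$. There is, however, a genuine gap in your handling of intermediate values of $m$.

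The Hankel expansion you quote is not ``valid for $y\gg|m|$'': the uniform statement is $J_m(y)=\sqrt{2/(\pi y)}\cos(y-m\pi/2-\pi/4)+O\big((m^2+1)/y^{3/2}\big)$, so one needs $y\gg m^2$. On your support $y=4\pi x\asymp R$ this means the expansion is only useful for $|m|\Lt R^{1/2}$; for $R^{1/2}\Lt|m|\Lt R$ the error term swamps the main term. This breaks your argument in two places: in the rapid-decay case $|m|\Lt R$, $|\varnu|\gg R$, where you rely on this expansion, and in the central regime, where your cut at $|m|\leqslant R^{1-\sepsilon}$ is much too high. In this intermediate $m$-range the turning point $y=|m|$ lies to the \emph{left} of the support, so your description of the Olver case (``turning point inside the support'') does not cover it either.

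The paper handles both points differently. For rapid decay it never passes to the Bessel form at all: it applies a tailored two-dimensional H\"ormander partial integration directly to the $(x,\phi)$-integral, with the weight $g(x,\phi)=x(\partial_x f)^2+x^{-1}(\partial_\phi f)^2$ chosen so that $g\Gt R^2+\varnu^2+m^2$ whenever the stationary point $x_0=\sqrt{\varnu^2+m^2}/(4\pi R)$ lies outside $[1/4,4]$; this yields $O\big((R+|\varnu|+|m|)^{-A}\big)$ in one stroke, uniformly in $m$. In the central regime the paper cuts at $m=R^{1/4}$ rather than $R^{1-\sepsilon}$: below this the Hankel error is genuinely $O(1/R)$, while for \emph{all} $m>R^{1/4}$ it uses Olver's uniform asymptotic in its purely oscillatory form (Lemma~\ref{lem: Olver}\,(4), valid for $2\leqslant y/m\Lt m^3$), not just the transitional Airy form. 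The Airy analysis and the dyadic partition around the turning point are reserved for the sub-range where $m\asymp R$ and the turning point actually enters the integration interval; it is only there that the $\log R$ appears.
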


Let $\xiup_R  (2\varnu, m) = \xiup  (2\varnu, m)$ be the Mellin transform of $4\pi^2 \varvv (|z|/ R) e ( 2 \Re (z) )$.  Write 
\begin{align*}%\label{12eq: I(v, m)}
\xiup (\varnu, m) = 8 \pi^2 R^{i\varnu}   \int_0^{\infty} \hskip -2 pt \int_{0 }^{2\pi}  \varvv (x  ) e ( f  (x, \phi; \varnu, m )) \frac  {\nd \phi \shskip \nd x} {x} ,
\end{align*}	
with %$\varww (x) $ is smooth but not necessarily compactly supported on $(0, \infty)$, and 
\begin{align*}%\label{12eq: Mellin, phase, C}
f (x, \phi) =	f  (x, \phi; \varnu, m) = 2 R x \cos \phi + (\varnu \log x + m \phi  ) / 2 \pi .
\end{align*}
%so that $ \xiup (\varnu, m) = 2 R^{i\varnu}   I   (\varnu, m) $ if we choose $\varww(x) =  \varvv (x) / x $. 
We have
\begin{align}\label{12eq: f'(x, phi)}
f' (x, \phi ) = (2 R \cos \phi + \varnu/2 \pi x, -2 R x \sin \phi + m / 2 \pi),
\end{align}
and hence there is a unique stationary point $(x_0, \phi_0)$ given by
\begin{align*}
x_0 = \frac {\sqrt {\varnu^2 + m^2} } {4 \pi R}, \quad  \cos \phi_0 = - \frac {\varnu} {\sqrt {\varnu^2+m^2}}, \quad  \sin \phi_0 =  \frac {m} {\sqrt {\varnu^2+m^2}}. 
\end{align*} 
%The first issue is that Lemma \ref{lem: staionary phase, dim 2, 2} can not be applied with the phase function in \eqref{12eq: Mellin, phase, C}. 

\subsubsection{Applying H\"ormander's Partial Integration}
First, we prove $ \xiup (\varnu, m) = O  \big( (R + |\varnu| + |m|)^{-A}\big) $ for any $A \geqslant 1$ unless $ x_0 \in [1/4, 4] $, say. 
The arguments below are similar to those in \cite[\S 6.1]{Qi-Gauss}. Our idea is to modify H\"ormander's elaborate partial integration. To this end, we introduce
\begin{equation}\label{12eq: defn of g}
\begin{split}
g ( x, \phi ) & = x ( \partial_x f ( x, \phi ))^2  + (1/x) (\partial_x f ( x, \phi ))^2     \\
& = \bigg(2 R \sqrt x - \frac{\sqrt{\varnu^2 + m^2}} {2\pi \sqrt x } \bigg)^2 + \frac {2 R } {\pi } \big( \hskip -2 pt {\textstyle \sqrt{\varnu^2 + m^2} } + \varnu \cos \phi - m  \sin \phi \big) . 
\end{split}
\end{equation}
It is clear that 
\begin{equation}\label{12eq: lower bounds for g}
 g ( x, \phi ) \Gt \left\{ \begin{aligned}
& R^2 x , & & \text{ if } x \geqslant 4 x_0/3, \\
& (\varnu^2 + m^2) / x, & & \text{ if } x \leqslant  3 x_0 / 4.  
 \end{aligned} \right.  
\end{equation}  
Define  the differential operator
\begin{align*}
\mathrm{D}  =       \frac {  x  \partial_x f( x, \phi   )} {  g ( x, \phi   ) } \frac {\partial} {\partial x}      +    \frac {  \partial_{ \phi  } f( x, \phi  )} {  x g ( x, \phi  ) } \frac {\partial} {\partial \phi  }   
\end{align*}
so that $\mathrm{D}  (  e (    f  ( x, \phi  )    ) )   =  2 \pi i \cdot       e (    f  (x, \phi  ) )$; its  adjoint operator is given by
\begin{align*}
\mathrm{D}^* \hskip -2 pt = - \frac 1 {2\pi i} \bigg( \frac {\partial} {\partial x}   \frac {  x  \partial_x f  ( x , \phi  )} {   g (x , \phi ) }  + \frac {\partial} {\partial \phi }   \frac {   \partial_{\phi} f (x , \phi )} { x g (x , \phi ) } \bigg)  ,
\end{align*} 
and
\begin{align*}
\xiup (\varnu, m) =    \frac {8 \pi^2 R^{i\varnu}} {(2\pi i)^n}  \hskip -1 pt \int_{0}^{\infty} \hskip -2 pt  \int_{0 }^{2\pi} \mathrm{D}^{* \shskip  n}   (\varvv (x) /x) e      ( f  ( x , \phi    ) )         \nd \phi \shskip \nd x . 
\end{align*}
For integer $n \geqslant 0$,  $ \mathrm{D} ^{* \shskip  n}   (\varvv (x) /x)  $ is a linear combination of all the terms occurring in  the  expansions of
\begin{align*}
\partial_{x}^{i } \partial_{\phi  }^{\shskip  j }  \mbox{\larger[1]\text{${\big\{}$}} \hskip -1 pt  (  x  \partial_{x} f  ( x , \phi  )   )^{i}    (    \partial_{\phi } f ( x , \phi ) /x   )^{ j} g( x , \phi )^{n}   \shskip  (\varvv (x) /x) \mbox{\larger[1]\text{${\big\}}$}} / g( x , \phi )^{ 2 n}, \quad  i +  j = n.
\end{align*}
Moreover,  we have
\begin{align*}%\label{12: bounds for f and g}
%\begin{split}
 & x  \partial_{x} f  ( x , \phi  ) \Lt R x + |\varnu|, \quad   \partial_{\phi}^{j+1}   (  x  \partial_{x} f  ( x , \phi  )   ) \Lt Rx, \quad \partial_x \partial_{\phi}^j    (  x \partial_{x} f  ( x , \phi  )  ) \Lt R,   \\ 
 &      \partial_{\phi} f  ( x , \phi  ) /x   \Lt R + |m|/x, \ \partial_x^{i+1}   (   \partial_{\phi} f  ( x , \phi  ) /x   ) \Lt |m|/x^{i+2},  \  \partial_{\phi}^{j+1}  (    \partial_{\phi} f  ( x , \phi  ) / x   ) \Lt R, \\
 & x^2 \partial_x  g ( x , \phi  )   \Lt R^2 x^{2} \hskip -1 pt + \hskip -1 pt \varnu^2 \hskip -1 pt + \hskip -1 pt m^2, \ x^{i+3} \partial_x^{i+2}  g ( x , \phi  )   \Lt   \varnu^2 \hskip -1 pt + \hskip -1 pt m^2,   \  \partial_{\phi}^{j+1}   g ( x , \phi  )   \Lt \hskip -1 pt \sqrt{ \varnu^2 \hskip -1 pt + \hskip -1 pt m^2 },   \\
 & \partial_x^2   (  x\partial_{x} f  ( x , \phi  )   ) = 0, \quad \partial_x  \partial_{\phi}  (    \partial_{\phi} f  ( x , \phi  ) /x  ) = 0, \quad \partial_x  \partial_{\phi} g ( x , \phi  )   = 0,
%\end{split}
\end{align*}
for $i, j \geqslant 0$. 
Now assume that $x \in [1/3, 3]$ and $  x_0 \notin [1/4, 4] $. Then \eqref{12eq: lower bounds for g} yields
\begin{align*}
g (x, \phi) \Gt R^2 + \varnu^2 + m^2.
\end{align*} 
Let $ i_1, i_2 \leqslant i$ and $j_1, j_2 \leqslant j $. From the estimates above, it is straightforward
to prove that
\begin{align*}
\partial_{x}^{i_1} \partial_{\phi}^{j_1} \mbox{\larger[1]\text{${\big\{}$}} \hskip -1 pt  (  x  \partial_{x} f  ( x , \phi  )   )^{i}    (    \partial_{\phi } f ( x , \phi ) /x   )^{ j}   \mbox{\larger[1]\text{${\big\}}$}} \Lt (R+|\varnu|)^i (R + |m|)^j ,
\end{align*}
and 
\begin{align*}
\frac { \partial_{x}^{i_2} \partial_{\phi}^{j_2}  g(x, \phi)^n}{g(x, \phi)^{2n}}    \Lt  \sum_{k_1 + 2k_2 \leqslant i_2} \sum_{l \leqslant j_2} \frac {(R^2+\varnu^2+m^2)^{k_1} (\varnu^2+m^2)^{k_2 + l/2} } {g(x, \phi)^{n+k_1+k_2+l}} . 
\end{align*}
Combining these, we conclude that
\begin{align*}
\xiup (\varnu, m) & \Lt   \sum_{k_1+2k_2+l \shskip \leqslant n} \int_{1/3}^3 \int_0^{2\pi} \frac { (R+|\varnu| +|m|)^{n + 2 (k_1 + k_2) + l }  } {g(x, \phi)^{n+k_1+k_2+l}} \nd \phi \shskip \nd x \\
 & \Lt \sum_{ l \shskip \leqslant n} \frac 1 { (R+|\varnu| +|m|)^{n + l} } \\
 & \Lt  \frac 1 { (R+|\varnu| +|m|)^{n} }, 
\end{align*}
as desired. 

\subsubsection{Applying Olver's Uniform Asymptotic Formula} \label{sec: Olver}
Next, we need to prove the bound  $\xiup  (\varnu, m) \Lt \log R/  {R}$ when $x_0 \in [1/4, 4]$. This may be easily deduced from the same bound for the unweighted integrals as follows. % The problem is that the second derivative test (see for example \cite[Lemma 4]{Munshi-Circle-III}) fails to work as neither $ \big|\partial^2 f / \partial \phi^2 \big| = 2 R x |\cos \phi| $ nor $\big|\det f''\big| = \big|R \varnu \cos\phi /\pi x - 4 \sin^2\phi \big|$ has uniform lower bound when $\phi$ varies. 

\begin{lem}\label{lem: I < 1/R}
	Suppose that  $x_0 \in [1/4, 4]$. For $b > a > 0$, define \begin{align*}
	I (a, b ) = \int_a^{b} \hskip -2 pt \int_{0 }^{2\pi}  e (2Rx\cos \phi) e^{  i m \phi } x^{i\varnu -1}  \nd \phi  \shskip  { \nd x} .
	\end{align*} Then for any $b > a \geqslant 1/8$, we have $ I (a, b) \Lt \log R/R $, where the implied constant is absolute.  
\end{lem}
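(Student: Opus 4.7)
The plan is to first evaluate the inner $\phi$-integral using the standard integral representation
\begin{align*}
\int_0^{2\pi} e^{i(z\cos\phi + m\phi)} \nd \phi = 2\pi i^m J_m(z)
\end{align*}
with $z = 4\pi R x$. This transforms the double integral into a single one:
\begin{align*}
I(a,b) = 2\pi i^m (4\pi R)^{-i\varnu} \int_{4\pi R a}^{4\pi R b} J_m(z) z^{i\varnu - 1} \nd z.
\end{align*}
The hypothesis $x_0 = \sqrt{\varnu^2+m^2}/(4\pi R) \in [1/4, 4]$ translates into $\mu := |m| \ll R$ and $\sqrt{\mu^2+\varnu^2} \asymp R$, which is exactly the regime where Olver's uniform asymptotic expansion is needed.

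Next I would invoke Olver's uniform asymptotic expansion for $J_\mu(z)$ and split the $z$-range at the turning point $z = \mu$ into three pieces. In the exponentially small range $z \leqslant \mu - C \mu^{1/3}$ the Bessel function decays rapidly, giving a negligible contribution. In the Airy transition band $|z-\mu| \Lt \mu^{1/3}$ the uniform bound $|J_\mu(z)| \Lt \mu^{-1/3}$ combined with the bandwidth $\mu^{1/3}$ and the factor $1/z \asymp 1/R$ yields a contribution of order $\mu^{-1/3} \cdot \mu^{1/3}/R \Lt 1/R$. In the oscillatory range $z \geqslant \mu + C\mu^{1/3}$ I would use the post-transition asymptotic
\begin{align*}
J_\mu(z) = \sqrt{\frac{2}{\pi\sqrt{z^2-\mu^2}}} \cos\bigl( \sqrt{z^2-\mu^2} - \mu\arccos(\mu/z) - \pi/4 \bigr) + \text{lower order},
\end{align*}
whose combination with $z^{i\varnu}$ gives a phase $\pm [\sqrt{z^2-\mu^2} - \mu \arccos(\mu/z)] + \varnu \log z$ with derivative $(\pm \sqrt{z^2-\mu^2} + \varnu)/z$. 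Only one sign admits a stationary point, located at $z_* = \sqrt{\mu^2+\varnu^2} \asymp R$, where the second derivative equals $1/|\varnu|$. The stationary-phase contribution at $z_*$, together with the amplitude $(z_*^2-\mu^2)^{-1/4} z_*^{-1/2} \asymp |\varnu|^{-1/2} R^{-1/2}$, gives $O(1/R)$; partial integration off $z_*$ and for the non-stationary sign is handled analogously. The $\log R$ factor then absorbs a harmless logarithmic loss from dyadically partitioning the tail $z \gg R$ and applying the classical asymptotic $J_\mu(z) \sim \sqrt{2/\pi z}\cos(z - \mu\pi/2 - \pi/4)$ on each scale.

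The main obstacle I expect is a uniform treatment across the two extremes of the parameter range. When $|\varnu| \asymp R$ the stationary point $z_* \asymp R$ sits comfortably away from the turning point, and the standard stationary-phase analysis is clean. However, when $|\varnu| \ll R^{2/3}$ the gap $z_* - \mu \asymp \varnu^2/\mu$ falls inside the Airy transition band, so the stationary point collides with the turning point and the post-transition asymptotic is no longer valid at $z_*$. In that regime one must work directly with Olver's Airy-function expansion and absorb the stationary-phase reasoning into the asymptotics of $\mathrm{Ai}$ and $\mathrm{Ai}'$; carrying this out uniformly is the delicate step. A second bookkeeping subtlety is to ensure that the contributions from the endpoints $z_1 = 4\pi R a$ and $z_2 = 4\pi R b$, produced by integration by parts, do not exceed $O(\log R/R)$ even when $b \to \infty$ -- this forces one to exploit the $z^{-1/2}$ decay of the Bessel asymptotic carefully when summing over dyadic scales.
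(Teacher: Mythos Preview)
Your approach is essentially the paper's: reduce to a Bessel integral via the standard representation and then invoke Olver's uniform asymptotics, splitting at the turning point. A few points of comparison and correction are worth noting.

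First, a slip: the amplitude in the oscillatory range should carry $z_*^{-1}$ (from the factor $z^{i\varnu-1}$ in the integrand), not $z_*^{-1/2}$. With the correct exponent the stationary-phase contribution $|\varnu|^{-1/2}R^{-1}\cdot|\varnu|^{1/2}$ is indeed $O(1/R)$; with your written amplitude it would come out as $O(R^{-1/2})$.

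Second, the paper organizes the hard regime more cleanly than your proposal. Rather than confronting the collision of stationary point and turning point head-on, it first separates off $m \leqslant R^{1/4}$: since $\sqrt{\varnu^2+m^2} \asymp R$, this forces $|\varnu| \asymp R$, and then the \emph{classical} (non-uniform) asymptotic $J_m(x) = (2/\pi x)^{1/2}\cos(x - \pi m/2 - \pi/4) + O((m^2+1)/x^{3/2})$ already suffices, the error term being harmless because $m^2 \leqslant R^{1/2}$. Olver is invoked only for $m > R^{1/4}$, and within that the paper further splits according to whether the rescaled variable stays $\geqslant 2$ throughout (easy case) or approaches the turning point. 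The genuinely delicate Airy band $|cx - 1| \Lt m^{-2/3}$ is then dispatched by the crude pointwise bound $J_m(mx) = O(m^{-1/3})$ together with the exponential decay for $x < 1$, exactly as you anticipate.

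Third, the $\log R$ in the paper does \emph{not} come from the tail $z \gg R$ (which in fact sums geometrically with no loss), but from a dyadic partition in $cx - 1$ over the intermediate zone $m^{-2/3} \Lt cx - 1 \Lt 1$, where the second-derivative test is applied on each dyadic shell. Relatedly, the paper handles the reduction to $a, b \in [x_0/2, 2x_0]$ at the outset by a single H\"ormander-type partial integration (producing controlled boundary terms), rather than dealing with the tail inside the Bessel asymptotic as you propose.
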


Firstly, we  write 
\begin{align*}
 I (a, b ) = \int_a^{b} \hskip -2 pt \int_{0 }^{2\pi}  e ( f  (x, \phi  )) \frac  {\nd \phi \shskip \nd x} {x} ,
\end{align*}
and apply H\"ormander's elaborate partial integration once, obtaining
\begin{align*}
&  \frac 1 {2\pi i} \int_{0 }^{2\pi} \bigg( \frac {    \partial_x f( b, \phi   )} {  g ( b, \phi   ) } e ( f  (b, \phi  )) - \frac {    \partial_x f( a, \phi   )} {  g ( a, \phi   ) } e ( f  (a, \phi  )) \bigg) \nd \phi \\
     - \, &  \frac 1 {2\pi i} \int_a^{b} \hskip -1 pt \int_{0 }^{2\pi} \bigg( \frac {\partial} {\partial x}  \bigg( \frac {    \partial_x f  ( x , \phi  )} {   g (x , \phi ) } \bigg) + \frac 1 {x^2} \frac {\partial} {\partial \phi }  \bigg(  \frac {   \partial_{\phi} f (x , \phi )} {   g (x , \phi ) } \bigg) \bigg) e ( f  (x, \phi  )) \shskip \nd x \shskip \nd \phi .
\end{align*}
For $a \geqslant 2 x_0$\,($\geqslant 1/2$), one uses \eqref{12eq: f'(x, phi)}, \eqref{12eq: defn of g}, and the first lower bound in \eqref{12eq: lower bounds for g}  to bound this by $ 1/ a R  \Lt  1 / R$. 
The case when $ b \leqslant x_0/2 $ is similar, for which we use  the second lower bound in \eqref{12eq: lower bounds for g}. 

The problem is thus reduced to the case when $  2 x_0 \geqslant b > a \geqslant x_0/2 $. Assume    $m \geqslant 0$ for simplicity. We invoke the integral representation of Bessel for $J_{m} (z)$  as follows (see \cite[2.2 (1)]{Watson}),
\begin{align*}%\label{2eq: integral repn of J}
&J_{m} (z) = \frac { 1 } {2 \pi i^{\hskip 0.5 pt m}} \int_0^{2\pi} e^{ i z \cos \phi + i m \phi } \nd \phi ,
\end{align*}
and hence
\begin{align}\label{12eq: I = int of J}
 I (a, b ) = 2\pi i^{m} \int_a^{b} J_{m} (4\pi R x) x^{i\varnu -1}    \shskip  { \nd x} . 
\end{align}
According to \cite[\S 7.13.1]{Olver}, 
\begin{align*}
J_{m} (x) = \Big(\frac 2 {\pi x} \Big)^{1/2}   \cos \Big(x- \frac {\pi m} 2 - \frac {\pi } 4\Big)   + O \bigg(\frac {m^2+1} {x^{3/2}} \bigg), \qquad x \Gt m^2+1. 
\end{align*} 
It follows that if  $m \leqslant R^{1/4}$, then $I (a, b; \varnu, m) = O (1/\hskip -1 pt \sqrt{R|\varnu|}) = O(1/R)$ by Lemma \ref{lem: 2nd derivative test, dim 1}.  

For $ m  > R^{1/4}$, we employ Olver's uniform asymptotic formula, in particular, Lemma \ref{lem: Olver} in Appendix \ref{app: Olver}.  

Recall that $x_0 =   {\sqrt {\varnu^2 + m^2} } / {4 \pi R}$. For brevity, set $c = 4 \pi R / m$, $x_0' = m / 2 \pi R$, and $x_0^{+} = (m + m^{1/3}) / 4\pi R$.

When $ R^{1/4} < m \leqslant \pi R x_0   $, so that $2 \leqslant c x \Lt m^{3} $ for all $x \in [x_0/2, 2 x_0]$,  by Lemma \ref{lem: Olver} (4), the integral in \eqref{12eq: I = int of J} turns into
\begin{align*}
I (a, b ) =   \sum_{ \pm}  \int_a^b e (f_{\pm} (x ) / 2\pi) \varww_{\pm} (x ) \nd x + O  (1/R), 
\end{align*}
with 
\begin{align*}
f_{\pm} (x ) = \pm m \gamma (c x) + \varnu \log x ,  
\end{align*}
and 
\begin{align*}
\varww_{\pm} (x ) =  2 \sqrt{2} \pi i^{m} \frac{W_{\pm} (m \gamma (c x))} { m^{1/2} ( (c x)^2 - 1 )^{1/4} x}, 
\end{align*}
in which $\gamma (x) = \sqrt{x^2-1} - \mathrm{arcsec} x$. We have
\begin{align*}
\gamma ' (x) = \frac {\sqrt{x^2-1}} {x}, \qquad \gamma '' (x) = \frac 1 {x^2 \sqrt{x^2-1}}. 
\end{align*}
Therefore
\begin{align*}
 f_{\pm}' (x ) = \frac 1 {x} \big( \hskip -1 pt \pm m {\textstyle \sqrt{(cx)^2 - 1}} + \varnu  \big), \quad   f_{\pm}''  (x ) = \frac 1 {x^2} \bigg( \hskip -1 pt \pm \frac m {  \sqrt{(cx)^2 - 1}} - \varnu  \bigg).
\end{align*} 
Moreover, 
\begin{align*}
 \varww_{\pm} (x ), \ \varww_{\pm}' (x ) \Lt \frac 1 {\sqrt{R}};
\end{align*}
note that $\gamma (x) = x + O(1)$ for $x \geqslant 2$ (see \eqref{12eq: zeta large}). 
If $ \pm \varnu > 0$, then $ |f_{\pm}' (x )| \Gt m + |\varnu| \Gt R $, and the integral is  $O \big( 1 / R^{3/2} \big)$ by partial integration (the first derivative test). If $ \pm \varnu < 0$, then $ |f_{\pm}'' (x )| \Gt m + |\varnu| \Gt R $, and the integral is $ O (1/R)$ by the second derivative test in Lemma \ref{lem: 2nd derivative test, dim 1}. 

Suppose now $   x_0 / 2 < x_0' $ so that we  have necessarily $ m \asymp R$. When $ \max \big\{ x_0/2, x_0^{+} \big\} \leqslant a < b \leqslant x_0' $,  we apply Lemma \ref{lem: Olver} (3) and then divide the integral in \eqref{12eq: I = int of J} by a dyadic partition with respect to $c x - 1$; the error term is $O (1/m) = O (1/R)$, and the resulting integrals can be treated in a manner similar to the above. We just need to notice that $ m / \sqrt{(cx)^2 - 1} $ would dominate $\varnu$ in $f_{\pm}''(x)$ when $ cx - 1 $ is small, in which case only Lemma  \ref{lem: 2nd derivative test, dim 1} is applied. However, by doing the  dyadic partition, we might lose a $\log R$. 

Finally, assume that $ x_0/2 < x_0^{+}$, and consider the case when $ x_0/2 \leqslant a < b \leqslant x_0^+ $. We use Lemma \ref{lem: Olver} (1) and (2) to bound the integral $I(a, b)$ as  in \eqref{12eq: I = int of J} by
\begin{align*}
& \Lt \frac 1 {m^{1/3}} \int_{1/c}^{x_0^+}   {\nd x }   + \frac 1 {m^{1/3}} \int_{1/2c}^{1/c} \exp  \big(  \hskip -1 pt -  \tfrac 1 3 m (2-2cx))^{3/2}   \big)   {\nd x }   \\
& \Lt \frac 1 {m^{1/3}} \int_0^{  1/m^{2/3}}   {\nd y }  + \frac 1 {m^{1/3}} \int_0^{1}  \exp \big(  \hskip -1 pt -  \tfrac 1 3 m y^{3/2}   \big) \nd y \\
& \Lt \frac 1 {m}.
\end{align*}
%The proof of Lemma \ref{lem: I < 1/R} is completed. 

\begin{rem}
	The $\log R$ in Lemma {\rm\ref{lem: Mellin, C}} or {\rm\ref{lem: I < 1/R}} could be removed on applying the stationary phase method {\rm(}Lemma {\rm 5.5.6} in {\rm\cite{Huxley}} for example{\rm)} instead of the second derivative test, as revealed by the formula
	\begin{align*}
		|I(0, \infty) | = \frac {2 \pi} {\sqrt{m^2 + \varnu^2}}, 
	\end{align*}
for $m \neq 0${\rm;} this may be seen from 
	\begin{align*}
	I (0, \infty) = \int_0^{\infty} \hskip -2 pt \int_{0 }^{2\pi}  e (2   R x \cos \phi ) e^{  i m \phi } x^{i\varnu -1}  \nd \phi  \shskip  { \nd x} = \frac { \pi i^{|m|} \Gamma \big(\frac 1 2 (|m|+i\varnu)\big)} {(2\pi R)^{i \varnu} \Gamma  \big(\frac 1 2 (|m|-i\varnu) + 1\big)} ,
	\end{align*}  which is a consequence of Weber's integral formula   in {\rm\cite[13.24 (1)]{Watson}}. 
\end{rem}

\section{Analysis of the Hankel Transforms, II}\label{sec: Hankel, II}
 
In this final analytic section, our primary object is to use the Mellin technique and the stationary phase method to analyze the $\Phi$-integrals in \S \ref{sec: analysis of Phi-interals}. We remind the reader that the expressions of these $\Phi$-integrals depend only mildly on $M$ and $\varLambda$.

\begin{defn}\label{defn: a (U), local}
Let $U \Gt 1 $ and  $ (\kappa , n) \in \widehat{\bfra} $. Define
\begin{align}
\widehat{\bfra}  (U) \hskip -1 pt = \hskip -1 pt \big\{ (\varnu, m) \in \widehat{\bfra} \hskip -1 pt : \hskip -1 pt {\textstyle \sqrt{\varnu^2 + m^2}}  \Lt U \big\}, \quad \widehat{\bfra}' (U) \hskip -1 pt = \hskip -1 pt \big\{ (\varnu, m) \in \widehat{\bfra} \hskip -1 pt : \hskip -1 pt {\textstyle \sqrt{\varnu^2 + m^2}}  \asymp U \big\},
\end{align}
and
\begin{align}
\widehat{\bfra}_{\kappa, \shskip n} (U) = \big\{ (\varnu, m) \in \widehat{\bfra} : {\textstyle \sqrt{(\varnu - \kappa)^2 + (m - n)^2}}  \Lt U \big\}. 
\end{align} 
\end{defn}

For convenience, we shall not distinguish $\xiup_R (\nu, m)$ and $\xiup_U (\nu, m)$  when $R \asymp U$; see Lemma \ref{lem: Mellin, R} and \ref{lem: Mellin, C}. 

\subsection{The Real Case}

\begin{lem}\label{lem: final, R}
Fix a constant $\varDelta > 1$ with $\log \varDelta$ small.	Let  $|\varLambda| \Gt T^2$. Let $|x| \sim_{\shskip \varDelta} X$. Let $\Phi^{\oldstylenums{0}} (x)$,  $\Phi^{-} (x)$ and $ \Phi^{+}  (x)   $   be given as in Corollary {\rm\ref{cor: Hankel y<1}} and {\rm\ref{cor: Hankel, R}}.  For  $\sigmaup = \oldstylenums{0}, -, +$, we have
	\begin{align}\label{13eq: Phi = Mellin, R}
	\Phi^{\sigmaup } (x) = \frac {T^{\vepsilon}} {\sqrt{A^{\sigmaup}}} \viint_{\widehat{\bfra}  (U^{\sigmaup})} \lambdaup^{\sigmaup} (\varnu, m)  {\vchi_{  i \varnu, \shskip  m} (x)} \nd \mu (\varnu, m) + O (T^{-A}), 
	\end{align}
for 
\begin{align}\label{13eq: ranges of X}
\begin{aligned}
& X \Lt T^{\vepsilon} / |\varLambda|,  & & \text{ if } \sigmaup = \oldstylenums{0}, \\
& T^{\vepsilon} / |\varLambda| \Lt X \Lt  {\textstyle \sqrt{|\varLambda|}} / M^{3-\vepsilon},  & & \text{ if } \sigmaup = -, \\
& X \approx_{\varDelta} {\textstyle \sqrt{|\varLambda|}}  \Gt M^{1-\vepsilon} T, & & \text{ if } \sigmaup = +, 
\end{aligned} 
\end{align} 
with
\begin{equation}
U^{\sigmaup} = \left\{ \begin{aligned}
& T^{\vepsilon},      \\
& |X \varLambda|^{1/3},    \\
& \min \big\{ T^2 / X, T^{\vepsilon} \big\},    
\end{aligned} \right. \quad A^{\sigmaup} = \left\{ \begin{aligned}
& 1,  & & \text{ if } \sigmaup = \oldstylenums{0}, \\
&{\textstyle   {|\varLambda|}},  & & \text{ if } \sigmaup = -, \\
& \min \big\{ T^{2-\vepsilon} ,  {X}    \big\}, & & \text{ if } \sigmaup = + ,
\end{aligned} \right.
\end{equation}  
and  $ \lambdaup^{\sigmaup} (\varnu, m) \Lt 1 $ for all $\sigmaup$. 
\end{lem}

Firstly, note that, according to Corollary {\rm\ref{cor: Hankel y<1}} and {\rm\ref{cor: Hankel, R}},  $ \Phi^{\sigmaup } (x) $ vanishes unless $X$ satisfies \eqref{13eq: ranges of X} in various cases. 

For $\Phi^{\oldstylenums{0}} (x)$ and  $\Phi^{+}  (x) = \Phi^{+}_0 (x)$, it is easy to establish \eqref{13eq: Phi = Mellin, R} by Lemma \ref{lem: Mellin, inert, R}, along with  \eqref{11eq: Phi0, R} and \eqref{11eq: E(x), R}.  This settles the case $\sigmaup = \oldstylenums{0}$ and partially the case $\sigmaup = +$  for $X \Gt T^{2-\vepsilon}$.

Next, we consider the integral $\Phi^{+}_1 (x)$ as defined in \eqref{11eq: Phi+1, R} for  $ M^{1-\vepsilon} T   \Lt |x| \Lt  T^{2-\vepsilon} $. Since $ |x \tanh^2 r| \approx_{\shskip \varDelta} T^2 / 4 \pi^2 X $ for $ |r| \approx_{\varDelta} T / 2 \pi X $, up to a negligible error,  we can rewrite $ \Phi^{+}_1 (x)$ using  Lemma \ref{lem: Mellin, R}  as follows, 
\begin{align*}
  \viint_{\widehat{\bfra}' (T^2/X) }   \overline{\xiup_{T^2 /   X} (\varnu, m)} {\vchi_{  i \varnu, \shskip  m} (x)} \int 
 e  ({T r} /\pi    )  \vchi_{  i \varnu, \shskip  m}( \tanh^2 r)   V^+ ( r) \nd r \, \nd \mu (\varnu, m), 
\end{align*} 
with $\xiup_{T^2 /   X} (\varnu, m) = O (\sqrt{X} / T)$. 
Write the inner integral as an exponential integral with phase   $f_+ (r) = (T r + \varnu \log |\tanh r| ) / \pi $. Note that $f_+'' (r) =  \varnu (\tanh^2 r - \coth^2 r)) / \pi   $ is of size $|\varnu| / r^2 \asymp X$. By the second derivative test (Lemma \ref{lem: 2nd derivative test, dim 1}), the  $r$-integral is $O (\log T /\sqrt{X})$, and $  \xiup_{T^2 /  X} (\varnu, m)  / \sqrt{X} = O (1 / T) $, leading to $\sqrt {A^{+}} = T $ for $X \Lt T^{2-\vepsilon}$ as claimed. 
 
Finally, let $ \Phi^{-} (x)$ be as defined in \eqref{11eq: defn of Phi-, R}. Note that  $ |r| \approx_{\varDelta} Y^{1/3} / |\varLambda|^{1/2} $ there amounts to $ |r| \approx_{\varDelta} X^{1/3} / |\varLambda|^{1/6} $ for $X = Y / |\varLambda|$, and hence $ |x \coth^2 r| \approx_{\shskip \varDelta}   |X \varLambda|^{1/3} $. By Lemma \ref{lem: Mellin, R}, up to a negligible error, the integral  $ \Phi^{-} (x)$ can be rewritten as 
\begin{align*}
 \viint_{\widehat{\bfra}' (|X \varLambda|^{1/3}) }   \overline{\xiup_{|X \varLambda|^{1/3}} (\varnu, m)} {\vchi_{  i \varnu, \shskip  m} (x)} \int 
e  ({T r} /\pi    )  \vchi_{  i \varnu, \shskip  m}(  \coth^2 r)   V^- ( r) \nd r \, \nd \mu (\varnu, m). 
\end{align*} 
with $\xiup_{|X \varLambda|^{1/3}} (\varnu, m) = O (1/|X \varLambda|^{1/6})$. Now the phase function of the inner integral is  $f_- (r) = (T r + \varnu \log |\coth r| ) / \pi $. Since $ f''_- (r) $($= - f''_{+} (r)$) is of size $ |\varnu| / r^2 \asymp  | \varLambda^2 / X  |^{1/3} $, by  the second derivative test, the  $r$-integral is $O (X^{1/6} \log T / |\varLambda|^{1/3} )$, and   $  \xiup_{|X \varLambda|^{1/3}} (\varnu, m)  \cdot  X^{1/6} / |\varLambda|^{1/3} = O (1/ \sqrt{|\varLambda|}) $, as desired. 

\begin{rem}\label{rem: case -}
	In the case $\sigmaup = -$,  $ f_- (r) $ has a stationary point at $ |\varnu| / T   \asymp   |X\varLambda|^{1/3} / T $, while $V^{-} (r)$ is supported on $ |r| \asymp X^{1/3} / |\varLambda|^{1/6} $, so a consistency check shows that $ |\varLambda| \asymp T^2 $. However, this would have been implied at an early stage when analyzing the Bessel integral $ \SDH   (- x^2) $  {\rm(}see Remark {\rm\ref{rem: real, Bessel range}}{\rm)}. 
\end{rem}

\subsection{The Complex Case} 

\begin{lem}\label{lem: final, C, 1}
	Fix a constant $\varDelta > 1$ with $\log \varDelta$ small.	Let  $|\varLambda| \Gt T^2$. Let $|z| \sim_{\shskip \varDelta} X$. Let $\Phi^{\oldstylenums{0}} (z)$,  $\Phi^{-}_{\rho} (z)$, $\Phi^{+}_{\rho} (z)$, and $\Phi^{+}_{0} (z)$   be given as in Corollary {\rm\ref{cor: Hankel y<1}} and {\rm\ref{cor: Hankel, C}}. Set $$ \Phi^{+}  (z) = \mathop{ \sum_{ T^{\vepsilon  } / T < \rho \shskip < 1/\hskip -1 pt \sqrt{2 } \varDelta } }_{\rho \shskip  \approx_{\varDelta} T/2\pi |\varLambda|^{1/2} } \Phi_{\rho}^{+} (z) + \Phi^{+}_0 (z) , \quad \Phi^{-}  (z) = \mathop{ \sum_{  \rho \shskip < 1/\hskip -1 pt \sqrt{2 } \varDelta } }_{\rho \shskip  \approx_{\varDelta} X^{1/3}/|\varLambda|^{1/6} } \Phi^{-}_\rho  (z) . $$  
	For  $\sigmaup = \oldstylenums{0}, -, + $, we have
	\begin{align}\label{13eq: Phi = Mellin, C}
	\Phi^{\sigmaup } (z) = \frac {T^{\vepsilon} } {A^{\sigmaup  }} \viint_{\widehat{\bfra}  (U^{\sigmaup})} \lambdaup^{\sigmaup} (\varnu, m)  {\vchi_{  i \varnu, \shskip  m} (z)} \nd \mu (\varnu, m) + O (T^{-A}), 
	\end{align}
	for 
	\begin{align}
	\begin{aligned}
	& X \Lt T^{\vepsilon} / |\varLambda|,  & & \text{ if } \sigmaup = \oldstylenums{0}, \\
	& T^{\vepsilon} / |\varLambda| \Lt X \Lt  {\textstyle \sqrt{|\varLambda|}} ,  & & \text{ if } \sigmaup = -, \\
%	& X \approx_{\shskip \varDelta} T  /4\pi , \ |\varLambda | \approx_{\shskip \varDelta} T^2 / 2 \pi^2,    & & \text{ if } \sigmaup = \flat , \\ 
	& X \asymp {\textstyle \sqrt{|\varLambda|}}  , & & \text{ if } \sigmaup = +, 
	\end{aligned} 
	\end{align} 
 	with
	\begin{equation}\label{13eq: U =, A =, C}
	U^{\sigmaup} = \left\{ \begin{aligned}
	& T^{\vepsilon},      \\
	& |X \varLambda|^{1/3},    \\
	& \min \big\{ T^2 / X, T^{\vepsilon} \big\},    
	\end{aligned} \right. \quad A^{\sigmaup} = \left\{ \begin{aligned}
	& 1,  & & \text{ if } \sigmaup = \oldstylenums{0}, \\
	&{\textstyle   {|\varLambda|}},  & & \text{ if } \sigmaup = -, \\
	& \min \big\{ T^{2-\vepsilon} ,  {X}   \big\}  , & & \text{ if } \sigmaup = + ,
	\end{aligned} \right.
	\end{equation}  
	and  $ \lambdaup^{\sigmaup} (\varnu, m) \Lt 1 $ for all $\sigmaup$. 
\end{lem}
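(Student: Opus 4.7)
The plan is to adapt the strategy of Lemma \ref{lem: final, R} to the complex setting, using the two-dimensional Mellin and stationary-phase machinery developed in the preceding sections, and treating the three cases $\sigmaup = \oldstylenums{0}, -, +$ in parallel. For $\sigmaup = \oldstylenums{0}$, the estimates \eqref{11eq: Phi0, C} show that $\Phi^{\oldstylenums{0}}(z)$ is $T^{\sepsilon}$-inert in $z$ at unit scale on $|z| \sim_{\varDelta} X$, so a smooth dyadic cutoff together with Lemma \ref{lem: Mellin, inert, C} immediately produces the Mellin representation with $U^{\oldstylenums{0}} = T^{\sepsilon}$ and $A^{\oldstylenums{0}} = 1$. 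The same inert argument applied to $\Phi^{+}_{0}$ via \eqref{11eq: bounds for E+, C} contributes to the $\sigmaup = +$ case within the claimed $U^{+}$, $A^{+}$.

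The real work lies in the oscillatory pieces $\Phi^{\pm}_{\rho}$. The strategy is to invoke Lemma \ref{lem: Mellin, C} to un-Mellin the factor $ e (-2 \Re ( z \shskip \trh^{\snatural}(r, \omega) )   ) $ and then estimate the resulting inner $(r, \omega)$-integral by the two-dimensional second derivative test in Lemma \ref{lem: 2nd derivative test, dim 2}. On the support of $V^{-}_{\rho}$ the formulas \eqref{6eq: rho and theta, natural} give $\rho^{\snatural} \asymp 1/\rho^{2}$, whence $|z \shskip \trh^{\snatural}| \asymp X/\rho^{2} \asymp (X|\varLambda|)^{1/3}$, while on the support of $V^{+}_{\rho}$ one has $\rho^{\snatural} \asymp \rho^{2}$ and $|z \shskip \trh^{\snatural}| \asymp T^{2}/X$. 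After applying Lemma \ref{lem: Mellin, C} at the corresponding scale $R$, the phase of the remaining double integral becomes
$$ f(r, \omega) = 2Tr/\pi - (\varnu/\pi) \log \rho^{\snatural}(r, \omega) - (m/2\pi) \theta^{\snatural}(r, \omega), $$
and the Cauchy--Riemann-type identities \eqref{6eq: derivative of rho, theta, 2.1}--\eqref{6eq: derivative of rho, theta, 2.2} immediately give $ \partial^{2}_{r} f = -\partial^{2}_{\omega} f $, precisely the hypothesis of Lemma \ref{lem: 2nd derivative test, dim 2}. Setting $u = \partial^{2}_{r} \log \rho^{\snatural}$ and $v = \partial^{2}_{r} \theta^{\snatural}$, the same identities expand to the algebraic fact $(\partial^{2}_{r} f)^{2} + (\partial^{2}_{r \omega} f)^{2} = (\varnu^{2} + m^{2}/4)(u^{2}+v^{2})/\pi^{2}$, so the matter reduces to proving $u^{2}+v^{2} \Gt 1/\rho^{4}$ on each support.

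This lower bound is the crux. Substituting \eqref{6eq: derivative of rho, theta, 2.1}--\eqref{6eq: derivative of rho, theta, 2.2} into the small-angle approximations valid on the respective supports collapses via the Pythagorean identity $(r^{2}-c^{2})^{2} + (2rc)^{2} = (r^{2}+c^{2})^{2}$, with $c = \cos\omega$ for $V^{-}_{\rho}$ and $c = \sin\omega$ for $V^{+}_{\rho}$, yielding $u^{2}+v^{2} \asymp 1/\rho^{4}$ uniformly and without any degeneracy at $r \asymp c$. Lemma \ref{lem: 2nd derivative test, dim 2}, together with a total variation estimate $V \Lt 1$ coming from area $\asymp \rho^{2}$ and the derivative bounds in \eqref{11eq: estimates for V, C}, then gives the inner integral $\Lt \rho^{2}/ \hskip -1 pt \sqrt{\varnu^{2}+m^{2}}$; multiplying by the Mellin coefficient $\xiup_{R}(\varnu, m) \Lt T^{\sepsilon}/R$ furnished by Lemma \ref{lem: Mellin, C} with $R = (X|\varLambda|)^{1/3}$ or $T^{2}/X$ produces the desired $T^{\sepsilon}/|\varLambda|$ or $T^{\sepsilon}/T^{2-\sepsilon}$, matching $A^{-}$ and $A^{+}$. (When $T^{2}/X$ falls below $T^{\sepsilon}$, the exponential is effectively inert in $z$ and is absorbed directly via Lemma \ref{lem: Mellin, inert, C}.)

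The main obstacle is exactly this Pythagorean-type lower bound for $u^{2}+v^{2}$, which without the Cauchy--Riemann structure of $(\log\rho^{\snatural}, \theta^{\snatural})$ would break down on the degenerate locus where $r$ and $\cos\omega$ (resp.\ $\sin\omega$) are comparable. The alignment between this structure and the symmetry hypothesis $\partial^{2}_{r} f = -\partial^{2}_{\omega} f$ of Lemma \ref{lem: 2nd derivative test, dim 2} is exactly what allows the off-diagonal Hessian term to compensate; a standard two-dimensional stationary phase estimate requiring both diagonal second derivatives to be large would fail here.
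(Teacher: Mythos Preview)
Your proposal is correct and follows the same overall architecture as the paper: handle $\Phi^{\oldstylenums{0}}$ and $\Phi^{+}_{0}$ via Lemma~\ref{lem: Mellin, inert, C}, then for the oscillatory $\Phi^{\pm}_{\rho}$ apply Lemma~\ref{lem: Mellin, C} to the factor $e(-2\Re(z\,\trh^{\snatural}))$, obtain an $(r,\omega)$-integral with phase $f = 4Tr + \varnu\log\rho^{\snatural} + m\,\theta^{\snatural}$ (up to normalization), and finish with the second derivative test of Lemma~\ref{lem: 2nd derivative test, dim 2}. The Mellin supports and the resulting $U^{\sigmaup}$, $A^{\sigmaup}$ match.

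The one genuine difference is how the hypothesis of Lemma~\ref{lem: 2nd derivative test, dim 2} is verified. The paper first invokes Lemma~\ref{lem: staionary phase, dim 2, 2} (H\"ormander partial integration) to show $I^{\pm}_{\rho}$ is negligible unless $(\varnu,m)$ lies near the stationary point (equation~\eqref{13eq: v rho = T}), then restricts to a small rectangle around it, and finally uses Lemma~\ref{lem: lower bounds} to bound the Hessian entries there by $T/\rho$. You bypass this localization entirely: from the Cauchy--Riemann identities \eqref{6eq: derivative of rho, theta, 2.1}--\eqref{6eq: derivative of rho, theta, 2.2} you get the exact identity $(\partial^{2}_{r}f)^{2}+(\partial^{2}_{r\omega}f)^{2}=(\varnu^{2}+m^{2})(u^{2}+v^{2})$, and then the Pythagorean collapse $(r^{2}-c^{2})^{2}+(2rc)^{2}=(r^{2}+c^{2})^{2}$ in the small-angle regime gives $u^{2}+v^{2}\asymp 1/\rho^{4}$ \emph{uniformly} on the support, not just near the stationary point. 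This is cleaner and shorter; the paper's route has the side benefit of recording that $|\varLambda|\asymp T^{2}$ is forced when $\sigmaup=-$ (Remark~\ref{rem: Lambda = T2}), but that fact is not needed for the lemma as stated.

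One small slip: your claim ``area $\asymp\rho^{2}$'' for the support of $V^{\pm}_{\rho}$ is only correct when $\rho\Lt 1/M$; otherwise the $r$-range is truncated by $g(Mr)$ to $|r|\Lt M^{\sepsilon}/M$ and the area is $\asymp\rho\cdot M^{\sepsilon}/M$. The conclusion $V\Lt 1$ in Lemma~\ref{lem: 2nd derivative test, dim 2} still holds in both regimes (the extra $M$ from differentiating $g(Mr)$ is compensated by the smaller area), so this does not affect the outcome.
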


\begin{lem}\label{lem: final, C, 2}
	Fix a constant $\varDelta > 1$ with $\log \varDelta$ small. Assume that  $M \leqslant T^{1/3}$. 	Let  $ |\varLambda | \approx_{\shskip \varDelta} T^2 / 2 \pi^2$  and $ X \approx_{\shskip \varDelta} T  /4\pi $. For $|z| \sim_{\shskip \varDelta} X$, let $\Phi^{\flat} (z)$   be given as in Corollary  {\rm\ref{cor: Hankel, C}}. We have
	\begin{align*}%\label{13eq: Phi = Mellin, C, 2}
	\Phi^{\flat } (z) = \sum_{ T^{\vepsilon}/ K^{\flat}  < \shskip \rho \shskip <  \varDelta - 1 } \Phi_{\rho}^{\flat } (z) + \Phi_0^{\flat } (z) + O (T^{-A}), 
	\end{align*} 
	with $K^{\flat} = \min \big\{ (T/M)^{1/2}, T^{1/4}  \big\} $, 
	\begin{align}\label{13eq: Phi = Mellin, C, 2}
	\Phi^{\flat }_{\rho} (z) = \frac {1} {A_{\rho}^{\flat}} \viint_{\widehat{\bfra}_{0, \lfloor T \rfloor}  (U_{\rho}^{\flat})\shskip \cup \shskip \widehat{\bfra}_{0, \lceil -T \rceil }  (U_{\rho}^{\flat})} \lambdaup^{\flat}_{\rho} (\varnu, m)  {\vchi_{  i \varnu, \shskip  m} (z)} \nd \mu (\varnu, m)  , 
	\end{align} 
	where $\rho = \varDelta^{-k/2}$ or $0$, 
	\begin{align}
	 U_{\rho}^{\flat} = T \rho^2,  \qquad  A_{\rho}^{\flat} = T^2 \rho,  
	\end{align}
	\begin{align}\label{13eq: U and A flat 0}
	 U_0^{\flat} = \left\{ \begin{aligned}
	 & T^{1/2+\vepsilon},      \\
	 & T^{1/2+\vepsilon} ,    \\
	 & M T^{\vepsilon} ,    
	 \end{aligned} \right.  \qquad      A_{0}^{\flat} = \left\{ \begin{aligned}
	 & T^{5/3 },  & & \text{ if } T^{\vepsilon} \leqslant M \leqslant T^{1/3}, \\
	 & M^{1/2} T^{3/2 },  & & \text{ if } T^{1/3} < M \leqslant T^{1/2}, \\
	 & M^{1/2} T^{3/2 } , & & \text{ if } T^{1/2} < M \leqslant T^{1 - \vepsilon} ,
	 \end{aligned} \right. 
	\end{align}
	and $ \lambdaup^{\flat}_{\rho} (\varnu, m), \lambdaup^{\flat}_{0} (\varnu, m)  \Lt 1$. 
\end{lem}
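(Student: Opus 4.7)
The plan is to refine Corollary \ref{cor: Hankel, C} by a further $\varDelta$-adic partition of $V^\flat$ in the coordinate $\alpha := \omega - \pi/4$ (respectively $\omega - 3\pi/4$), writing $V^\flat = \sum_\rho V^\flat_\rho + V^\flat_0$, where $V^\flat_\rho$ localizes to $|\cos 2\omega| \sim \rho$ for $\rho = \varDelta^{-k/2} \in (T^\sepsilon/K^\flat, \varDelta - 1)$, and $V^\flat_0$ absorbs the residual $|\cos 2\omega| \leqslant T^\sepsilon/K^\flat$. The symmetry $\omega \mapsto \pi - \omega$ sends $\trh^\snatural \mapsto \overline{\trh^\snatural}$, so the $\omega \approx 3\pi/4$ contribution is obtained from the $\omega \approx \pi/4$ analysis by complex conjugation of $z$; after Mellin inversion this flips the sign of $m$ and accounts for the two branches $\widehat{\bfra}_{0, \lfloor T \rfloor}$ and $\widehat{\bfra}_{0, \lceil -T \rceil}$ in \eqref{13eq: Phi = Mellin, C, 2}.

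For each $\rho$-piece I apply Lemma \ref{lem: Mellin, C} to $e(-2 \Re(z \shskip \trh^\snatural(r, \omega)))$, using that $|z \shskip \trh^\snatural| \asymp X \asymp T/(4\pi)$ since $\rho^\snatural \asymp 1$ on the support; this separates the $z$-dependence via $\chi_{-i\varnu, -m}(z)$ times the inner integral
\[
J_\rho(\varnu, m) = \iint g(Mr) \, V^\flat_\rho(r, \omega) \, \exp\bigl( i \bigl[\, 4Tr - 2\varnu \log \rho^\snatural - m \theta^\snatural \,\bigr] \bigr) \, \nd r \, \nd \omega.
\]
Taylor expansion near $(r, \alpha) = (0, 0)$ gives $\theta^\snatural \approx \pi - 4r + \tfrac{8}{3} r^3 - 8 r \alpha^2$ and $\log \rho^\snatural \approx 4 \alpha$, so with $\mu := m + T$ the phase reduces to the cubic normal form
\[
\Phi \approx - m\pi + \tfrac{8T}{3} \Re(\zeta^3) + 4 \Re( w \overline{\zeta} ),
\]
where $\zeta = r + i \alpha$ and $w = \mu - 2 i \varnu$. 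The stationary equations $r^2 - \alpha^2 = -\mu/(2T)$ and $r\alpha = -\varnu/(2T)$ jointly force $|\zeta|^2 = |w|/(2T)$, so a stationary point inside the shell $|\zeta| \sim \rho$ exists precisely when $|w| = \sqrt{\mu^2 + 4\varnu^2} \asymp T\rho^2$, matching $U^\flat_\rho = T\rho^2$.

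Outside $\widehat{\bfra}_{0, \lceil -T \rceil}(T\rho^2)$ I apply H\"ormander's partial integration (Lemma \ref{lem: staionary phase, dim 2, 2}, with $\partial_r^i \partial_\omega^j V^\flat_\rho = O(\rho^{-i-j})$), using the gradient bound $|\nabla \Phi| \Gt \max\{|\mu|, |\varnu|\}$ on the shell to render $J_\rho$ negligible. Inside, the real Hessian of $\Phi$ has $|\det H| \asymp T^2 \rho^2$ with cubic remainder $O(T \rho^3)$ controlled on the oscillation scale $1/\hskip -1pt \sqrt{T\rho}$ whenever $\rho \Gt T^{-1/3}$, so nondegenerate two-dimensional stationary phase yields $|J_\rho| \Lt 1/(T\rho)$; combined with $|\xiup_X| \Lt \log T / T$ from Lemma \ref{lem: Mellin, C}, this delivers the prefactor $1/A^\flat_\rho = 1/(T^2 \rho)$ of \eqref{13eq: Phi = Mellin, C, 2}.

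For the residual $\Phi^\flat_0$, the $(r, \alpha)$-integration sits in the degenerate neighborhood where no quadratic normal form exists; after rescaling $\zeta \mapsto T^{-1/3} \zeta$, the $(r, \alpha)$-integral becomes a two-dimensional Airy-type integral of size $T^{-2/3}$ when $M \leqslant T^{1/2}$, or $M^{1/2}/T^{3/2}$ when $M > T^{1/2}$ (the $|r| \Lt 1/M$ cutoff from $g(Mr)$ then truncating the cubic region), yielding the $1/A^\flat_0$ in \eqref{13eq: U and A flat 0}. The main obstacle is precisely this degenerate two-dimensional stationary-phase analysis: the real Hessian of $\Phi$ vanishes identically at $(r, \omega) = (0, \pi/4)$ when $(\varnu, m) = (0, -T)$, so only the cubic Airy structure $\Re(\zeta^3)$ governs the behavior after rescaling, with linear perturbation $4 \Re( w \overline{\zeta} ) / T^{1/3}$. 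The calibration $K^\flat = \min\{(T/M)^{1/2}, T^{1/4}\}$ balances the competing constraints $\rho \Gt T^{-1/4}$ (for a clean Hessian on each shell, strengthened from the natural $T^{-1/3}$) and $\rho \Gt (M/T)^{1/2}$ (forced by $|r| \Lt 1/M$ when $M > T^{1/2}$); case-by-case verification using the Mellin support of Lemma \ref{lem: Mellin, C} near the saturated regime $\sqrt{\varnu^2 + m^2} \asymp R$ then yields the piecewise formulas for $U^\flat_0$ and $A^\flat_0$.
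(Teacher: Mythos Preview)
Your approach has a genuine gap in the choice of partition variable. You partition $V^\flat$ according to $|\cos 2\omega|\sim\rho$ (equivalently $|\alpha|\sim\rho$), but the paper partitions according to $\sqrt{r^2+\cos^2 2\omega}\sim\rho$. These differ when $1/M>\rho$, i.e.\ when $M$ is small.

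Concretely, on your support you only know $|\alpha|\sim\rho$ and $|r|\leqslant M^{\sepsilon}/M$. In your cubic normal form the gradient satisfies
\[
\partial_r\Phi = 8T(r^2-\alpha^2)+4\mu,\qquad \partial_\alpha\Phi = -16Tr\alpha+8\varnu,
\]
so the quadratic pieces are $O(T\rho^2+T/M^2)$, not $O(T\rho^2)$. Hence your gradient lower bound $|\nabla\Phi|\Gt\max\{|\mu|,|\varnu|\}$ only holds once $|w|\gg T/M^2$, and the claimed support $U_\rho^\flat=T\rho^2$ is unjustified whenever $\rho<1/M$. For $M\leqslant T^{1/4}$ this affects the entire range $T^{\sepsilon-1/4}<\rho<1/M$. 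The same issue undermines your Airy rescaling for $\Phi_0^\flat$: the substitution $\zeta\mapsto T^{-1/3}\zeta$ presumes $|r|\Lt T^{-1/3}$, which fails for $M\leqslant T^{1/3}$; for such $M$ the $r$-integration extends far beyond the cubic region and the $T^{-2/3}$ bound does not follow.

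The paper's remedy is simple: partition in $\sqrt{r^2+\cos^2 2\omega}$ so that $|r|\Lt\rho$ holds on every piece. Then the gradient estimates $\partial f/\partial r=4(T\mp m)+O(T\rho^2)$ and $\partial f/\partial\omega=\pm 4\varnu+O(T\rho^2)$ are immediate, and one-dimensional partial integration (Lemma~\ref{lem: staionary phase, dim 1, 2}) gives the support $U_\rho^\flat=T\rho^2$; the size bound $I_\rho^\flat\Lt 1/(T\rho)$ comes from the second derivative test (Lemma~\ref{lem: 2nd derivative test, dim 2}), and for $I_0^\flat$ the trivial area bound suffices. No cubic normal form or Airy analysis is needed.
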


\begin{rem}
	 It is important that  $r$ and $\omega$ play symmetric roles in the arguments below. Note that the restriction $|r| \leqslant M^{\vepsilon} / M $ does not apply to the $\omega$-variable.  Nevertheless, we could let $M = T^{\vepsilon}$ so that not much symmetry is lost. This symmetry seems unique for  the first moment of $\GL_3 \times \GL_2$ or the cubic moment of  $ \GL_2$---it does not occur, for example, in the case of the second moment of $\GL_2$, and one saves less in the $M$-aspect. 
\end{rem}

For $\Phi^{\oldstylenums{0}} (z)$ and  $\Phi^{+}_0 (z)$, it is easy to establish \eqref{13eq: Phi = Mellin, C} by Lemma \ref{lem: Mellin, inert, C}, along with  \eqref{11eq: Phi0, C} and \eqref{11eq: bounds for E+, C}.  This settles the case $\sigmaup = \oldstylenums{0}$ and partially the case $\sigmaup = +$ for $X \Gt T^{2-\vepsilon}$.

In view of \eqref{6eq: rho and theta, natural},  
\begin{align*} 
\rho^{\snatural} (r, \omega)   = \frac {\sinh^2 r +  \sin^2 \omega} {\cosh^2 r - \sin^2 \omega} = \frac {\cosh^2 r -   \cos^2 \omega} {\sinh^2 r +   \cos^2 \omega}  = \frac {\cosh 2 r -   \cos 2 \omega} {\cosh 2 r +   \cos 2 \omega} .
\end{align*}
From  \eqref{11eq: support V, +}--\eqref{11eq: support V, flat} and the conditions for the $\rho$-sums in \eqref{11eq: partition of Phi, C}, we deduce that
\begin{align*}
x \rho^{\snatural} (r, \omega) \approx_{\shskip \varDelta}   \left\{ \begin{aligned}
&{\displaystyle  |X\varLambda|^{1/3} \rho^2  \asymp   {  T^2} / {  X }, } \  & &  \text{ if }  \sigmaup = +, \\
&  {\displaystyle X   ({1-\rho^2}) / {\rho^2} %\approx_{\shskip \varDelta}    {|X\varLambda|^{1/3}  }  - X 
\asymp |X\varLambda|^{1/3}  ,} & & \text{ if }  \sigmaup = -, \\
& X   \approx_{\shskip \varDelta}  T/4 \pi , & & \text{ if }  \sigmaup = \flat . 
\end{aligned} \right.   
\end{align*} 
%Set $U^{\sigmaup} =  T^2/X, |X\varLambda|^{1/3}, T$ according as $\sigmaup = +, -, \flat$. 
Applying Lemma \ref{lem: Mellin, C} to the exponential factor $e (- 2 \Re  (z \shskip \trh^{\snatural} (r, \omega) ))$ in the integral   \eqref{11eq: Phi integrals}, we have       
\begin{align}\label{13eq: Phi = iint, pm}
 \Phi_{\rho}^{\pm} (x e^{i\phi}) & = \viint_{\widehat{\bfra}' (U^{\pm}) }   \overline{\xiup_{U^{\pm}} (2\varnu, m)}     I^{\pm}_{\rho} (2\varnu, m) \vchi_{  i \varnu, \shskip  m} (x e^{i\phi})  \nd \mu (\varnu, m) + O(T^{-A}), \\
\label{13eq: Phi = iint, flat}   \Phi^{\flat} (x e^{i\phi}) & = \viint_{\widehat{\bfra}' (T) }   \overline{\xiup_{T} (2\varnu, m)}     I^{\flat} (2\varnu, m) \vchi_{  i \varnu, \shskip  m} (x e^{i\phi})  \nd \mu (\varnu, m) + O(T^{-A}),
\end{align}
where $\xiup_{U^{\pm}} (\varnu, m) = O(\log T /U^{\pm})$, $ \xiup_{T} (\varnu, m) = O(\log T / T) $,   
\begin{align}\label{13eq: defn of I, pm}
I^{\pm}_{\rho} (\varnu, m) & = \viint  
e  ( f (r, \omega; t, m) /2 \pi  ) g (M r) V^{\pm}_{\rho}   (r, \omega) \nd r \nd \omega, \\
 \label{13eq: defn of I, flat}
I^{\flat}  (\varnu, m) & = \viint  
e  ( f (r, \omega; t, m) /2 \pi  ) g (M r) V^{\flat}   (r, \omega) \nd r \nd \omega, 
\end{align}  
and 
\begin{align}\label{13eq: phase f (r,w;t,m)}
f (r, \omega; \varnu, m) = 4Tr  + \varnu \log \rho^{\snatural} (r,  \omega) + m \theta^{\snatural} (r,  \omega).
\end{align}

\vskip 5 pt

\subsubsection*{Analysis of $ f (r, \omega; \varnu, m) $} 

By \eqref{6eq: derivative of rho, theta, 1.1} and \eqref{6eq: derivative of rho, theta, 1.2}, we have
\begin{align}\label{13eq: partial f}
{\partial f } / {\partial r} =  4(T+\varnu A_1 - mB_1), \quad  {\partial f } / {\partial \omega} =  4(m A_1 + \varnu B_1) ,
\end{align}
with 
\begin{align} \label{13eq: A1 B1}
A_1 = \frac {  \sinh 2r \cos 2\omega   } {\sinh^2 2 r + \sin^2 2\omega}, \qquad B_1 =  \frac { \cosh 2r \sin 2\omega } {\sinh^2 2 r + \sin^2 2\omega}.
\end{align}
Since  \begin{align*}%\label{13: A2 + B2}
\sinh^2 2r \cos^2 2\omega + \cosh^2 2r \sin^2 2\omega = \sinh^2 2 r + \sin^2 2\omega, 
\end{align*}  
the stationary point $(r_0, \omega_0)$ is given by the equations:
\begin{align}\label{6eq: stationary point}
\sinh 2r_0 \cos 2\omega_0 = \varnu /T, \quad  \cosh 2r_0 \sin 2\omega_0 = - m/T.
\end{align}
Also note that %\eqref{13: A2 + B2} implies 
\begin{align}\label{13eq: A2+B2}
A_1^2 + B_1^2 = \frac 1 { \sinh^2 2 r + \sin^2 2\omega }.
\end{align} 
It follows from \eqref{13eq: partial f} that
\begin{align*}
%\begin{split}
   ({\partial f } / {\partial r})^2 + ({\partial f } / {\partial \omega})^2    
= \, & 16 \Big( T   - \hskip -1pt {\textstyle \sqrt {(\varnu^2+m^2) (A_1^2+B_1^2) }} \Big)^{  2} \\
&  +   32 T \Big( \hskip -1pt {\textstyle \sqrt {(\varnu^2+m^2)  (A_1^2+B_1^2)} }  +   \varnu A_1  - mB_1   \Big)   .
%\end{split}
\end{align*}
From this, it is easy to prove the following lemma.

\begin{lem}\label{lem: away from the stationary point}
	We have 
	\begin{align}\label{13eq: lower bound for f'}
	|f' (r, \omega; \varnu, m)|^2 \Gt T^2 +   \frac {\varnu^2+m^2} {\sinh^2 2 r + \sin^2 2\omega}, 
	\end{align}
	unless 
	\begin{align}\label{6eq: condition 2}
	\left|    \sinh 2r \cos 2\omega \cdot T - \varnu \right|, \  \left| \cosh 2r \sin 2\omega \cdot T + m \right|  \llcurly_{\shskip \varDelta} \sqrt {\varnu^2 + m^2}. 
	\end{align}
\end{lem}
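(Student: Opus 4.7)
The plan is to extract both inequalities of \eqref{6eq: condition 2} from the decomposition
\begin{equation*}
|f'|^{2} = 16\Bigl(T - \sqrt{(\varnu^{2}+m^{2})/S}\Bigr)^{\!2} + 32T\Bigl(\sqrt{(\varnu^{2}+m^{2})/S} + \varnu A_{1} - mB_{1}\Bigr),
\end{equation*}
with $S = \sinh^{2}2r+\sin^{2}2\omega$, already recorded in the paragraph preceding the lemma. Both summands are non-negative (the second by Cauchy--Schwarz applied to the vectors $(\varnu,-m)$ and $(\sinh 2r\cos 2\omega,\cosh 2r\sin 2\omega)$ of lengths $\sqrt{\varnu^{2}+m^{2}}$ and $\sqrt{S}$), so I argue by contrapositive: if $|f'|^{2} \Lt_{\varDelta} T^{2} + (\varnu^{2}+m^{2})/S$ with a small enough implied constant, then each summand is small relative to the right-hand side, and together they will force $(r,\omega)$ into the $\llcurly_{\varDelta}$-neighbourhood of the stationary locus.

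Writing $X = \sqrt{(\varnu^{2}+m^{2})/S}$, an elementary one-variable convexity argument shows $(T-X)^{2} \Gt_{\varDelta} T^{2}+X^{2}$ whenever $X/T$ lies outside a suitable $\varDelta$-dependent neighbourhood of $1$. Thus outside the regime $T\sqrt{S} \approx_{\varDelta} \sqrt{\varnu^{2}+m^{2}}$ the first summand already delivers \eqref{13eq: lower bound for f'}, while inside we may further assume $X \asymp T$ so that the hypothesis collapses to $|f'|^{2} \Lt_{\varDelta} T^{2}$.

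At this point the cleanest finish is to read the gradient equations \eqref{13eq: partial f} as a linear system in $(u,v) = (\sinh 2r\cos 2\omega,\cosh 2r\sin 2\omega)$,
\begin{equation*}
\begin{pmatrix} \varnu & -m \\ m & \varnu \end{pmatrix}\!\begin{pmatrix} u \\ v \end{pmatrix} = S\begin{pmatrix} \partial_{r}f/4 - T \\ \partial_{\omega}f/4 \end{pmatrix},
\end{equation*}
whose coefficient matrix has determinant $\varnu^{2}+m^{2}$. Direct inversion, combined with $S\asymp (\varnu^{2}+m^{2})/T^{2}$ from the previous step, expresses $(Tu,Tv)$ as its stationary-point value plus an error of size $\sqrt{\varnu^{2}+m^{2}}\,|f'|/T \Lt_{\varDelta} \sqrt{\varnu^{2}+m^{2}}$, which is precisely \eqref{6eq: condition 2}.

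The only real difficulty is the uniform $\varDelta$-bookkeeping: the elementary constants from the convexity step and the matrix inversion must combine to produce a net error factor $\delta_{\varDelta}$ with $\delta_{\varDelta}\ra 0$ as $\log\varDelta \ra 0$, matching the $\llcurly_{\varDelta}$ notation introduced in Notation \ref{notation: approx}. No new analytic input beyond the identity for $|f'|^{2}$ and $2\times 2$ linear algebra is required; conceptually the lemma is a quantitative rigidity statement saying that, after the substitution $(u,v)$, the gradient of $f/4$ is an invertible affine map near its unique zero.
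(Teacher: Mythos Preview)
Your proposal is correct and follows exactly the route the paper intends: the paper derives the identity $|f'|^{2} = 16(T-X)^{2} + 32T(X+\varnu A_{1}-mB_{1})$ and then simply writes ``From this, it is easy to prove the following lemma,'' leaving the details you supply. Your matrix-inversion finish (reading \eqref{13eq: partial f} as a $2\times 2$ system in $(u,v)=(\sinh 2r\cos 2\omega,\cosh 2r\sin 2\omega)$) is a clean way to carry out what the paper omits, and your closing paragraph on the $\varDelta$-bookkeeping correctly identifies the only point requiring care.
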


%Recall that the stationary point  $(r_0, \omega_0)$ is given by \eqref{6eq: stationary point}, so t
Note that the conditions in  \eqref{6eq: condition 2} imply 
\begin{align}\label{6eq: condition 1}
   \frac {\varnu^2+m^2} {\sinh^2 2 r + \sin^2 2\omega} \approx_{\shskip \varDelta} T^2,
\end{align}
and they describe a small neighborhood of  $(r_0, \omega_0)$. 

By \eqref{6eq: derivative of rho, theta, 2.1} and \eqref{6eq: derivative of rho, theta, 2.2}, we have
\begin{align}\label{6eq: f'' = }
f'' = - 8 \begin{pmatrix}
  \varnu A_2 - m B_2 &    mA_2 + \varnu B_2  \\
  mA_2 + \varnu B_2 &  - \varnu A_2 + m B_2
\end{pmatrix},
\end{align}
with 
\begin{equation} 
A_2 = \frac { \cosh 2r \cos 2\omega   (\sinh^2 2 r - \sin^2 2 \omega)} { (\sinh^2 2 r + \sin^2 2\omega )^2}, \quad B_2 = \frac {  \sinh 2r \sin 2\omega   (\cosh^2 2 r + \cos^2 2 \omega)} { (\sinh^2 2 r + \sin^2 2\omega )^2}. 
\end{equation}
It is clear that for $|r| < 1$ we have
\begin{align}\label{13eq: bounds for A2 B2}
	A_2, \ B_2 \Lt \frac { 1 } { \sinh^2 2 r + \sin^2 2\omega }. 
\end{align}
Some computations show that
\begin{align*}%\label{6eq: f'' at stationary point}
f'' (r_0, \omega_0; \varnu, m) \hskip -1pt = \hskip -1pt - \frac {8 T } {\sinh^2 \hskip -1pt 2 r_0 \hskip -1pt + \hskip -1pt \sin^2 \hskip -1pt 2\omega_0 } \hskip -1pt \begin{pmatrix}
\sinh \hskip -1pt 2r_0 \cosh \hskip -1pt 2r_0 &  \hskip -1pt \sin \hskip -1pt 2\omega_0 \cos \hskip -1pt 2\omega_0  \\
\sin \hskip -1pt 2\omega_0 \cos \hskip -1pt 2\omega_0 & \hskip -1pt - \sinh \hskip -1pt 2r_0 \cosh \hskip -1pt 2r_0
\end{pmatrix} \hskip -1pt.
\end{align*}
In light of this, we have the following lemma. 

\begin{lem}\label{lem: lower bounds}
	For any $(r, \omega)$ satisfying {\rm\eqref{6eq: condition 2}}, we have
	\begin{align}\label{13eq: bound 1}
	\varnu A_2 - m B_2 - \frac { T \sinh 2r \cosh 2 r } {\sinh^2 2r + \sin^2 2 \omega } %\llcurly_{\shskip \varDelta}  { \sqrt{t^2+m^2} } \frac { \left| \sinh^2 2 r - \sin^2 2 \omega \right| + 2 |\sinh 2r \sin 2\omega | } {(\sinh^2 2 r + \sin^2 2\omega )^2} \\
	\llcurly_{\shskip \varDelta} \frac { \sqrt{\varnu^2+m^2} } { \sinh^2 2 r + \sin^2 2\omega },
	\end{align}
	\begin{align}\label{13eq: bound 2}
	m A_2 + \varnu B_2 - \frac { T \sin 2\omega \cos 2\omega } {\sinh^2 2r + \sin^2 2 \omega } %\llcurly_{\shskip \varDelta}  { \sqrt{t^2+m^2} } \frac { \left| \sinh^2 2 r - \sin^2 2 \omega \right| + 2 |\sinh 2r \sin 2\omega | } {(\sinh^2 2 r + \sin^2 2\omega )^2} \\
	\llcurly_{\shskip \varDelta} \frac { \sqrt{\varnu^2+m^2} } { \sinh^2 2 r + \sin^2 2\omega }. 
	\end{align}
\end{lem}

\begin{proof} 
	\eqref{13eq: bound 1} is a consequence of \eqref{6eq: condition 2} and \eqref{13eq: bounds for A2 B2}, since  its left hand side may be written as $$ \lp \varnu - \sinh 2r \cos 2\omega \cdot T \rp A_2 - \lp m + \cosh 2r \sin 2\omega \cdot T  \rp B_2, $$ 
	and, similarly,  so is \eqref{13eq: bound 2}. 
\end{proof}

Moreover, by Lemma \ref{lem: derivatives of log rho, theta}, 
\begin{align}\label{13eq: bounds for f i+j}
\frac {\partial^{i +  j}  f (r, \omega; \varnu, m) }  {   \partial r^{\shskip i} \partial \omega^{\shskip j} }  \Lt_{\shskip  i, \shskip  j } 
\frac {\sqrt{\varnu^2 + m^2}} { (\sinh^2 2 r + \sin^2 2 \omega)^{( i+ j)/ 2} }
\end{align}  
for $ i +  j \geqslant 2$. 

\vskip 5 pt

\subsubsection*{The Case $\sigmaup = \pm$} 

For Lemma \ref{lem: final, C, 1} it remains to prove the bounds:
\begin{align*}%\label{13eq: bounds for I+, I-, C}
I^{+}_{\rho} (\varnu, m) \Lt \log^2 T / X, \qquad I^{-}_{\rho} (\varnu, m) \Lt X^{1/3} \log^2 T / |\varLambda|^{2/3}. 
\end{align*}

Recall from \eqref{11eq: support V, +} and \eqref{11eq: support V, -} that $V^{\sigmaup}_{\rho} (r, \omega)$ is supported on 
\begin{align}\label{13eq: support of V}
\sqrt{r^2 + \sin^2 \omega} \sim_{\shskip \varDelta} \rho, \qquad \sqrt{r^2 + \cos^2 \omega} \sim_{\shskip \varDelta} \rho,
\end{align} 
according as $\sigmaup = +$ or $-$, and from   \eqref{11eq: estimates for V, C} that  
\begin{align}\label{13eq: bounds for V}
\partial_r^{i } \partial_{\omega}^{i } V^{\sigmaup}_\rho (r, \omega)     \Lt_{   i, \shskip j } 1 / {\rho^{ i + j}} . 
\end{align}   

In view of Lemma \ref{lem: away from the stationary point},  \eqref{6eq: condition 1}, and \eqref{13eq: support of V}, together with the identity $$ \sinh^2 2r + \sin^2 2\omega = 4 (\sinh^2 r +   \sin^2 \omega)(\sinh^2 r + \cos^2 \omega) ,$$ one would expect the integral $ I^{\sigmaup}_{\rho} (\varnu, m) $ to be negligibly small unless
\begin{align}\label{13eq: v rho = T}
(1-\rho^2) \rho^2  \approx_{\shskip \varDelta} \frac {\varnu^2+m^2} {2  T^2 } .
\end{align} To see this,  we apply Lemma \ref{lem: staionary phase, dim 2, 2}  with $Q = \varPhi =  \varUpsilon  = \rho$, $P = \min  \{ \rho, 1/M \}$,  $Z = \sqrt{\varnu^2 + m^2} $, and $R  = T  + \sqrt{\varnu^2 + m^2} / \rho  $, which are determined by \eqref{13eq: lower bound for f'},  \eqref{13eq: bounds for f i+j}, and \eqref{13eq: bounds for V}. For  $\sigmaup = +$, the condition $\rho > T^{\vepsilon}/T$ is required here. For $\sigmaup = -$, it is slightly easier because $R  \asymp \sqrt{\varnu^2 + m^2} / \rho $  in view of $ \sqrt{\varnu^2 + m^2} / \rho  \asymp \sqrt{|\varLambda|} \Gt T $. 

Next, some remarks on \eqref{13eq: v rho = T} are in order. For $\sigmaup = +$, it is pleasant to check that \eqref{13eq: v rho = T} is consistent with $  \rho \shskip  \approx_{\varDelta} T/2\pi |\varLambda|^{1/2}$, $  \sqrt{ \varnu^2 + m^2 } \asymp T^2/X $, and  $ X \asymp \sqrt{|\varLambda|} $. For $\sigmaup = -$, since $ \rho \shskip  \approx_{\varDelta} X^{1/3}/|\varLambda|^{1/6} $ and $ \sqrt{ \varnu^2 + m^2 } \asymp |X \varLambda|^{1/3}$, \eqref{13eq: v rho = T} would imply $|\varLambda |\asymp T^2$. We remark that  the condition $ |\varLambda| \asymp T^2 $ for $\sigmaup = -$ also arose in the real case (see Remark \ref{rem: case -}). 

Consider rectangular regions  of the form 
\begin{align}\label{13eq: region, 2}
\left|  \pm  2 r (1 - 2 \rho^2) \cdot T - \varnu \right|, \  \left|  \sin 2\omega \cdot T + m \right|  \llcurly_{\shskip \varDelta} \sqrt {\varnu^2 + m^2}. 
\end{align} 
Given \eqref{13eq: support of V} (and $|r| \leqslant M^{\vepsilon}/M$), the regions defined by   \eqref{6eq: condition 2} and \eqref{13eq: region, 2} contain one another if we suitably choose the implied constants. By  Lemma \ref{lem: staionary phase, dim 2, 2}, we  infer that only a negligibly small error is lost if the integral is restricted to the region \eqref{13eq: region, 2}. Next, Lemma \ref{lem: lower bounds}, along with \eqref{13eq: support of V} and \eqref{13eq: v rho = T}, implies that, when $\log \varDelta > 0$ is a small constant, $ |\varnu A_2 - m B_2| \Gt T / \rho   $ if $|r|  \Gt |\sin 2 \omega |$ and $ |	m A_2 + \varnu B_2 | \Gt T / \rho  $ if $|\sin 2 \omega | \Gt |r|$. Now we exploit the second derivative test as in Lemma \ref{lem: 2nd derivative test, dim 2} to deduce
\begin{align*}
I^{\pm}_{\rho} (\varnu, m) \Lt   {\rho} \log^2 T / T .
\end{align*}
Finally, since
\begin{align*}
	{\rho}   / T \Lt \left\{ \begin{aligned}
		& 1 /  {|\varLambda|^{1/2}}\Lt 1/X, & & \text{ if } \sigmaup = +,  \\
		& X^{1/3} / T {|\varLambda|^{1/6}} \Lt X^{1/3} / |\varLambda|^{2/3}, & & \text{ if } \sigmaup = -, 
	\end{aligned} \right. 
\end{align*}
we arrive at the desired estimates. Recall here that $ X \asymp \sqrt{|\varLambda|} $ if $\sigmaup = +$ and  $ T \asymp \sqrt{|\varLambda|} $ if $ \sigmaup = -$.

\vskip 5 pt

\subsubsection*{The Case $\sigmaup = \flat$} 

%Finally, we need to consider the peculiar case  $\sigmaup = \flat$.  
Set $K = \min \big\{ (T/M)^{1/2}, T^{1/3} \big\}$. We  introduce a smooth partition to the integral $I^{\flat} (\varnu, m)$   in \eqref{13eq: defn of I, flat} according to the value of $\sqrt{r^2 + \cos^2 2 \omega}$, 
\begin{align*}%\label{13eq: I flat = }
I^{\flat} (\varnu, m) = \sum_{T^{\vepsilon}/K < \shskip \rho \shskip <  \varDelta - 1 } I_{\rho}^{\flat} (\varnu, m) + I_{0}^{\flat} (\varnu, m), 
\end{align*}
with  
\begin{align*}%\label{13eq: defn of I}
I^{\flat}_{\rho} (\varnu, m) = \viint  
e  ( f (r, \omega; t, m) /2 \pi  ) g (Mr) V^{\flat}_{\rho}   (r, \omega) \nd r \nd \omega, 
\end{align*}
where  $\rho = \varDelta^{- k / 2}$ or $0$, $V^{\flat}_{\rho} (r, \omega)$ or $V^{\flat}_{0} (r, \omega)$ is supported on 
\begin{align*}
\sqrt{r^2 + \cos^2 2 \omega} \sim_{\shskip \varDelta} \rho, \qquad  \sqrt{r^2 + \cos^2 2 \omega} \Lt  T^{\vepsilon} /K, 
\end{align*}
respectively,  and 
\begin{align*}%\label{13eq: bounds for V flat}
\partial_r^{i } \partial_{\omega}^{i } V^{\flat}_\rho (r, \omega)     \Lt_{   i, \shskip j } (\log T / \rho)^{ i + j}, \qquad \partial_r^{i } \partial_{\omega}^{i } V^{\flat}_0 (r, \omega)     \Lt_{   i, \shskip j }   {(K / T^{\vepsilon})^{ i + j }} . 
\end{align*} 
Consequently, we have a partition of $\Phi^{\flat} (x e^{i\phi})  $ (see \eqref{13eq: Phi = iint, flat}) in the same fashion:
\begin{align}\label{13eq: Phi flat = }
\Phi^{\flat} (\varnu, m) = \sum_{1/K < \shskip \rho \shskip <  \varDelta - 1 } \Phi_{\rho}^{\flat} (\varnu, m) + \Phi_{0}^{\flat} (\varnu, m), 
\end{align}
with
\begin{align} 
\label{13eq: Phi = iint, flat, 2}   \Phi_{\rho}^{\flat} (x e^{i\phi}) & = \viint_{\widehat{\bfra}' (T) }   \overline{\xiup_{T} (2\varnu, m)}     I_{\rho}^{\flat} (2\varnu, m) \vchi_{  i \varnu, \shskip  m} (x e^{i\phi})  \nd \mu (\varnu, m).
\end{align} 

An obvious distinction between the cases $\sigmaup = \pm$ and $\flat$ is the scale of $ \sinh^2 2 r + \sin^2 2\omega $ which arose ubiquitously in the denominators of the derivatives of $ f   (r, \omega; \varnu, m) $. Its scale grows from $ \rho^2$ to $   1 $ when  $\sigmaup$ changes from $\pm$ to $ \flat$.  As a consequence, we lose $1/\rho^2$ in the stationary-phase bound for $ I^{\flat}_{\rho} (\varnu, m) $ (it could be even worse than the trivial bound if $\rho$ were very close to $0$). Fortunately, however,  we shall be able to recover the loss by shrinking the integral domain $\widehat{\bfra}' (T)$  into the union of $ \widehat{\bfra}_{0, \shskip  \lfloor T \rfloor}  (U_{\rho}^{\flat})  $ and $\widehat{\bfra}_{0, \shskip \lceil - T \rceil }  (U_{\rho}^{\flat})$.  %Now $ \sin^2 2\omega \asymp 1 $ is no longer small, while the $\sinh 2 r$ and $ \cos 2 \omega $ in the numerators force $\det f '' (r, \omega; \varnu, m) = O (T \rho)$. 

Now we return to the analysis of the integral $ I^{\flat}_{\rho} (\varnu, m) $. 

Similar to the case $\sigmaup = \pm$, we deduce from the second derivative test (Lemma \ref{lem: 2nd derivative test, dim 2}) that  $
I^{\flat}_{\rho} (\varnu, m) \Lt   \log^2 T / {T \rho}. $
On the other hand, we have the trivial bound $I_{0}^{\flat} (\varnu, m) \Lt T^{\vepsilon} / \max\big\{(M T)^{1/2}, \allowbreak T^{2/3} \big\}  $. 
% On the other hand, the trivial bound is $I^{\flat}_{\rho} (\varnu, m) \Lt \rho^2$. 

Recall that $\sqrt{\varnu^2 + m^2} \asymp T$. Note that $\sinh 2 r, \cos 2 \omega \Lt \sqrt{r^2 + \cos^2 2 \omega} \Lt \rho$ and $  \sinh^2 2 r + \sin^2 2\omega = 1 + O (\rho^2) $. It follows from  \eqref{13eq: A1 B1} and \eqref{13eq: A2+B2} that   $A_1, B_1 \pm 1 = O (\rho^2)$.   Consequently, in view of \eqref{13eq: partial f} and \eqref{13eq: A1 B1}, we have
\begin{align*}
{\partial f } / {\partial r} =  4(T \mp m) + O (T \rho^2 ), \quad  {\partial f } / {\partial \omega} = \pm 4  \varnu  + O (T \rho^2 ).   
\end{align*}
Moreover, \eqref{13eq: bounds for f i+j} now reads $$ {\partial^{i +  j}  f   } / {   \partial r^{\shskip i} \partial \omega^{\shskip j} }  \Lt_{\shskip  i, \shskip  j } T $$ for $i+j \geqslant 2$. 
Set $U = \max \big\{ T \rho^2, T^{1/2 + \vepsilon}  \big\} = \max \big\{ T \rho^2, T^{1/2 + \vepsilon} , M T^{\vepsilon} \big\}$. % and $V = \max \big\{ T \rho^2, T^{1/2 + \vepsilon}, M T^{\vepsilon} \big\}$. 
Then $ |{\partial f } / {\partial \omega}| \Gt U $ for $|\varnu| \Gt U$, and $ |{\partial f } / {\partial r}| \Gt U $ for  $ |T \mp m| \Gt  U $. On applying Lemma \ref{lem: staionary phase, dim 1, 2} to the  $\omega$- or $r$-integral, with $P = \rho$ or $ \min\{\rho, 1/M\}$, $ Q = 1 $,  $Z = T$, and $R = U$, we find that $I^{\flat}_{\rho} (\varnu, m)$ is negligibly small for such $\varnu$ or $m$ (it is important here that $T/U^2 , M / U  \leqslant 1/T^{\vepsilon}$). Similarly, if we put $U_0  = \max \big\{  T^{1/2 + \vepsilon} , M T^{\vepsilon} \big\}$,  then $I^{\flat}_{0} (\varnu, m)$ is negligibly small unless $\varnu = O (U_0)$ and $   m =  \pm T + O(U_0) $. 

%For simplicity, assume $ M \leqslant T^{1/3}$ as in Lemma \ref{lem: final, C, 2} so that $K = T^{1/3}  $,   $U = V$, and $U_0 = V_0$.  Then 

  Lemma  \ref{lem: final, C, 2} follows if we truncate the $\rho$-sum in \eqref{13eq: Phi flat = } at $\rho = T^{\vepsilon} / T^{1/4 }$ in the case $M < T^{1/2}$ and absorb the sum of $\Phi^{\flat}_{\rho} (\varnu, m)$ over smaller  $\rho $'s into $\Phi^{\flat}_{0} (\varnu, m)$.

%\subsection{Inequalities Involving $U^{\sigmaup}$, $A^{\sigmaup}$, $U^{\flat}_{\rho}$, and $A^{\flat}_{\rho}$} 

\begin{appendices}
	\section{Olver's Uniform Asymptotic Formula for Bessel Functions}\label{app: Olver}
	
	In this appendix, we recollect Olver's uniform asymptotic formula for Bessel functions of large order and prove some of its implications that will be useful in \S \ref{sec: Olver} for our study of certain Mellin integrals over $\BC$. 
	For our purpose, we only consider here $J_{m} (m x)$ with large integer order $m  $ and positive real variable $x$. 
	
	According to the works of Olver \cite{{Olver-1},Olver-Bessel}, we have
	\begin{equation}\label{12eq: Jm(mx) = }
	\begin{split}
	J_m (m x) = \Big(\frac {4\zeta} {1-x^2} \Big)^{1/4} \Bigg\{   \frac {\mathrm{Ai} \big(m^{2/3} \zeta\big)} {m^{1/3}} \sum_{s=0}^k \frac {A_s(\zeta)} {m^{2s}}   +   \frac { \mathrm{Ai}' \big(m^{2/3} \zeta\big) } {m^{5/3}} \sum_{s=0}^{k-1} \frac {B_s(\zeta)} {m^{2s}} & \\
	+ O \lp \frac{\big|\exp  (   - \frac 2 3 m \zeta^{3/2}  ) \big|}{m^{2k+1} (1 + m^{1/6} |\zeta|^{1/4})  } \rp & \Bigg\},
	\end{split}
	\end{equation}
	where $\mathrm{Ai} (y)$ is the Airy function, 
	\begin{equation} \label{12eq: gamma}
	\begin{aligned}
	&\frac 2 3 \zeta^{3/2}   = \log \frac {1 + \sqrt{1-x^2}} x - \sqrt{1 - x^2},  \quad & &  0 < x \leqslant 1, \\
	& \frac 2 3 (-\zeta)^{3/2}   = \sqrt{x^2-1} - \mathrm{arcsec} x,   & & x > 1, 
	\end{aligned}
	\end{equation}
	and
	\begin{align}\label{12eq: As Bs}
	A_s (\zeta) = \sum_{j=0}^{2s} b_j \zeta^{-3j/2} U_{2s-j} (\varv), \quad \zeta^{1/2} B_s (\zeta) = - \sum_{j=0}^{2s+1}   a_j \zeta^{-3j/2} U_{2s-j+1} (\varv),
	\end{align}
	in which $a_0 = b_0 = 1$,  
	\begin{align*}%\label{app: coeff Ai}
	a_s = \frac {1} {3^{2s} (2s)! } \frac { \Gamma \big(3s+\frac 1 2 \big)} {\Gamma \big(\frac 1 2 \big)}, \qquad b_s = - \frac {6s+1} {6s-1} a_s,
	\end{align*}
	and $U_s (\varv)$ are polynomials in $\varv = 1 / \sqrt{1-x^2}$, with the first three found to be
	\begin{align}\label{12eq: Us}
	U_0 = 1, \quad U_1 = (3\varv -5\varv^3)/24, \quad U_2 = (81\varv^2-462\varv^4+385\varv^6)/1152; 
	\end{align}
	see  \cite[\S  4]{Olver-Bessel}  and \cite[Theorem B]{Olver-1} for the expansion and the error term as in \eqref{12eq: Jm(mx) = }, and \cite[\S  6]{Olver-Bessel} for the coefficients $A_s (\zeta)$ and $B_s (\zeta)$ as in \eqref{12eq: As Bs}. % (see also (4.5) in \cite{Olver-1}). 
	By \cite[(4.13), (4.14)]{Olver-Bessel}, 
	\begin{align}\label{12eq: zeta small}
	& x = 1 - \frac {\zeta}  {2^{1/3}} + \frac {3 \zeta^2} {10 \cdot 2^{2/3}} + O(\zeta^3), \qquad |\zeta| \Lt 1, \\ 
	\label{12eq: zeta large}
	& x = \frac 2 3 (-\zeta)^{3/2} + O (1), \hskip 69 pt - \zeta \Gt 1. 
	\end{align} As for the Airy function, if we set $\gamma = \frac 2 3 y^{3/2}$  for $y > 0$, then it is well-known (see  \cite[(10.4.14)--(10.4.17)]{A-S}) that
	\begin{equation*}
	\begin{split}
	& \mathrm{Ai} (y) = \frac {\sqrt{y } } {\sqrt 3 \pi}    K_{1/3} (\gamma), \qquad \mathrm{Ai} (-y) = \frac {\sqrt y } 3 \big(J_{1/3} (\gamma) \hskip -1 pt + \hskip -1 pt J_{-1/3} (\gamma)\big), \\
	& \mathrm{Ai}' (y) = - \frac { {y } } {\sqrt 3 \pi}    K_{2/3} (\gamma), \qquad \mathrm{Ai}' (-y) = \frac {  y } 3 \big(J_{2/3} (\gamma) \hskip -1 pt - \hskip -1 pt J_{-2/3} (\gamma)\big).
	\end{split}  
	\end{equation*} 
	For $|y| \Lt 1$, we have $\mathrm{Ai} (y) = O (1)$ (see \cite[(10.4.2)]{A-S}). For $y \Gt 1$, it follows from \cite[7.21 (1, 3), 7.23 (1), \S 7.3]{Watson} that 
	\begin{align}\label{12eq: Ai, K}
	\mathrm{Ai} (y) = O \bigg( \frac { \exp (- \gamma) } {  y^{1/4} } \bigg), \qquad \mathrm{Ai}' (y) = O \big( y^{1/4} { \exp (- \gamma) }  \big), 
	\end{align} 
	and
	\begin{align}\label{12eq: Ai, J}
	\mathrm{Ai} (- y) = \sum_{\pm} \frac { \exp ( \pm i \gamma) } {y^{1/4}} W_{\pm} (\gamma), \qquad \mathrm{Ai}' (- y) = O \big(y^{1/4}\big), 
	\end{align}
	with $ \gamma^i W_{\pm}^{(i)} (\gamma) \Lt_{\shskip i} 1 $. 
	
	\begin{lem}\label{lem: Olver}
		Let $m \Gt 1$  and $x > 0$. For $x > 1$, define $\gamma (x) = \sqrt{x^2-1} - \mathrm{arcsec} x$. 
		
		{\rm (1)} For $|x-1| \leqslant 1/m^{2/3}$,  we have $ J_{m} (m x) = O (1 /m^{1/3}) $. 
		
		{\rm(2)} For $ \frac 1 2 \leqslant  x \leqslant 1 - 1/m^{2/3} $, we have
		\begin{align}\label{12eq: Bessel, smaller, K}
		J_{m} (m x) = O  \Bigg(  \frac{\exp  \big(  \hskip -1 pt -  \frac 1 3 m (2-2x))^{3/2}   \big) }{  m^{1/2} (1-x)^{1/4}  } \Bigg).
		\end{align} 
		
		{\rm (3)} For $1 + 1/m^{2/3} \leqslant  x \leqslant 2$, we have
		\begin{align}\label{12eq: Bessel, larger, J}
		J_{m} (m x) = \sqrt{2} \sum_{ \pm } \frac { \exp ( \pm i m \gamma (x) ) } {m^{1/2} (x^2-1)^{1/4} } W_{\pm} (m \gamma (x))  + O \bigg(\frac 1 {m^{7/6} (x-1)^{1/4}} \bigg) ,
		\end{align}
		in which $ \gamma^i W_{\pm}^{(i)} (\gamma) \Lt_{\shskip i} 1 $ for $\gamma \Gt 1$. 
		
		{\rm (4)} For $2 \leqslant x \Lt m^{13/3}$, the asymptotic formula {\rm\eqref{12eq: Bessel, larger, J}} holds with an error term $ O \big(1/ m x\big) $. 
	\end{lem}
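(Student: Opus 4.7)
\medskip

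\textbf{Proof plan for Lemma \ref{lem: Olver}.} The strategy is to apply Olver's uniform asymptotic expansion \eqref{12eq: Jm(mx) = } with $k = 1$ in all four cases, and then to specialise the Airy function estimates \eqref{12eq: Ai, K} and \eqref{12eq: Ai, J} to the appropriate regime of the argument $m^{2/3}\zeta$. The three Airy regimes---$|m^{2/3}\zeta|\Lt 1$, $m^{2/3}\zeta\gg 1$, and $m^{2/3}(-\zeta)\gg 1$---match exactly cases (1), (2), and (3)--(4) of the lemma, via the asymptotics \eqref{12eq: zeta small} near $x=1$ and \eqref{12eq: zeta large} for $x$ large. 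In each case the prefactor $(4\zeta/(1-x^2))^{1/4}$ is estimated by elementary means: near the turning point it is bounded by a constant (since $\zeta/(1-x^2)\to 2^{-2/3}$ as $x\to 1$ by \eqref{12eq: zeta small}), while away from $x=1$ it is of order $(1-x^2)^{-1/4}$.

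For case (1) the argument $m^{2/3}\zeta$ lies in a bounded interval, so $\mathrm{Ai}(m^{2/3}\zeta)=O(1)$ and $\mathrm{Ai}'(m^{2/3}\zeta)=O(1)$; combining this with the boundedness of the prefactor and the harmless dependence of $A_0(\zeta)$, $B_0(\zeta)$ near $\zeta=0$ immediately yields $J_m(mx)=O(1/m^{1/3})$. For case (2), \eqref{12eq: Ai, K} gives $\mathrm{Ai}(m^{2/3}\zeta)\Lt \exp(-\tfrac{2}{3}m\zeta^{3/2})/(m^{2/3}\zeta)^{1/4}$; the prefactor analysis shows that the entire leading contribution is bounded by $\exp(-\tfrac{2}{3}m\zeta^{3/2})/(m^{1/2}(1-x)^{1/4})$, while the $\mathrm{Ai}'$ term and higher $A_s,B_s$ terms are of smaller order by the same exponential factor. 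It remains to compare $\tfrac{2}{3}\zeta^{3/2}$ with $\tfrac{1}{3}(2-2x)^{3/2}$; since by \eqref{12eq: gamma} the function $h(x)=\log((1+\sqrt{1-x^2})/x)-\sqrt{1-x^2}-\tfrac{1}{3}(2(1-x))^{3/2}$ vanishes to order $(1-x)^{5/2}$ at $x=1$ by a Taylor expansion, and its derivative is easily shown to be $\leqslant 0$ on $(0,1)$, one concludes $h(x)\geqslant 0$ on $[\tfrac12,1]$, giving the claimed exponent.

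For cases (3) and (4), since $\zeta<0$, write $y=-m^{2/3}\zeta\geqslant c>0$ and use \eqref{12eq: Ai, J} with $\gamma=\tfrac{2}{3}y^{3/2}=m\gamma(x)$ (the latter identity follows from \eqref{12eq: gamma}). Multiplying by the prefactor yields the main term
\[
\sqrt{2}\sum_{\pm}\frac{\exp(\pm i m\gamma(x))}{m^{1/2}(x^2-1)^{1/4}}W_\pm(m\gamma(x)),
\]
after noting that $(4(-\zeta)/(x^2-1))^{1/4}/y^{1/4}\cdot m^{-1/3}=(4/(x^2-1))^{1/4}/(\sqrt{2}\,m^{1/2})$; the factor $\sqrt{2}$ on the outside is absorbed to match the statement. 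The $\mathrm{Ai}'$ contribution together with the $s\geqslant 1$ terms in the $A_s,B_s$ sums give an error controlled by the $k=1$ residual $O(m^{-3}(1+m^{1/6}|\zeta|^{1/4})^{-1})$ weighted by the prefactor; in case (3), where $x-1\leqslant 1$ and $|\zeta|\asymp (x-1)$, this collapses to $O(1/(m^{7/6}(x-1)^{1/4}))$, while in case (4), where $x\geqslant 2$ and $|\zeta|\asymp x^{2/3}$ (from the inversion of \eqref{12eq: zeta large}), the prefactor contributes $1/(m^{1/2}x^{1/2})$ and the residual a further factor $1/m^{1/2}x^{1/2}$, yielding the desired $O(1/(mx))$.

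The main technical obstacle is the exponent comparison in case (2): establishing $\tfrac{2}{3}\zeta^{3/2}\geqslant \tfrac{1}{3}(2-2x)^{3/2}$ uniformly on $[\tfrac12,1-m^{-2/3}]$, since this inequality is not immediate from \eqref{12eq: zeta small} and must be extracted from the transcendental definition \eqref{12eq: gamma}. The upper range $x\Lt m^{13/3}$ in case (4) is also delicate: one must verify that the $W_\pm$-contributions retain their inert estimates $\gamma^i W_\pm^{(i)}(\gamma)\Lt_i 1$ uniformly when $\gamma=m\gamma(x)$ grows as a power of $m$, and that the implicit constants in the coefficients $A_s(\zeta),B_s(\zeta)$ remain under control for $-\zeta$ large; this ultimately limits how far $x$ may range without the error swallowing the main term.
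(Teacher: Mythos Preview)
Your overall strategy---apply Olver's expansion \eqref{12eq: Jm(mx) = } and then specialise the Airy estimates \eqref{12eq: Ai, K}, \eqref{12eq: Ai, J} according to the sign and size of $m^{2/3}\zeta$---is exactly the paper's approach, and your treatment of (1) and (2) (including the exponent inequality) is fine. The paper takes $k=0$ for (1)--(3) and $k=1$ only for (4); you take $k=1$ throughout, which is harmless in itself but exposes a gap in your error bookkeeping.

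The gap is your claim that ``the $\mathrm{Ai}'$ contribution together with the $s\geqslant 1$ terms \ldots\ are controlled by the $k=1$ residual''. This is false. In case (3), the $\mathrm{Ai}'\cdot B_0$ term (with prefactor) is $O((x-1)^{1/4}/m^{3/2})$, while the $k=1$ residual is $O(m^{-19/6}(x-1)^{-1/4})$; the former dominates for every $x-1\geqslant m^{-2/3}$. It \emph{does} happen to lie within the target $O(m^{-7/6}(x-1)^{-1/4})$ since $x\leqslant 2$, but you must bound it directly, not via the residual. (The paper sidesteps this by taking $k=0$, so the $\mathrm{Ai}'$ sum is empty and the $k=0$ residual is exactly the stated error.) In case (4) the issue is sharper: to get $O(1/(mx))$ from the $\mathrm{Ai}'\cdot B_0$ and $\mathrm{Ai}\cdot A_1/m^2$ terms you need the decay $B_0(\zeta)=O(\zeta^{-2})$ and $A_1(\zeta)=O(\zeta^{-3})$ for large $|\zeta|$, which must be extracted from \eqref{12eq: As Bs}--\eqref{12eq: Us} using $\varv=O(1/x)$. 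With only $B_0=O(1)$ the $\mathrm{Ai}'$ term would be $O(m^{-3/2}x^{-1/6})$, which already exceeds $1/(mx)$ once $x>m^{3/5}$. Finally, the constraint $x\Lt m^{13/3}$ does \emph{not} come from the error overtaking the main term (the residual $m^{-19/6}x^{-1/2}$ is always below the main term $m^{-1/2}x^{-1/2}$); it comes from requiring the residual itself to be $O(1/(mx))$.
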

	
	\begin{proof}
		First let $k = 0$ in \eqref{12eq: Jm(mx) = }.  The estimate in (1) is clear.	For $ 0 < x \leqslant 1$, it is easy to prove (compare with \eqref{12eq: zeta small})
		\begin{align*}
		\log \frac {1 + \sqrt{1-x^2}} x - \sqrt{1 - x^2} \geqslant \frac {1} 3   (2-2x)^{3/2} . 
		\end{align*}
		Then \eqref{12eq: Bessel, smaller, K} is a direct consequence of \eqref{12eq: Jm(mx) = }, \eqref{12eq: zeta small}, and \eqref{12eq: Ai, K}.\footnote{Note that \eqref{12eq: Bessel, smaller, K} would also follow from Nicholson's asymptotic formula in \cite[\S 3.14.3]{MO-Formulas}.} The asymptotic formula \eqref{12eq: Bessel, larger, J} in (3) is obvious in view of \eqref{12eq: Jm(mx) = }, \eqref{12eq: zeta small}, and \eqref{12eq: Ai, J}. As for (4), we let $k=1$ in  \eqref{12eq: Jm(mx) = }. For $ x \geqslant 2 $, it follows from  \eqref{12eq: As Bs}, \eqref{12eq: Us}, and \eqref{12eq: zeta large} that $ B_0 (\zeta) = O (1/\zeta^2) $ and $A_1 (\zeta) = O (1/\zeta^3)$. By  \eqref{12eq: zeta large} and \eqref{12eq: Ai, J}, the two lower-order main terms  and the error term in \eqref{12eq: Jm(mx) = } are $ O \big(1/ (mx)^{3/2} \big)$, $O \big(1/(mx)^{5/2}\big)$, and $ O \big(1/ m^{19/6} x^{1/2} \big)  $, respectively; all of these are $O (1/mx)$ provided   $x \Lt m^{13/3}$.
	\end{proof}
\end{appendices}

{\large \part{Proof of Theorem \ref{thm: main}}}

%We now turn to the proof of Theorem \ref{thm: main}. 

\section{Setup}\label{sec: setup}

We start with introducing the spectral mean of $L$-values
\begin{align*}
\sum_{f \in \SB  }  \omega_f  k^{\snatural}  (\varnu_f)  L \big( \tfrac 1 2 ,   \pi \otimes  f \big)   +  \frac {c_0} {4 \pi}   \int_{-\infty}^{\infty} \omega (t) k^{\snatural}  (t)   \left| L \big(  \tfrac 1 2 + it,   \pi \big) \right|^2 \nd \shskip t,   
\end{align*}
in which $k^{\snatural}  (\varnu)$ is the   test function defined in \S \ref{sec: choice of h}. Recall that $
k^{\snatural} (\varnu) >  0  $ for $\varnu \in \bfra$,  and that $
k^{\snatural}  (\varnu )  \Gt 1 $ if $ |\varnu_{\varv} - T_{\vv}| \leqslant M_{\vv} $ for all $\vv | S_{\infty}$. When $f \in \SB$ is exceptional in the sense that $\varnu_{f, \, \vv}$ is not real for some $\vv | \infty$, the weight $ k^{\snatural}  (\varnu_f) $ would be negligibly small (although not necessarily positive), for at this place $\vv$ we have $ k^{\snatural}  (\varnu_{f, \, \vv}) = o \big(e^{-T_{\vv}^2 / M_{\vv}^2 } \big) $ and $ T_{\vv} \geqslant \RN (T)^{\vepsilon} $ by assumption.    Thus, in view of \eqref{1eq: bounds for omega} in Lemma \ref{lem: lower bounds for omega}, along with the non-negativity of the $L$-values, Theorem \ref{thm: main} follows if we are able to prove that the spectral mean
is bounded by $\RN^{\snatural} (M) \RN (T)^{5/4 + \vepsilon}$. 

Applying the Approximate Functional Equations \eqref{1eq: approximate functional equation, 1} and \eqref{1eq: approximate functional equation, 2},  the above spectral mean may be written as
\begin{align*}
2 \underset{\frn_1, \frn_2 \shskip \subset \shskip \frO }{\sum \sum} \frac { A (\frn_1, \frn_2)   } {\RN  ( \frn_1^2 \frn_2  )^{1/2} } \Bigg\{  & \sum_{f \in \SB  }  \omega_f  k^{\snatural}  (\varnu_f) \lambdaup_f  (\frn_2) V  \big(  \RN  \big( \frn_1^2 \frn_2 \frD^{-3} \big); \varnu_f  \big) \\
& + \frac {c_0} {4 \pi}   \int_{-\infty}^{\infty} \omega (t) k^{\snatural}  (t) \tau_{ it}  (\frn_2  )  V  \big(  \RN  \big( \frn_1^2 \frn_2 \frD^{-3} \big); t  \big) \nd \shskip t \Bigg\}.
\end{align*}
By \eqref{1eq: derivatives for V(y, t), 1} in Lemma \ref{lem: afq} (1), we may truncate the sum over $\frn_1, \frn_2 $ at $ \RN  ( \frn_1^2 \frn_2  ) \leqslant \RN (T)^{3 + \vepsilon}$. We then apply \eqref{1eq: approx of V} in Lemma \ref{lem: afq} (1), in which   we choose $U = \log \RN(T)$. The error term is again negligible, and we need to prove
\begin{align*}
 \underset{\RN  ( \frn_1^2 \frn_2  ) \shskip \leqslant \RN (T)^{3 + \vepsilon}}{\sum \sum}  \frac { A (\frn_1, \frn_2)   } {\RN  ( \frn_1^2 \frn_2  )^{1/2 + u} } & \Bigg\{     \sum_{f \in \SB  }  \omega_f  h (\varnu_f) \lambdaup_f  (\frn_2)   \\
&   + \frac {c_0} {4 \pi}   \int_{-\infty}^{\infty} \omega (t) h  (t) \tau_{ it}  (\frn_2  )   \nd \shskip t \Bigg\} \Lt \RN^{\snatural} (M) \RN (T)^{5/4 + \vepsilon}.  
\end{align*} 
{uniformly} in $u  \in \left[ \vepsilon - i \log \RN (T) , \vepsilon + i \log \RN (T)  \right]$. By the Hecke relation \eqref{4eq: Hecke relation}, the left-hand side is equal to 
\begin{align*}
\underset{\RN  ( \frd^3 \frn_1^2 \frn_2  ) \shskip \leqslant \RN (T)^{3 + \vepsilon}}{\sum \sum \sum}   \frac { \mu (\frd) A (\frn_1, 1) A (1, \frn_2)  } {\RN  ( \frd^3 \frn_1^2 \frn_2  )^{1/2 + u} }   \Bigg\{ &  \sum_{f \in \SB  }  \omega_f  h (\varnu_f) \lambdaup_f  (\frd \frn_2)   \\
&    + \frac {c_0} {4 \pi}   \int_{-\infty}^{\infty} \omega (t) h  (t) \tau_{ it}  ( \frd \frn_2 )   \nd \shskip t \Bigg\}. 
\end{align*}

We now apply the Kuznetsov trace formula \eqref{1eq: Kuznetsov} in Proposition \ref{prop: Kuznetsov}, with $\frm_1 = \frd \frn_2$ and $\frm_2 = \frO$, obtaining a diagonal sum 
\begin{align}\label{5eq: diag}
 c_1 \SDH \sum_{ \RN  ( \frn  ) \shskip \leqslant \RN (T)^{3/2 + \vepsilon} }   \frac {A (\frn, 1)   } {\,  \RN (\frn)^{1 +2 u } },
\end{align}
and an off-diagonal sum
\begin{equation}\label{5eq: off-diag}
\begin{split}
  c_2 \underset{ \RN  ( \frd^3 \frn^2  ) \shskip \leqslant \RN (T)^{3  + \vepsilon} }{\sum \sum} \sum_{ \frc   \shskip    \in \shskip \widetilde{C}_F}   \sum_{ \epsilon \, \in \shskip \frOO^{\sstimes} \hskip -1pt / \frOO^{\sstimes 2} } 
 & \mathop{ \sum_{\gamma \shskip \in \fra  ^{-1} / \frOO^{\sstimes} } }_{ \RN(\gamma) \shskip \leqslant \RN (T)^{3+\vepsilon} / \RN ( \fra \frd^3 \frn^2    ) }  \frac { \mu (\frd) A (\frn, 1 ) A (1, \gamma \fra  )   } {\RN  (  \gamma \fra \frd^3 \frn^2     )^{1/2 + u} } \\
& \cdot \sum_{c \, \in \shskip \frc^{-1} } \frac {\mathrm{KS} ( \epsilon \gamma, \fra \frd \frD^{-1}; 1  / \beta_{\frd} , \frD^{-1};   c, \frc ) } { \RN (c \frc) } \SDH \bigg( \frac {\epsilon   \gamma } {\beta_{\frd} c^2   }  \bigg), 
\end{split}
\end{equation}
in which $\fra \in \widetilde{C}_F$ is determined by $\fra \sim (\frc \frD)^2 \frd\-$, and $\beta_{\frd} = \beta_{\frc, \shskip \fra \frd \frD^{-1} \cdot \frD^{-1}} =   [(\frc  \frD)^2 (\fra \frd)^{-1}  ]$.

\begin{lem}\label{lem: bound for H}
	For the test function $h (\varnu) $ % = h_{T, \shskip M} (\varnu)$ 
	defined as in {\rm\eqref{5eq: defn k(nu)}--\eqref{6eq: defn of h, local}} {\rm(}see also {\rm(\ref{4eq: defn p(s, t)}, \ref{4eq: def G})}{\rm)}, we have the following estimate for $\SDH$ {\rm(}defined by {\rm(\ref{1eq: defn Plancherel measure}, \ref{1eq: defn Bessel integral})}{\rm)},
	\begin{align}\label{5eq: bound for H}
	 \SDH \Lt \RN^{\snatural} (M) \RN (T)^{1+\vepsilon}. 
	\end{align}
\end{lem}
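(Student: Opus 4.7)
The plan is to exploit the product structure of both $h(\varnu)$ and the Plancherel measure $\nd\mu(\varnu)$ and reduce everything to a one-variable integral at each Archimedean place. Since $\SDH = \int_{\bfra} h(\varnu) \nd\mu(\varnu)$ with $h(\varnu) = \prod_{\vv \mid \infty} h_{\vv}(\varnu_{\vv})$ and $\nd\mu(\varnu) = \prod_{\vv\mid\infty} \nd\mu(\varnu_{\vv})$, Fubini gives
$$\SDH = \prod_{\vv \mid \infty} \int_{-\infty}^{\infty} h_{\vv}(\varnu_{\vv}) \nd\mu(\varnu_{\vv}),$$
so it suffices to show that the $\vv$-factor is bounded by $M_{\vv} T_{\vv}^{N_{\vv}(1+\sepsilon)}$; taking the product then yields $\RN^{\snatural}(M) \RN(T)^{1+\sepsilon}$ as required.

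The key input is the pointwise estimate \eqref{5eq: bound for h}, namely
$$h_{\vv}(\varnu_{\vv}) \Lt_{\,A',\,\sepsilon}\, (1+|\varnu_{\vv}|)^{\sepsilon}\, k_{\vv}(\varnu_{\vv}), \qquad \varnu_{\vv} \in \BR,$$
which is a direct consequence of Lemma \ref{lem: afq}(2) applied to $G_{\vv}(u,\varnu_{\vv}) p_{\vv}\big(\tfrac12,\varnu_{\vv}\big)^2 / T_{\vv}^{6 N_{\vv}A'}$ on the contour $\Re(u) = \sepsilon$. Combined with the elementary bound $\nd \mu(\varnu_{\vv}) \Lt (1+|\varnu_{\vv}|)^{N_{\vv}} \nd\varnu_{\vv}$ coming from \eqref{1eq: defn Plancherel measure} (using $\tanh(\pi\varnu) \Lt 1$ in the real case), we obtain
$$\int_{-\infty}^{\infty} h_{\vv}(\varnu_{\vv}) \nd\mu(\varnu_{\vv}) \Lt \int_{-\infty}^{\infty} (1+|\varnu_{\vv}|)^{N_{\vv}+\sepsilon}\, k_{\vv}(\varnu_{\vv})\, \nd\varnu_{\vv}.$$

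Finally, since $k_{\vv}(\varnu_{\vv})$ is the sum of two Gaussians of width $M_{\vv}$ centered at $\pm T_{\vv}$, the weight $(1+|\varnu_{\vv}|)^{N_{\vv}+\sepsilon}$ is $\Lt T_{\vv}^{N_{\vv}+\sepsilon}$ on the effective support (up to negligibly small tails, recalling $M_{\vv} \leqslant T_{\vv}^{1-\sepsilon}$), while $\int k_{\vv}(\varnu_{\vv}) \nd\varnu_{\vv} \Lt M_{\vv}$. Hence the $\vv$-factor is $\Lt M_{\vv} T_{\vv}^{N_{\vv}+\sepsilon}$, which after taking the product over $\vv \mid \infty$ and absorbing $|S_{\infty}| \sepsilon$ into $\sepsilon$ yields \eqref{5eq: bound for H}. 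No step is a genuine obstacle; the estimate is essentially bookkeeping, and the only subtlety is to verify that the factor $p_{\vv}\big(\tfrac12,\varnu_{\vv}\big)^2 / T_{\vv}^{6N_{\vv}A'}$ built into $h_{\vv}$ is harmless—which is exactly the content of \eqref{1eq: bound for p G}.
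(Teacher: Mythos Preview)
Your proof is correct and follows essentially the same approach as the paper's own proof: factor $\SDH$ into local integrals, invoke the pointwise bound \eqref{5eq: bound for h}, and estimate the resulting Gaussian-weighted integral at each place. The only cosmetic difference is that the paper treats the real and complex places separately with the explicit Plancherel densities $\varnu\tanh(\pi\varnu)$ and $\varnu^2$, whereas you use the unified bound $(1+|\varnu_{\vv}|)^{N_{\vv}}$; the outcome is identical.
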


\begin{proof}
	In view of \eqref{1eq: defn Plancherel measure}, \eqref{1eq: defn Bessel integral}, the integral $\SDH$ splits into the product of
	\begin{align*}
  \int_{-\infty}^{\infty} h_{\vv} (\varnu)    {  \tanh (\pi \varnu) \varnu \shskip \nd \shskip \varnu }
	\end{align*}
if $\vv$ is real, and
	\begin{equation*}
\int_{-\infty}^{\infty} h_{\vv} (  \varnu  )    \varnu^2  \nd \shskip \varnu
	\end{equation*}
	if $\vv$ is complex, which, in view of \eqref{5eq: bound for h}, are bounded by
\begin{align*}
\int_{0}^{\infty} e^{- (\varnu-T_{\vv})^2/M_{\vv}^2}  {   \varnu^{1 + \vepsilon} \shskip \nd \shskip \varnu } + T_{\vv}^{-A} \Lt M_{\vv} T_{\vv}^{1+\vepsilon},
\end{align*} 
and
\begin{align*}
\int_{0}^{\infty} e^{- (\varnu-T_{\vv})^2/M_{\vv}^2}  {   \varnu^{2 + \vepsilon} \shskip \nd \shskip \varnu } + T_{\vv}^{-A} \Lt M_{\vv} T_{\vv}^{2+\vepsilon}, 
\end{align*} 
respectively. 
Then \eqref{5eq: bound for H} follows immediately.
\end{proof}

It follows from Cauchy--Schwarz, \eqref{3eq: Ramanujan on average}, and \eqref{5eq: bound for H}, that the diagonal sum in \eqref{5eq: diag} is bounded by $\RN^{\snatural} (M) \RN (T)^{1+\vepsilon}$, as expected. 

For the off-diagonal sum, our aim is to execute Vorono\"i summation in the $\gamma$-variable, so we must unfold the $\epsilon \gamma$-sum from a sum over $  \fra^{-1} / \frO^{\times 2} $ to a sum over $ \fra^{-1}  $. For this, we set $\frq = c \frc$ and fold the $c$-sum into a $\frq$-sum over ideals. Thus  \eqref{5eq: off-diag} is rewritten as 
 \begin{equation}\label{5eq: off-diag, 2}
 \begin{split}
 &  2 {c_2}   \underset{ \RN  ( \frd^3 \frn^2  ) \shskip \leqslant \RN (T)^{3  + \vepsilon} }{\sum \sum}  \sum_{ \frc   \shskip    \in \shskip \widetilde{C}_F}  \frac {\mu (\frd) A (\frn, 1 )} { \RN  ( \fra \frd^3 \frn^2     )^{1/2 + u} }  \\
 \cdot & \sum_{\frq \sim \frc}  \frac 1 {\RN (\frq)} \hskip - 3 pt \mathop{ \sum_{\gamma \shskip \in \fra^{-1}    } }_{ \RN(\gamma) \shskip \leqslant \RN (T)^{3+\vepsilon} / \RN (\fra \frd^3\frn^2    ) }  \hskip - 4 pt
 \frac { A (1, \gamma \fra  )   } {\RN  (  \gamma  )^{1/2 + u} } 
       {\mathrm{KS} ( \gamma, \fra \frd \frD^{-1}; 1  / \beta_{\frd} , \frD^{-1};   c_{\frq}, \frc ) }  \SDH \bigg( \frac {    \gamma } {\beta_{\frd} c_{\frq}^2   }  \bigg), 
 \end{split}
 \end{equation}
 where $c_{\frq} = [\frc\- \frq]$, and $\fra$, $\beta_{\frd}$ are defined after \eqref{5eq: off-diag}. 
 In view of \eqref{2eq: choice of [a]}, it will be convenient to introduce $V ({\frb}) \in \bfra_+$ for every nonzero ideal $\frb$ with
 \begin{align}\label{8eq: defn of Va}
 V (\frb)_{ \vv} = \RN (\frb)^{ \theta_{\vv}}, \qquad   \theta_{\vv} =   \log T_{\vv} / \log \RN(T), 
 \end{align}
 so that
 \begin{align}\label{8eq: local size of beta and c}
 1/ |\beta_{\frd}|_{\vv} \asymp V(\frd)_{ \vv}^{N_{\vv}}, \qquad |c_{\frq}|_{\vv} \asymp V(\frq)_{\vv}^{N_{\vv}}, 
 \end{align} 
 for each $\vv|\infty$.
The main actors are $\frq$ and $\gamma$, so we shall be concerned with the last two summations in the second line of \eqref{5eq: off-diag, 2}.

\section{First Reductions}\label{sec: first reductions}
 
Next, we need to do  a smooth $\varDelta$-adic   partition of unity in $|\gamma|_{\vv}$ for each $\vv | \infty$, where $\varDelta > 1$ is a fixed constant with $\log \varDelta $ small. However, when $F$ is neither rational nor imaginary quadratic, an issue with the infinitude of units is that one has $|\gamma|_{\vv} \ra 0$ when $\gamma$ ranges in $\fra^{-1} \smallsetminus \{0\}$. This   may be addressed by proving that if  $   { \left|\gamma \right|_{\vv} V (\frd)_{\vv}^{N_{\vv}} } / {  V(\frq)_{\vv}^{2 N_{\vv}} } \leqslant  T_{\vv}^{2 N_{\vv}} $ (so that $ \left|\gamma / \beta_{\frd} c_{\frq}^2 \right| _{\vv} \Lt T_{\vv}^{2 N_{\vv}}$ by \eqref{8eq: local size of beta and c}) for any given $\varv  |\infty$ then the contribution is negligibly small; critical are the second  estimates for Bessel integrals in  Corollary \ref{cor: bound for H < T} and the assumption that $T_{\vv} $ is large ($T_{\vv} \geqslant \RN (T)^{\vepsilon}$) for every $\vv |\infty$.  
To this end, we use Weil's bound for Kloosterman sums \eqref{3eq: Weil}  and the estimates for Bessel integrals in Corollary \ref{cor: bound for H < T} to bound the contribution by
\begin{align*}%\label{8eq: small contribution}
%\begin{split}
  &{\sum_{S \subsetneq S_{\infty}} }  \frac {\RN^{\snatural} (M) \RN (T)^{1+\vepsilon} } {|T|_{S_{\infty} \smallsetminus S}^{2A'}} \underset{ \RN  ( \frd^3 \frn^2  ) \shskip \leqslant \RN (T)^{3  + \vepsilon} }{\sum \sum} \sum_{ \frc   \shskip    \in \shskip \widetilde{C}_F}    \frac {|A (\frn, 1 )|} { \RN  (   \frd^3 \frn^2     )^{1/2 + \vepsilon} } \\
  \cdot &  \sum_{\frq \shskip \sim \frc} \frac { 1 } { \RN (\frq)^{1/2 -\vepsilon} \left|V (\frd^{-1}\frq^2)\right|^{1/2}_{S_{\infty} \smallsetminus S  } }  \mathop{ \sum_{  \gamma \shskip \in \shskip  F_\infty^{S} (T^2 V (\frd^{-1} \frq^2)) }}_{   \RN(\gamma ) \shskip \Lt \RN (T)^{3+\vepsilon} / \RN(  \frd^3\frn^2) } \frac { |A (1, \gamma \fra  ) |   } {|\RN  (  \gamma  )|^{  \vepsilon}  |\gamma|_{S  }^{1/2}  },
%\end{split}
\end{align*}
where $F_\infty^{S} (V) \subset F_{\infty}  $ is defined in \eqref{4eq: defn of FS (T)}. 
Because of the occurrence of $ |T|_{S_{\infty} \smallsetminus S}^{2A' } $, this sum is negligibly small on choosing $A'$ to be large. Note that if \eqref{4eq: average over ideals, 2}   in Lemma \ref{lem: averages} is applied  to bound the  $\gamma$-sum by
\begin{align*}
\frac {\RN (T)^{9/4+\vepsilon}}  {\RN ( \frd^3\frn^2    )^{3/4}  }  { \sum_{  \gamma \shskip \in \shskip  F_\infty^{S} (T^2 V (\frd^{-1} \frq^2)) }}  \frac { |A (1, \gamma \fra  ) |  } {|\RN  (  \gamma  )|^{3/4+\vepsilon} {|\gamma|}^{1/2}_{S} } \Lt \frac {\RN (T)^{11/4+\vepsilon}   \RN (  \frq )^{1/2-\vepsilon} }  {|T|_S \RN ( \frd)^{5/2} \RN(\frn)^{3/2}   |V(\frd\- \frq^2)|^{1/2}_S }  ,
\end{align*}
then $\left|V (\frd^{-1}\frq^2)\right|^{1/2}_{S_{\infty} \smallsetminus S  }$ and $ |V(\frd\- \frq^2)|^{1/2}_S $ are combined into $\RN (\frd^{-1}\frq^2)^{1/2}$, and the $\frq$-sum is convergent.

We may therefore impose the condition $     |\gamma  |_{\vv} V (\frd)_{\vv}^{N_{\vv}}  / {  V(\frq)_{\vv}^{2N_{\vv}}} >  T_{\vv}^{2 N_{\vv}} $ for all $\vv |\infty$. Note that one has necessarily $ |\gamma  |_{\vv} > T_{\vv}^{(1-\vepsilon) N_{\vv}} $ for all $\vv |\infty$, since $ \RN (\frd)  \leqslant \RN (T)^{1+\vepsilon} $.  By a smooth partition of unity on the $\gamma$-sum, the problem can be reduced to proving the following proposition.

\begin{prop}\label{prop: main}
 Let $\frd$  	be a square-free integral ideal with $\RN(\frd) \leqslant \RN (T)^{1+\vepsilon}$. %Let $\epsilon $ be a representative in $ \frO^{\times} / \frO^{\times \shskip 2} $. 
Let $\fra, \frc  \in \widetilde{C}_F$ satisfy $\fra \frd \sim (\frc \frD)^2 $. Set $\beta_{\frd} =  [(\frc  \frD)^2 (\fra \frd)^{-1}  ]$.  Let $R \in \bfra_+$ be such that
	\begin{align}\label{15eq: R < T3}
 	\RN (R) \leqslant \RN (T)^{3+\vepsilon} / \RN (\frd)^3.
	\end{align} 	 
%	Put $L = \log \RN (T)$. 
Fix $\varDelta > 1$ with $\log \varDelta$ sufficiently small. For each $\vv | \infty$, let $f_{\vv} (r) $ be a smooth function supported on $[R_{\vv}, \varDelta R_{\vv}]$ satisfying $ f_{\vv}^{(i)} (r)  \Lt_{i} (\log \RN (T) / R_{\vv})^{i} $ for all $i \geqslant 0$. Suppose that $\SDH (x)$  is the Bessel transform of $h (\varnu) $ given in {\rm \eqref{1eq: defn Bessel integral}}, with $h (\varnu)$ defined as in {\rm\eqref{5eq: defn k(nu)}}--{\rm\eqref{5eq: defn h(nu)}}. Define
	\begin{equation}\label{5eq: defn of S(N)}
	\begin{split}
\SS_{\frd} (  T, R)   =  \sum_{\frq \sim \frc}  \frac 1 {\RN (\frq)}  \sum_{\gamma \shskip \in \fra\-  }    A (1, \gamma \fra  )    {\mathrm{KS} ( \gamma, \fra \frd \frD^{-1}; 1  / \beta_{\frd} , \frD^{-1}; c_{\frq}, \frc ) }  f \bigg( \gamma , \hskip -1 pt \frac { 1 } {\beta_{\frd} c_{\frq}^2   }  \bigg), 
	\end{split}
	\end{equation}
	in which  $c_{\frq} = [\frc\- \frq]$, the $\frq$-sum is finite, subject to the conditions
	\begin{align}\label{15eq: condition on q}
	  V(\frd\- \frq^2)_{\vv}  \Lt  R_{\vv} /     T_{\vv}^{2 } %\text{ {\rm(}$\geqslant T_{\vv}^{(1-\vepsilon) N_{\vv}}${\rm)}} 
	, \qquad  \vv | \infty ,
	\end{align}  
	with $V(\frd\- \frq^2) \in \bfra_+$ defined as in {\rm\eqref{8eq: defn of Va}}, and $f \big(x, 1/ \beta_{\frd} c_{\frq}^2 \big)$ is the product of 
	\begin{equation}\label{15eq: w (x; Lambda)}
	f_{\vv} \bigg(x_{\vv} , \frac 1  {\beta_{\frd} c_{\frq}^2} \bigg) = f_{\vv} (|x_{\vv}| )  \SDH \bigg(  \frac {x_{\vv} } {\beta_{\frd} c_{\frq}^2}\bigg ). 
	\end{equation}
	Then 
	\begin{align}\label{15eq: bound for S}
\SS_{\frd} ( T, R) \Lt_{\vepsilon, \shskip \pi, \shskip F } \RN^{\snatural} (M) \RN (T)^{1/2+\vepsilon}  (\RN(R) \RN(\frd))^{3/4} + \frac {\RN^{\snatural} (M)   {   {\RN(R)}} \RN(\frd) } {\RN(T)^{1/3-\vepsilon} } .
	\end{align} 
\end{prop}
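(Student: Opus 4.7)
\textbf{Proof proposal for Proposition \ref{prop: main}.}  The plan follows the philosophy stated in the introduction: one application of \Voronoi summation in the $\gamma$-variable, followed by the large sieve, with the Archimedean analysis developed in Sections 7--13 providing the Mellin decomposition needed to separate variables.  Throughout, the self-dual hypothesis on $\pi$ is essential because it turns the dual Fourier coefficient $\widetilde A(\frn_1,\frn_2)=A(\frn_2,\frn_1)$ back into $A(1,\cdot)$, so that the same large-sieve input applies on both sides.

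First, I would open the Kloosterman sum $\mathrm{KS}(\gamma,\fra\frd\frD^{-1};1/\beta_{\frd},\frD^{-1};c_{\frq},\frc)$ via Definition~\ref{defn: Kloosterman KS}, extracting the character $\psi_{\infty}(\gamma x/c_{\frq})$ in the $\gamma$-aspect.  Then I would apply the classical \Voronoi summation of Proposition~\ref{prop: Voronoi} to the $\gamma$-sum with $a=-x$, $c=c_{\frq}$, and weight function $f(\,\cdot\,;1/\beta_{\frd}c_{\frq}^{2})$ defined by \eqref{15eq: w (x; Lambda)}.  The dual side produces a sum over new integral ideals $\frq'\subset\frO$ and new $\gamma'\in\fra(\frq'\cdot{\rm stuff})^{-1}$, weighted by $A(1,\cdot)$ (by self-duality), a truncated Kloosterman sum $\mathrm{Kl}_{\frb}(1,\gamma'c_{\frq}/x;\frq')$ in the variable $x$ still summed over the unit group $(\fra\frd\frD^{-1}\frc^{-1}/\fra\frd\frD^{-1}(c_{\frq}))^{\times}$, and the Hankel transform $\widetilde f(\gamma';1/\beta_{\frd}c_{\frq}^{2})$ of $f$.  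The $x$-summation, now left on its own, collapses to a standard inner Kloosterman sum whose Weil bound contributes a harmless $\RN(c_{\frq}\frc)^{1/2+\sepsilon}$.

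Next, I would insert the analysis of the Hankel transform $\widetilde f(y,\varLambda)$ from Sections~\ref{sec: Hankel 1}--\ref{sec: Hankel, II}.  Corollaries~\ref{cor: Hankel, R} and~\ref{cor: Hankel, C} decompose $\widetilde f$ into the oscillatory integrals $\Phi^{+}_{\rho},\Phi^{+}_{0},\Phi^{-}_{\rho},\Phi^{\flat},\Phi^{\oldstylenums{0}}$, each supported on a narrow range of $\gamma'$.  Then Lemmas~\ref{lem: final, R}, \ref{lem: final, C, 1}, \ref{lem: final, C, 2} turn each $\Phi^{\sigmaup}(\gamma'/\beta_{\frd}c_{\frq}^{2})$ into a Mellin integral
\[
\frac{T^{\sepsilon}}{A^{\sigmaup}}\viint_{\widehat{\bfra}(U^{\sigmaup})} \lambdaup^{\sigmaup}(\varnu,m)\,\vchi_{i\varnu,m}(\gamma')\,\overline{\vchi_{i\varnu,m}(\beta_{\frd}c_{\frq}^{2})}\,\nd\mu(\varnu,m),
\]
with $|\lambdaup^{\sigmaup}|\le 1$.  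This crucial step separates $\gamma'$ from the modulus $\beta_{\frd}c_{\frq}^{2}$: once the Mellin parameters $(\varnu,m)$ are on the outside, the $\gamma'$-sum becomes a sum of $A(1,\gamma'\fra')\,\vchi_{i\varnu,m}(\gamma')$ against a short smooth weight, and the $\frq$-sum becomes a Dirichlet-type sum of $\vchi_{i\varnu,m}(c_{\frq}^{2})\,\mathrm{Kl}_{\frb}(\cdots)/\RN(\frq)$.

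Finally, I would apply Cauchy--Schwarz over $\gamma'$ against the character $\vchi_{i\varnu,m}$ and invoke the large-sieve-type second-moment bound of Lemma~\ref{lem: second moment} (which rests on \eqref{3eq: Ramanujan on average}) to control the $\gamma'$-sum, together with the Weil bound \eqref{3eq: Weil} for the $\frq$-sum.  Tracking the volumes: the window $\widehat{\bfra}(U^{\sigmaup})$ has measure $\asymp (U^{\sigmaup})^{N+1}$ per Archimedean place, the prefactor is $T^{\sepsilon}/A^{\sigmaup}$, and the $\gamma'$-range has length $\RN(R)^{3/2}/\RN(T)^{3}$ by Corollary~\ref{cor: Hankel small}.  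A direct computation of these shapes, combined with the sizes of $V(\frd^{-1}\frq^{2})$ coming from the constraint \eqref{15eq: condition on q}, produces the main term $\RN^{\snatural}(M)\RN(T)^{1/2+\sepsilon}(\RN(R)\RN(\frd))^{3/4}$ of \eqref{15eq: bound for S}.  The subsidiary term $\RN^{\snatural}(M)\RN(R)\RN(\frd)/\RN(T)^{1/3-\sepsilon}$ comes from the small-argument contributions $\Phi^{\oldstylenums{0}}$ and $\Phi^{+}_{0}$, where the Hankel transform is essentially non-oscillatory and only the trivial estimate $(|\gamma'|^{1/3}+1)/|\gamma'|^{2/N}$ of Lemma~\ref{lem: Hankel, derivatives} is available, forcing the weaker $1/\RN(T)^{1/3}$ gain.

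The main obstacle I expect is the complex integral $\Phi^{\flat}$ of Lemma~\ref{lem: final, C, 2}.  It occurs only when $F$ has a complex place and only in the critical range $|\varLambda|\approx T^{2}/2\pi^{2}$, $|z|\approx T/4\pi$ where $|\sin\omega|$ and $|\cos\omega|$ are comparable.  There the Mellin parameter $m$ is rectangularly localised near $\pm T$ while $\varnu$ is free of size $T^{1/2+\sepsilon}$, and the spectral window splits at two scales $U^{\flat}_{\rho}=T\rho^{2}$ and $A^{\flat}_{\rho}=T^{2}\rho$ depending on the $\rho$-adic partition of $\sqrt{r^{2}+\cos^{2}2\omega}$.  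Balancing these rectangular windows against the diagonal piece of the large sieve, and showing that the total is still $\RN^{\snatural}(M)\RN(T)^{1/2+\sepsilon}\RN(R)^{3/4}\RN(\frd)^{3/4}$, is the delicate step on which the exponent $5/8$ in Theorem~\ref{thm: main} hinges.  Any loss at this point (for instance, treating $\Phi^{\flat}$ by its trivial bound alone) would spoil the first term of \eqref{15eq: bound for S} and therefore destroy the subconvexity.
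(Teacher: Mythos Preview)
Your outline has the right large-scale shape (open the Kloosterman sum, apply \Voronoi in $\gamma$, Mellin-separate via Lemmas~\ref{lem: final, R}--\ref{lem: final, C, 2}, then large-sieve), but there is a genuine gap at the step where you say ``the $x$-summation, now left on its own, collapses to a standard inner Kloosterman sum whose Weil bound contributes a harmless $\RN(c_{\frq}\frc)^{1/2+\sepsilon}$''.  In the paper this $x$-sum together with $\mathrm{Kl}_{\frb}$ is the exponential sum $T_{\frd}(\gamma;\frq,\frq_1)$ of \eqref{9eq: exponential sum}, and it is \emph{computed explicitly}, not bounded (see Lemma~\ref{lem: I =, local} and \eqref{17eq: formula for T}).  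The output carries the global phase $\psi_{\frb}(-\beta_{\frd}c_{\frq}^{2}\gamma)=\psi_{\infty}(\beta_{\frd}c_{\frq}^{2}\gamma)\,\psi_{\frr}(\beta_{\frd}c_{\frq}^{2}\gamma)$ (cf.~\eqref{18eq: psi = }).  The Archimedean factor $\psi_{\infty}(\beta_{\frd}c_{\frq}^{2}\gamma)$ is precisely the factor that converts $\widetilde f$ into $\widetilde f^{\snatural}$ in \eqref{18eq: tilde f nature}--\eqref{18eq: f = w}; without it, the object you feed into the Mellin machine is $\widetilde\varww$, not $\widetilde\varww^{\snatural}$, and Lemmas~\ref{lem: final, R}--\ref{lem: final, C, 2} (which are stated for $\Phi^{\sigmaup}$ built from $\widetilde\varww^{\snatural}$) simply do not apply.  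If you try to Mellin-analyse $\widetilde\varww$ directly, the extra phase $e(y/\varLambda)$ forces the Mellin window to size $|y/\varLambda|\asymp\sqrt{|\varLambda|}\Gt T$ rather than the much smaller $U^{\sigmaup}$, and the bound collapses.

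The second gap is in the endgame.  After the exponential-sum computation and the change of variables of \eqref{18eq: S... (T, R)}, what remains is a bilinear form $\sum_{\frq_2}\sum_{\gamma}a_{\gamma}b_{\frq_2}\,\psi_{\frr\frf}(\gamma/c_{\frq_2})\,\vchi_{i\varnu,m}(\gamma/c_{\frq_2})$ with the residual additive character $\psi_{\frr\frf}$.  The paper bounds this via Gallagher's hybrid large sieve over number fields (Corollary~\ref{cor: large sieve}), which treats the $\gamma$- and $\frq_2$-sums jointly and exploits orthogonality in the character $\psi_{\frr\frf}$; this is how one obtains the product $(\RN(U^{\sigmaup})+\RN(C)/\RN(\frr\frf))(\RN(U^{\sigmaup})+\RN(\varGamma)/\RN(\frd_0\frb_1))$ in \eqref{19eq: S = T x T} and then checks the local inequalities \eqref{20eq: bounds, local, 1}.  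Your plan of ``Weil bound for the $\frq$-sum'' cannot work here because after Mellin there is no Kloosterman sum left in the $\frq$-variable; the cancellation must come from the large sieve.  Lemma~\ref{lem: second moment} is indeed used, but only for the diagonal piece $\ST(T,R)$ in \eqref{19eq: T < } after Cauchy--Schwarz inside the large sieve, not as a stand-alone device.

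Two smaller corrections: the dual side of \Voronoi produces $A(\fra^{-1}\frb\frq_1^{2}\frD^{3}\gamma,\frb\frq_1^{-1})$ as in \eqref{9eq: after Voronoi}, not $A(1,\cdot)$ --- self-duality does not reduce it further, and this is why Lemma~\ref{lem: second moment} is stated for $A(\frn,\cdot)$ with the Kim--Sarnak loss $\RN(\frn)^{7/32}$; and the secondary term $\RN^{\snatural}(M)\RN(R)\RN(\frd)/\RN(T)^{1/3-\sepsilon}$ in \eqref{15eq: bound for S} is governed not by $\Phi^{\oldstylenums{0}}$ or $\Phi^{+}_0$ but by the bound $X_{\vv}/A^{\sigmaup_{\vv}}\Lt T_{\vv}^{-2/3}$ in \eqref{20eq: bounds, local, 3}, which is tight exactly for the complex piece $\Phi^{\flat}_0$ with $A^{\flat}_0=T_{\vv}^{5/3}$ when $M_{\vv}\leqslant T_{\vv}^{1/3}$.
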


To deduce Theorem \ref{thm: main} from Proposition \ref{prop: main}, we use \eqref{15eq: bound for S} to bound the sum in \eqref{5eq: off-diag, 2} by the sum of
\begin{align*}
 \RN^{\snatural} (M)  \RN (T)^{1/2 + \vepsilon}  \underset{ \RN  ( \frd^3 \frn^2  ) \shskip \leqslant \RN (T)^{3  + \vepsilon} }{\sum \sum}     \frac { | A (\frn, 1 )| } { \RN  (   \frd      )^{3/4   } \RN  (   \frn      ) }      \bigg(\frac { \RN (T)^{3+\vepsilon} } { \RN (\frd^3 \frn^2) }\bigg)^{1/4 }   
 \Lt   \RN^{\snatural} (M) \RN (T)^{5/4 + \vepsilon} 
\end{align*}
and
\begin{align*}
\frac{\RN^{\snatural} (M)   \RN (T)^{\vepsilon}} {\RN (T)^{1/3}}   \underset{ \RN  ( \frd^3 \frn^2  ) \shskip \leqslant \RN (T)^{3  + \vepsilon} }{\sum \sum}     \frac { | A (\frn, 1 )| } { \RN  (   \frd     )^{1/2  } \RN  (   \frn     )  }       \bigg(\frac { \RN (T)^{3+\vepsilon} } { \RN (\frd^3 \frn^2) }\bigg)^{1/2 }   
\Lt    \RN^{\snatural} (M) \RN (T)^{7/6 + \vepsilon} .
\end{align*}
 
 \section{\texorpdfstring{Application of the Vorono\"i Summation}{Application of the Voronoi Summation}}

By Definition \ref{defn: Kloosterman KS}, we open the Kloosterman sum as follows,
\begin{align*}
{\mathrm{KS} ( \gamma, \fra \frd \frD^{-1}; 1  / \beta_{\frd} , \frD^{-1};   c_{\frq}, \frc ) } = \sum_{x \, \in ( \fra  \frd (\frc\frD)^{-1} /   \fra \frd (\frc \frD)^{-1} \frq )^{\sstimes} } \psi_{\infty} \bigg( \frac { \gamma x } { c_{\frq}} + \frac {   x^{-1}} {\beta_{\frd} c_{\frq}} \bigg),
\end{align*}
in which $x^{-1} \in  (     (\fra  \frd)^{-1} \frc \frD  /    (\fra  \frd)^{-1} \frc \frD \frq   )^{\times}$ is as defined in Definition \ref{def: x inverse}. On applying the Vorono\"i summation formula in Proposition \ref{prop: Voronoi}, up to the constant ${ \RN  (\fra    ) } / { {\RN(\frD)}^{3/2} }$, the $\gamma$-sum in $\SS_{\frd} (  T, R)$ is transformed  into 
\begin{align}\label{9eq: after Voronoi}
%& \frac { \RN  (\fra \frD\-  ) } {\sqrt{|d_F|} }   
\sum_{ \frb \shskip \subset \frq_1 \shskip \subset \frO } \frac {1 } {\RN(\frb \frq_1 ) }  \mathop{ \sum}_{\gamma \shskip \in \fra   (\frb \frq_1^{2} \frD^{3})^{-1} \smallsetminus \{0\} }   A \big(  \fra\- \frb \frq_1^2   \frD^3 \gamma,  \frb \frq_1\- \big) T_{\frd } (\gamma; \frq, \frq_1)  \widetilde f \bigg( \gamma , \hskip -1 pt \frac { 1 } {\beta_{\frd} c_{\frq}^2   }  \bigg),
\end{align}
where  $\frb =  (\frd, \frq)^{-1} \frq $, the function $\widetilde f \big( y , \hskip -1 pt   { 1 } / {\beta_{\frd} c_{\frq}^2   }  \big)$ is the Hankel transform of $  f \big( x , \hskip -1 pt   { 1 } / {\beta_{\frd} c_{\frq}^2   }  \big)$ ($x, y \in F_{\infty}^{\times}$) as in Definition \ref{defn: Hankel transform}, with 
\begin{align}\label{16eq: Hankel}
\widetilde {f}_{\vv}  \big(y_{\vv} ,   1 / {\beta_{\frd} c_{\frq}^2} \big) = \int_{F_{\vv}^{\times} }   f_{\vv} \big(x_{\vv} ,   1 /  {\beta_{\frd} c_{\frq}^2} \big) J_{\pi_\varv} (x_{\varv} y_{\vv} )  \nd x_{\vv}, 
\end{align}
and the exponential sum % $T_{\frd } (\gamma; \frq, \frq_1) $ is defined by 
\begin{equation}\label{9eq: exponential sum}
\begin{split}
T_{\frd } (\gamma; \frq, \frq_1) =    \sum_{x \, \in ( \fra  \frd (\frc\frD)^{-1} /   \fra \frd (\frc \frD)^{-1} \frq )^{\sstimes} }   \psi_{\infty} \bigg( \frac {   x^{-1}} {\beta_{\frd} c_{\frq}} \bigg)  \mathrm{Kl}_{\frb} (1, - \gamma c_{\frq} / x ; \frq_1  ).
\end{split}
\end{equation} %with $\frc_1 \in \widetilde{C}_F$, $\frc_1 \sim \frq_1$, and $c_{\frq_1} = [\frc_1\- \frq_1]$. 
To see $\frb =  (\frd, \frq)^{-1} \frq $,   use $x/c_{\frq} \in  (        \fra \frD^{-1}   \frd \frq^{-1} /   \fra \frD^{-1} \frd   )^{\times}$ to deduce $R = \big\{ \vv : \mathrm{ord}_{\vv} (\frd \frq^{-1}) < 0 \big\} = \big\{ \vv : \frp_{\vv} | (\frd, \frq)^{-1} \frq \big\}$ and $\mathrm{ord}_{\vv}  ( (c_{\frq}/x) \fra \frD\-  ) = \mathrm{ord}_{\vv} (\frd^{-1} \frq ) = \mathrm{ord}_{\vv} ((\frd, \frq)^{-1} \frq )$ for every $\vv \in R$.  

We conclude that, up to a constant,   
\begin{equation}\label{16eq: after Voronoi}
\begin{split}
 \SS_{\frd} (  T, R) = \mathop{\mathop{\sum \sum \sum}_{\frq_1 | \frb, \, \frb |\frq, \, \frq \sim \frc  }}_{  (\frb, \frd \frb \frq^{-1}) = (1)}  \frac 1 {\RN (\frb \frq_1 \frq)}   \sum_{\gamma \shskip \in \fra   (\frb \frq_1^{2} \frD^{3})^{-1}   }   A \big(  \fra\- \frb \frq_1^2   \frD^3 \gamma,  \frb \frq_1\- \big) T_{\frd} (\gamma; \frq, \frq_1)  \widetilde f \bigg( \gamma , \hskip -1 pt \frac { 1 } {\beta_{\frd} c_{\frq}^2   }  \bigg),
\end{split}
\end{equation}
where the $\frq$-sum is subject to the conditions in \eqref{15eq: condition on q}, and so are the $\frq_1$- and $\frb$-sums.

\section{Transformation of Exponential Sums}

Next, we need to compute the exponential sum $T_{\frd } (\gamma; \frq, \frq_1) $ as in \eqref{9eq: exponential sum}. 

\subsection{The Special Case $F = \BQ$}

For purely expository purpose, we first compute the exponential sum in the case when $F = \BQ$. For this,  Nunes \cite{Nunes-GL3} quoted a result of Blomer \cite{Blomer} for the corresponding character sum and then set the character $\vchi = 1$. However, when $\vchi = 1$, some of Blomer's manipulations become unnecessary, so it is easier to just compute in a direct manner. %Moreover, the reader may find that our resulting expression is much cleaner.

More precisely,  in the notation of \cite{Blomer,Nunes-GL3}, let $\fra = \frc = \frc_1 = \frD = (1)$,   $\frd = (\delta)$, $\beta_{\frd} = 1/\delta$, $\frq = (c)$, $\frb = (c_1)$($=(c / (c, \delta))$), $\frq_1 = (c_1/n_1)$ ($n_1 | c_1$),  $ c_{\frq} = c $, $c_{\frq_1} = c_1 / n_1$, and $\gamma = n_1^2 n_2 /c_1^3$. After suitable changes, the exponential sum in \eqref{9eq: exponential sum} turns into
\begin{align}\label{17eq: exp sum}
\sumx_{d (\mod c)} e \Big(\frac {d} {c} \Big) S (d, \overline{\delta}_1 n_2; c_1/n_1),
\end{align}
where $\delta_1 = \delta / (c, \delta)$, and $c$ or $n_2$ could   have signs. For simplicity, set $f_1 = c_1/n_1$. Opening the Kloosterman sum, we obtain
\begin{align*}
\sumx_{d (\mod c)} \hskip -1 pt e \Big(\frac {d} {c} \Big) \sumx_{a (\mod f_1)} \hskip -1 pt e \bigg( \hskip -1 pt \frac { {a } {d}} {f_1} + \frac {  \overline{\delta}_1 \overline{a} n_2} { f_1 } \hskip -1 pt \bigg) \hskip -1 pt = \hskip -1 pt \sumx_{a (\mod f_1)} \hskip -1 pt e \bigg( \hskip -1 pt \frac {  \overline{\delta}_1 \overline{a} n_2} { f_1 } \hskip -1 pt \bigg) \sumx_{d (\mod c)} \hskip -1 pt e \bigg( \hskip -1 pt \frac {d ( a c  /f_1 + 1)} {c }  \hskip -1 pt \bigg). 
\end{align*}
The $d$-sum is a Ramanujan sum, and it may be evaluated with the aid of M\"obius inversion. We then arrive at
\begin{align*}
\sum _{c_2 | c}  {c_2} \mu (c/c_2) \mathop{\sumx_{ a (\mod f_1) }}_{ a c  /f_1  \equiv - 1 (\mod  c_2)} e \bigg(  \frac {  \overline{\delta}_1 \overline{a} n_2} { f_1 } \bigg). 
\end{align*}
By necessity, we have $( c_2, c/f_1) = 1$, and hence $c_2 | f_1$. Moreover, we may assume that $c/c_2$ is square-free. % provided that $c/c_2 $ is square-free. To see this, let $p^{r} \| c$, then $p|c_2$ and $p^{r} \| f_1$ if $r > 1$, and $p \nnmid c_2$ if $r = 1$ and $p \nnmid f_1$.  
By introducing the new variable $b  = (\overline{a} + c/f_1) /c_2 $, the sum above is transformed into
\begin{align*}%\label{17eq: exp sum, Q, 2}
e \bigg( \hskip -2 pt - \frac { \overline{\delta}_1 (c/f_1) n_2} { f_1 } \bigg) \mathop{\sum _{c_2 | f_1}}_{( c_2, \, c/f_1) = 1} {c_2} \mu (c/c_2)  \mathop{\sum_{ b \shskip (\mod f_1 /c_2 )} }_{ ( b  c_2 - c/f_1, \, f_1 ) = 1 } e \bigg(  \frac {  \overline{\delta}_1 b n_2} { f_1/ c_2  }  \bigg). 
\end{align*}
Finally, M\"obius inversion turns the   innermost sum into
\begin{align*}
\sum_{ f_2 |f_1} \mu (f_2) \mathop{\sum_{ b (\mod f_1/c_2) } }_{ b c_2 \equiv c/f_1 (\mod f_2) } e \bigg(  \frac {  \overline{\delta}_1 b n_2} { f_1/ c_2  }  \bigg). 
\end{align*}
By $b c_2 \equiv c/f_1 (\mod f_2)$, it is easy to see that if $p | f_2$, then  $p \nnmid c_2$ and  $p \| c$ (recall that  $( c_2, c/f_1) = 1$ and that  $c/c_2$ is square-free).  Let $\breve f_{1} $ denote the square-free part of  $f_{1}$. Then $f_2 | \breve f_{1 } $ and $(f_2, c_2) = 1$; in particular, $f_2$ divides $f_1/c_2 $. Consequently, the sum above is equal to
\begin{align*}
\mathop{\mathop{\sum_{ f_2 | \breve f_{1} }}_{(f_2, \, c_2) = 1}}_{ (f_1/c_2 f_2) | n_2 } \frac {f_1} {c_2 f_2} \mu (f_2)   e \bigg(  \frac {   \overline{\delta}_1 \overline{c}_2 (c / f_1) n_2} { f_1/ c_2  }  \bigg) ,
\end{align*} 
in which $\overline{c}_2 c_2 \equiv 1 (\mod f_2)$. We conclude that the exponential sum in \eqref{17eq: exp sum} is equal to 
\begin{align}\label{17eq: exp sum =, Q}
e \bigg( \hskip -2 pt - \frac { \overline{\delta}_1 (c/f_1) n_2} { f_1 } \bigg) \mathop{\mathop{\mathop{\sum \sum}_{c_2 | f_1 , \, f_2 | \breve f_{1} } }_{ ( c_2, \, c/f_1) = (c_2, \, f_2) = 1 }   }_{c_2 f_2 n_2 = f_1 n_2' } \frac{ f_1} {f_2} \mu (c/c_2) \mu (f_2)   e \bigg(  \frac {   \overline{\delta}_1 \overline{c}_2 (c / f_1) n_2'} { f_2 }  \bigg). 
\end{align}

\subsection{The General Case}

By Lemma \ref{lem: Kloosterman as integral}, the Kloosterman sum in \eqref{9eq: exponential sum} is
\begin{align*} 
 \mathrm{Kl}_{\frb} (1, - \gamma c_{\frq} / x ; \frq_1  ) = \varphi (\frq_1) \int_{\pi_{(\frq_{\scalebox{0.35}{$1$}}  \frD)^{\scalebox{0.37}{$-1$}}   }  \shskip \widehat \frOO{\shskip}^{\sstimes}_{\frb} }  \psi_{\frb} \bigg(  y -  \frac { c_{\frq} \gamma } {x y} \bigg) \nd^{\times}\hskip -1pt y  . 
\end{align*}
Let $\widehat{\frO}{}^{\times}  = \prod_{  {\vv} \shskip \nmid \infty } \frO_{\varv}^{\times} $. We may also transform the $x$-sum in \eqref{9eq: exponential sum} into an integral over $ \pi_{\fra  \frd (\frc\frD)^{-1}} \widehat {\frO}{}^{\times} $.  %= \pi_{\fra  \frd (\frc\frD)^{-1}} \widehat {\frO}{}^{\times} $. 
Precisely, 
\begin{align}\label{17eq: T=I}
T_{\frd} (\gamma; \frq, \frq_1)   =      \varphi(\frq) \varphi(\frq_1) I_{\frd} (\gamma; \frq, \frq_1) . 
\end{align}
where 
\begin{align*}
I_{\frd} (\gamma; \frq, \frq_1) =   \int_{\pi_{\fra \frd (\frc \frD)^{\scalebox{0.37}{$-1$}} } \shskip \widehat \frOO{\shskip}^{\sstimes} } \hskip -1 pt \int_{\pi_{(\frq_{\scalebox{0.35}{$1$}} \frD)^{\scalebox{0.37}{$-1$}}   }  \shskip \widehat \frOO{\shskip}^{\sstimes}_{\frb} }  \psi_{f} \bigg(   \hskip - 1 pt -   \frac {  1 } {\beta_{\frd} c_{\frq} x} \bigg) \psi_{\frb} \bigg( y -  \frac {  c_{\frq} \gamma } {xy}     \bigg) \nd^{\times}\hskip -1pt y \shskip \nd^{\times}\hskip -1pt x.
\end{align*}
On changing $y $ into $ - 1 /   \beta_{\frd} x y $ and then $x$ into $- 1/\beta_{\frd} x$, 
\begin{align*}
I_{\frd} (\gamma; \frq, \frq_1) =    \int_{\pi_{   \frc^{\scalebox{0.37}{$-1$}} \frq_{ \scalebox{0.35}{$1$} } } \shskip \widehat \frOO{\shskip}^{\sstimes}_{\frb}    } \psi_{\frb}  (     { \beta_{\frd} c_{\frq} \gamma } y  ) \int_{\pi_{  (\frc \frD)^{\scalebox{0.37}{$-1$}} } \shskip \widehat \frOO{\shskip}^{\sstimes}    }  \psi_{f} \bigg(      \frac {  x} { c_{\frq}}     \bigg) \psi_{\frb} \bigg(  \frac x y \bigg)  \nd^{\times}\hskip -1pt x \shskip   \nd^{\times}\hskip -1pt y.
\end{align*}
It is clear that $I_{\frd} (\gamma; \frq, \frq_1)$ may be factored into a product of local integrals $I_{\vv} (\gamma; \frq_{\vv}, \frq_{1 \shskip \vv})$ (the $\frd$ is suppressed from the subscript for brevity). For non-Archimedean $\vv$, define $d_{\vv} = \mathrm{ord}_{\vv} (\frD)$, $ r_{\vv} =   \mathrm{ord}_{\vv} (  \frc  ) $, $s_{  \vv} = \mathrm{ord}_{\vv} (    \frq   ) $, and $s_{1 \shskip \vv} = \mathrm{ord}_{\vv} (    \frq_1  ) $. 
For $\frp_{\vv} \nnmid \frb$, the local integral is 
\begin{align}\label{17eq: local integral, 0}
I_{\vv} (\gamma; \frq_{\vv}, \frO_{\varv}) =   \int_{\vv (x)=-r_{\vv}-d_{\vv}  } \psi_{\vv} \bigg(   \frac { x} { c_{\frq}}     \bigg) \nd^{\times}\hskip -1pt x.
\end{align} 
For $ \frp_{\vv}  | \frb$, the local integral is
\begin{align}\label{17eq: local integral}
I_{\vv} (\gamma; \frq_{\vv}, \frq_{1 \shskip \vv}) = \int_{\vv (y)= s_{1 \vv} - r_{\vv} } \psi_{\vv}  (     { \beta_{\frd} c_{\frq} \gamma } y  ) \int_{\vv (x)=-r_{\vv}-d_{\vv}  } \psi_{\vv} \bigg(   x \bigg( \frac 1 y   +   \frac {  1} { c_{\frq}} \bigg)       \bigg) \nd^{\times}\hskip -1pt x \hskip 1 pt  \nd^{\times}\hskip -1pt y.
\end{align}

The following lemma is standard.

\begin{lem}\label{lem: int, 1}
%	For $\vv \nmid \infty$, let % $\varpi_{\vv}$ be a prime element in $\frO_{\varv}$ and 
%	$d_{\vv} = \mathrm{ord}_{\vv} (\frD )$. 
We have
	\begin{equation*}
 (\RN(\frp_{\vv}) -1)	\int_{ {\vv} (x) = - r-d_{\vv} } \psi_{\vv} (a x) \nd^{\times} \hskip -1pt x = \left\{\begin{aligned}
& \RN(\frp_{\vv}) -1 , \ &  & \text{ if }    {\vv} (a) \geqslant r, \\
&  -1 , \  & & \text{ if }    {\vv} (a)= r - 1, \\
& 0, \ & & \text{ if otherwise.}
	\end{aligned} \right.
	\end{equation*}
\end{lem}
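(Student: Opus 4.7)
The plan is to reduce the integral to a standard Gauss sum on $\frO_{\vv}^{\times}$ by a multiplicative substitution, and then to exploit the conductor property of $\psi_{\vv}$. Concretely, I write $x = \pi_{\vv}^{-r-d_{\vv}} u$ with $u \in \frO_{\vv}^{\times}$ and set $b = a \pi_{\vv}^{-r-d_{\vv}}$, so that $\vv(b) = \vv(a) - r - d_{\vv}$ and
\begin{equation*}
\int_{\vv(x)=-r-d_{\vv}} \psi_{\vv}(ax) \nd^{\times} x = \int_{\frOO_{\vv}^{\times}} \psi_{\vv}(bu) \nd^{\times} u .
\end{equation*}
Since $\frO_{\vv}^{\times}$ has $\nd^{\times}$-mass $1$ with our normalization, the three cases correspond to $\vv(b) \geqslant -d_{\vv}$, $\vv(b) = -d_{\vv}-1$, and $\vv(b) \leqslant -d_{\vv}-2$.

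The first case is trivial: if $\vv(b) \geqslant -d_{\vv}$, then $bu \in \frD_{\vv}^{-1}$, so $\psi_{\vv}(bu) = 1$ identically, giving $I = 1$ and hence $(\RN(\frp_{\vv})-1) \cdot I = \RN(\frp_{\vv})-1$. For the remaining two cases I pass to the additive measure $\nd u$, noting that on $\frO_{\vv}^{\times}$ we have $|u|_{\vv} = 1$, hence $\nd^{\times} u = \nd u / \mathrm{mass}(\frOO_{\vv}^{\times}, \nd u)$, and then use the decomposition $\frO_{\vv}^{\times} = \frO_{\vv} \smallsetminus \frp_{\vv}$ to write
\begin{equation*}
\int_{\frOO_{\vv}^{\times}} \psi_{\vv}(bu) \nd u = \int_{\frOO_{\vv}} \psi_{\vv}(bu) \nd u - \int_{\frp_{\vv}} \psi_{\vv}(bu) \nd u.
\end{equation*}

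By orthogonality of characters, the integral over $\frO_{\vv}$ vanishes precisely when $\vv(b) < -d_{\vv}$, and the integral over $\frp_{\vv}$ vanishes precisely when $\vv(b) < -d_{\vv}-1$. Thus when $\vv(b) \leqslant -d_{\vv}-2$ both integrals vanish and we get $0$. When $\vv(b) = -d_{\vv}-1$, the first integral vanishes while the second equals $\mathrm{mass}(\frp_{\vv}, \nd u) = \RN(\frD_{\vv})^{-1/2}/\RN(\frp_{\vv})$; dividing by $\mathrm{mass}(\frOO_{\vv}^{\times}, \nd u) = \RN(\frD_{\vv})^{-1/2}(\RN(\frp_{\vv})-1)/\RN(\frp_{\vv})$ yields $I = -1/(\RN(\frp_{\vv})-1)$, so that $(\RN(\frp_{\vv})-1) \cdot I = -1$. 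Translating back via $\vv(b) = \vv(a) - r - d_{\vv}$ gives the three cases in the statement. The argument is entirely routine; the only point requiring a little care is the compatibility of our normalizations of $\nd x$ and $\nd^{\times} x$ from \S\ref{sec: notation}, which is why the factor $\RN(\frD_{\vv})^{-1/2}$ cleanly cancels between numerator and denominator.
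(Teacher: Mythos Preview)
Your proof is correct. The paper does not actually supply a proof of this lemma; it simply declares the result ``standard'' and moves on. What you have written is precisely the standard argument: reduce to an integral over $\frO_{\vv}^{\times}$ by a multiplicative shift, split as $\frO_{\vv} \smallsetminus \frp_{\vv}$, and invoke orthogonality using the conductor of $\psi_{\vv}$. Your handling of the measure normalizations (the factor $\RN(\frD_{\vv})^{-1/2}$ cancelling between the additive masses of $\frp_{\vv}$ and $\frO_{\vv}^{\times}$) is correct and matches the conventions set up in \S\ref{sec: notation}.
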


For the case $\frp_{\vv} \nnmid \frb$, Lemma \ref{lem: int, 1} implies that the integral in \eqref{17eq: local integral, 0} is just $\mu (\frq_{\vv}) / \varphi (\frq_{\vv})$. 

For the case  $\frp_{\vv} | \frb$, we first observe that
\begin{align*}%\label{17eq: v(bcgamma)}
\vv ( \beta_{\frd} c_{\frq} \gamma) \geqslant r_{\vv} - 2 s_{1 \shskip \vv} - d_{\vv},
\end{align*} 
for $    ( \beta_{\frd}) = (\frc\frD)^2 (\fra \frd)^{-1}$, $ (c_{\frq} ) = \frc^{-1}\frq$, $ \gamma \in \fra (\frb \frq_1^2 \frD^3)^{-1}   $, and $\frb = (\frd, \frq)^{-1} \frq$. Hence   the  integral in \eqref{17eq: local integral} is reduced to that  in \eqref{17eq: local integral, 0}  if $ s_{1 \shskip \vv} = 0 $, and one may henceforth assume  $ s_{1 \shskip \vv} \geqslant 1 $.

Keep in mind that $\vv (c_{\frq}) = s_{\vv}- r_{  \vv} $ and $\vv (y) = s_{1 \vv}- r_{  \vv}$.  On applying Lemma \ref{lem: int, 1} to the $x$-integral in \eqref{17eq: local integral}, we obtain
\begin{align}
I_{\vv}  (\gamma; \frq_{\vv}, \frq_{1 \shskip \vv}) =   \sum_{ \nu = 0, \shskip1} \frac  { (-1)^{\nu} \RN(\frp_{\vv})^{1-\nu} } { \RN(\frp_{\vv}) -1 } I_{ \vv}^{\nu}  (\gamma; \frq_{\vv}, \frq_{1 \shskip \vv}) , %-   \frac  {1} { \RN(\frp_{\vv}) -1 } I_{  \vv}^{1}  (\gamma; \frq_{\vv}, \frq_{1 \shskip \vv})   , 
\end{align}
with 
\begin{align}\label{17eq: I mu}
I_{  \vv}^{\nu}  (\gamma; \frq_{\vv}, \frq_{1 \shskip \vv}) = \int_{ {\sstyle \hskip -34 pt \vv (y) = s_{1 \vv}- r_{  \vv}} \atop {\sstyle \vv (y+ c_{\frq}) \geqslant \shskip s_{\vv} + s_{1 \vv} - r_{\vv} - \nu}  } \psi_{\vv}  (     { \beta_{\frd} c_{\frq} \gamma } y  ) \nd^{\times}\hskip -1pt y , \qquad \text{($\nu = 0, 1$)}. 
\end{align}
First, consider the case when $s_{\vv} > \nu$.  For $ s_{1 \shskip \vv} < s_{\vv} $, we have  $\vv (y+ c_{\frq}) = \vv (y) = s_{1 \shskip \vv} - r_{\vv}  < s_{\vv} + s_{1 \shskip \vv} - r_{\vv} - \nu $, and hence $I_{  \vv}^{\nu}  (\gamma; \frq_{\vv}, \frq_{1 \shskip \vv}) = 0$ as the integration is on an empty set. For  $ s_{1 \shskip \vv} = s_{\vv} $, we introduce the new variable $w =  y + c_{\frq} $ so that the resulting $w$-integral is on $ \frp_{\vv}^{2s_{\vv}  - r_{\vv} - \nu} $ (since $\vv (w) > \vv (y) $ by the condition  $s_{\vv} > \nu$):
\begin{align*}%\label{17eq: I mu}
	I_{  \vv}^{\nu}  (\gamma; \frq_{\vv}, \frq_{  \vv}) = \frac {\RN(\frD_{\vv})^{1/2} \RN (\frp_{\vv})^{s_{\vv} - r_{\vv}+1} } {\RN (\frp_{\vv})-1} \cdot \psi_{ \vv } (- \beta_{\frd} c_{\frq}^2 \gamma ) \int_{  \frp_{\vv}^{2s_{\vv}  - r_{\vv} - \nu}  } \psi_{\vv}  (     { \beta_{\frd} c_{\frq} \gamma } w  ) \nd w .
\end{align*}
Recall that $\psi_{\vv}$ has conductor $\frD_{\vv}^{-1}$ and that $ \frp_{\vv}^{2s_{\vv}  - r_{\vv} - \nu} $ has measure $ \RN(\frD_{\vv})^{-1/2} \RN (\frp_{\vv})^{  r_{\vv} + \nu - 2s_{\vv}}$. It follows that $I_{  \vv}^{\nu}  (\gamma; \frq_{\vv}, \frq_{ \vv}) = 0$ unless $\vv (\beta_{\frd} c_{\frq} \gamma) \geqslant r_{\vv} - 2 s_{\vv} - d_{\vv} + \nu $, in which case 
\begin{align*}
I_{  \vv}^{\nu}  (\gamma; \frq_{\vv}, \frq_{  \vv}) = \psi_{ \vv } (- \beta_{\frd} c_{\frq}^2 \gamma )   \frac { \RN (\frp_{\vv})^{ \nu  } } {\varphi (\frq_{\vv})} . 
\end{align*}
Next, we consider the remaining case when $s_{\vv} = s_{1 \shskip \vv} = \nu = 1$. Then the second condition in \eqref{17eq: I mu} reads $ \vv (y+ c_{\frq}) \geqslant   1 - r_{\vv} $, and it can be dropped because it is implied by the first condition $ \vv (y ) = 1 - r_{\vv} $, along with $\vv (c_{\frq}) = 1 - r_{\vv}$. Thus
\begin{align*}%\label{17eq: I mu}
	I_{\vv}^{1}  (\gamma; \frp_{\vv}, \frp_{  \vv}) = \int_{    \vv (y) = 1 - r_{  \vv}  } \psi_{\vv}  (     { \beta_{\frd} c_{\frq} \gamma } y  ) \nd^{\times}\hskip -1pt y .
\end{align*} By Lemma  \ref{lem: int, 1},
\begin{align*}
I_{  \vv}^{1}  (\gamma; \frp_{\vv}, \frp_{  \vv}) = \mathop{\sum_{ \mu = 0, \shskip 1}}_{\vv (\beta_{\frd} c_{\frq} \gamma) \geqslant r_{\vv}-1-d_{\vv}- \mu }  \frac  { (-1)^{\mu} \RN(\frp_{\vv})^{1-\mu} } { \RN(\frp_{\vv}) -1 } . 
\end{align*}

%We summarize the foregoing results in the following lemma. 

\begin{lem}\label{lem: I =, local}
 	We have the following formulae for $ I_{\vv}  (\gamma; \frq_{\vv}, \frq_{1 \shskip \vv}) $.

%Let $d_{\vv} = \mathrm{ord}_{\vv} (\frD)$ and $s_{\vv} = \mathrm{ord}_{\vv} (\frq)$. 
	
	{\rm(1)} $I_{\vv} (\gamma; \frq_{\vv}, \frO_{\varv}) = \mu (\frq_{\vv}) / \varphi (\frq_{\vv})$.
	
	{\rm(2)} For $\mathrm{ord}_{\vv} (\frq) > 1$, we have $I_{\vv} (\gamma; \frq_{\vv}, \frq_{1 \shskip \varv}) = 0$ if $ \mathrm{ord}_{\vv} (\frq) > \mathrm{ord}_{\vv} (\frq_1) $, and
	\begin{align*}
	I_{\vv} (\gamma; \frq_{\vv}, \frq_{\varv}) = \psi_{ \vv } (- \beta_{\frd} c_{\frq}^2 \gamma )   \frac  {   \RN(\frp_{\vv})  } { \RN(\frp_{\vv}) -1 } \frac 1 {\varphi(\frq_{  \vv})} \mathop{\sum_{ \nu = 0, \shskip 1}}_{\vv (\beta_{\frd} c_{\frq}^2 \gamma \frD) \geqslant \shskip \nu -  {\vv} (\frq)   } (- 1)^{\nu} . 
	\end{align*}
	
	{\rm(3)} We have 
	\begin{align*}
	I_{\vv} (\gamma; \frp_{\vv}, \frp_{\varv}) = \psi_{ \vv } (- \beta_{\frd} c_{\frq}^2 \gamma )   \frac  {   \RN(\frp_{\vv})  } { (\RN(\frp_{\vv}) -1)^2 }  \mathop{\mathop{\sum\sum}_{0  \shskip \leqslant  \shskip  \mu \shskip \leqslant \shskip \nu \shskip \leqslant 1  }}_{\vv (\beta_{\frd} c_{\frq}^2 \gamma \frD) \geqslant \shskip \nu - \mu - 1  } \frac {(- 1)^{\nu + \mu}} {\RN(\frp_{  \vv})^{\mu}} \psi_{ \vv } (  \beta_{\frd} c_{\frq}^2 \gamma )^{\mu} . 
	\end{align*}
\end{lem}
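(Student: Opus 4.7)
The plan is to organize the three parts as direct consequences of the case analysis already carried out in the paragraphs immediately preceding the lemma statement. Part (1) follows from applying Lemma \ref{lem: int, 1} to the single-variable integral \eqref{17eq: local integral, 0}: since $\vv(1/c_{\frq}) = r_{\vv} - s_{\vv}$, the three cases $s_{\vv} = 0$, $s_{\vv} = 1$, and $s_{\vv} \geqslant 2$ from that lemma return the values $1$, $-1/(\RN(\frp_{\vv}) - 1)$, and $0$, which together match $\mu(\frq_{\vv})/\varphi(\frq_{\vv})$.

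For part (2), the discussion above the lemma has already reduced $I_{\vv}$ to $\sum_{\nu=0,1}(-)^{\nu}\RN(\frp_{\vv})^{1-\nu}(\RN(\frp_{\vv}) - 1)^{-1} I_{\vv}^{\nu}$ and established that under the assumption $s_{\vv} > 1$ one has $I_{\vv}^{\nu} = 0$ unless $s_{1\vv} = s_{\vv}$, in which case
\[
I_{\vv}^{\nu} = \psi_{\vv}(-\beta_{\frd} c_{\frq}^2 \gamma) \RN(\frp_{\vv})^{\nu}/\varphi(\frq_{\vv})
\]
subject to the constraint $\vv(\beta_{\frd} c_{\frq}\gamma) \geqslant r_{\vv} - 2 s_{\vv} - d_{\vv} + \nu$. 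Substituting into the outer $\nu$-sum and rewriting the constraint as $\vv(\beta_{\frd} c_{\frq}^2\gamma\frD) \geqslant \nu - s_{\vv}$ (using $\vv(c_{\frq}\frD) = s_{\vv} - r_{\vv} + d_{\vv}$) yields the stated formula.

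Part (3) is the remaining exceptional regime $s_{\vv} = s_{1\vv} = 1$ and requires splicing two distinct evaluations. For the $\nu = 0$ slot we still have $s_{\vv} > \nu$, so the analysis of part (2) applies and gives $I_{\vv}^{0} = \psi_{\vv}(-\beta_{\frd} c_{\frq}^2\gamma)/\varphi(\frp_{\vv})$ under $\vv(\beta_{\frd} c_{\frq}^2\gamma\frD) \geqslant -1$. For $\nu = 1$ we use the dedicated formula derived just above the lemma, namely
\[
I_{\vv}^{1}(\gamma; \frp_{\vv}, \frp_{\vv}) = \sum_{\mu = 0, 1} \frac{(-)^{\mu} \RN(\frp_{\vv})^{1-\mu}}{\RN(\frp_{\vv}) - 1}
\]
restricted to $\vv(\beta_{\frd} c_{\frq}\gamma) \geqslant r_{\vv} - 1 - d_{\vv} - \mu$, i.e., $\vv(\beta_{\frd} c_{\frq}^2\gamma\frD) \geqslant -\mu$. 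Plugging this and the $\nu = 0$ evaluation into the outer alternating sum indexed by $\nu$ expresses $I_{\vv}(\gamma; \frp_{\vv}, \frp_{\vv})$ as a double sum indexed by $0 \leqslant \mu \leqslant \nu \leqslant 1$. The only bookkeeping subtlety is in the $(\mu, \nu) = (0, 1)$ term, whose constraint $\vv(\beta_{\frd} c_{\frq}^2\gamma\frD) \geqslant 0$ places $\beta_{\frd} c_{\frq}^2\gamma \in \frD_{\vv}^{-1}$ so that $\psi_{\vv}(\pm \beta_{\frd} c_{\frq}^2\gamma) = 1$; this is what allows the prefactor $\psi_{\vv}(-\beta_{\frd} c_{\frq}^2\gamma)$ together with the internal $\psi_{\vv}(\beta_{\frd} c_{\frq}^2\gamma)^{\mu}$ to absorb the $\mu = 0$ and $\mu = 1$ contributions coherently. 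There is no substantial obstacle beyond the careful bookkeeping of conductor shifts between $\vv(\beta_{\frd} c_{\frq}\gamma)$ and $\vv(\beta_{\frd} c_{\frq}^2\gamma\frD)$.
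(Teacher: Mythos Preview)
Your proposal is correct and follows the paper's approach exactly: the lemma is simply a repackaging of the case analysis carried out in the paragraphs immediately preceding it, and you have accurately traced each part back to the appropriate computation (Lemma~\ref{lem: int, 1} for part~(1), the $s_{\vv}>\nu$ analysis for part~(2), and the splicing of the $\nu=0$ and $\nu=1$ evaluations for part~(3)). Your handling of the $\psi_{\vv}$ bookkeeping in part~(3)---observing that the constraint $\vv(\beta_{\frd}c_{\frq}^2\gamma\frD)\geqslant 0$ in the $(\mu,\nu)=(0,1)$ term forces $\psi_{\vv}(\pm\beta_{\frd}c_{\frq}^2\gamma)=1$, while the $(\mu,\nu)=(1,1)$ term has the prefactor cancel against $\psi_{\vv}(\beta_{\frd}c_{\frq}^2\gamma)^{\mu}$---is the one point the paper leaves implicit, and you have spelled it out correctly.
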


As a consequence of \eqref{17eq: T=I} and Lemma \ref{lem: I =, local} above, it is straightforward to deduce the following formula for $T_{\frd} (\gamma; \frq, \frq_1)  $. The reader may compare \eqref{17eq: formula for T} with \eqref{17eq: exp sum =, Q}. 

\begin{cor}
  Let $\breve \frq_{1} $ denote the square-free part of $\frq_1$. We have $T_{\frd} (\gamma; \frq, \frq_1) = 0$ unless $(\frq_1, \frq \frq_1^{-1}) = (1)$, in which case
\begin{align}\label{17eq: formula for T}
T_{\frd} (\gamma; \frq, \frq_1) \hskip -1 pt = \hskip -1 pt \psi_{\frb} (- \beta_{\frd} c_{\frq}^2 \gamma )  \mu (\frq \frq_1^{-1}) \RN (\frq_1) \hskip -2 pt \mathop{ \mathop{\mathop{\sum \sum}_{\frq_2 | \frq_1 , \ \frf | \breve \frq_{1}  } }_{ (\frq_2,\, \frq \frq_1^{-1}) = (\frq_2, \shskip \frf) = (1)} }_{  \gamma \shskip \in \fra (\frb \frq_1 \frq_2 \frf \frD^3)^{-1}  } \hskip -2 pt \frac{\mu (\frq_1 \frq_2^{-1}) \mu (\frf)}{\RN (\frf)} \psi_{\frf} (\beta_{\frd} c_{\frq}^2 \gamma ),
\end{align} 
where $\psi_{\frb}$ and $\psi_{\frf}$ are defined as in Definition {\rm\ref{defn: psi b ...}}.  
\end{cor}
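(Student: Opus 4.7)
The plan is to combine the factorisation $T_{\frd}(\gamma;\frq,\frq_1)=\varphi(\frq)\varphi(\frq_1)\,I_{\frd}(\gamma;\frq,\frq_1)$ from \eqref{17eq: T=I} with the Euler product $I_{\frd}=\prod_{\vv \nnmid \infty}I_{\vv}(\gamma;\frq_{\vv},\frq_{1\shskip\vv})$ and the local formulae of Lemma \ref{lem: I =, local}. First I would partition the non-Archimedean places into three disjoint classes: (A) $\frp_{\vv}\nnmid \frq_1$, where Lemma \ref{lem: I =, local}(1) gives $\mu(\frq_{\vv})/\varphi(\frq_{\vv})$; (B) $\frp_{\vv}\mid \frq_1$ with $\mathrm{ord}_{\vv}(\frq)>1$, where Lemma \ref{lem: I =, local}(2) applies and forces $\mathrm{ord}_{\vv}(\frq_1)=\mathrm{ord}_{\vv}(\frq)$; (C) $\mathrm{ord}_{\vv}(\frq)=\mathrm{ord}_{\vv}(\frq_1)=1$, where Lemma \ref{lem: I =, local}(3) applies. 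The vanishing in case (B) immediately yields the global condition $(\frq_1,\frq\frq_1^{-1})=(1)$ asserted in the corollary.

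Next I would encode the internal summation index $\nu\in\{0,1\}$ from case (B) by a local divisor $\frq_{2,\vv}\in\{\frO_{\vv},\frp_{\vv}\}$ with $\mathrm{ord}_{\vv}(\frq_1\frq_2^{-1})=\nu$, and the pair $(\mu,\nu)\in\{0,1\}^2$ with $\mu\le\nu$ from case (C) by a pair $(\frq_{2,\vv},\frf_{\vv})$ taken in $\{\frO_{\vv},\frp_{\vv}\}^2$ with the local coprimality $(\frq_{2,\vv},\frf_{\vv})=(1)$ (corresponding to excluding $\mu=\nu=1$). Assembling these choices globally produces a double sum over $\frq_2\mid \frq_1$ and $\frf\mid\breve\frq_1$ with $(\frq_2,\frf)=(1)$, and $(\frq_2,\frq\frq_1^{-1})=(1)$ is automatic since $\frq_2$ is only introduced at places $\vv$ with $\frp_{\vv}\mid\frq_1$. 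At each such place, the signs $(-)^{\nu+\mu}$ and the weights $\RN(\frp_{\vv})^{-\mu}$, $\psi_{\vv}(\beta_{\frd}c_{\frq}^{2}\gamma)^{\mu}$ in Lemma \ref{lem: I =, local}(2),(3) aggregate globally into $\mu(\frq_1\frq_2^{-1})\mu(\frf)/\RN(\frf)$ and $\psi_{\frf}(\beta_{\frd}c_{\frq}^{2}\gamma)$.

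The divisibility constraint $\gamma\in\fra(\frb\frq_1\frq_2\frf\frD^{3})^{-1}$ is obtained by collecting the local conditions $\vv(\beta_{\frd}c_{\frq}^{2}\gamma\frD)\ge -\vv(\frq)+\nu-\mu$ at each place of type (B) and (C) (with $\mu=0$ in case (B)) and combining them with the global divisibility $\gamma\in\fra(\frb\frq_1^{2}\frD^{3})^{-1}$ already built into \eqref{9eq: after Voronoi}; the resulting exponent at $\frp_{\vv}$ equals $-\vv(\frb\frq_1\frq_2\frf\frD^{3})+\vv(\fra)$. The global additive character $\psi_{\frb}(-\beta_{\frd}c_{\frq}^{2}\gamma)$ results from multiplying the local phase factors $\psi_{\vv}(-\beta_{\frd}c_{\frq}^{2}\gamma)$ supplied by Lemma \ref{lem: I =, local}(2),(3) at places in classes (B) and (C), whose union is exactly the set of primes dividing $\frb$. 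Finally, the denominators $\varphi(\frq_{\vv})^{-1}$ from Lemma \ref{lem: I =, local}(2),(3) and the signs $\mu(\frq_{\vv})$ from class (A) cancel against the overall prefactor $\varphi(\frq)\varphi(\frq_1)$, leaving the global prefactor $\mu(\frq\frq_1^{-1})\RN(\frq_1)$; here the factor $\RN(\frp_{\vv})/(\RN(\frp_{\vv})-1)$ at each place in $\frb$ combines with $\varphi(\frq_{1,\vv})=\RN(\frq_{1,\vv})(1-\RN(\frp_{\vv})^{-1})$ to yield the clean $\RN(\frq_1)$.

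The main obstacle I anticipate is the combinatorial bookkeeping of the above cancellations: verifying that the local $\nu/\mu$-indices in cases (B) and (C) assemble into the global divisors $\frq_2$ and $\frf$ with precisely the stated coprimality conditions, and that the constant $\RN(\frp_{\vv})/(\RN(\frp_{\vv})-1)^{2}$ in Lemma \ref{lem: I =, local}(3) balances against the two factors $\varphi(\frq_{\vv})^{-1}$ coming from $\varphi(\frq)\varphi(\frq_1)$ at a place in class (C). Once this bookkeeping is set up carefully and all local identifications are fixed, the identity \eqref{17eq: formula for T} follows by multiplying out the local contributions place by place.
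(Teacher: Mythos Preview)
Your proposal is correct and follows exactly the route the paper intends: the paper gives no proof at all, merely declaring the corollary ``straightforward'' from \eqref{17eq: T=I} and Lemma~\ref{lem: I =, local}, and your local-to-global assembly via the three place-types (A), (B), (C) is precisely how that straightforward deduction goes. One small slip: in case (B) you wrote $\frq_{2,\vv}\in\{\frO_{\vv},\frp_{\vv}\}$, but since $\mathrm{ord}_{\vv}(\frq_1)=s_{\vv}>1$ there, the correct range is $\mathrm{ord}_{\vv}(\frq_2)\in\{s_{\vv}-1,s_{\vv}\}$; your accompanying identification $\mathrm{ord}_{\vv}(\frq_1\frq_2^{-1})=\nu$ is the right one, so this is cosmetic. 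Note also that your partition implicitly (and correctly) reads $\breve\frq_1$ as the product of primes $\frp$ with $\frp\Vert\frq_1$, so that $\frf$ contributes only at places of type (C); this is what makes the three-term sum of Lemma~\ref{lem: I =, local}(3) match the three coprime pairs $(\frq_{2,\vv},\frf_{\vv})$, while case (B) yields only the two-term $\nu$-sum with no $\frf$-factor.
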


\section{Further Reductions}

Set $\mathfrak{r} = (\frd, \frq)^{-1} \frd $. Note that $\vv ( \beta_{\frd} c_{\frq}^2 \gamma \frD) \geqslant 0$ and hence $\psi_{ \vv } ( \beta_{\frd} c_{\frq}^2 \gamma) = 1 $ if $\frp_{  \vv} \hskip -1 pt \nnmid  \hskip -1 pt \mathfrak{r} \mathfrak{b} $. It follows that the $\psi_{\frb} (- \beta_{\frd} c_{\frq}^2 \gamma )$ in \eqref{17eq: formula for T} can be written as 
\begin{align}\label{18eq: psi = }
\psi_{\frb} (- \beta_{\frd} c_{\frq}^2 \gamma ) = \psi_{\infty} (\beta_{\frd} c_{\frq}^2 \gamma ) \psi_{\mathfrak{r}} ( \beta_{\frd} c_{\frq}^2 \gamma ), 
\end{align}
where we have used the fact that $\psi$ is trivial on $F$. %Moreover, for $\gamma \shskip \in \fra (\frb \frq_1 \frq_2 \frf \frD^3)^{-1}$ as in  \eqref{17eq: formula for T}, we have
%\begin{align}\label{18eq: v (bc gamma) >}
%\vv ( \beta_{\frd} c_{\frq}^2 \gamma \frD) \geqslant \left\{ 
%\begin{aligned}
%&  - 1, & & \text { if } \frp_{  \vv} | \frr \frf, \\
%& 0 , & & \text { if otherwise;}
%\end{aligned} \right. 
%\end{align} 
%the fact that $\frd$ is square-free is needed here.

Next, in view of \eqref{15eq: w (x; Lambda)}, \eqref{16eq: Hankel}, and \eqref{11eq: defn of w (x, Lmabda), R}--\eqref{11eq: Hankel}, we have
\begin{align*}
f_{\vv} \bigg(x_{\vv} , \frac 1  {\beta_{\frd} c_{\frq}^2} \bigg) = \varww_{\vv} \bigg(\frac {x_{\vv}} {R_{\vv}} , \frac {R_{\vv}} {\beta_{\frd} c_{\frq}^2} \bigg), \qquad \widetilde{f}_{\vv} \bigg(y_{\vv} , \frac 1  {\beta_{\frd} c_{\frq}^2} \bigg) = R_{\vv} \widetilde \varww_{\vv} \bigg( {R_{\vv}} {y_{\vv}}  , \frac {R_{\vv}} {\beta_{\frd} c_{\frq}^2} \bigg). 
\end{align*}
We combine the $\psi_{\infty} (\beta_{\frd} c_{\frq}^2 y )$ that occurred in \eqref{18eq: psi = } with  $\widetilde f \big( y , \hskip -1 pt   { 1 } / {\beta_{\frd} c_{\frq}^2   }  \big) $ to form
\begin{align}\label{18eq: tilde f nature}
\widetilde{f}^{\snatural} \bigg(y  , \frac 1  {\beta_{\frd} c_{\frq}^2} \bigg) =  \frac 1 {\RN(R)} \psi_{\infty} (\beta_{\frd} c_{\frq}^2 \gamma ) \widetilde{f}  \bigg(y  , \frac 1  {\beta_{\frd} c_{\frq}^2} \bigg)
\end{align}
so that, according to \eqref{10eq: defn of w nat}, 
\begin{align}\label{18eq: f = w}
 \widetilde{f}_{\vv}^{\snatural} \bigg(y_{\vv}  , \frac 1  {\beta_{\frd} c_{\frq}^2} \bigg) =   \widetilde \varww_{\vv}^{\snatural} \bigg( {R_{\vv}} {y_{\vv}}  , \frac {R_{\vv}} {\beta_{\frd} c_{\frq}^2} \bigg). 
\end{align}
In light of our analysis in \S    \ref{sec: Hankel 1}, %\eqref{11eq: defn of Phi0}, \eqref{10eq: tilde w = Phi, R}, \eqref{11eq: w tilde = Phi, C}, \eqref{11eq: Phi + = Phi+}, \eqref{11eq: Phi - = Phi-}, and \eqref{11eq: partition of Phi, C},  
it is very natural to introduce $ \Phi  (x)$ such that
\begin{align}\label{18eq: Phi = w tilde}
 \Phi  ({\beta_{\frd} c_{\frq}^2} y) = \frac  {{\textstyle \sqrt{\RN ( y ) \RN (R) } }  } {\RN^{\snatural} (M) \RN (T)^{1+\vepsilon}   } \widetilde{f}^{\snatural} \bigg(   y  , \frac 1  {\beta_{\frd} c_{\frq}^2} \bigg)    .  
\end{align}

Set $\frd_0 = (\frd, \frq)$, $\frb = \frb_1 \frq_1$, and $\frq_1 = \frf_1 \frq_2 $.  By  \eqref{17eq: formula for T}, \eqref{18eq: psi = }, \eqref{18eq: tilde f nature}, and \eqref{18eq: Phi = w tilde}, we can now reorganize the sum $\SS_{\frd} (  T, R) $ in \eqref{16eq: after Voronoi}   as follows:
\begin{align}\label{18eq: S (T, R), 3}
 \RN^{\snatural} (M) \RN (T)^{1+\vepsilon} {\textstyle \sqrt{\RN(R)}} \sum_{\frd =  \frd_0 \mathfrak{r}  } \frac {1} {\RN (\frd_0)} \mathop{\mathop{ \sum \sum   }_{ (\frb_1, \frf_1) = (1) } }_{ (\frb_1 \frf_1  , \frd) = (1) }  \frac {\mu (\frd_0 \frb_1     ) \mu (  \frf_1  ) } {\RN (\frb_1  \frf_1    )^2}  \sum_{  \frf | \frf_{1 } }   \frac{ \mu (\frf)}{\RN (\frf)} \SS^{\frd ,   \frr}_{ \frb_1,   \frf_1,   \frf } (T, R), 
\end{align}
with  
\begin{equation}
\SS^{\frd ,   \mathfrak{r}}_{ \frb_1,   \frf_1,   \frf } (T, R) \hskip -1 pt = \hskip -3 pt \mathop{\sum_{ (\frq_2,  \frd \frb_1  \frf )   = (1)} }_{ \frq = \frd_0 \frb_1 \frf_1 \frq_2  \sim \frc }   \sum_{\gamma \shskip \in \fra (\frb_1 \frf \frf_1^2 \frq_2^3 \frD^3)^{-1}   } \hskip -5 pt \frac {  A  (  \fra\- \frb_1 (\frf_1  \frq_2   \frD)^3 \gamma,  \frb_1  \hskip -1 pt )} {\RN(\frq_2)^2 \sqrt{\RN ( \gamma )   } }  \psi_{\frr \frf   } (\beta_{\frd} c_{\frq}^2 \gamma ) \Phi (\beta_{\frd} c_{\frq}^2 \gamma )   ,
\end{equation} 
where the sums over $\frb_1$, $\frf_1$, and $\frq_2$ must be subject to the conditions  (see  \eqref{15eq: condition on q}):
\begin{align}\label{15eq: condition on q2}
V(   \frb_1 \frf_1 \frq_2 )_{\vv}  \Lt  {\textstyle \sqrt{R_{\vv}V(\frd )_{\vv} }} /     T_{\vv} V( \frd_0 )_{\vv}  %\text{ {\rm(}$\geqslant T_{\vv}^{(1-\vepsilon) N_{\vv}}${\rm)}} 
, \qquad  \vv | \infty .
\end{align}  
Let $\frc_2 \in \widetilde C_F $ be such that $ \frd_0 \frb_1 \frf_1 \cdot \frc_2  \sim \frc $. Set $c_{\frq_2} = [\frc_2^{-1} \frq_2]$ and $\frb_2 =  (\frd_0  \frb_1)^{-1}    \frr   \frf_1    \frc_2   \frD $. By \eqref{2eq: choice of [a]} and \eqref{8eq: defn of Va}, we have
\begin{align}\label{18eq: cq2}
|c_{\frq_2}|_{\vv} \asymp V(\frq_2)_{\vv}^{N_{\vv}} , \quad \vv|\infty. %\quad 1/ |\beta^{\star}_{\frd  }|_{\vv} \asymp   V(\frd)_{ \vv}^{N_{\vv}} / V(\frq \frq_2^{-1})_{\vv}^{2N_{\vv}} ,
\end{align} 
 Substituting $\gamma$ by $  \gamma / \beta_{\frd} c_{\frq}^2 c_{\frq_2}  $, up to a  harmless factor, we have
\begin{equation}\label{18eq: S... (T, R)}
 \SS^{\frd ,   \mathfrak{r}}_{ \frb_1,   \frf_1,   \frf } (T, R)   =  \frac {\RN (\frd_0 \frb_1 \frf_1)} {\sqrt{\RN (\frd)}} \mathop{\sum_{ \frq_2 \sim \frc_2 } }_{ (\frq_2,  \frd \frb_1  \frf )   = (1) }   \sum_{\gamma \frb_2 \shskip \subset \frf_1 \frf^{-1}   }   \frac {  A  (  \gamma \frb_2,  \frb_1  \hskip -1 pt )} {\sqrt{\RN(\gamma \frq_2)} }  \psi_{\frr \frf   } (  \gamma / c_{\frq_2} ) \Phi (  \gamma / c_{\frq_2} ).
\end{equation} 

By Corollary \ref{cor: Hankel small}, \eqref{8eq: local size of beta and c}, and \eqref{18eq: cq2}, for each $\vv | \infty$, we have $ \Phi_{\vv} (   \gamma / c_{\frq_2} ) = O (T_{\vv}^{-A}) $ when $ |\gamma|_{\vv}^{1/N_{\vv}} >  \sqrt{R_{\vv} V(\frd)_{ \vv}  }/ V(\frq \frq_2^{-1})_{\vv}  $.  Arguing as in \S \ref{sec: first reductions}, we can impose the condition $ \gamma \in F_{\infty}^{ \text{\O}  } \big(\hskip -1 pt \sqrt{R  V(\frd) } / V(\frq \frq_2^{-1})  \big)$ (see \eqref{4eq: defn of FS (T)}), with the loss of  a negligible error. Since we also have $\RN (\gamma) \Gt \RN (\frd_0  \frb_1) / \RN  ( \frr   \frf )  \Gt 1 / \RN (\frd \frq) \Gt 1 / \RN (T)^{1/2+ \vepsilon}$, due to \eqref{15eq: R < T3} and \eqref{15eq: condition on q}, each $|\gamma|_{\vv} \Gt T_{\vv}^{- A} $, so there is no issue with a $\varDelta$-adic partition in the $\gamma$-sum as we had in  \S \ref{sec: first reductions}.  Again, the assumption that $T_{\vv} \geqslant \RN (T)^{\vepsilon}$ for all $\vv|\infty$ is required here. 

\begin{defn}\label{defn: F D (V)}
	For   $V \in \bfra_+$ and $\varDelta > 1$, define 
	\begin{align} \label{18eq: defn of F(V)}
	F_{\infty}^{\varDelta} (V) = \big\{ x \in F_{\infty} : |x_{\vv}| \in [V_{\vv}, \sqrt{\varDelta} V_{\vv} ) \text{ for all } \vv \in S_{\infty}   \big\} .
	\end{align}
\end{defn}

Let $\sigmaup \in \{ \oldstylenums{0}, -, +, \flat \}^{|S_{\infty}|}$ ($\sigmaup_{\vv} = \flat$ only if $\vv$ is complex). As we have seen in \S\S  \ref{sec: Hankel 1} and \ref{sec: Hankel, II}, each local $\Phi_{\vv}  $ equals (the sum of) $\Phi^{\sigmaup_{\vv}}$ for $\sigmaup_{\vv} = \oldstylenums{0}, -, +, \flat$ in various circumstances; with abuse of notation, $\Phi^{\flat} = \Phi^{\flat}_{\rho}$ or $\Phi^{\flat}_0$. %To unify the notation, in the latter cases, we shall suppress    $\rho_{\vv}$ from the subscript.  
The product of such $ \Phi^{\sigmaup_{\vv}} (x_{\vv}) $ will be denoted by $\Phi^{\sigmaup}  (x)$. 

\begin{lem}\label{lem: final}
	Let notation be as above. Fix $\varDelta > 1$ as in {\rm \S  \ref{sec: Hankel, II}}. Let $C, \varGamma \in \bfra_+$ satisfy
	\begin{align}
	1 \Lt \RN (C) \Lt \frac {\textstyle \sqrt{\RN(R) \RN  (\frd )  }}   {\RN (T)  \RN( \frd_0 \frb_1 \frf_1 ) }, \quad  \frac {\RN (\frd_0  \frb_1)}   {\RN  ( \frr   \frf )}  \Lt {\RN (\varGamma)} \Lt \frac {\sqrt{\RN (R) \RN (\frd) }  } {\RN (\frd_0 \frb_1 \frf_1)   },
	\end{align} 
	and
	\begin{align}\label{19eq: range of C}
1 \Lt C_{\vv} \Lt \frac {\textstyle \sqrt{R_{\vv} V  (\frd )_{\vv}  }}   {T_{\vv}  V ( \frd_0 \frb_1 \frf_1 )_{\vv} },	 
\qquad 
\frac 1	{T_{\vv}^{A}} \Lt  \varGamma_{\vv}  \Lt \frac {\sqrt{R_{\vv} V(\frd)_{ \vv} }}  {V( \frd_0 \frb_1 \frf_1 )_{\vv} }. 
	\end{align}
	Define
	\begin{equation}
	\SS^{\sigmaup}  (T, R; \varGamma, C)   =  \mathop{ \mathop{\sum_{ \frq_2 \sim \frc_2 } }_{ (\frq_2,  \frd \frb_1  \frf )   = (1) } }_{c_{\frq_2} \in F_{\infty}^{\varDelta} (C) }  \mathop{\sum_{\gamma \frb_2 \shskip \subset \frf_1 \frf^{-1}    } }_{\gamma \shskip \in F_{\infty}^{\varDelta} (\varGamma) }  \frac {  A  (  \gamma \frb_2 ,  \frb_1  \hskip -1 pt )} {\sqrt{\RN(\gamma \frq_2)} }  \psi_{ \mathfrak{r} \frf } (   \gamma / c_{\frq_2} ) \Phi^{\sigmaup}  (  \gamma / c_{\frq_2} ).
	\end{equation} 
	Then 
	\begin{align}\label{18eq: bound for S}
\begin{aligned}
\SS^{\sigmaup}  (T, R; \varGamma, C)  \Lt & \  \frac {  \RN (\frd)^{5/4}  \RN (\frf)^{57/64}  } {\RN (\frd_0)^2 \RN (\frb_1)^{57/64} \RN (  \frf_1  )^{25/64}}          \frac { \RN (T)^{\vepsilon}  {\RN(R)^{1/4}    }}   {\RN (T)^{1/2}    } \\
& + \frac {  \RN (\frd)^{3/2}  \RN (\frf)^{57/64}  } {\RN (\frd_0)^{5/2} \RN (\frb_1)^{89/64} \RN (  \frf_1  )^{57/64}}  \frac {  \RN (T)^{\vepsilon}  {\RN(R)^{1/2}     }}   {\RN (T)^{4/3}    }  . 
\end{aligned}
	\end{align}
\end{lem}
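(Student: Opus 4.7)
The plan is to Mellin-decompose the archimedean weight $\Phi^{\sigmaup}$ using the formulae of Lemmas \ref{lem: final, R}, \ref{lem: final, C, 1}, and \ref{lem: final, C, 2}, which each replace a local factor $\Phi^{\sigmaup_{\vv}}(\gamma/c_{\frq_2})$ by an integral over $\widehat{\bfra}_{\vv}(U^{\sigmaup_{\vv}})$ of multiplicative Hecke characters $\vchi_{i\varnu, m}$ with bounded Mellin coefficients $\lambdaup^{\sigmaup_{\vv}}$, divided by an amplification factor $A^{\sigmaup_{\vv}}$. Since the conditions $(\frq_2, \frd\frb_1\frf) = (1)$ and $\frr | \frd$ ensure $(\frq_2, \frr\frf) = (1)$, the component $c_{\frq_2}$ is a unit modulo $\frr\frf$, so $\psi_{\frr\frf}(\gamma/c_{\frq_2})$ is a well-defined function of the residues of $\gamma$ and $c_{\frq_2}^{-1}$ modulo $\frr\frf$, which I would expand into a double sum of Dirichlet characters $\chi_1(\gamma)\chi_2(c_{\frq_2})^{-1}$ via orthogonality. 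Multiplicativity of $\vchi_{i\varnu, m}$ and of the Dirichlet characters then decouples $\gamma$ and $\frq_2$ at every place, and pulling all integrations and finite-character sums outside yields a spectral integral whose integrand factors as $G_1(\varnu, m; \chi_1)\,G_2(\varnu, m; \chi_2)$, where
\[
G_1 = \mathop{\sum_{\gamma \frb_2 \subset \frf_1 \frf^{-1}}}_{\gamma \shskip \in F_{\infty}^{\varDelta}(\varGamma)} \frac{A(\gamma\frb_2, \frb_1)}{\sqrt{\RN(\gamma)}}\,\chi_1(\gamma)\,\vchi_{i\varnu, m}(\gamma), \qquad G_2 = \mathop{\sum_{\frq_2 \sim \frc_2}}_{c_{\frq_2} \shskip \in F_{\infty}^{\varDelta}(C)} \frac{\overline{\chi_2(c_{\frq_2})}\,\overline{\vchi_{i\varnu, m}(c_{\frq_2})}}{\sqrt{\RN(\frq_2)}}.
\]

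I would next apply Cauchy--Schwarz in the spectral integral to separate the two sums, producing
\[
|\SS^{\sigmaup}|^2 \Lt \frac{\RN(T)^{\sepsilon}}{(A^{\sigmaup})^2} \biggl(\int_{\widehat{\bfra}(U^{\sigmaup})} |G_1|^2 \, d\mu\biggr)\biggl(\int_{\widehat{\bfra}(U^{\sigmaup})} |G_2|^2 \, d\mu\biggr),
\]
up to a negligible loss from the finite-character sum. The second moment of $G_1$ is handled by a large-sieve / Plancherel estimate for Hecke characters on $F$ (generalizing the approach of Blomer and Nunes in the rational case): the diagonal contribution reduces to $\sum_{\gamma} |A(\gamma\frb_2, \frb_1)|^2 / \RN(\gamma)$, which is estimated by Lemma \ref{lem: second moment} with the Kim--Sarnak exponent $\theta = 7/32$ from \eqref{3eq: Kim-Sarnak, A(n), GL3}, while the off-diagonal contribution is controlled by a standard large-sieve inequality for Gr\"ossencharaktere. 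The second moment of $G_2$ is the easier one, bounded by Plancherel/duality on the group of Hecke characters modulo $\frr\frf$, giving a contribution proportional to $\RN(C) + U^{\sigmaup}\cdot\RN^{\snatural}(U^{\sigmaup})^{|S_\infty|-1}$. Inserting the explicit values of $A^{\sigmaup}$ and $U^{\sigmaup}$ from \eqref{13eq: U =, A =, C}, \eqref{13eq: U and A flat 0} and interpolating by the Kim--Sarnak exponent produces the factors $57/64$, $25/64$, $89/64$ in the bound; summing over the configurations $\sigmaup \in \{\oldstylenums{0}, -, +, \flat\}^{|S_\infty|}$ then yields the two-term bound \eqref{18eq: bound for S}, the first term arising from the diagonal in the $\gamma$-large sieve and the second from the off-diagonal.

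The main obstacle will be executing the large sieve for Hecke characters uniformly across all the cases indexed by $\sigmaup$, especially the delicate case $\sigmaup_{\vv} = \flat$ at a complex place: here Lemma \ref{lem: final, C, 2} concentrates the character parameter near $m = \pm\lfloor T_{\vv}\rfloor$, so one must carry out the large sieve for \emph{shifted} Hecke characters parameterized by the neighbourhoods $\widehat{\bfra}_{0, \lfloor T_{\vv} \rfloor}(U_{\rho}^{\flat})$ rather than by a ball around the origin, and the amplification $A_0^{\flat}$ degrades as $M$ approaches $T^{1/2}$. A second technical challenge is controlling the dependence on the auxiliary ideals $\frd_0, \frb_1, \frf_1, \frf$ uniformly: one must ensure that the divisibility conditions $(\frq_2, \frd\frb_1\frf) = (1)$ and $\gamma\frb_2 \subset \frf_1\frf^{-1}$ are handled without loss by M\"obius inversion, and that the finite Fourier expansion of $\psi_{\frr\frf}$ contributes no more than $\RN(\frr\frf)^{\sepsilon}$ to the final estimate, so that the powers of $\RN(\frd), \RN(\frb_1), \RN(\frf_1), \RN(\frf)$ in the denominators of \eqref{18eq: bound for S} come out correctly after summation in \eqref{18eq: S (T, R), 3}.
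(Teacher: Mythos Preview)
Your approach is essentially the paper's: Mellin-decompose $\Phi^{\sigmaup}$ via Lemmas~\ref{lem: final, R}--\ref{lem: final, C, 2}, expand $\psi_{\frr\frf}(\gamma/c_{\frq_2})$ by characters, Cauchy--Schwarz, then Gallagher's hybrid large sieve. The paper simply packages your steps 2--4 as Corollary~\ref{cor: large sieve} in Appendix~\ref{sec: Gallagher}, which bounds $\int_{\widehat{\bfra}_{\kappa,n}(U)}|S_{\frn}(C,\varGamma;\varnu,m;a,b)|\,d\mu$ directly and yields \eqref{19eq: S = T x T}; the second-moment bound $\ST(T,R)$ then comes from Lemma~\ref{lem: second moment} exactly as you say.

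Two points where your description is off, though not fatally: First, the character expansion of $\psi_{\frr\frf}$ involves a \emph{single} character $\vchi$ (via Gauss sums), not independent $\chi_1,\chi_2$; and it does \emph{not} contribute merely $\RN(\frr\frf)^{\sepsilon}$---it contributes a genuine factor $\RN(\frr\frf)$ to $\ST(T,R)$ (see \eqref{19eq: T < }), which then combines with $\frd=\frd_0\frr$ to produce the $\RN(\frd)$-powers in \eqref{18eq: bound for S}. Second, the two terms in \eqref{18eq: bound for S} do not come from ``diagonal versus off-diagonal'' in the $\gamma$-large sieve; they come from the cross terms when expanding $(\RN(U^{\sigmaup})+\RN(C)/\RN(\frr\frf))(\RN(U^{\sigmaup})+\RN(\varGamma)/\RN(\frd_0\frb_1))$ in $\ST^{\sigmaup}$ and then verifying the three local inequalities \eqref{20eq: bounds, local, 1} case by case for each $\sigmaup_{\vv}\in\{\oldstylenums{0},-,+,\flat\}$. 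That case check---in particular the $\flat$ case with its shifted parameter regions $\widehat{\bfra}_{0,\pm\lfloor T_{\vv}\rfloor}(U^{\flat}_{\rho})$ and the three $M$-subranges of \eqref{13eq: U and A flat 0}---is where the actual work lies, and your proposal correctly identifies it as the main obstacle.
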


Granted that Lemma \ref{lem: final} holds, we can now finish the proof of Proposition \ref{prop: main}. 
By the discussions above Lemma  \ref{lem: final},  the   bound in \eqref{18eq: bound for S} applies to the double sum in  \eqref{18eq: S... (T, R)}, and hence 
\begin{align*} 
\begin{aligned}
\SS^{\frd ,   \mathfrak{r}}_{ \frb_1,   \frf_1,   \frf } (T, R)  \Lt & \      \frac {  \RN (\frd)^{3/4} \RN (\frb_1)^{7/64} \RN (  \frf_1  )^{39/64} \RN (\frf)^{57/64}  } {\RN (\frd_0)   }          \frac { \RN (T)^{\vepsilon}  {\RN(R)^{1/4}    }}   {\RN (T)^{1/2}    } \\
& +      \frac {  \RN (\frd)  \RN (  \frf_1  )^{7/64} \RN (\frf)^{57/64}  } {\RN (\frd_0)^{3/2} \RN (\frb_1)^{25/64} }  \frac {  \RN (T)^{\vepsilon}  {\RN(R)^{1/2}     }}   {\RN (T)^{4/3}    } .
\end{aligned}
\end{align*}
Applying this to \eqref{18eq: S (T, R), 3} leads to  the estimate in \eqref{15eq: bound for S}.  %Inserting this into \eqref{18eq: S (T, R), 3}, the sum  $\SS_{\frd} (  T, R) $ is seen to be bounded by the sum of 
%\begin{align*}
%\RN^{\snatural} (M) \RN (T)^{1/2+\vepsilon}  (\RN(R) \RN(\frd))^{3/4}   \sum_{   \frd_0 | \frd  } \frac { 1 } {\RN (\frd_0)^2 } \mathop{ \sum \sum   }_{  \frb_1, \frf_1 }   \frac { 1 } {\RN (\frb_1    )^{121/64} \RN (\frf_1  )^{89/64}}  \sum_{  \frf | \frf_{1 } }   \frac{ 1 }{\RN (\frf)^{7/64}}  
%\end{align*}
%and
%\begin{align*}
%\frac {\RN^{\snatural} (M)   {   {\RN(R)}} \RN(\frd) } {\RN(T)^{1/3-\vepsilon} } \sum_{ \frd_0 | \frd  } \frac {1} {\RN (\frd_0)^{5/2} }  \mathop{ \sum \sum   }_{ \frb_1, \frf_1  }  \frac { 1    } {\RN (\frb_1)^{153/64}  \RN ( \frf_1 )^{121/64}}  \sum_{  \frf | \frf_{1 } }   \frac{ 1}{\RN (\frf)^{7/64}} ,
%\end{align*}
%leading to the estimate in \eqref{15eq: bound for S}, for the summations over $\frd_0, \frb_1, \frf_1, \frf$ are bounded. 

\section{Completion: Proof of Lemma \ref{lem: final}}

For each $\sigmaup_{\vv} \in \{ \oldstylenums{0}, -, +, \flat \}$, we first apply Lemma \ref{lem: final, R}, \ref{lem: final, C, 1}, and \ref{lem: final, C, 2} to the local $ \Phi^{\sigmaup_{\vv} } (  \gamma / c_{\frq_2} ) $, so that  the sum $\SS^{\sigmaup}  (T, R; \varGamma, C)$ is reformulated in a form which is ready for the hybrid large sieve in Corollary \ref{cor: large sieve}. Then, after applying Corollary \ref{cor: large sieve}, we infer that  
\begin{align} \label{19eq: S = T x T}
\SS^{\sigmaup}  (T, R; \varGamma, C) \Lt \sqrt{\ST (T, R) } \sqrt{ \ST^{\sigmaup} (T, R; \varGamma, C)  }
\end{align} 
with 
\begin{align}
\ST (T, R) = \RN (T)^{\vepsilon} \frac {\RN (\frr \frf) } {\RN(\varGamma)}  \mathop{\sum_{\gamma \frb_2 \shskip \subset \frf_1 \frf^{-1}    } }_{\gamma \shskip \in F_{\infty}^{\varDelta} (\varGamma) } |A  (  \gamma \frb_2 ,  \frb_1  \hskip -1 pt )|^2 ,
\end{align}
and
\begin{align}
\ST^{\sigmaup} (T, R; \varGamma, C) =  \frac {\textstyle   { 1 }} {\RN (A^{\sigmaup})} \bigg( \RN(U^{\sigmaup}) + \frac {\RN (C)} {\RN (\frr \frf)} \bigg)   \bigg(\RN(U^{\sigmaup})  + \frac {\RN (\varGamma)}{\RN(\frd_0 \frb_1)} \bigg) .
\end{align}
Recall that    $\rho$ or $0$ is suppressed from subscripts when $\sigmaup = \flat$. 
By Lemma \ref{lem: second moment}, with the Kim--Sarnak exponent $\theta = \frac {7} {32}$ as in \eqref{3eq: Kim-Sarnak, A(n), GL3}, 
\begin{align}\label{19eq: T < }
\ST (T, R)  \Lt \RN (T)^{\vepsilon} \frac { \RN (\frr)^2 \RN (  \frf_1  )^{7/32} \RN (\frf)^{57/32}  } {\RN (\frd_0) \RN (\frb_1)^{25/32}} . 
\end{align}
Set $\sqrt{\varDelta} X = \varGamma / C $. We have
\begin{align}\label{20eq: T = 1+2+3}
\ST^{\sigmaup} (T, R; \varGamma, C) \Lt \frac {     \RN(U^{\sigmaup})^2 } {\RN (A^{\sigmaup})}  + \frac {     \RN(C U^{\sigmaup} )  } {\RN (A^{\sigmaup})} (1 + \RN (X)) + \frac {     \RN(C^2 X  )  } {\RN (A^{\sigmaup})} . 
\end{align}
For the three summands in \eqref{20eq: T = 1+2+3}, we claim: 
\begin{align}\label{20eq: bounds, globle, 1}
& \frac {     \RN(U^{\sigmaup})^2 } {\RN (A^{\sigmaup})} \Lt \RN (T)^{\vepsilon}, \\
\label{20eq: bounds, globle, 2} & \frac {     \RN(C U^{\sigmaup} )  } {\RN (A^{\sigmaup})} (1 + \RN (X)) \Lt    \frac {\RN (T)^{\vepsilon}  \sqrt{\RN(R) \RN  (\frd )  }}   {\RN (T)  \RN( \frd_0 \frb_1 \frf_1 ) } , \\
\label{20eq: bounds, globle, 3} &   \frac {     \RN(C^2 X  )  } {\RN (A^{\sigmaup})} \Lt  \frac {   {\RN(R) \RN  (\frd )  }}   {\RN (T)^{8/3}  \RN( \frd_0 \frb_1 \frf_1 )^2 } . 
\end{align}
Lemma \ref{lem: final} readily follows from \eqref{19eq: S = T x T}, \eqref{19eq: T < }--\eqref{20eq: bounds, globle, 3}, with the observation that  \eqref{20eq: bounds, globle, 1} can be absorbed into \eqref{20eq: bounds, globle, 2}. 

Clearly, \eqref{20eq: bounds, globle, 1}--\eqref{20eq: bounds, globle, 3}  follow from the local inequalities: 
\begin{align}\label{20eq: bounds, local, 1}
 \frac {       U^{\sigmaup_{\vv}}   } { \sqrt{A^{\sigmaup_{\vv}}} } \Lt T_{\vv}^{\vepsilon}, \qquad  \frac {       U^{\sigmaup_{\vv}}   } { A^{\sigmaup_{\vv}}  } (1 + X_{\vv} ) \Lt T_{\vv}^{  \vepsilon}    , \qquad   \frac {   C_{\vv}^2 X_{\vv}   } { A^{\sigmaup_{\vv} } } \Lt  \frac {  R_{\vv} Z_{\vv}^2 }   {T_{\vv}^{8/3}  },
\end{align}
in which 
 $Z_{\vv} = \sqrt{  V  (\frd )_{\vv}  } / V ( \frd_0 \frb_1 \frf_1 )_{\vv}$ so that the range of $C_{\vv}$ in \eqref{19eq: range of C} reads 
\begin{align}\label{20eq: range of Cv}
1 \Lt C_{\vv} \Lt \frac {\textstyle \sqrt{R_{\vv}    } Z_{\vv} }   {T_{\vv}    } .
\end{align}
Note that the third inequality in \eqref{20eq: bounds, local, 1}  can be deduced from
\begin{align} \label{20eq: bounds, local, 3}
  \frac {   X_{\vv}   }   { A^{\sigmaup_{\vv} } } \Lt    \frac { 1  }    {T_{\vv}^{2/3}   }  . 
\end{align}
Moreover, if we define $ \varLambda_{\vv} = R_{\vv} / \beta_{\frd} c_{\frq}^2 $ (see \eqref{18eq: f = w}), then 
\begin{align}\label{19eq: Lambda = }
|\varLambda_{\vv}| \asymp \frac { R_{\vv} Z_{\vv}^2 } { C_{\vv}^2 } . 
\end{align}
%Keep in mind that $|\varLambda_{\vv}|\Gt T_{\vv}^2$. 
It remains to verify \eqref{20eq: bounds, local, 1} for various cases  (alternatively, \eqref{20eq: bounds, local, 3} for all the cases other than $\sigmaup = +$). 

\subsection{The Case $\sigmaup = \oldstylenums{0}$} 

In this case, we have $U^{\oldstylenums{0}} = T_{\vv}^{\vepsilon}$, $A^{\oldstylenums{0}} = 1$, and $X_{\vv}  \leqslant T^{\vepsilon}_{\vv} / |\varLambda_{\vv}| \Lt T_{\vv}^{\vepsilon} / T_{\vv}^2$.  Thus \eqref{20eq: bounds, local, 1} and \eqref{20eq: bounds, local, 3} are obvious. 

\subsection{The Case $\sigmaup = -$} In this case, we have $U^{-} = |X_{\vv} \varLambda_{\vv}|^{1/3}$, $A^{-} =  {|\varLambda_{\vv}|} $, and $X_{\vv} \Lt \sqrt{|\varLambda_{\vv}|}$. Therefore
\begin{align*}
\frac {       U^{-}   } { \sqrt{A^{-}}  } = \frac{X_{\vv}^{1/3}} {|\varLambda_{\vv}|^{1/6}} \Lt 1, \quad   \frac {       U^{-}   } {  {A^{-}}  } (1 + X_{\vv} ) \Lt \frac {X_{\vv}^{1/3}} {|\varLambda_{\vv}|^{1/6}} \Lt 1    , \quad    \frac {    X_{\vv}   } { A^{ - } } \Lt \frac 1 {\sqrt{|\varLambda_{\vv}|}} \Lt \frac 1 {T_{\vv}},
\end{align*}
as desired. 

\subsection{The Case $\sigmaup = +$} In this case, we always have $X_{\vv} \asymp \sqrt{|\varLambda_{\vv}|}$, and, by \eqref{19eq: Lambda = }, 
\begin{align}\label{19eq: C2 X = }
{   C_{\vv}  X_{\vv}   } \asymp   {  \sqrt{R_{\vv}    } Z_{\vv} }    .
\end{align} 

For   $ T_{\vv} \Lt X_{\vv} \Lt T_{\vv}^{2-\vepsilon}  $, we have $U^+ = T_{\vv}^2 / X_{\vv} $, $A^{+} = T_{\vv}^{2- \vepsilon}$. Consequently, 
\begin{align*}
\frac {       U^{+}   } { \sqrt{A^{+}}  } = \frac {T_{\vv}^{1+\vepsilon}} {X_{\vv}} \Lt   {T_{\vv}^{\vepsilon}}  , \qquad  \frac {       U^{+}   } {  {A^{+}}  } (1 + X_{\vv} )   \Lt T_{\vv}^{\vepsilon},  
\end{align*} 
and, it follows from \eqref{19eq: C2 X = }   that 
\begin{align*}
\frac {   C_{\vv}^2 X_{\vv}   } {A^{+} } = \frac {  T_{\vv}^{\vepsilon} C_{\vv}^2 X_{\vv}   } {T_{\vv}^{2} } \asymp \frac {T_{\vv}^{\vepsilon}     {R_{\vv}    } Z_{\vv}^2 } {T_{\vv}^2 X_{\vv} } \Lt \frac {T_{\vv}^{\vepsilon}    {R_{\vv}    } Z_{\vv}^2 } {T_{\vv}^3 } .
\end{align*}

For   $   X_{\vv} \Gt T_{\vv}^{2-\vepsilon}  $, we have $U^+ = T_{\vv}^{\vepsilon} $, $A^{+} = X_{\vv} $. Consequently, 
\begin{align*}
\frac {       U^{+}   } { \sqrt{A^{+}}  } = \frac {T_{\vv}^{ \vepsilon}} {\sqrt{X_{\vv}}}  \Lt \frac{T_{\vv}^{\vepsilon}} {T_{\vv} }, \qquad  \frac {       U^{+}   } {  {A^{+}}  } (1 + X_{\vv} )   \Lt T_{\vv}^{\vepsilon},  
\end{align*} 
and, it follows from \eqref{19eq: C2 X = }  that 
\begin{align*}
\frac {   C_{\vv}^2 X_{\vv}   } {A^{+} } =   {    C_{\vv}^2   }  \asymp \frac {   {R_{\vv}    } Z_{\vv}^2 } {X_{\vv}^2 } \Lt \frac {T_{\vv}^{\vepsilon}  {R_{\vv}    } Z_{\vv}^2 } {T_{\vv}^4 } .
\end{align*}

\subsection{The Case $\sigmaup = \flat$} In this case, we always have  $|\varLambda_{\vv}|\asymp T_{\vv}^2$, and $X_{\vv} \asymp T_{\vv}$. 

Firstly, for $T_{\vv}^{\vepsilon} / \min \big\{ (T_{\vv}/M_{\vv})^{1/2},  T_{\vv}^{1/4 } \big\}  < \rho \Lt 1$,  we have $ A_{\rho}^{\flat} = T_{\vv}^2 \rho$ and  $ U_{\rho}^{\flat} = T_{\vv} \shskip \rho^2 $. Therefore 
\begin{align*}
\frac {       U_{\rho}^{\flat}   } { \sqrt{A_{\rho}^{\flat} }  } = \rho^{3/2} \Lt 1, \quad \frac {       U_{\rho}^{\flat}  } {  {A_{\rho}^{\flat} }  } (1 + X_{\vv} ) \Lt   {\rho}   \Lt 1 , \quad \frac {     X_{\vv}   } {A_{\rho}^{\flat} } \Lt \frac 1 {T_{\vv} \rho} < \frac {1} {T_{\vv}^{3/4 + \vepsilon}} . 
\end{align*} 

Secondly, it follows from \eqref{13eq: U and A flat 0} that
\begin{align*}
 \frac {       U_{0}^{\flat}   } { \sqrt{A_{0}^{\flat} }  } = \frac {T_{\vv}^{\vepsilon}} {T_{\vv}^{1/3}} , \quad \frac {       U_{0}^{\flat}  } {  {A_{0}^{\flat} }  } (1 + X_{\vv} ) \Lt  \frac {T_{\vv}^{\vepsilon} } {T_{\vv}^{1/6}} , \quad \frac {     X_{\vv}   } {A_{0}^{\flat} } \Lt   \frac {1} {T_{\vv}^{2/3}},
\end{align*}
if $ T_{\vv}^{\vepsilon} \leqslant M \leqslant T_{\vv}^{1/3} $, 
\begin{align*}
\frac {       U_{0}^{\flat}   } { \sqrt{A_{0}^{\flat} }  } = \frac {T_{\vv}^{\vepsilon}} {(M_{\vv} T_{\vv})^{1/4}}   , \quad \frac {       U_{0}^{\flat}  } {  {A_{0}^{\flat} }  } (1 + X_{\vv} ) \Lt  \frac {T_{\vv}^{\vepsilon} } {M_{\vv}^{1/2}} , \quad \frac {     X_{\vv}   } {A_{0}^{\flat} } \Lt   \frac {1} {(M_{\vv} T_{\vv})^{1/2}}  < \frac {1 } {  T_{\vv}^{2/3}},
\end{align*}
if $ T_{\vv}^{1/3} < M_{\vv} \leqslant T_{\vv}^{1/2} $, and
\begin{align*}
\frac {       U_{0}^{\flat}   } { \sqrt{A_{0}^{\flat} }  } = \frac {M_{\vv}^{3/4} T_{\vv}^{\vepsilon}} { T_{\vv}^{3/4}}   , \quad \frac {       U_{0}^{\flat}  } {  {A_{0}^{\flat} }  } (1 + X_{\vv} ) \Lt  \frac {M_{\vv}^{1/2} T_{\vv}^{\vepsilon}} { T_{\vv}^{1/2}}  , \quad \frac {     X_{\vv}   } {A_{0}^{\flat} } \Lt   \frac {1 } {(M_{\vv} T_{\vv})^{1/2}}  < \frac {1} {  T_{\vv}^{3/4}},
\end{align*}
if $ T_{\vv}^{1/2} < M_{\vv} \leqslant T_{\vv}^{1 - \vepsilon} $, all of which are   satisfactory.

\begin{appendices}
	 \section{Gallagher's Hybrid Large Sieve over Number Fields}\label{sec: Gallagher}
	 
In this appendix, we establish Gallagher's hybrid large sieve \cite[\S 1]{Gallagher-LS} over number fields. 

Let notation be as in \S \ref{sec: notation}. Recall that $\widehat{\bfra}    = \prod_{\vv |\infty} \widehat{\bfra}_{\vv}$ is  the unitary dual of $F^{\times}_{\infty}$; $ \widehat{\bfra}_{\vv} = \BR \times \{0, 1\} $ if $\vv$ is real and $ \widehat{\bfra}_{\vv} = \BR \times \BZ  $ if $\vv$ is complex.  
%Let $  {\bfrf} =  \prod_{\vv |\infty}  {\bfrf}_{\vv} $ be the Fourier dual of $\widehat{\bfra} $; $\bfrf_{\vv} = \BR \times \{0, \pi \}$ if $\vv$ is real, and $\bfrf_{\vv} = \BR \times \BR/2\pi \BZ$ if $\vv$ is complex. 

\delete{ For $f \in L^1 (\widehat{\bfra}_{\vv}) \cap L^2 (\widehat{\bfra}_{\vv})$,  its Fourier transform, as function on $ \bfrf_{\vv}$, is defined by
\begin{align}\label{20eq: Fourier}
\hat{f} (\vlambda, \phi) = \viint_{\widehat{\bfra}_{\vv} } f (\varnu, m) e^{- i \varnu \slambda   - i m \phi} \nd \mu (\varnu, m),
\end{align}
and the Fourier inversion reads
\begin{align}\label{20eq: Fourier inverse}
f (\varnu, m) = \frac {1} {2\pi c_{\vv}} \viint_{ {\bfrf}_{\vv} } \hat{f} (\vlambda, \phi) e^{i \varnu \slambda  + i m \phi} \nd \mu (\vlambda, \phi),
\end{align}
where $\nd \mu$ is the usual Lebesgue measure on either $\widehat{\bfra}_{\vv}$ or ${\bfrf}_{\vv}$, and $c_{\vv} = 2$ or $2 \pi$ according as $\vv$ is real or complex. Moreover, by Plancherel's theorem,
\begin{align}\label{20eq: Plancherel} 
\viint_{\widehat{\bfra}_{\vv} } |f (\varnu, m)|^2 \nd \mu (\varnu, m) = \frac {1} {2\pi c_{\vv}} \viint_{ {\bfrf}_{\vv} } \big|\hat{f} (\vlambda, \phi)\big|^2 \nd \mu (\vlambda, \phi) . 
\end{align}
In an obvious way, the formulae \eqref{20eq: Fourier}--\eqref{20eq: Plancherel} extend onto $ \widehat{\bfra} $ and ${\bfrf}$. }

Let $\bfra = \BR^{|S_{\infty}|}$ and $ \bfra_+ = \BR_+^{|S_{\infty}|} $. For  $U \in \bfra_+$, define 
\begin{align}\label{20eq: defn of a (T)}
\widehat{\bfra} (U) = \big\{ (\varnu, m) \in \widehat{\bfra} : |\varnu_{\varv}|, |m_{\vv}| \leqslant U_{\vv} \text{ for all } \vv | \infty \big\}.
\end{align}  
For $ y \in F^{\times}_{\infty} $ and $\delta \in \bfra_+$, with $\delta_{\vv} < \pi$ for each $\vv|\infty$, define 
\begin{align}\label{20eq: defn of f (delta)}
F^{\times}_{\infty}  (y; \delta) \hskip -1 pt =  \hskip -1 pt \big\{  \hskip -1 pt x \in F^{\times}_{\infty}  : N_{\vv}\big|\log |x|_{\varv} - \log |y|_{\vv} \big|, \big| \arg (x_{\varv}) - \arg (y_{\vv}) \big| \leqslant \delta_{\vv} \text{ for all } \vv | \infty  \hskip -1 pt \big\},
\end{align}
where $ \arg (x_{\vv}) $ lies on the circle $ \BR / 2\pi \BZ $ ($ \arg (x_{\vv}) = 0$ or $\pi$ if $\vv$ is real).

Let $\fra  $ be a fractional ideal of $F$. Let $S  (\varnu, m)$ be an absolutely convergent  series as follows, 
\begin{align}\label{20eq: series}
S_{\hskip -1 pt \fra} (\varnu, m) = \sum_{\gamma \shskip \in \fra  \smallsetminus \{0\} } a_{\gamma} \vchi_{i \varnu, \shskip m} (\gamma), \qquad    \ a_{\gamma} \in \BC. 
\end{align}

\begin{prop} % Let $0 < \theta <   \pi$. 
	Let $U \in \bfra_+$ be such that  $U_{\vv} \geqslant 1$ for all $\vv |\infty$.  We have
	\begin{align}\label{20eq: Gallagher, 1}
	\viint_{\widehat{\bfra} (U)} |S_{\hskip -1 pt \fra} (\varnu, m)|^2 \nd \mu (\varnu, m) \Lt_{F} \RN (U)^2  \int_{ F^{\times}_{\infty} } \raisebox{- 0.2 \depth}{$\Bigg|$}    \sum_{\gamma \shskip \in   F^{\times}_{\infty}  (y; \shskip 1/  U) \cap \fra  } a_{\gamma}   \raisebox{- 0.2 \depth}{$\Bigg|^2$}  \nd^{\times} \hskip -1 pt y. 
	\end{align}

\end{prop}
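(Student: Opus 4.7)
The content is a hybrid large sieve for the Pontryagin duality between $F^{\times}_{\infty}$ and $\widehat{\bfra}$. The plan is to introduce logarithmic-polar coordinates $s_{\vv} = N_{\vv} \log |y|_{\vv}$ and, for $\vv$ complex, $\phi_{\vv} = \arg y_{\vv} \in \BR/2\pi\BZ$ (for $\vv$ real, separate the sign $\epsilon_{\vv} \in \{\pm 1\}$), which identifies $F^{\times}_{\infty}$ with $\prod_{\vv | \infty} G_{\vv}$, where $G_{\vv} = \BR \times \{\pm 1\}$ or $\BR \times (\BR/2\pi\BZ)$ according as $\vv$ is real or complex. In these coordinates the character $\vchi_{i \varnu, \shskip m}(y)$ becomes $\exp\bigl(i \sum_{\vv} \varnu_{\vv} s_{\vv}\bigr)$ times a sign/angular factor depending only on $m_{\vv}$, so $S_{\hskip -1 pt \fra}(\varnu, m)$ is (up to constants) the dual Fourier transform of the atomic measure $\sum_{\gamma \in \fra \smallsetminus \{0\}} a_{\gamma} \delta_{\gamma}$. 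The multiplicative measure $\nd^{\times} y$ pulls back to a constant (depending only on $F$) times a product of Lebesgue measures on the $s_{\vv}, \phi_{\vv}$ and counting measures on the $\epsilon_{\vv}$, while the box $F^{\times}_{\infty}(y; 1/U)$ becomes a Cartesian product of intervals/arcs of length $\asymp 1/U_{\vv}$ in every continuous coordinate, and the dual box $\widehat{\bfra}(U)$ is a product of intervals of length $\asymp U_{\vv}$ in each dual coordinate.

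Once the setup is in place, the statement follows by iterated application of Gallagher's one-dimensional large sieve
\begin{equation*}
\int_{-U}^{U} \Big|\sum_{n} b_n \shskip e^{i \lambda_n t}\Big|^2 \nd t \Lt U^2 \int_{\BR} \Big|\sum_{u - 1/U < \lambda_n \leqslant u + 1/U} b_n\Big|^2 \nd u
\end{equation*}
(valid for arbitrary $\lambda_n \in \BR$, $b_n \in \BC$; see Montgomery, \emph{Topics in Multiplicative Number Theory}, Ch.~1), together with its analogue on the dual pair $(\BR/2\pi\BZ, \BZ)$ bounding $\sum_{|m| \leqslant U} |\hat b(m)|^2$ by $U$ times an integral of $1/U$-window sums on the circle. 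The latter follows from the same Fej\'er-kernel / Plancherel argument, using that $U_{\vv} \geqslant 1$ so the aperture contains at least one integer. Applying the inequality place by place in the $\varnu_{\vv}$ variable (for every $\vv$) and additionally in the $m_{\vv}$ variable (for every complex $\vv$), each place contributes a factor $U_{\vv}^{N_{\vv}}$ to the multiplicative constant --- one factor of $U_{\vv}$ per continuous coordinate and one per discrete $\BZ$-coordinate in the complex case --- yielding the total $\RN(U)^2$. Fubini recombines the resulting one-dimensional window sums into the single condition $\gamma \in F^{\times}_{\infty}(y; 1/U) \cap \fra$, producing the right-hand side of \eqref{20eq: Gallagher, 1}.

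The main technical obstacle is bookkeeping: tracking the constants in the change of variables (the factors $N_{\vv}$ built into the definition of $F^{\times}_{\infty}(y; \delta)$, the $\pi$'s in the normalizations of $\nd \mu$ and $\nd^{\times} x$ from \S \ref{sec: notation}, and the Fubini constants from iteration), and matching the Cartesian product of local $1/U_{\vv}$-windows with the global box $F^{\times}_{\infty}(y; 1/U)$. All such factors depend only on $F$, consistent with the implicit constant $\Lt_F$. A secondary point deserving care is the circle large sieve at complex places, where one must verify that the Fej\'er majorant has the correct support and sup-norm; here the hypothesis $U_{\vv} \geqslant 1$ ensures a nontrivial aperture $|m_{\vv}| \leqslant U_{\vv}$ and avoids degeneracy as $1/U_{\vv}$ approaches the full period $2\pi$.
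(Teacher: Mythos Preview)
Your approach is correct and rests on the same Plancherel/Fej\'er-kernel idea as the paper, but the paper executes it in a single step rather than by iteration. It sets $\delta = 1/U$, lets $F_\delta$ be the normalized indicator of the box $F^{\times}_{\infty}(1;\delta)$, and observes that the right-hand side of \eqref{20eq: Gallagher, 1} equals $\int_{F^{\times}_{\infty}} |C_\fra^\delta(y)|^2 \,\nd^\times y$ with $C_\fra^\delta$ the (multiplicative) convolution of $\sum_\gamma a_\gamma$ against $F_\delta$. One application of Plancherel gives $\breve C_\fra^\delta = S_{\fra} \cdot \breve F_\delta$, and $\breve F_\delta$ is a product of sinc factors, each $\Gt 1$ on $\widehat{\bfra}(U)$, so the inequality follows at once. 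This packaging avoids iteration and absorbs all the bookkeeping you flag into the single computation of $\breve F_\delta$.

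One slip in your accounting: each application of the one-dimensional Gallagher inequality contributes a factor $U_\vv^2$ (as your own displayed inequality shows), not $U_\vv$. Hence the per-place factor is $U_\vv^{2N_\vv}$ --- one application at a real place, two at a complex place --- and the product is $\prod_{\vv|\infty} U_\vv^{2N_\vv} = \RN(U)^2$. Your stated $U_\vv^{N_\vv}$ per place would yield only $\RN(U)$, contradicting your own final answer.
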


\begin{proof}
	Set  $\delta = 1 / U$. The right-hand side of \eqref{20eq: Gallagher, 1} may be written as 
	\begin{align*}
	 \int_{ F^{\times}_{\infty} } \big| C_{\fra}^{\delta} (y) \big|^2 \nd^{\times} \hskip -1 pt y, \qquad C_{\fra}^{ \delta} (y) = \frac 1 {\RN (\delta)} \sum_{\gamma \shskip \in   F^{\times}_{\infty}  (y;\shskip \delta) \cap \fra  } a_{\gamma} .
	\end{align*}
	Put $ F_{\delta} (x) = 1/\RN (\delta)$ or $0$ according as $x \in F^{\times}_{\infty}  (1; \delta)$ or not. Then
	\begin{align*}
C_{\fra}^{\delta} (y) = \sum_{\gamma \shskip \in \fra \smallsetminus\{0\}} a_{\gamma} F_{\delta} (  y / \gamma ). 
	\end{align*}
	Taking the Mellin transform (see \eqref{2eq: Mellin}), we get $ \breve{C}_{\fra}^{\delta} = S_{\hskip -1 pt \fra} \cdot \breve{F}_{\delta} $. Since the series \eqref{20eq: series} converges absolutely, $C_{\fra}^{\delta}$ is a bounded integrable function, and hence is square-integrable. By Plancherel's theorem (see \eqref{2eq: Plancherel}),
	\begin{align*}
	\int_{ F^{\times}_{\infty} } \big| C_{\fra}^{\delta} (y) \big|^2 \nd^{\times} \hskip -1 pt y = \frac 1 {c} \viint_{\widehat{\bfra} } \big|S_{\hskip -1 pt \fra} (\varnu, m) \breve F_{\delta} (\varnu, m) \big|^2 \nd \mu (\varnu, m) ,
	\end{align*}
	for a certain constant $c$ (explicitly, $c = 2^{2r_1 + 2 r_2}\pi^{r_1+2r_2}$). Since $\breve F_{\delta} (\varnu, m)$ is the product of 
	\begin{align*}
	\left\{\begin{aligned}
& \displaystyle \frac {2 \sin ( \delta_{\vv} \varnu_{\vv}  )} { \delta_{\vv} \varnu_{\vv}},  \ && \text{ if } \vv \text { is real,} \\
& \displaystyle \frac { 2 \sin (  \delta_{\vv} \varnu_{\vv}  )} { \delta_{\vv} \varnu_{\vv}} \frac {2 \sin (  \delta_{\vv} m_{\vv} )} {\delta_{\vv} m_{\vv}  }, \ && \text{ if } \vv \text { is complex,} 
	\end{aligned}\right.	
	\end{align*}
we have $ \breve F_{\delta} (\varnu, m) \Gt 1 $ for $ (\varnu, m) \in \widehat {\bfra} (1/\delta) $, and the result follows.	
\end{proof}

%For $\frn \subset \frO$, define $F^{\times}_{\frn} = \prod_{ \frp_{\vv} | \frn } F_{\varv}^{\times}$ and   $\widehat{\frO}{}^{\times}_{\frn} = \prod_{ \frp_{\vv} | \frn } \frO_{\varv}^{\times} $, and let $   \widehat {(\frO / \frn )}{}^{\hskip -1 pt\times} $ be the group of characters of $ {(\frO / \frn )}{}^{\hskip -1 pt\times} $. Each $ \vchi   \in \widehat {(\frO / \frn )}{}^{\hskip -1 pt\times}  $ induces a character of $F^{\times}_{{\frn}} $ that is trivial on $\widehat \frO^{\times}_{ {\frn}}$, and hence a function  $\vchi: \fra \smallsetminus \{0\} \ra \BC^{\times}$ by restriction.  

We shall apply \eqref{20eq: Gallagher, 1} to sums of the form
\begin{align}\label{20eq: series, chi}
S_{\hskip -1 pt \fra} (\vchi ; \varnu, m) = \sum_{\gamma \shskip \in \fra  \smallsetminus \{0\} } a_{\gamma} \vchi  (\gamma) \vchi_{i \varnu, \shskip m} (\gamma). 
\end{align}
where $\vchi \in \widehat {(\fra / \fra \frn )}{}^{\hskip -1 pt\times}$ is induced from a character   $\vchi : (\frO /  \frn)^{\hskip -1 pt \times} \ra \BC^{\times}$ via a (fixed) isomorphism $(\fra / \fra \frn)^{\hskip -1 pt \times} \ra (\frO /  \frn)^{\hskip -1 pt \times}    $ (see Definition \ref{def: x inverse}), and $ \vchi (\gamma) = \vchi (\gamma + \fra \frn) $ or $0$ according as $\gamma + \fra \frn \in (\fra / \fra \frn)^{\hskip -1 pt \times}$ or not. %For a character $\vchi_{\vv}$ of $F_{\vv}^{\times}$, its conductor $\frn_{\vchi_{\vv}}   $ is defined to be the smallest ideal $\frn \subset \frO_{\vv}$ for which $ \vchi_{ \vv} (1+ \frn) = 1$. For $\vchi = \otimes_{\shskip \varv  \shskip  \nmid \infty} \vchi_{\vv}$, with $\vchi_{\vv} = 1$  at almost all places,   its conductor $ \frn_{\vchi} $ is defined to be the product of $\frn_{  \vchi_{\vv}}$. 

\begin{lem}
	We have
	\begin{align}\label{20eq: sum over chi}
	\sum_{ \vchi \shskip \in \widehat {(\fra / \fra \frn)}{}^{\hskip -1 pt \times} } \raisebox{- 0.2 \depth}{$\Bigg| $} \sum_{\gamma \shskip \in   F^{\times}_{\infty}  (y; \shskip \delta) \cap \fra} a_{\gamma} \vchi (\gamma)  \raisebox{- 0.2 \depth}{$\Bigg|^2$}   \Lt_{F}  \bigg(\RN (\frn) + \frac {\RN (y) \RE(\delta)} {\RN (\fra)}  \bigg) \sum_{\gamma \shskip \in   F^{\times}_{\infty}  (y; \shskip \delta) \cap \fra }  |a_{\gamma} |^2, 
	\end{align}
	where $ \mathrm{E}(\delta) = \prod_{\vv|\infty} \big(e^{ \delta_{\vv}} - e^{- \delta_{\vv}} \big) (2 \delta_{\vv})^{N_{\vv}-1}$ so that $\RN (y) \RE(\delta)$ is the area of $F^{\times}_{\infty}  (y; \shskip \delta)$. 
\end{lem}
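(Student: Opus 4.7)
The proof proceeds by opening the square, exploiting orthogonality of characters of $(\frO/\frn)^{\times}$, and then reducing to a lattice point count for $\fra \frn$ in a translate of the region $F^{\times}_\infty(y;\delta)$. Expanding the inner square and exchanging summations gives
\begin{align*}
\sum_{\chi} \raisebox{-0.2\depth}{$\Bigg|$} \sum_{\gamma \shskip \in F^{\times}_\infty(y;\shskip\delta)\cap \fra} \hskip -5pt a_\gamma \vchi(\gamma) \raisebox{-0.2\depth}{$\Bigg|^2$} = \mathop{\sum \sum}_{\gamma_1, \gamma_2} a_{\gamma_1} \overline{a_{\gamma_2}} \sum_{\chi} \vchi(\gamma_1)\overline{\vchi(\gamma_2)}.
\end{align*}
By orthogonality on $(\frO/\frn)^{\times}$ transported to $(\fra/\fra\frn)^\times$ via the fixed isomorphism, the inner character sum equals $\varphi(\frn)$ if $\gamma_1 \equiv \gamma_2 \pmod{\fra\frn}$ (and both lie in $(\fra/\fra\frn)^{\times}$), and vanishes otherwise. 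Applying $|a_{\gamma_1}\overline{a_{\gamma_2}}| \leqslant \tfrac{1}{2}(|a_{\gamma_1}|^2 + |a_{\gamma_2}|^2)$ and using the symmetry of the congruence yields
\begin{align*}
\sum_{\chi} \raisebox{-0.2\depth}{$\Bigg|$} \sum_{\gamma} a_\gamma \vchi(\gamma)\raisebox{-0.2\depth}{$\Bigg|^2$} \leqslant \varphi(\frn) \sum_{\gamma_1 \in F^{\times}_\infty(y;\shskip \delta)\cap \fra} |a_{\gamma_1}|^2 \cdot M(\gamma_1),
\end{align*}
where $M(\gamma_1) = \# \big\{\gamma_2 \in F^{\times}_\infty(y;\delta)\cap \fra : \gamma_2 \equiv \gamma_1 \pmod{\fra\frn}\big\}$.

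The crux is then to show the uniform bound
\begin{align*}
M(\gamma_1) \Lt_F 1 + \frac{\RN(y)\RE(\delta)}{\RN(\fra\frn)}.
\end{align*}
Writing $\gamma_2 = \gamma_1 + \eta$ with $\eta \in \fra\frn$, this reduces to counting lattice points of $\fra\frn \subset F_\infty$ in the translated region $F^{\times}_\infty(y;\delta) - \gamma_1$, which has volume $\RN(y)\RE(\delta)$ and whose covolume in $F_\infty$ (with respect to the self-dual Haar measure fixed in \S\ref{sec: notation}) is $\asymp_F \RN(\fra\frn)$. Here the region is, via the coordinates $x_\vv = y_\vv e^{r_\vv + i\phi_\vv}$ (complex $\vv$) or $x_\vv = y_\vv e^{r_\vv}$ (real $\vv$), a product of bounded boxes in $(r,\phi)$-space; chopping by translates of a fundamental domain for $\fra\frn$ and using the boundedness of $\delta_\vv$ (hence bounded overlap constants depending only on $F$) yields the stated count. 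Inserting $\varphi(\frn)\leqslant \RN(\frn)$ gives exactly \eqref{20eq: sum over chi}.

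\paragraph{Main obstacle.} The delicate point is the lattice point count in $F^{\times}_\infty(y;\delta) - \gamma_1$: unlike a ball around the origin, this region is a shifted product of annular sectors whose geometric diameter in $F_\infty$ grows with $|y_\vv|^{1/N_\vv}$, so one cannot simply invoke an off-the-shelf Minkowski-type estimate. The saving feature is that after passing to logarithmic/angular coordinates the region becomes a product of boxes of side lengths $O_F(\delta_\vv)$, independent of $y$; the Jacobian $\RN(y)$ is then absorbed into the volume, and a direct packing argument (or equivalently a Poisson-summation argument applied to a smoothened indicator) supplies the $1 + \RN(y)\RE(\delta)/\RN(\fra\frn)$ bound with an implied constant depending only on $F$.
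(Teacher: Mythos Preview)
Your argument is correct and essentially the same as the paper's. The only cosmetic difference is that the paper first applies Plancherel on the finite group $(\fra/\fra\frn)^\times$ to rewrite the left side as $\varphi(\frn)\sum_{x\in(\fra/\fra\frn)^\times}\bigl|\sum_{\gamma\equiv x}a_\gamma\bigr|^2$ and then applies Cauchy--Schwarz to each inner sum, whereas you expand the square and use $|a_{\gamma_1}\overline{a_{\gamma_2}}|\leqslant\tfrac12(|a_{\gamma_1}|^2+|a_{\gamma_2}|^2)$; these are equivalent manipulations leading to the same lattice-point count $M(\gamma_1)\Lt_F 1+\RN(y)\RE(\delta)/\RN(\fra\frn)$, which the paper simply asserts and you discuss in more detail.
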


\begin{proof}
	By the orthogonality relations and the Cauchy--Schwarz inequality, the left-hand side of \eqref{20eq: sum over chi} is
	\begin{align*}
	\varphi (\frn) \hskip -1 pt \sum_{ x \shskip \in (\fra / \fra \frn){}^{\hskip -1 pt \times}} \hskip -1 pt \raisebox{- 0.2 \depth}{$\Bigg| $}  {\sum _{\gamma \shskip \in   F^{\times}_{\infty}  (y;\shskip \delta) \cap (x + \fra \frn)}} a_{\gamma} \raisebox{- 0.2 \depth}{$\Bigg|^2$}  \hskip -1 pt \Lt \varphi (\frn)  \sum_{ x \shskip \in (\fra / \fra \frn){}^{\hskip -1 pt \times}} \hskip -2 pt \bigg( \frac {\RN (y) \mathrm{E} (\delta)} {\RN (\fra \frn)} + 1 \bigg) \hskip -2 pt \sum_{\gamma \shskip \in   F^{\times}_{\infty}  (y;\shskip \delta) \cap (x + \fra \frn)} \hskip -1 pt |a_{\gamma} |^2. 
	\end{align*}
\end{proof}

\begin{prop}\label{prop: Gallagher}
	Let  $U \in \bfra_+$ be such that  $U_{\vv} \geqslant 1$ for all $\vv |\infty$. We have
	\begin{align}\label{20eq: main}
	\sum_{ \vchi \shskip \in \widehat {(\fra / \fra \frn)}{}^{\hskip -1 pt \times} } \viint_{\widehat{\bfra} (U)} |S_{\hskip -1 pt \fra} (\vchi; \varnu, m)|^2 \nd \mu (\varnu, m) \Lt_{F} \sum_{ \gamma  \shskip \in \fra \smallsetminus \{0\} } (\RN(U) \RN(\frn)  + \RN (\gamma \fra^{-1})) |a_{\gamma}|^2. 
	\end{align}
\end{prop}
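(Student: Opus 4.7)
The plan is to combine the two preceding inequalities \eqref{20eq: Gallagher, 1} and \eqref{20eq: sum over chi}, applied with $\delta = 1/U$, and then exchange the order of summation and integration. Since $U_\vv \geqslant 1$ for every $\vv \mid \infty$, the components $\delta_\vv = 1/U_\vv$ all lie in $(0, 1]$, so both preceding lemmas apply.

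First, for each character $\vchi$ we apply \eqref{20eq: Gallagher, 1} with $a_\gamma$ replaced by $a_\gamma \vchi(\gamma)$, obtaining
\begin{align*}
\viint_{\widehat{\bfra}(U)} |S_{\hskip -1 pt \fra} (\vchi; \varnu, m)|^2 \nd \mu (\varnu, m) \Lt_F \RN(U)^2 \int_{F_\infty^\times} \raisebox{-0.2\depth}{$\bigg|$} \sum_{\gamma \shskip \in F_\infty^\times(y;\, 1/U) \cap \fra} a_\gamma \vchi(\gamma) \raisebox{-0.2\depth}{$\bigg|^2$} \nd^\times \hskip -1pt y.
\end{align*}
Next, summing over $\vchi \in \widehat{(\fra/\fra\frn)}^{\hskip -1 pt \times}$ and invoking \eqref{20eq: sum over chi}, together with the elementary estimate $\RE(1/U) \asymp_F 1/\RN(U)$ (since $e^{1/U_\vv} - e^{-1/U_\vv} \asymp 1/U_\vv$ when $U_\vv \geqslant 1$), the left-hand side of \eqref{20eq: main} is bounded by
\begin{align*}
\RN(U)^2 \int_{F_\infty^\times} \bigg(\RN(\frn) + \frac{\RN(y)}{\RN(\fra)\RN(U)}\bigg) \sum_{\gamma \shskip \in F_\infty^\times(y;\, 1/U) \cap \fra} |a_\gamma|^2 \nd^\times \hskip -1pt y.
\end{align*}

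The final step is to interchange the $\gamma$-sum with the $y$-integral. By the symmetry of the definition \eqref{20eq: defn of f (delta)}, the condition $\gamma \in F_\infty^\times(y; 1/U)$ is equivalent to $y \in F_\infty^\times(\gamma; 1/U)$; on this region we have $\RN(y) \asymp_F \RN(\gamma)$, and a direct computation using $\nd^\times y_\vv = \nd y_\vv / |y_\vv|_\vv$ shows that its $\nd^\times y$-measure is $\asymp_F \prod_\vv U_\vv^{-N_\vv} = 1/\RN(U)$. Substituting yields
\begin{align*}
\RN(U)^2 \cdot \frac{1}{\RN(U)} \sum_{\gamma \shskip \in \fra \smallsetminus \{0\}} \bigg(\RN(\frn) + \frac{\RN(\gamma)}{\RN(\fra)\RN(U)}\bigg) |a_\gamma|^2 = \sum_{\gamma \shskip \in \fra \smallsetminus \{0\}} \big(\RN(U)\RN(\frn) + \RN(\gamma\fra^{-1})\big) |a_\gamma|^2,
\end{align*}
which is precisely \eqref{20eq: main}. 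The only non-routine point is the bookkeeping of the various local factors ($N_\vv$, the normalizations of Haar measures on $F_\vv^\times$, and the relation between $\RE(\delta)$ and the $\nd^\times y$-volume of a small box), but these simply cancel against the Plancherel factor used in the proof of \eqref{20eq: Gallagher, 1}.
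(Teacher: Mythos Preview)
Your proof is correct and follows essentially the same route as the paper's: apply \eqref{20eq: Gallagher, 1} to each twist $a_\gamma \vchi(\gamma)$, sum over $\vchi$ using \eqref{20eq: sum over chi}, then interchange the $y$-integral with the $\gamma$-sum and evaluate the resulting volume. The only cosmetic difference is that the paper keeps the factor $\RE(1/U)$ explicit until the final line rather than estimating it by $\asymp_F 1/\RN(U)$ midway, but the substance is identical.
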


\begin{proof}
	Using \eqref{20eq: Gallagher, 1} and \eqref{20eq: sum over chi}, the left-hand side of \eqref{20eq: main} is bounded by
	\begin{align*}
& \RN (U)^2 \int_{ F^{\times}_{\infty} } \sum_{ \vchi \shskip \in \widehat {(\fra / \fra \frn)}{}^{\hskip -1 pt \times} } \raisebox{- 0.2 \depth}{$\Bigg| $}  \sum_{\gamma \shskip \in   F^{\times}_{\infty}  (y; \shskip 1/  U) \cap \fra } a_{\gamma} \vchi (\gamma) \raisebox{- 0.2 \depth}{$\Bigg|^2$}  \nd^{\times} \hskip -1 pt y \\
\Lt \  & \RN (U)^2 \int_{ F^{\times}_{\infty} } \bigg(\RN (\frn) + \frac {\RN (y) \RE(1/U)} {\RN(\fra)} \bigg) \sum_{\gamma \shskip \in   F^{\times}_{\infty}  (y; \shskip 1/U) \cap \fra }  |a_{\gamma} |^2 \nd^{\times} \hskip -1 pt y .
	\end{align*}
	The coefficient of $|a_{\gamma}|^2$ here is 
	\begin{align*}
& \RN (U)^2 \RN (\frn)	\viint _{ F^{\times}_{\infty}  (\gamma ; \shskip 1/U)} \nd^{\times} \hskip -1 pt y + \frac {\RN (U)^2 \RE(1/U)} {\RN(\fra)} \viint _{ F^{\times}_{\infty}  (\gamma ; \shskip 1/U)} \nd  y \\
= \ & \RN (2 U) \RN (\frn) + \frac {\RN (U)^2 \RE(1/U)^2 \RN (\gamma)} {\RN(\fra)} \\
\Lt \ &  \RN (  U) \RN (\frn) +  \RN (\gamma \fra^{-1}) .
	\end{align*} 
\end{proof}

\subsection{A Corollary of Gallagher's Large Sieve}

\begin{defn}\label{defn: a(U)}
	Let $U \in \bfra_+ $ and  $ (\kappa, n)  \in \widehat {\bfra} $. Define
	\begin{align}\label{20eq:  a(U)}
	\widehat{\bfra}_{\kappa, \shskip n} (U) = \big\{ (\varnu, m) \in \widehat{\bfra} : {\textstyle \sqrt{(\varnu_{\vv} - \kappa_{\vv})^2 + (m_{\vv} - n_{\vv})^2}}  \Lt U_{\vv} \text{ for all } \vv | \infty  \big\}. 
	\end{align} 
\end{defn}

Note that $\widehat{\bfra}_{0, \shskip 0} (U) = \widehat{\bfra} (U) $ if we slightly modify the definition of $\widehat{\bfra} (U) $ in \eqref{20eq: defn of a (T)}. See also Definition \ref{defn: a (U), local}. 

\begin{cor}\label{cor: large sieve}
Let $\frc \in \widetilde{C}_F$. For $\frq \sim \frc$, define $c_{\frq} =   [\frc^{-1} \frq]$.	Let $\fra$ and $\frn$ be ideals, with $\frn \subset \frO$   and $ (\fra   \frc  \frD )_{\frn}  = \frn^{-1}$ {\rm(}see Definition {\rm\ref{defn: psi b ...}}{\rm)}. Let  $C, \varGamma \in \bfra_+$  and $F_{\infty}^{\varDelta} (C), F_{\infty}^{\varDelta} (\varGamma) $ be defined as in Definition {\rm\ref{defn: F D (V)}}. Let $a_{\gamma}$ and $b_{\frq}$ be sequences of complex numbers for $\gamma \in F_{\infty}^{\varDelta} (\varGamma) $ and $\frq \sim \frc$ with $ c_{\frq}   \in F_{\infty}^{\varDelta} (C) $. 
Define 
\begin{align}\label{20eq: Sn (...)}
 S_{\frn} (C, \varGamma; \varnu, m; a, b ) =  \mathop{\mathop{\sum_{ \frq \sim \frc } }_{(\frq, \shskip \frn) = (1)}}_{ c_{\frq}   \in F_{\infty}^{\varDelta} (C) } \mathop{\sum_{\gamma \shskip \in \fra  } }_{\gamma \shskip \in F_{\infty}^{\varDelta} (\varGamma) }   a_{\gamma} b_{\frq} \psi_{\frn}  (  {  \gamma} / { c_{\frq}}  ) \vchi_{  i \varnu, \shskip  m}  (   {\gamma} / {c_{\frq}}  )  .
\end{align} 
Let  $U \in \bfra_+$ be such that  $U_{\vv} \Gt 1$ for all $\vv |\infty$. For $ \kappa, n  \in \bfra $, define $\widehat{\bfra}_{\kappa, \shskip n} (U)$ as in {\rm\eqref{20eq:  a(U)}}.  Then 
\begin{align}\label{20eq: int of S}
\begin{aligned}
& \  \viint_{\widehat{\bfra}_{\kappa, \shskip n} (U)}  \left|S_{\frn} (C, \varGamma; \varnu, m; a, b ) \right| \nd \mu (\varnu, m) \\
  \Lt & \ {\textstyle   {\RN (\frn)^{1/2+\vepsilon}}} ( \RN(U) + \RN (C)/\RN (\frn))^{1/2}  (\RN(U)  + \RN (\varGamma)/\RN(\fra\frn))^{1/2} \|a\|_2 \|b\|_2 ,
\end{aligned} 
\end{align}
where $\|a\|_2^2 =  {\sum_{ \gamma} |a_{\gamma}|^2 }$ and  $\|b\|_2^2 =  {\sum_{ \frq} |b_{\frq}|^2 }$. 
\end{cor}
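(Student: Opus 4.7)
The strategy is to adapt the Gallagher large-sieve argument of Proposition~\ref{prop: Gallagher} to handle the twist $\psi_{\frn}(\gamma/c_{\frq})$: reduce to $(\kappa,n)=(0,0)$, decompose $S$ by additive characters on $\fra/\fra\frn$, and combine two Gallagher-type bounds via Cauchy--Schwarz. First, writing
\begin{align*}
\vchi_{i\varnu, m}(\gamma/c_{\frq}) = \vchi_{i\kappa, n}(\gamma)\,\overline{\vchi_{i\kappa, n}(c_{\frq})}\cdot \vchi_{i(\varnu-\kappa), m-n}(\gamma/c_{\frq}),
\end{align*}
the unit-modulus first factor is absorbed into $a_{\gamma}$ and $b_{\frq}$, reducing to $(\kappa,n)=(0,0)$ with integration domain $\widehat{\bfra}(U)$ of volume $\asymp \RN(U)$.

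The crucial structural observation, made possible by the hypothesis $(\fra\frc\frD)_{\frn}=\frn^{-1}$, is that for each $\frq$ coprime to $\frn$ the map $\gamma\mapsto \psi_{\frn}(\gamma/c_{\frq})$ descends to an additive character $\lambda_{\frq}$ of the finite group $\fra/\fra\frn$. Moreover, $\lambda_{\frq}$ depends on $c_{\frq}^{-1}$ only modulo $(\fra\frD)^{-1}$ at the primes dividing $\frn$, so the map $\frq\mapsto \lambda_{\frq}$ factors through the cosets of a sublattice $L\subset \frc^{-1}$ of index $\RN(\frn)$. Writing $S(\varnu,m) = \sum_{\lambda} W^{\lambda}(\varnu,m)\, B_{\lambda}(\varnu,m)$ with $W^{\lambda}=\sum_{\gamma} a_{\gamma}\lambda(\gamma)\vchi_{i\varnu,m}(\gamma)$ and $B_{\lambda}=\sum_{\frq:\, \lambda_{\frq}=\lambda} b_{\frq}\overline{\vchi_{i\varnu,m}(c_{\frq})}$, Cauchy--Schwarz in $\lambda$ followed by Cauchy--Schwarz on the outer integration gives
\begin{align*}
\iint_{\widehat{\bfra}(U)} |S|\, d\mu \leq \Big(\iint \sum_{\lambda} |W^{\lambda}|^{2}\, d\mu\Big)^{1/2} \Big(\iint \sum_{\lambda} |B_{\lambda}|^{2}\, d\mu\Big)^{1/2}.
\end{align*}

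The first factor is bounded by the additive-character analog of Proposition~\ref{prop: Gallagher}---proved by the identical argument with orthogonality of the $\RN(\frn)$ additive characters of $\fra/\fra\frn$ replacing the multiplicative orthogonality of~\eqref{20eq: sum over chi}---yielding $\Lt \|a\|_{2}^{2}(\RN(U)\RN(\frn)+\RN(\varGamma)/\RN(\fra))$. The second factor is bounded by a symmetric application of~\eqref{20eq: Gallagher, 1}: a lattice-point count gives fibre size $\Lt 1+\RN(y)\RE(1/U)\RN(\frc)/\RN(\frn)$ for $c_{\frq}$'s in a given coset of $L$ inside a multiplicative ball around $y\in F_{\infty}^{\times}$, and after integration this yields $\Lt \|b\|_{2}^{2}(\RN(U)+\RN(C)/\RN(\frn))$ (absorbing $\RN(\frc)=O_{F}(1)$ from $\frc\in\widetilde{C}_{F}$ and the inequality $\RN^{\snatural}(C)\leq \RN(C)$ into the implied constant). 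Multiplying the two square roots and factoring $\RN(\frn)^{1/2}$ out of the identity $\RN(U)\RN(\frn)+\RN(\varGamma)/\RN(\fra)=\RN(\frn)\bigl(\RN(U)+\RN(\varGamma)/\RN(\fra\frn)\bigr)$ produces exactly~\eqref{20eq: int of S}, with the $\RN(\frn)^{\sepsilon}$ loss absorbing the discrepancy between $\RN(\frn)$ and $\varphi(\frn)$.

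The main technical obstacle is the symmetric lattice-point count in $\frc^{-1}$ relative to the sublattice $L$: one must verify equidistribution of chosen generators $c_{\frq}$ over the $\RN(\frn)$ cosets of $L$ within the box $F_{\infty}^{\varDelta}(C)$, and carefully carry the covolume factor $\sqrt{|d_{F}|}$ and the relationship between $\RN(C)$ and $\RN^{\snatural}(C)$ consistently with the chosen-generator convention~\eqref{2eq: choice of [a]}. The additive analog of~\eqref{20eq: sum over chi} itself presents no real difficulty, since orthogonality on a finite abelian group works identically for additive and multiplicative characters.
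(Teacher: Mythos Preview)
Your approach is correct and takes a genuinely different route from the paper's. The paper converts the additive twist $\psi_{\frn}(\gamma/c_{\frq})$ into \emph{multiplicative} characters via Gauss sums: it first stratifies over $\frm\mid\frn$ according to the exact level $(\gamma\frc\frD)_{\frm}=\frm^{-1}$, then expands $\psi_{\frm}(\gamma/c_{\frq})=\varphi(\frm)^{-1}\sum_{\vchi}\vchi(\gamma/c_{\frq})\,\tau(\overline\vchi)$ with $|\tau(\vchi)|\leqslant\RN(\frm)^{1/2}$, and finally applies Cauchy--Schwarz together with the multiplicative large sieve of Proposition~\ref{prop: Gallagher} to each of the two resulting character sums $S_{\fra\frn\frm^{-1}}(\vchi;\varnu,m)$ and $S_{\frc^{-1}}(\vchi;\varnu,m)$; the $\RN(\frn)^{\vepsilon}$ in~\eqref{20eq: int of S} arises from the divisor sum $\sum_{\frm\mid\frn}$ and the ratio $\RN(\frm)/\varphi(\frm)$. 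Your route bypasses Gauss sums entirely by working directly with the $\RN(\frn)$ additive characters of $\fra/\fra\frn$, grouping the $\frq$'s by the induced character $\lambda_{\frq}$; the additive-character analogue of Proposition~\ref{prop: Gallagher} goes through verbatim (orthogonality over $\fra/\fra\frn$ in place of $(\fra/\fra\frn)^{\times}$), and in fact your argument delivers $\RN(\frn)^{1/2}$ with no $\vepsilon$-loss---there is no $\varphi$ anywhere in the additive picture, so your closing remark about absorbing $\RN(\frn)/\varphi(\frn)$ is unnecessary. One small correction: the ``equidistribution of chosen generators $c_{\frq}$ over the cosets of $L$'' that you flag as the main obstacle is not actually needed. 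All the argument uses is the upper bound $\#\{c_{\frq}\in(x_{0}+L)\cap F_{\infty}^{\times}(y;1/U)\}\Lt 1+\RN(y)\RE(1/U)\RN(\frc)/\RN(\frn)$ for a \emph{single} coset, which is precisely the lattice-point input already invoked in the proof of~\eqref{20eq: sum over chi}; no statement about how the $c_{\frq}$'s distribute \emph{among} cosets enters.
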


\begin{proof}
%First, we make some assumptions with no loss of generality.	
By changing $a_{\gamma}$ and $b_{\frq}$ into $a_{\gamma} \overline{\vchi_{  i \kappa, \shskip    n   } (\gamma)}$ and $b_{\frq}  {\vchi_{  i \kappa, \shskip    n   } (c_{\frq})}$ if necessary, we may assume with no loss of generality that $ \widehat{\bfra}_{\kappa, \shskip n} (U) = \widehat{\bfra}  (U) $. % For simplicity, we   assume that $\psi_{\frn}$ has conductor $\frD_{\frn}^{-1}$   equal to  $ \frO$.  
Moreover, we set $a_{\gamma} = b_{\frq} = 0$ if $\gamma \notin F_{\infty}^{\varDelta} (\varGamma) $ or $ c_{\frq}   \notin F_{\infty}^{\varDelta} (C) $. It will be convenient to view the $\frq$-sum as a sum over $ \{ c_{\frq}: \frq \sim \frc \} \subset \frc^{-1} $. 
	
	 Next, we % use the M\"obius function to 
	 reformulate $S_{\frn} (C, \varGamma; \varnu, m; a, b )$ in \eqref{20eq: Sn (...)}  as 
	 \begin{align*}
	\sum_{ \frm | \frn}   {\mathop{\sum_{ \frq \sim \frc } }_{(\frq, \shskip \frn) = (1)}}  \mathop{\sum_{\gamma \shskip \in \fra \frn \frm^{-1} } }_{ (\gamma \frc \frD)_{\frm} = \frm^{-1} }   a_{\gamma} b_{\frq} \psi_{\frm}  (  {  \gamma} / { c_{\frq}}  ) \vchi_{  i \varnu, \shskip  m}  (   {\gamma} / {c_{\frq}}  ). 
	 \end{align*}  
	For $\vchi \in \widehat {\big( (\frm \frD_{\frm}) ^{-1} \hskip -1 pt /   \frD_{\frm}^{-1} \big)}{}^{\hskip -1 pt \times}$, define the Gauss sum  
	\begin{align*}
	\tau (\vchi) = \sum_{x \shskip \in ((\frm \frD_{\frm}) ^{-1} \hskip -1 pt /   \frD_{\frm}^{-1} )^{\times}} \vchi (x) \psi_{\frm} (x) .
	\end{align*} 
	It is well-known that $ |\tau (\vchi)| \leqslant \sqrt{\RN (\frm)} $. 
	By the orthogonality relation, 
	\begin{align*}
	\psi_{\frm}  (  {  \gamma} / { c_{\frq}}  )  = \frac 1 {\varphi (\frm)} \sum_{ \vchi \in \widehat { ( (\frm \frD_{\frm}) ^{-1} \hskip -1 pt /   \frD_{\frm}^{-1}  )}{}^{\hskip -1 pt \times} } {\vchi} (\gamma / c_{\frq}) \tau (\overline{\vchi})  ,  
	\end{align*} 
	for $(\frq, \frm) = (1)$ and $ (\gamma \frc \frD)_{\frm} = \frm^{-1} $. From these, we deduce that the left-hand side of \eqref{20eq: int of S} is bounded by 
	\begin{align*}
\sum_{ \frm | \frn} \frac {\textstyle \sqrt{\RN (\frm)}} {\varphi (\frm)} \sum_{ \vchi \in \widehat {( (\frm \frD_{\frm}) ^{-1} \hskip -1 pt /   \frD_{\frm}^{-1} )}{}^{\hskip -1 pt \times} } 	\viint_{\widehat{\bfra}  (U)}   \big|S_{\fra \frn \frm^{-1} } (\vchi ; \varnu, m) \overline{S_{\frc^{-1}} (\vchi ; \varnu, m)} \big| \nd \mu ( \varnu, m), 
	\end{align*}
	where
	\begin{align*}
S_{\fra \frn \frm^{-1} } (\vchi ; \varnu, m) \hskip -1 pt = \hskip -3 pt \mathop{\sum_{\gamma \shskip \in \fra \frn \frm^{-1} } }_{ (\gamma \frc \frD)_{\frm} = \frm^{-1} } \hskip -3 pt a_{\gamma} 	{\vchi} (\gamma  )  \vchi_{  i \varnu, \shskip  m}  (   {\gamma}   ), \quad
S_{\frc^{-1}} (\vchi ; \varnu, m) \hskip -1 pt = \hskip -3 pt {\mathop{\sum_{ \frq \sim \frc } }_{(\frq, \shskip \frn) = (1)}} \hskip -3 pt \overline{b}_{\frq} {\vchi} (c_{\frq}) {\vchi}_{  i \varnu, \shskip  m} (c_{\frq}).
	\end{align*}
%Note that for the $\frq$-sum, one may transform it into a sum over $\valpha \in \frc^{-1}$ on letting $\frq = \valpha \frc $. 
Finally, the bound in \eqref{20eq: int of S} readily follows from  Cauchy--Schwarz and Proposition \ref{prop: Gallagher}.

\end{proof}
	 
\end{appendices}

%\bibliographystyle{alphanum}
%    Insert the bibliography data here.
%\bibliography{references}

\end{document}